\newtheorem{thm}{Theorem}[section]
\newtheorem{cor}[thm]{Corollary}
\newtheorem{lem}[thm]{Lemma}
\newtheorem{prop}[thm]{Proposition}
\newtheorem*{thm*}{Theorem 1.1}
\newtheorem*{lem*}{Lemma 2.1}
\theoremstyle{definition}
\newtheorem{defn}[thm]{Definition}
\theoremstyle{remark}
\newtheorem{rem}[thm]{Remark}
\numberwithin{equation}{section}
\newcommand{\lr}[1]{\langle #1 \rangle}
\newcommand{\mt}[1]{\mathfrak #1 }
\newcommand{\R}{\mathbb{R}}
\newcommand{\mbq}{\mathbbm{q}}
\newcommand{\mbr}{\mathbbm{r}}
\newcommand{\mbp}{\mathbbm{p}}
\newcommand{\mcH}{\mathcal{H}}
\newcommand{\tmbq}{\tilde{\mathbbm{q}}}
\newcommand{\tmbr}{\tilde{\mathbbm{r}}}
\newcommand{\tmbp}{\tilde{\mathbbm{p}}}
\title[$L^p$ estimate for the gain term]
{The $L^p$ estimate for the gain term of the Boltzmann collision operator and its application}
\author{Ling-Bing He}
\address{Department of Mathematical Sciences, TsingHua University, Beijing, 100084, P.R.China}
\email{hlb@tsinghua.edu.cn}
\author{Jin-Cheng Jiang}
\address{Department of Mathematics, National Tsing Hua University, Hsinchu, Taiwan 300040, R.O.C.
\\, and  National Center for Theoretical Sciences, National Taiwan University, Taipei, Taiwan 100046, R.O.C.}
\email{jcjiang@math.nthu.edu.tw}
\author[Kuo]{Hung-Wen Kuo}
\address{Department of Mathematics, National Cheng Kung University, Tainan 701401, Taiwan
, R.O.C.}
\email{hwkuo@mail.ncku.edu.tw}
\author{Meng-Hao Liang}
\address{Department of Mathematics, National Tsing Hua University, Hsinchu, Taiwan 300040, R.O.C.}
\email{a0970352580@gmail.com}
\begin{document}

\begin{abstract}
We prove the  Hardy-Littlewood-Sobolev type $L^p$ estimates for the gain term of the Boltzmann 
collision operator including Maxwellian molecule, hard potential and hard sphere models. 
Combining with the results of  Alonso et al.~\cite{ACG10} for the soft potential and Maxwellian molecule 
models, we provide an unified  form of $L^p$ estimates for all cutoff models which 
are sharp in the sense of scaling. The most striking feature of our new estimates for the hard potential 
and hard sphere models is that they do not increase the moment, the same as Maxwellian molecule 
and soft potential models.  
Based on these novelties, 
we prove the global existence and scattering of the non-negative 
unique mild solution for the Cauchy problem of the Boltzmann equation when 
the positive initial data is small in the weighted $L^3_{x,v}$ space. 
\end{abstract}

\maketitle

\section{Introduction and Main results}

We consider the Boltzmann equation, 
\[
{\partial_t f}+v\cdot\nabla_x f =Q(f,f),
\]
in $(0,\infty)\times\mathbb{R}^3\times\mathbb{R}^3$
where the collision operator $Q(f,f)$ is given by 
\[
Q(f,f)(v)=\int_{\mathbb{R}^3}\int_{\omega\in S^{2}_+}(f'f'_{*}-ff_{*}) B(v-v_{*},\omega)\; d\omega dv_{*},
\]
and $d\omega$ is the solid element in the direction of unit vector $\omega$ in the upper hemisphere $S^2_+$.
We use the abbreviations $f'=f(x,v',t)\;,\;f'_{*}=f(x,v'_{*},t)\;,\;f_{*}=f(x,v_{*},t)$, 
and the relation between the pre-collisional velocities of particles and
after collision is given by
\[
v'=v-[\omega\cdot(v-v_{*})]\omega\;,\;v'_{*}=v_{*}+[\omega\cdot(v-v_{*})]\omega\;,\;\omega\in S^{2}_+.
\]

In this paper, we consider the cutoff models, including Maxwellian molecule, hard potential 
and hard sphere models.
Namely, the collision kernel $B$  being the product of kinetic part and angular part, 
\begin{equation}\label{D:kernel}
B(v-v_*,\omega)=|v-v_*|^{\gamma}\;b(\cos\theta)=|v-v_*|^{\gamma}\;\cos\theta ,
\end{equation}
where 
\[
0\leq \gamma \leq 1\;,\;\cos\theta=\lr{\omega,(v-v_*)/|v-v_*|}\;,0\leq\theta\leq \pi/2.
\]
The angular function $b(\cos\theta)=\cos\theta$ satisfying  the Grad's cutoff assumption 
\begin{equation}\label{D:Grad}
\int_{S^{2}_+} b(\cos\theta)d\omega<\infty
\end{equation}
is given in the explicit form for the simplicity of representation and the result of this work still holds if 
it is replaced by smooth function with same decay when $\theta$ towards $0$ and $\pi/2$. 
The cases $-3<\gamma<0,\;\gamma=0,\;0<\gamma<1$ and $\gamma=1$ are called the sot potential, 
Maxwell molecules, hard potential and hard sphere model respectively. When the cutoff condition~\eqref{D:Grad}
is satisfied, the collision operator $Q$ can be split into the gain term $Q^{+}$ and the loss term $Q^{-}$.    
It is useful to introduce the bilinear gain and loss terms
\begin{equation}\label{gain-loss-def}
\begin{split}
 Q^+(f,g)(v)=\int_{{\R}^3}\int_{S^{2}_+}  f(v')g(v_*') B(v-v_*,\omega)
d\omega dv_*, \\
 Q^{-}(f,g)(v)=\int_{{\R}^3}\int_{S^{2}_+}  f(v)g(v_*) B(v-v_*,\omega)
d\omega dv_*.
\end{split}
\end{equation}

In order to study the Boltzmann equation, it is essential to understand the properties of the collision operator  
and which is clearly non-trivial due to the complex structure of the operator. 
In particular, the estimate for the gain term is the core part of the task. 
Among all, the regularization property of the gain term  found first by 
Lions~\cite{Lio94} was used to prove the existence of renormalized solutions. 
In~\cite{Lio94}, Lions assumed that both the relative velocity and angular function are compact 
supported and the support of the later is away from $0$ and $\pi/2$. It was extended to the full 
collision kernel by several authors~\cite{JC12,JC20,Lu98,Mou04,Wen94} in different forms for various purposes. 
For example the sharp regularizing estimates, for $0\leq \gamma\leq 1$, 
given in~\cite{JC20}  take the form:
\begin{equation}\label{gain-smooth}
 \|Q^+(f,g)\|_{\dot{H}^{\gamma}}\leq C \|f\|_{L^{\mt{p}}}\|g\|_{L^{\mt{q}}},\;
 \|Q^+(f,g)\|_{{H}^{\gamma}}\leq C \|\lr{v}^{\gamma}f\|_{L^{\mt{p}}}\|\lr{v}^{\gamma}g\|_{L^{\mt{q}}},
\end{equation} 
where $\dot{H}^{\gamma}$ and $H^{\gamma}$ are the homogeneous and inhomogeneous Sobolev space 
respectively, $1/{\mt{p}}+1/{\mt{q}}=3/2,\;1\leq \mt{p},\mt{q}\leq 2$ and $\lr{v}=(1+|v|^{2})^{1/2}$
is the Japanese bracket (see also the notations in the end of this section).  

Secondly, it is also useful to regard the gain term as a convolution-like operator. The Hardy-Littlewood-Sobolev 
type estimates was first given by Gustsfsson~\cite{Gus86}, then by several authors~\cite{DKR06,AC10,ACG10,AG11}. 
The recent result of Alonso et al.~\cite{ACG10}  for the soft potential and Maxwellain models, 
i.e. $-3<\gamma\leq 0$, reads $1<\mt{p},\mt{q},\mt{r}<\infty$ and
\begin{equation}\label{sharp-scale-soft}
 \|Q^+(f,g)\|_{L^{\mt{r}}}\leq C \|f\|_{L^{\mt{p}}}\|g\|_{L^{\mt{q}}},
 \;\frac{1}{\mt{p}}+\frac{1}{\mt{q}}=1+\frac{\gamma}{3}+\frac{1}{\mt{r}}.
\end{equation}     
This estimate is employed  by the first and second authors in the recent work~\cite{HJ17,HJ23} where they solve 
the small initial data Cauchy problem for the 3 dimensional  Boltzmann equation for the cases
$-1\leq \gamma<0$. Because the estimate~\eqref{sharp-scale-soft}  does not increase the moment and 
thus gives the desirable nonlinear estimate, 
\begin{equation}\label{desire-v}
\|\lr{v}^{\ell} Q^{+}(f_+,f_+)\|_{L^{\tilde{p}'}} \le  C\|\lr{v}^{\ell} f_+\|^{2}_{L^p},
\end{equation}
for suitable $(\tilde{p}',p)$ which fits the requirement of the Strichatz estimate (see also 
Section~\ref{well-posed} for the details).
The above estimate  allows us to close the iteration by the fixed point argument.

On the other hand, for the hard potential and hard sphere models
i.e., $0<\gamma\leq 1$, the estimate of Alonso et al.~\cite{ACG10} reads
\begin{equation}\label{gain-hard-loss}
\|\lr{v}^{\ell} Q^{+}(f,g)\|_{L^{\mt{r}}}\leq C\|\lr{v}^{\ell+\gamma} f\|_{L^{\mt{p}}}
\|\lr{v}^{\ell+\gamma} g\|_{L^{\mt{q}}},
\end{equation}  
where 
\begin{equation}\label{Convolution-scaling}
\frac{1}{\mt{p}}+\frac{1}{\mt{q}}=1+\frac{1}{\mt{r}},\;\ell\in\mathbb{R},\;1\leq \mt{p},\mt{q},\mt{r}\leq \infty.
\end{equation}
There is an increase of moment of order $\gamma$ in the estimate~\eqref{gain-hard-loss} comparing to the 
estimate~\eqref{sharp-scale-soft} which makes the approach in~\cite{HJ17,HJ23} being not practicable.
 
However the first estimate of~\eqref{gain-smooth} and Sobolev embedding implies that for 
$1\leq\mt{p},\mt{q}\leq 2$, 
\[
\|Q^+(f,g)\|_{L^{\mt{r}}}\leq C \|f\|_{L^{\mt{p}}}\|g\|_{L^{\mt{q}}},\;{\rm if}\;
\frac{1}{\mt{p}}+\frac{1}{\mt{q}}=\frac{3}{2}=1+\frac{\gamma}{3}+\frac{1}{\mt{r}}.
\]
This means~\eqref{sharp-scale-soft} holds for hard potential and hard sphere models at least for 
some of triplets $(\mt{p},\mt{q},\mt{r})$. It is natural to ask if it is necessary to increase the moment when we consider
the $L^{p}$ estimate for the gain term with hard potential and hard sphere models.  

The first part of this work provides an unexpected observation to the gain term for the  hard potential and 
hard sphere models for this question.  We prove the  Hardy-Littlewood-Sobolev type $L^p$ estimates
for the gain term of the Boltzmann collision operator including Maxwellian molecule, hard potential and 
hard sphere models. 
Combining with the results of  Alonso et al.~\cite{ACG10} for the soft potential and Maxwellian molecule 
models, we provide an unified  form of $L^p$ estimates for all cutoff models which 
are sharp in the sense of scaling. The most striking feature of our new estimates for the hard potential 
and hard sphere models is that they do not increase the moment, the same as Maxwellian molecule 
and soft potential models.  
Therefore the damping effect inside the collision operator for the hard potential 
and hard sphere models comes from the loss term only, 
at least in the suitable $L^p$ space. This might be a distinguishing feature of hard potential 
and hard sphere models comparing to the Maxwellian molecule 
and soft potential models.

\begin{thm}\label{T:Gain-p-w-est} 
Let $\ell_{0}\geq 0$, $1<\mt{p}, \mt{q}, \mt{r} <\infty$,\; $0\leq  \gamma\leq 1$ and
\begin{equation}\label{scaling-relation}
\frac{1}{\mt{p}}+\frac{1}{\mt{q}}=1+\frac{1}{\mt{r}}+\frac{\gamma}{3},\;\;
\frac{\gamma}{6}\leq \frac{1}{\mt{r}}\leq 1-\frac{5\gamma}{6},
\end{equation}
and both $\leq$ are replaced by $<$ when $\gamma=0$.
Consider
\[
B(v-v_*,\omega)=|v-v_*|^{\gamma}\;\cos\theta.
\]
Then the bilinear operator $Q^{+}(f,g)$ satisfies  
\begin{equation}\label{W-convolution}
\|\lr{v}^{\ell_{0}}Q^{+}(f,g)\|_{L^{\mt{r}}(\mathbb{R}^3)}\leq C\;\|\lr{v}^{\ell_{0}} f\|_{L^{\mt{p}}(\mathbb{R}^3)}
\|\lr{v}^{\ell_{0}}g\|_{L^{\mt{q}}(\mathbb{R}^3)}.
\end{equation}
If $\ell_{1}>3/m$ and $1<\mt{p}_{m}, \mt{q}_{m}, m, \mt{r}_{m} <\infty$ satisfy
\begin{equation}\label{minus-size-condition}
\frac{1}{\mt{p}_{m}}+\frac{1}{m}<1\;,\;\frac{1}{\mt{q}_{m}}+\frac{1}{m}<1,
\end{equation}
both $\leq$ are replaced by $<$ when $\gamma=0$, and
\begin{equation}\label{scaling-relation-w}
\frac{1}{\mt{p}_{m}}+\frac{1}{\mt{q}_{m}}+\frac{1}{m}=1+\frac{1}{\mt{r}_{m}}+\frac{\gamma}{3}, \;\;
\frac{\gamma}{6}\leq \frac{1}{\mt{r}_{m}}\leq 1-\frac{5\gamma}{6},
\end{equation}
then we have  
\begin{equation}\label{W-convolution-m}
\|\lr{v}^{\ell_{1}} Q^{+}(f,g)\|_{L^{\mt{r}_{m}}(\mathbb{R}^3)}\leq C\;
\|\lr{v}^{\ell_{1}} f\|_{L^{\mt{p}_{m}}(\mathbb{R}^3) }\|\lr{v}^{\ell_{1}} g\|_{L^{\mt{q}_{m}}(\mathbb{R}^3)}.
\end{equation}
\end{thm}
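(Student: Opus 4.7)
The plan is to first establish \eqref{W-convolution} and then derive \eqref{W-convolution-m} as a consequence. For the weight in \eqref{W-convolution}, I would use the pointwise energy-conservation bound
\[
\lr{v}^{\ell_{0}} \leq C_{\ell_{0}}\,\lr{v'}^{\ell_{0}}\lr{v'_{*}}^{\ell_{0}}, \qquad \ell_{0}\geq 0,
\]
which follows at once from $|v|^{2}\leq |v|^{2}+|v_{*}|^{2}=|v'|^{2}+|v'_{*}|^{2}$. Inserting this into the integrand of $Q^{+}(f,g)$ and using positivity yields the pointwise domination $\lr{v}^{\ell_{0}}Q^{+}(f,g)(v)\leq C_{\ell_{0}}\,Q^{+}(\lr{\cdot}^{\ell_{0}}f,\lr{\cdot}^{\ell_{0}}g)(v)$, so \eqref{W-convolution} reduces to the unweighted case $\ell_{0}=0$.

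For the unweighted bound I would use Riesz--Thorin interpolation between two families of endpoint estimates. The first family is obtained by combining the sharp regularizing estimate \eqref{gain-smooth} with the Sobolev embedding $\dot{H}^{\gamma}(\mathbb{R}^{3})\hookrightarrow L^{6/(3-2\gamma)}(\mathbb{R}^{3})$, yielding without any weight
\[
\|Q^{+}(f,g)\|_{L^{6/(3-2\gamma)}}\leq C\|f\|_{L^{\mt{p}}}\|g\|_{L^{\mt{q}}},\qquad \tfrac{1}{\mt{p}}+\tfrac{1}{\mt{q}}=\tfrac{3}{2},\;1\leq \mt{p},\mt{q}\leq 2.
\]
This realizes the target scaling $\tfrac{1}{\mt{p}}+\tfrac{1}{\mt{q}}=1+\tfrac{1}{\mt{r}}+\tfrac{\gamma}{3}$ at $1/\mt{r}=1/2-\gamma/3$, which is precisely the midpoint of the admissible interval $[\gamma/6,\,1-5\gamma/6]$. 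The second family of endpoint bounds must come from an auxiliary estimate compatible with \eqref{gain-hard-loss}, designed so that the moment weight $\lr{v}^{\gamma}$ appearing there is paid off against an additional exponent of integrability. Interpolating the two families along the scaling line should cover the full admissible range, with the endpoints $\gamma/6$ and $1-5\gamma/6$ marking the extreme vertices of the interpolation region.

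For \eqref{W-convolution-m} I would use the elementary splitting $\lr{v}^{\ell_{1}}\leq C_{\ell_{1}}(\lr{v'}^{\ell_{1}}+\lr{v'_{*}}^{\ell_{1}})$ (valid since $|v|\leq |v'|+|v'_{*}|$) to dominate
\[
\lr{v}^{\ell_{1}}Q^{+}(f,g)(v)\leq C_{\ell_{1}}\bigl(Q^{+}(\lr{\cdot}^{\ell_{1}}f,\,g)(v)+Q^{+}(f,\,\lr{\cdot}^{\ell_{1}}g)(v)\bigr).
\]
To the first term I would apply \eqref{W-convolution} with $\ell_{0}=0$ and exponents $(\mt{p}_{m},\mt{q}',\mt{r}_{m})$ satisfying $1/\mt{p}_{m}+1/\mt{q}'=1+1/\mt{r}_{m}+\gamma/3$, then convert via H\"older
\[
\|g\|_{L^{\mt{q}'}}=\bigl\|\lr{v}^{-\ell_{1}}\cdot\lr{v}^{\ell_{1}}g\bigr\|_{L^{\mt{q}'}}\leq \|\lr{v}^{-\ell_{1}}\|_{L^{m}}\|\lr{v}^{\ell_{1}}g\|_{L^{\mt{q}_{m}}};
\]
the factor $\|\lr{v}^{-\ell_{1}}\|_{L^{m}}$ is finite because $\ell_{1}>3/m$, and the scaling \eqref{scaling-relation-w} forces exactly $1/\mt{q}'=1/m+1/\mt{q}_{m}$. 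Condition \eqref{minus-size-condition} is precisely what guarantees $\mt{q}'\in(1,\infty)$ so that \eqref{W-convolution} applies, and the second term is handled by the symmetric choice.

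The main obstacle I foresee is the construction of the second endpoint family for the unweighted interpolation, and understanding why the admissible interval for $1/\mt{r}$ is exactly $[\gamma/6,\,1-5\gamma/6]$ --- in particular why it collapses to the single point $\{1/6\}$ when $\gamma=1$ (hard sphere). Any successful approach will have to convert the moment growth present in \eqref{gain-hard-loss} into integrability, presumably via a Carleman-type representation together with a dyadic decomposition of $|v-v_{*}|^{\gamma}$, so that the gain from Sobolev regularization exactly balances the loss from the kinetic factor at the extreme vertices of the admissible region.
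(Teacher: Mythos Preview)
Your reductions for the weighted estimates are essentially those of the paper and are correct: the pointwise bound $\lr{v}^{\ell_{0}}\leq C\lr{v'}^{\ell_{0}}\lr{v'_{*}}^{\ell_{0}}$ (or the paper's dual version via $\lr{v'}\leq 2\max\{\lr{v},\lr{v_{*}}\}$) reduces \eqref{W-convolution} to $\ell_{0}=0$, and your H\"older argument for \eqref{W-convolution-m} is exactly the paper's, down to the role of \eqref{minus-size-condition}.

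The gap is in the unweighted case, and it is the entire content of the theorem. Your proposed interpolation scheme cannot be completed as stated. The single point $1/\mt{r}=1/2-\gamma/3$ obtained from \eqref{gain-smooth} plus Sobolev is fine, but there is no ``second endpoint family'' to interpolate against: the only other available estimate is \eqref{gain-hard-loss}, which lives on the \emph{different} scaling line $1/\mt{p}+1/\mt{q}=1+1/\mt{r}$ and carries a moment loss $\lr{v}^{\gamma}$ that complex interpolation will not remove (interpolation between a weighted and an unweighted estimate produces a weighted estimate, not an unweighted one). A Carleman/dyadic argument does not manufacture the missing endpoints either --- the endpoints $1/\mt{r}\in\{\gamma/6,\,1-5\gamma/6\}$ are not reachable from any known bilinear bound and must be proved directly.

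What the paper actually does is entirely different and occupies Sections~3--5. It writes $Q^{+}$ in terms of the Radon transform $\mathbb{T}h(x)=|x|^{\gamma}\int_{S^{2}_{+}}b(\cos\theta)h(x-(x\cdot\omega)\omega)\,d\omega$ and proves the linear bound $\|\mathbb{T}h\|_{L^{p}}\lesssim\|h\|_{L^{\widetilde{p}}}$ for $\gamma/2\leq 1/p\leq 1-\gamma/2$, $1/\widetilde{p}=1/p+\gamma/3$ (Lemma~2.1). This in turn is reduced via $\mathbb{T}=T\circ(-\Delta)^{-\gamma/2}$ to an $L^{p}\to L^{p}$ bound for $T$, which is a sum of global Fourier integral operators with $SG^{\gamma-1,\gamma-1}$ symbols and phases $\psi_{\pm}(x,\xi)=\tfrac{1}{2}(x\cdot\xi\mp|x||\xi|)$, plus degenerate pieces. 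The admissible range for $1/p$ is exactly the Seeger--Sogge--Stein condition $\gamma-1\leq -2|1/2-1/p|$ for FIOs of order $\gamma-1$; translated back through the duality argument of Theorem~2.2 this becomes $\gamma/6\leq 1/\mt{r}\leq 1-5\gamma/6$. The collapse to $\{1/6\}$ at $\gamma=1$ is the collapse of the SSS range to $p=2$ for order-zero FIOs. None of this is accessible by interpolation from previously known bilinear estimates.
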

\begin{rem}
The estimate~\eqref{W-convolution-m} satisfying~\eqref{minus-size-condition},~\eqref{scaling-relation-w} 
is a modification of the estimate~\eqref{W-convolution} satisfying~\eqref{scaling-relation}.  
The second estimate~\eqref{W-convolution-m} will be used in solving the small data Cauchy problem 
for the Boltzmann equation later.  
\end{rem}
\begin{rem}
Here we restrict our study of the Boltzmann equation to $(t,x,v)\in \R\times\R^{3}\times\R^{3}$ for its importance 
in physics and to avoid lengthy, complicated presentation and proofs. The analogy results of the 
Theorem~\ref{T:Gain-p-w-est} and the following Theorem~\ref{result1} in other dimensions  will be 
presented in the upcoming work~\cite{Liang24}. 
\end{rem}
\begin{rem}
Our results also indicate that the hard sphere model has its own unique feature for the interval of $1/\mt{r}$ 
in~\eqref{scaling-relation} shrinks to a point unlike the hard potential and Maxwellian models.
This also happens to the interval of  $\varepsilon$ in the statement of Theorem~\ref{result1} below.
\end{rem}
\begin{rem}
The proof of Theorem~\ref{T:Gain-p-w-est} indeed also implies  that for  
$\frac{1}{\mt{p}}+\frac{1}{\mt{q}}=1+\frac{1}{\mt{r}},\;\;
\frac{\gamma}{6}\leq \frac{1}{\mt{r}}\leq 1-\frac{5\gamma}{6}$, we have 
\[
\| \Delta^{\frac{\gamma}{2}}\; Q^{+}(f,g)\|_{L^{\mt{r}}(\mathbb{R}^3)}\leq C\| f\|_{L^{\mt{p}}(\mathbb{R}^3)}\| g\|_{L^{\mt{q}}(\mathbb{R}^3)}.
\]
if we want to gain regularity of order $\gamma$ instead of gain integrability 
(we leave the modification of the proof to the reader). However we do not see very interesting 
application of this estimate so far. 
\end{rem}

Our second result is the application of the above result which concerns the global well-posedness of the 
Cauchy problem for the cutoff Boltzmann equation with small initial data:  
\begin{equation}\label{E:Cauchy}
\left\{
\begin{aligned}
&{\partial_t f}+v\cdot\nabla_x f =Q(f,f),\\
&f(0,x,v)=f_0(x,v).
\end{aligned}
\right.
\end{equation}
The first result in this aspect is due to Illner and Shinbrot~\cite{IS84} who showed the global existence of solutions for 
the Cauchy problem~\eqref{E:Cauchy} for several cutoff models when the initial data has exponential decay 
in spatial variable and has suitable weight in velocity variable. The  Kaniel-Shinbrot iteration method 
in~\cite{IS84}  comes from the earlier work~\cite{KS78}. Small initial data Cauchy problem 
 for different cut-off models was extensively studied during that decade using the same iteration or fixed 
point argument, see~\cite{BPT88, G96} and reference therein for more details. We remark that the assumption 
that the initial data has exponential decay in  spatial variable or in velocity variable is necessary to get these 
results.  

The Strichartz estimates for the kinetic equation in the note of Castella and Perthame~\cite{CP96} provide 
a possible tool to relax the harsh assumption, exponential decay,  posed on the initial data. 
Assumed the  kinetic part of the collision kernel is $L^p$ integrable for some $p$ depending on 
dimension, Ars\'{e}nio~\cite{Ars11}  was able to use Strichartz estimates to prove the existence of 
global weak (non-unique) solution
when the initial data is small in $L^D_{x,v}$ where $D$  is the dimension.  For the authentic 
 cutoff kernel with $-1\leq \gamma<0$, dimension 3 and initial data is small in weighted $L^3$ space, the global well-posed 
problem is solved by the first and second authors~\cite{HJ17,HJ23} by combining Strichartz 
estimates~\cite{CP96,Ovc11} and Kaniel-Shinbrot iteration~\cite{IS84,KS78}. As we mentioned before the 
most important ingredient of all is the estimate~\eqref{desire-v}.    
Note that the result for the two dimensional case with $\gamma=0$ is obtained by  Chen, Denlinger and 
Pavlovi\'{c}~\cite{CDP21} using a different approach.  
With Theorem~\ref{T:Gain-p-w-est} in hand, we are able 
to deal with the small initial data Cauchy problem of the Boltzmann equation for the Maxwellian molecule, 
hard potential and hard sphere models following the approach of~\cite{HJ23}.  

To state the result, let us introduce the mixed Lebesgue norm
\[
\|f(t,x,v)\|_{L^q_tL^r_xL^p_v}
\] where the notation  $L^q_tL^r_xL^p_v$ stands for the space $L^q(\mathbb{R};L^r
(\mathbb{R}^3;L^p(\mathbb{R}^3)))$. It is understood that we use 
$L^q_t(\mathbb{R})=L^q_t([0,\infty))$ for the well-posedness problem which can 
be done by imposing support restriction to the inhomogeneous  Strichartz estimates. 
We use $L^a_{x,v}$ to denote $L_x^a(\mathbb{R}^3;L_v^a(\mathbb{R}^3))$. Please note that 
the notation $L^{p}_{v}$ is used to indicate the variable of integration is $v$ which appears only 
when we use mixed norm. This should not be confused with weighted $L^{p}$ norm used in many 
papers.  

We also define the scattering of the solution with respect to kinetic transport operator  as the following. 
We say that a global solution $f\in C([0,\infty),L^a_{x,v})$ scatters in $L^a_{x,v}$ as 
$t\rightarrow\infty$ if there exits $f_{\infty}\in L^a_{x,v}$ such that 
\begin{equation}
\|f(t)-U(t)f_{\infty}\|_{L^a_{x,v}}\rightarrow 0
\end{equation}
where $U(t)f(x,v)=f(x-vt,v)$ is the solution map of the kinetic transport equation 
\[
\partial_t f+v\cdot\nabla_x f=0. 
\]
See also~\cite{BGGL16} and reference therein for the discussion about the scattering of the solution and 
its relation with famous H-theorem. With these terminologies, we state the second result 
as the following. 

\begin{thm}\label{result1}
Let $0\leq \gamma\leq 1$ and 
\[
B(v-v_*,\omega)=|v-v_*|^{\gamma}\;\cos\theta.
\]  
Let $\ell>2\gamma+\frac{10}{9}$ and $\varepsilon$ satisfy
\[
\max {\big \{ } 0,  \frac{5\gamma}{12}-\frac{7}{18}   \} < \varepsilon \leq \frac{1}{9}-\frac{\gamma}{12}.
\] 
There exists a small number $\eta>0$ such that if the initial data 
$$
f_{0}\in {\mathbb B}^{\ell}_{\eta}=\{f_{0} | f_0\geq 0,\; \| \lr{v}^{\ell} f_{0}\|_{L^{3}_{x,v}}<\eta,\;
\|\lr{v}^{\ell}f_0\|_{L^{15/8}_{x,v}}<\infty\}\subset L^3_{x,v},
$$
then the Cauchy problem~\eqref{E:Cauchy} admits a unique and non-negative mild solution 
\[
\lr{v}^{\ell} f\in C([0,\infty),L^3_{x,v})\cap
L^{\mbq}([0,\infty],L^{\mbr}_xL^{\mbp}_v), 
\]
where the triple 
\begin{equation}
(\frac{1}{\mbq},\frac{1}{\mbr},\frac{1}{\mbp})=(\frac{1}{3}-3\varepsilon,\;\frac{2}{9}+\varepsilon,\;\frac{4}{9}-\varepsilon). \end{equation}
The solution map $\lr{v}^{\ell}f_0\in {\mathbb B}^{\ell}_{\eta}\rightarrow \lr{v}^{\ell} 
f\in L^{\mbq}_tL^{\mbr}_xL^{\mbp}_v$ 
is Lipschitz continuous and the solution $\lr{v}^{\ell} f$ scatters with respect to the kinetic 
transport operator in $L^3_{x,v}$. 
\end{thm}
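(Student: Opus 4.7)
The plan is to follow the Strichartz/Kaniel--Shinbrot scheme of He--Jiang \cite{HJ17,HJ23}, with Theorem~\ref{T:Gain-p-w-est} playing the role that the Alonso et al.\ estimate~\eqref{sharp-scale-soft} played there. Writing the Cauchy problem in Duhamel form
\[
f(t)=U(t)f_{0}+\int_{0}^{t}U(t-s)\,Q(f,f)(s)\,ds,
\]
I would seek a solution in the weighted Strichartz space $\langle v\rangle^{\ell}f\in L^{\mbq}_{t}L^{\mbr}_{x}L^{\mbp}_{v}$ intersected with $C([0,\infty);L^{3}_{x,v})$. The triple $(\mbq,\mbr,\mbp)$ is chosen so that (i) the homogeneous Strichartz estimate for the kinetic transport operator $U(t)$ of Castella--Perthame/Ovcharov controls $\|\langle v\rangle^{\ell}U(t)f_{0}\|_{L^{\mbq}_{t}L^{\mbr}_{x}L^{\mbp}_{v}}$ by $\|\langle v\rangle^{\ell}f_{0}\|_{L^{3}_{x,v}}$, and (ii) the exponents on the nonlinear side, obtained from duality and H\"older in $(t,x)$, fit into the range of~\eqref{minus-size-condition}--\eqref{scaling-relation-w} in Theorem~\ref{T:Gain-p-w-est}. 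A direct check confirms that $\frac{1}{\mbr}+\frac{1}{\mbp}=\frac{2}{3}$ and that the pair $(\mbq,\mbr,\mbp)$ parametrised by $\varepsilon$ is admissible exactly when $\varepsilon$ lies in the declared window; the upper bound on $\varepsilon$ in terms of $\gamma$ comes from the scaling constraint $\frac{\gamma}{6}\le \frac{1}{\mt{r}_{m}}\le 1-\frac{5\gamma}{6}$ transplanted back to $(\mbq,\mbr,\mbp)$, which shrinks to a point at $\gamma=1$.

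The core nonlinear estimate goes as follows. For the gain term, fix $(t,x)$ and apply estimate~\eqref{W-convolution-m} with $\mt{p}_{m}=\mt{q}_{m}=\mbp$, an auxiliary exponent $m$ chosen to make the scaling~\eqref{scaling-relation-w} hold with $\mt{r}_{m}=\tilde{\mbp}\,'$ (the $v$-exponent for the inhomogeneous Strichartz estimate dual to $\mbp$), to obtain
\[
\|\langle v\rangle^{\ell}Q^{+}(f,f)(t,x,\cdot)\|_{L^{\tilde{\mbp}\,'}_{v}}
\le C\,\|\langle v\rangle^{\ell}f(t,x,\cdot)\|^{2}_{L^{\mbp}_{v}}.
\]
Then H\"older in $x$ and $t$ gives
\[
\|\langle v\rangle^{\ell}Q^{+}(f,f)\|_{L^{\tilde{\mbq}\,'}_{t}L^{\tilde{\mbr}\,'}_{x}L^{\tilde{\mbp}\,'}_{v}}
\le C\,\|\langle v\rangle^{\ell}f\|^{2}_{L^{\mbq}_{t}L^{\mbr}_{x}L^{\mbp}_{v}},
\]
provided $\tilde{\mbq}\,'=\mbq/2$, $\tilde{\mbr}\,'=\mbr/2$. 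Crucially, Theorem~\ref{T:Gain-p-w-est} adds \emph{no} $\langle v\rangle^{\gamma}$ weight on the right, which is the exact feature that allowed the soft-potential argument to close and which is now available for $0\le\gamma\le 1$. The loss term $Q^{-}(f,f)(v)=f(v)(|\cdot|^{\gamma}\ast f)(v)$ is handled by a pointwise bound $\langle v\rangle^{\ell}Q^{-}(f,f)\lesssim \langle v\rangle^{\ell+\gamma}f\,\cdot\,(\langle \cdot\rangle^{\gamma}\ast|\langle\cdot\rangle^{\ell}f|)$, combined with Young's convolution inequality and H\"older; the auxiliary moment $\ell>2\gamma+\tfrac{10}{9}$ is picked precisely to absorb the extra $\gamma$-weight by interpolation with the $L^{15/8}_{x,v}$ control on $\langle v\rangle^{\ell}f_{0}$. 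Applying the inhomogeneous Strichartz estimate then yields a quadratic bound $\|\langle v\rangle^{\ell}\Gamma(f)\|_{X}\le C\eta+C\|\langle v\rangle^{\ell}f\|^{2}_{X}$ for the Duhamel map $\Gamma$, where $X=L^{\mbq}_{t}L^{\mbr}_{x}L^{\mbp}_{v}\cap C([0,\infty);L^{3}_{x,v})$.

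With the quadratic estimate in hand, the remainder of the proof is standard but requires care about sign. To obtain a \emph{non-negative} mild solution and not just a fixed point of a signed iteration, I would run the Kaniel--Shinbrot monotone iteration as in \cite{IS84,HJ23}: construct monotone sequences $\{\ell_{n}\},\{u_{n}\}$ with $\ell_{n}\le f\le u_{n}$, each satisfying a linear transport-with-absorption equation, and use the same Strichartz+gain-term machinery above to close uniform bounds in $X$, forcing convergence to a common non-negative limit. Uniqueness and Lipschitz continuity of the solution map follow by subtracting two solutions and running the same estimate on the difference, using smallness $\eta$ for contraction. Scattering in $L^{3}_{x,v}$ is obtained by showing that the tail $\int_{T}^{\infty}U(-s)Q(f,f)(s)\,ds$ is Cauchy in $L^{3}_{x,v}$ as $T\to\infty$ via the same nonlinear estimate restricted to $[T,\infty)$. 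The principal obstacle I anticipate is the simultaneous satisfaction of (a) Strichartz admissibility for transport at $L^{3}_{x,v}$ data, (b) the range~\eqref{minus-size-condition}--\eqref{scaling-relation-w} of Theorem~\ref{T:Gain-p-w-est} applied to the doubled exponents $\mbp$ via an admissible $m$, and (c) the H\"older balance $\tilde{\mbq}\,'=\mbq/2,\ \tilde{\mbr}\,'=\mbr/2$ together with loss-term closure; the narrow $\varepsilon$-window in the statement, and the fact that it collapses at $\gamma=1$, reflects exactly how tight this compatibility becomes for the hard sphere model.
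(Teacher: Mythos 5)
Your overall architecture is the paper's: weighted Strichartz estimates for the kinetic transport operator, the no-moment-loss gain estimate \eqref{W-convolution-m} applied with $\mt{p}_{m}=\mt{q}_{m}=\mbp$, $1/m=(1+\gamma)/3$ and $\mt{r}_{m}=\tilde{\mbp}'$, the H\"older balance $\tilde{\mbq}'=\mbq/2$, $\tilde{\mbr}'=\mbr/2$ (exactly \eqref{solvable-triplets} and \eqref{desire-est-H}), Kaniel--Shinbrot for positivity, and scattering from the Duhamel tail; your identification of the $\varepsilon$-window, including its collapse at $\gamma=1$, is also the paper's computation. The one place where your route, taken literally, does not work is the loss term. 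You claim a quadratic bound $\|\lr{v}^{\ell}\Gamma(f)\|_{X}\le C\eta+C\|\lr{v}^{\ell}f\|_{X}^{2}$ for the \emph{full} Duhamel map in the single space $X=L^{\mbq}_{t}L^{\mbr}_{x}L^{\mbp}_{v}$ with the same weight $\ell$ on both sides. This cannot hold: unlike $Q^{+}$, the loss term $Q^{-}(f,f)=f\,L(f)$ genuinely costs a moment of order $\gamma$ (there is no Radon-transform smoothing to trade against the kinetic factor), so any estimate of $\lr{v}^{\ell}Q^{-}$ must pay either a higher weight on $f$ or a strictly smaller output weight. Relatedly, your proposed mechanism for the hypothesis $\|\lr{v}^{\ell}f_{0}\|_{L^{15/8}_{x,v}}<\infty$ --- ``interpolation to absorb the extra $\gamma$-weight'' --- is not what that hypothesis can deliver.

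In the paper the contraction is run only for the gain-term-only equation (Proposition~\ref{Thm-W-Gain}), which produces the non-negative upper barrier $g_{1}=f_{+}$ for the Kaniel--Shinbrot scheme; the loss term never enters a fixed-point estimate in $X$. Instead, Proposition~\ref{P:Loss} estimates $\lr{v}^{\ell_{2}}Q^{-}(f_{1},f_{2})$ with an explicit weight drop $\ell_{2}=\ell-(\gamma+\tfrac19)$ (this is precisely where $\ell>2\gamma+\tfrac{10}{9}$ is used, via $\lr{v}^{-\ell_{2}}\in L^{10/3}_{v}$, $\lr{v}^{-1/9}\in L^{30}_{v}$), and it is placed in the dual of a \emph{second} KT-admissible triple $(q_{2},r_{2},p_{2})$ with harmonic mean $a_{2}=15/8$. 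The role of the $L^{15/8}_{x,v}$ assumption is to propagate $\lr{v}^{\ell_{1}}f_{+}\in L^{q_{2}}_{t}L^{r_{2}}_{x}L^{p_{2}}_{v}$ (Proposition~\ref{P-p2-est-w}), which is what makes $L(g_{1})$ pointwise a.e.\ well defined in the iteration, lets one prove the limits coincide ($g=h$, Lemma~\ref{w-equation}) by a continuity-in-time absorption argument in the $(q_{2},r_{2},p_{2})$ norm, and gives uniqueness among non-negative mild solutions. So to repair your write-up you should drop the full-map contraction, state the loss estimate with the decreased weight and the auxiliary triple, and make the $15/8$-based Strichartz propagation an explicit step rather than an interpolation remark; with those corrections your argument coincides with the paper's proof.
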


The paper is organized as the following: In section 2, the $L^p$ estimate for the gain term is reduced 
to proving $L^p$ estimate for a Radon transform $\mathbb{T}$.  The transform  $\mathbb{T}$ is decomposed into 
the sum of Fourier integral operators (FIOs) and degenerate operators in section 3.  The $L^p$ boundedness
of the former is proved in  the section 4 and the later is in section 5, thus the proof of 
Theorem~\ref{T:Gain-p-w-est} is complete. The proof of Theorem~\ref{result1} is given in section 6.

\noindent{\bf Notations}\hfil\\
We set Japanese bracket  $\lr{v}=(1+|v|^2)^{1/2},\;v\in\mathbb{R}^3$, $(a\cdot b)=\sum_{i=1}^3 a_ib_i$  
the scalar product in $ \mathbb{R}^3$ and $\lr{f,g}=\int_{\mathbb{R}^3}f(x)g(x)dx$  the inner product in 
$L^2({\mathbb{R}^3}). $
The differential operator
$D^s,s\in\mathbb{R}$ is expressed through the Fourier transform:
\[
D^s f(x)=(2\pi)^{-3}\int_{\mathbb{R}^3} e^{ix\cdot\xi}|\xi|^s \widehat{f}(\xi)d\xi
\]
where the Fourier transform
$
\widehat{f}(\xi)=\int_{\mathbb{R}^3} e^{-ix\cdot\xi} f(x)dx.
$
The  fractional homogeneous and inhomogeneous Sobolev spaces are denoted by
\[
 \|f\|_{\dot{H}^{\alpha}}=\||\xi|^{\alpha}\widehat{f}(\xi)\|_{L^2}\;,\;
\|f\|_{{H}^{\alpha}}=\|\lr{\xi}^{\alpha}\widehat{f}(\xi)\|_{L^2}
\]

We use the multi-indices notation $\partial_x^{\alpha}=\partial_{x_1}^{\alpha_1}\cdots\partial_{x_d}^{\alpha_d}$.
By $\partial_x$ or by $\nabla_x$, we will denote the gradient.
A function $p(x,\xi)\in C^{\infty}(\mathbb{R}^3\times\mathbb{R}^3)$  satisfying 
\begin{equation}\label{D:symbol-definition}
 |\partial^{\alpha}_x \partial^{\beta}_{\xi}p(x,\xi)|
\leq C_{\alpha,\beta} \lr{\xi}^{m-|\beta|}
\end{equation} 
for any multi-indices $\alpha$ and $\beta$ is called a symbol of order $m$. The class of such function is  denoted by $S^{m}_{1,0}$.
We will see that the symbols $p(x,\xi)$ in this paper always enjoy the better decaying condition, i.e., 
\begin{equation}\label{D:symbol-definition-SG}
 |\partial^{\alpha}_x \partial^{\beta}_{\xi}p(x,\xi)|
\leq C_{\alpha,\beta}\lr{x}^{m_1-|\alpha|}\lr{\xi}^{m_2-|\beta|}
\end{equation} 
for any multi-indices $\alpha$ and $\beta$ and which is called a $SG$ symbol of order $(m_1,m_2)$, used by 
Cordes~\cite{Cord95} and Coriasco~\cite{Cori99}.
We  use  $SG^{m_1,m_2}$ to denote the set of such symbols.  For each $p(x,\xi)\in S_{1,0}^l$, the associate operators 
\[
P(x,D) h(x)=\int_{\mathbb{R}^3} e^{ix\cdot\xi} p(x,\xi) \widehat{h}(\xi)d\xi
\]
is called a pseudodifferential operator of order $l$.  The standard notation $S^{-\infty}_{1,0}=\cap_{\;l} S^{l}_{1,0},l\in\mathbb{Z}$
is also used. If $p(x,\xi)\in S^{-\infty}_{1,0}$, it is called a symbol of the smooth operator.
The operator 
\[
\mathscr{T} f(x)=\int_{\mathbb{R}^3} e^{i\psi(x,\xi)} p(x,\xi) \widehat{f}(\xi)d\xi
\]
with symbol $p(x,\xi)\in S^l_{1,0}$  and the phase function $\psi(x,\xi)$ satisfies  
non-degeneracy condition is called a Fourier integral operator of order $l$.
We say a phase function $\psi(x,\xi)$
satisfies the non-degeneracy condition if there is a constant $c>0$ such that 
\begin{equation}\label{D:nondegeneracy}
|\det  {\Big [} \partial_{x}\partial_{\xi} \psi(x,\xi) {\Big ]} | =|\det  {\Big [} \partial^2\psi(x,\xi)/\partial_{x_i}\partial_{\xi_j}  {\Big ]} |\geq c>0
\end{equation} 
for all $(x,\xi)\in{\rm supp}\;p(x,\xi)$.

\section{$L^p$ estimate for the gain term}

To study the gain term of the Boltzmann collision operator, we recall the Radon transform 
introduced by Lions~\cite{Lio94}:
\begin{equation}\label{Def:T}
\mathbb{T} h(x)=|x|^{\gamma}\int_{\omega\in S^2_+} b(\cos\theta)h(x-(x\cdot\omega)\omega) d\omega, 
\;0\leq\gamma\leq 1
\end{equation}
where 
$\cos\theta={(x\cdot\omega)}/{|x|}, x\neq 0$, $x=|x|(0,0,1)$ and 
$\omega=(\cos\varphi\sin\theta,\sin\varphi\sin\theta,\cos\theta)$, $0\leq\theta\leq \pi/2 $.
In our case, we have 
\[
b(\cos\theta)=\cos\theta.
\]
Lions~\cite{Lio94} assumed that $|x|^{\gamma}$ and $\theta$  both vary inside compact sets 
and the support of $\theta$ is away from $0$ and $\pi/2$.   Such a restriction has been removed in the second author’s 
work~\cite{JC12,JC20} in the proof of the $L^{2}$ based regularizing  estimates.  

By the inverse Fourier transform, we may write
\begin{equation}\label{D:T-Fourier}
\begin{split}
\mathbb{T} h(x)
&=(2\pi)^{-3}\int_{\mathbb{R}^3} e^{i{x\cdot\xi}} {A}(x,\xi) \widehat{h}(\xi) d\xi
\end{split}
\end{equation}
where 
\[
{A}(x,\xi)=|x|^{\gamma}\int_{\omega\in S^2_+} e^{-i(x\cdot\omega)(\xi\cdot\omega)} b(\cos\theta) d\omega.
\]

To explain the structure of~\eqref{D:T-Fourier}, we review some fact about the Fourier integral 
operators (FIOs). 
The Fourier integral operators which we consider have the form: 
\begin{equation}\label{D:FIO}
\mathscr{T} h(x)=\int_{\mathbb{R}^{3}} e^{i\psi(x,\xi)} p(x,\xi)\; \widehat{h}(\xi) d\xi
\end{equation}
where $p(x,\xi)$ is a SG-symbol of order $(\alpha,\beta)$ defined in~\eqref{D:symbol-definition-SG}
and the phase function $\psi$ satisfies the non-degeneracy condition, i.e., ~\eqref{D:nondegeneracy}
for all $(x,\xi)\in {\rm supp} \;p(x,\xi)$.
 
 Let $1<p<\infty$ and the pair of orders  $(m_{1},m_{2})$ satisfy
\begin{equation}\label{Symbol-order}
m_{1}\leq -2\;{\Big|}\frac{1}{2}-\frac{1}{p}{\Big |},\;\;m_{2}\leq -2\; {\Big |} 
\frac{1}{2}-\frac{1}{p}{\Big |},\;\;1<p<\infty.
\end{equation} 
When the symbol $p(x,\xi)$ has the compact support as a function of $x$, thus the first inequality 
of ~\eqref{Symbol-order} is automatically satisfied, it is called a local FIO.  
The local FIO whose symbol satisfies~\eqref{Symbol-order} enjoys 
\[
\|\mathscr{T} h\|_{L^p}\leq C\|h\|_{L^p}
\]
 is proved by Seeger, Sogge and Stein~\cite{SSS91}, see also Stein's book~\cite{Stein93}.  
An FIO whose symbol is not compact supported in $x$ variable is called a 
global FIO.  The fact that the SG-symbol global FIO whose symbol satisfies~\eqref{Symbol-order}  
enjoys the same estimate as the local one is proved by Coriasco and Ruzhansky~\cite{CR14},  
see also the references therein for more details.

It has been showed in~\cite{JC20} that the behavior of the operator $\mathbb{T}$ defined 
by~\eqref{Def:T} depends on its position in the phase space $(x,\xi)$
as well as the angle spanned by $x$ and $\xi$. By splitting 
the phase space into cones, we can write $\mathbb{T}$ as a sum of global FIOs 
and  degenerate operators where each FIO is defined on one cone whose size is related to the angle spanned by 
$x$ and $\xi$.
The SG-symbols of FIOs all have (we should call it a symbol later on) the order 
\begin{equation}\label{symbol-order}
m_{1}=\gamma-1,\;m_{2}=-1,
\end{equation}
while the phase functions has no uniform non-degeneracy condition, i.e., 
the constant $c$ in ~\eqref{D:nondegeneracy} may close to $0$ arbitrary and which 
is reason for splitting the phase space into cones.  The  degenerate operators need different 
analysis with the same geometric setup.  With such geometric consideration, the $L^2$-type 
estimate was build in~\cite{JC20} as good as a single global FIO.    

It is natural to expect that $\mathbb{T}$ also enjoys $L^p$-type of estimates as a single global FIO.  More precisely, 
\[
\|\mathbb{T} h\|_{L^p}\leq C\|h\|_{L^{\widetilde{p}}}\;{\rm where}\;\frac{1}{\widetilde{p}}=\frac{1}{p}+\frac{\gamma}{3}
\]
and  $p$ satisfies~\eqref{Symbol-order}.   The relations~\eqref{Symbol-order} and $\gamma-1$ in~\eqref{symbol-order}
is equivalent to 
\[
 \frac{\gamma}{2}\leq \frac{1}{p}\leq 1-\frac{\gamma}{2},
 \]
and both $\leq$ are replaced by $<$ when $\gamma=0$.
Here the exponent $\widetilde{p}$ is due to $m_{2}=(\gamma-1)-\gamma$ and Sobolev emdedding
 (see Section~\ref{Section 3} for more details). 
The proof of the estimate for $\mathbb{T}$ occupies the major part of our paper.

For our purpose, we need variants of the above estimate which are stated in the following 
Lemma~\ref{L:R-v-v_*-ineq}. To state it, 
we introduce the notation $\|F(v,v_*)\|_{L^p(v)}$ to denote the $L^p$ norm of 
$F(v,v_*)$ with respect to variable $v$ where the variable $v_*$ is regarded as a parameter
and vice verse for $\|F(v,v_*)\|_{L^p(v_*)}$.
We will see soon that the gain term estimates can be derived by the
Lemma~\ref{L:R-v-v_*-ineq} below which are concerning the estimates of $\mathbb{T}$ composed 
with translations.
\begin{lem}\label{L:R-v-v_*-ineq}
Let $\mathbb{T}$ be the operator defined by~\eqref{Def:T} and $\tau_m$ be the translation 
operator $\tau_m h(\cdot)=h(\cdot+m)$. Suppose $p$ and $\widetilde{p}$ satisfy
\begin{equation}\label{P-range}
 \frac{\gamma}{2}\leq \frac{1}{p}\leq 1-\frac{\gamma}{2},\; \frac{1}{\widetilde{p}}=\frac{1}{p}+\frac{\gamma}{3},\;
\end{equation}
and both $\leq$ are replaced by $<$ when $\gamma=0$.
Then we have
\begingroup
\large
\begin{equation}\label{E:R-v-ineq}  
\sup\limits_{v_*}\|(\tau_{-v_*}\circ \mathbb{T} \circ\tau_{v_*} )h(v)\|_{L^p(v)}
\leq C \|h\|_{L^{\widetilde{p}}}
\end{equation}
\endgroup
and
\begingroup
\large
\begin{equation}\label{E:R-v_*-ineq} 
\sup\limits_{v}\|(\tau_{-v_*}\circ \mathbb{T} \circ\tau_{v_*} )h(v)\|_{L^p(v_*)}
\leq C \|h\|_{L^{\widetilde{p}}}.
\end{equation}
\endgroup
\end{lem}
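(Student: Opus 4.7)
The plan is to reduce both \eqref{E:R-v-ineq} and \eqref{E:R-v_*-ineq} to the $L^{\widetilde{p}}\to L^{p}$ boundedness of the bare Radon transform $\mathbb{T}$ itself, which is the genuine analytical content of the paper (Sections 3--5). Unwinding the definition of $\mathbb{T}$ with $b(\cos\theta)=\cos\theta$ together with the two translations yields the explicit formula
\begin{equation*}
F(v,v_{*}):=(\tau_{-v_{*}}\circ\mathbb{T}\circ\tau_{v_{*}})h(v) \;=\; |u|^{\gamma-1}\int_{S^{2}_{+}} (u\cdot\omega)\,h\bigl(v-(u\cdot\omega)\omega\bigr)\,d\omega,\qquad u:=v-v_{*},
\end{equation*}
and the whole argument is built around this identity.

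For \eqref{E:R-v-ineq} the argument is essentially cosmetic: fix $v_{*}$ and change variable $v\mapsto u=v-v_{*}$ inside the $L^{p}$-norm; it has unit Jacobian, so the norm is just $\|\mathbb{T}(\tau_{v_{*}}h)\|_{L^{p}}$, and the bound follows from the $L^{\widetilde{p}}\to L^{p}$ estimate for $\mathbb{T}$ combined with the translation invariance $\|\tau_{v_{*}}h\|_{L^{\widetilde{p}}}=\|h\|_{L^{\widetilde{p}}}$.

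The substance of the lemma is \eqref{E:R-v_*-ineq}, which I expect to be the only nontrivial step. Fix $v$, change variable $v_{*}\mapsto u=v-v_{*}$, and introduce the reflection $\tilde h_{v}(y):=h(v-y)$, whose $L^{\widetilde{p}}$-norm equals $\|h\|_{L^{\widetilde{p}}}$. The integrand becomes
\begin{equation*}
|u|^{\gamma-1}\int_{S^{2}_{+}}(u\cdot\omega)\,\tilde h_{v}\bigl((u\cdot\omega)\omega\bigr)\,d\omega,
\end{equation*}
which is \emph{not} yet of $\mathbb{T}$-type because the argument of $\tilde h_{v}$ is the projection of $u$ onto the line $\mathbb{R}\omega$ rather than onto the plane $\omega^{\perp}$. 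I would remedy this by a substitution $\omega\mapsto\omega'$ in the angular variable: for each $u\neq 0$ let $\omega'\in S^{2}_{+}$ be the unique unit vector lying in $\mathrm{span}\{u,\omega\}$ with $\omega'\perp\omega$ and $u\cdot\omega'\geq 0$ (i.e.\ the normalized perpendicular component of $u$ relative to $\omega$). Because $\{\omega,\omega'\}$ is then an orthonormal basis of a $2$-plane containing $u$, one obtains for free $u=(u\cdot\omega)\omega+(u\cdot\omega')\omega'$, whence
\begin{equation*}
(u\cdot\omega)\omega=u-(u\cdot\omega')\omega'.
\end{equation*}
In coordinates where $u=|u|e_{3}$ the map is just $(\theta,\varphi)\mapsto(\pi/2-\theta,\varphi+\pi)$, a measure-preserving involution of $S^{2}_{+}$ whose Jacobian gives $(u\cdot\omega)\,d\omega=(u\cdot\omega')\,d\omega'$. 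Applying this substitution converts the integral into $|u|^{\gamma-1}\int_{S^{2}_{+}}(u\cdot\omega')\,\tilde h_{v}\!\bigl(u-(u\cdot\omega')\omega'\bigr)\,d\omega'=\mathbb{T}\tilde h_{v}(u)$, and \eqref{E:R-v_*-ineq} follows once more from the $L^{\widetilde{p}}\to L^{p}$ bound for $\mathbb{T}$.

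Thus the lemma is a pure translation-and-substitution corollary of the $L^{\widetilde p}\to L^{p}$ bound for $\mathbb{T}$; the real analytical work lies in that bound, not here. The only delicate point within the lemma itself is verifying the two identities in the $\omega$-substitution used for \eqref{E:R-v_*-ineq}, namely that $\omega\mapsto\omega'$ is a bijection $S^{2}_{+}\to S^{2}_{+}$ (off the measure-zero set $\omega\parallel u$) and that the cosine-weighted measure $(u\cdot\omega)\,d\omega$ is invariant, both of which reduce to the explicit coordinate computation above.
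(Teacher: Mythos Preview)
Your argument is correct, and it is genuinely different from---and considerably shorter than---the paper's own proof. The paper establishes \eqref{E:R-v-ineq} by translation invariance exactly as you do, but for \eqref{E:R-v_*-ineq} it does not reduce back to the bare $\mathbb{T}$ bound. Instead it rewrites $(\tau_{-v_*}\circ\mathbb{T}\circ\tau_{v_*})h(v)$ in Fourier form with symbol $A(v-v_*,\xi)$, passes to the operator $T$ via $\mathbb{T}=T\circ(-\Delta)^{-\gamma/2}$, and then \emph{re-runs the entire machinery of Sections~3--5}---the decomposition $T=T_A+T_B$, the FIO estimates, the $\mathcal{H}^1\to L^1$ analysis, and the Schur argument---with $x$ replaced by $v-v_*$ and with $v$ playing the role of a parameter. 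Your angular substitution $\omega\mapsto\omega'$ (the in-plane $\pi/2$ rotation, $(\theta,\varphi)\mapsto(\pi/2-\theta,\varphi+\pi)$) exploits the collision-geometry symmetry $(u\cdot\omega)\omega=u-(u\cdot\omega')\omega'$ together with the identity $(u\cdot\omega)\,d\omega=(u\cdot\omega')\,d\omega'$ to convert the ``projection onto the line'' into a ``projection onto the plane'', landing exactly on $\mathbb{T}\tilde h_v(u)$ and finishing in one line.

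The trade-off is scope: your Jacobian identity $(u\cdot\omega)\,d\omega=(u\cdot\omega')\,d\omega'$ is tailored to the specific angular function $b(\cos\theta)=\cos\theta$. For a general cutoff $b$ the substitution produces a new angular weight $\tilde b(\cos\theta')=b(\sin\theta')\cos\theta'/\sin\theta'$, so one would need to check that $\tilde b$ still lies in the class covered by the $\mathbb{T}$ estimate (the paper asserts its results persist for smooth $b$ with the same endpoint decay). The paper's heavier route is insensitive to this issue because it never performs the angular swap. For the lemma as stated, however, your proof is complete and preferable.
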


The proof of Lemma~\ref{L:R-v-v_*-ineq} will be given in Section~\ref{section-5} after the estimate for 
$\mathbb{T}$ is done.
Suppose Lemma~~\ref{L:R-v-v_*-ineq} holds, then we can prove Theorem~\ref{T:Gain-p-w-est} 
for the case $\ell_{0}=0$, i.e., the following estimate for the gain term.

\begin{thm}\label{Gain-p-est}
 Let $1<\mt{p}, \mt{q}, \mt{r} <\infty$,\; $0\leq \gamma\leq 1$ and 
 \begin{equation}\label{scaling-relation-0}
 \frac{1}{\mt{p}}+\frac{1}{\mt{q}}=1+\frac{1}{\mt{r}}+\frac{\gamma}{3},\;\; \frac{\gamma}{6}\leq \frac{1}{\mt{r}}
 \leq 1-\frac{5\gamma}{6},
 \end{equation}
and both $\leq$ are replaced by $<$ when $\gamma=0$.
Consider
\[
B(v-v_*,\omega)=|v-v_*|^{\gamma}\;\cos\theta.
\]
Then the bilinear operator $Q^{+}(f,g)$ is a bounded operator from $L^{\mt{p}}(\mathbb{R}^3)\times 
L^{\mt{q}}(\mathbb{R}^3)\rightarrow L^{\mt{r}}(\mathbb{R}^3)$ via the estimate 
\begin{equation}\label{G-conv-est}
\|Q^{+}(f,g)\|_{L^{\mt{r}}_{v}(\mathbb{R}^3)}\leq C\|f\|_{L^{\mt{q}}_{v}(\mathbb{R}^3)}\|g\|_{L^{\mt{p}}_{v}(\mathbb{R}^3)}.
\end{equation}
\end{thm}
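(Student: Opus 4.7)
The plan is to combine duality with the pre-/post-collisional change of variables to reduce the $L^{\mt{r}}$ bound for $Q^{+}$ to a trilinear form involving $(\tau_{-v_{*}}\circ\mathbb{T}\circ\tau_{v_{*}})h$, then apply bilinear complex interpolation between two endpoint cases supplied by Lemma~\ref{L:R-v-v_*-ineq}. By duality, $\|Q^{+}(f,g)\|_{L^{\mt{r}}}=\sup\{|\lr{Q^{+}(f,g),h}|:\|h\|_{L^{\mt{r}'}}=1\}$. Using the involution $(v,v_{*})\leftrightarrow(v',v_{*}')$ (Jacobian one, $|v-v_{*}|=|v'-v_{*}'|$) in the dual pairing, and observing that the inner $\omega$-integration against $h$ is precisely $\mathbb{T}[\tau_{v_{*}}h](v-v_{*})=(\tau_{-v_{*}}\circ\mathbb{T}\circ\tau_{v_{*}})h(v)$ by the definition~\eqref{Def:T} of $\mathbb{T}$, I would arrive at
\[
\lr{Q^{+}(f,g),h}=\int\!\!\int f(v)\,g(v_{*})\,(\tau_{-v_{*}}\circ\mathbb{T}\circ\tau_{v_{*}})h(v)\,dv\,dv_{*}.
\]

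For the endpoint estimates, define $\mt{p}_{0}$ by $1/\mt{p}_{0}=1/\mt{r}+\gamma/3$ and set $p=\mt{p}_{0}'$, so that $\widetilde{p}=\mt{r}'$ and $1/p=1-1/\mt{r}-\gamma/3$. The hypothesis $\gamma/6\leq 1/\mt{r}\leq 1-5\gamma/6$ is equivalent to $\gamma/2\leq 1/p\leq 1-\gamma/2$, precisely the admissibility range in Lemma~\ref{L:R-v-v_*-ineq}. Applying H\"older's inequality in $v$ and then invoking~\eqref{E:R-v-ineq},
\begin{align*}
|\lr{Q^{+}(f,g),h}| &\leq \|f\|_{L^{\mt{p}_{0}}}\|g\|_{L^{1}}\sup_{v_{*}}\|(\tau_{-v_{*}}\mathbb{T}\tau_{v_{*}})h\|_{L^{p}(v)} \\
&\leq C\,\|f\|_{L^{\mt{p}_{0}}}\|g\|_{L^{1}}\|h\|_{L^{\mt{r}'}},
\end{align*}
so $\|Q^{+}(f,g)\|_{L^{\mt{r}}}\leq C\|f\|_{L^{\mt{p}_{0}}}\|g\|_{L^{1}}$. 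Reversing the order of integration and using~\eqref{E:R-v_*-ineq} instead yields the symmetric endpoint $\|Q^{+}(f,g)\|_{L^{\mt{r}}}\leq C\|f\|_{L^{1}}\|g\|_{L^{\mt{p}_{0}}}$.

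Bilinear complex interpolation (Riesz--Thorin for bilinear operators) between these two endpoints then gives, for each $\theta\in(0,1)$, the bound $\|Q^{+}(f,g)\|_{L^{\mt{r}}}\leq C\|f\|_{L^{\mt{p}}}\|g\|_{L^{\mt{q}}}$ with $1/\mt{p}=(1-\theta)/\mt{p}_{0}+\theta$ and $1/\mt{q}=(1-\theta)+\theta/\mt{p}_{0}$. The sum $1/\mt{p}+1/\mt{q}=1+1/\mt{p}_{0}=1+1/\mt{r}+\gamma/3$ reproduces the scaling~\eqref{scaling-relation-0}, and as $\theta$ sweeps $(0,1)$ the pair $(1/\mt{p},1/\mt{q})$ traces the open segment from $(1/\mt{p}_{0},1)$ to $(1,1/\mt{p}_{0})$, covering exactly the set of pairs with $1<\mt{p},\mt{q}<\infty$ compatible with the scaling constraint. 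The Maxwellian case $\gamma=0$ is handled identically with all inequalities strict.

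The main obstacle I anticipate is the careful justification of the involution identity and the bookkeeping of the angular factor $b(\cos\theta)=\cos\theta$ on the hemisphere $S^{2}_{+}$ under the change of variables; once this identity is in place, H\"older, Lemma~\ref{L:R-v-v_*-ineq}, and standard bilinear Riesz--Thorin complete the argument.
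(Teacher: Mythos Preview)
Your proof is correct and reaches the same conclusion, but it takes a genuinely different route from the paper's argument.

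Both approaches start identically: duality, the pre-/post-collisional change of variables, and the identification of the inner $\omega$-integral with $(\tau_{-v_*}\circ\mathbb{T}\circ\tau_{v_*})h(v)$. From that point the paper proceeds by a direct Young-inequality-style argument: it applies H\"older in $v$, then a three-way H\"older split in $v_*$ of the form
\[
|H(v,v_*)|\,|g(v_*)|=\bigl(|H|^{\widetilde{\mt r}'}|g|^{\mt p}\bigr)^{1/\mt q'}\cdot|H|^{(\mt q'-\widetilde{\mt r}')/\mt q'}\cdot|g|^{(\mt q'-\mt p)/\mt q'},
\]
and feeds the two $H$-factors into~\eqref{E:R-v-ineq} and~\eqref{E:R-v_*-ineq} simultaneously. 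This is exactly the classical direct proof of Young's inequality, transplanted to the present setting.

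You instead prove the two $L^1$-endpoint estimates $L^{\mt p_0}\times L^1\to L^{\mt r}$ and $L^1\times L^{\mt p_0}\to L^{\mt r}$ (each using only one of the two bounds in Lemma~\ref{L:R-v-v_*-ineq}) and then invoke bilinear Riesz--Thorin. Your verification that the admissibility range $\gamma/6\le 1/\mt r\le 1-5\gamma/6$ is equivalent to $\gamma/2\le 1/p\le 1-\gamma/2$ for $p=\mt p_0'$ is correct, and the interpolation segment indeed covers precisely the pairs $(\mt p,\mt q)$ with $1<\mt p,\mt q<\infty$ on the scaling line.

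What each approach buys: the paper's direct argument is entirely self-contained and avoids any appeal to multilinear interpolation theory, at the cost of the somewhat opaque three-term H\"older decomposition. Your interpolation approach is cleaner to state, separates the two uses of Lemma~\ref{L:R-v-v_*-ineq} into two independent endpoint estimates, and would adapt more readily if one later wanted Lorentz-space or weak-type refinements. The two proofs are of comparable length and use the same ingredients; they are the two standard ways of proving a convolution-type bilinear estimate.
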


\begin{proof}
By the duality, it suffices to show that
\[
{\big |}\lr{Q^+(f,g),h}{\big |}\leq C \|f\|_{L^{\mt{q}}}\|g\|_{L^{\mt{p}}}\|h\|_{L^{\mt{r}'}}
\]
holds for any $h\in L^{\mt{r}'}$ where $\mt{r}'$ is the conjugate exponent of ${\mt{r}}$.
Using change of variables and H\"{o}lder inequality
\[
\begin{split}
&{\big |}\lr{Q^+(f,g),h}{\big |}\\
&={\big |}\iiint f(v)g(v_*)B(v-v_*,\omega)h(v')d\omega dv_* dv{\big |}\\
&\leq \|f(v)\|_{L^{\mt{q}}}\|Q^{+t}_g(h)(v)\|_{L^{\mt{q}'}},
\end{split}
\]
where 
\[
\begin{split}
Q^{+,t}_g(h)(v)& =\iint g(v_*)B(v-v_*,\omega)
h(v')d\omega dv_* \\
&=\int g(v_*)(\tau_{-v_*}\circ \mathbb{T} \circ\tau_{v_*} )h(v) dv_*.
\end{split}
\]
where we used the the notations as Lemma~\ref{L:R-v-v_*-ineq} in the last line.
Thus it is reduced to proving that 
\begin{equation}\label{E:Q-adjoint-ineq1}
\|Q^{+,t}_g(h)(v)\|_{L^{\mt{q}'}}\leq C\|g\|_{L^p}\|h\|_{L^{\mt{r}'}}.
\end{equation}
where 
\[
\frac{1}{\mt{p}}+\frac{1}{\widetilde{\mt{r}}'}=1+\frac{1}{\mt{q}'}\;,\; 
\frac{1}{\widetilde{\mt{r}}} \overset{\rm def}{=}\frac{1}{\mt{r}}+\frac{\gamma}{3}.
\]

Define  $H(v,v_*)=(\tau_{-v_*}\circ \mathbb{T}\circ\tau_{v_*})h(v)$. By 
\[
\frac{1}{\mt{q}'}+\frac{\mt{q}'-\widetilde{\mt{r}}'}{\widetilde{\mt{r}}'\mt{q}'}+\frac{\mt{q}'-\mt{p}}{\mt{p}\mt{q}'}=1,
\]
and H\"{o}lder inequality, we have 
\[
\begin{split}
&{\big |}Q^{+t}_g(h)(v){\big |}\leq \int {\big |}H(v,v_*){\big |}{\big |}g(v_*){\big |}dv_*\\
&=\int {\Big(} {\big |} H(v,v_*){\big |}^{\widetilde{\mt{r}}'}{\big |}g(v_*){\big |}^{\mt{p}} {\Big )}^{1/\mt{q}'}{\big |}H(v,v_*){\big |}^{(\mt{q}'-\widetilde{\mt{r}}')/\mt{q}'}
{\big |}g(v_*){\big |}^{(\mt{q}'-\mt{p})/\mt{q}'} dv_*\\
&\leq \|( |H(v,v_*)|^{\widetilde{\mt{r}}'} |g(v_*)|^{\mt{p}})^{1/\mt{q}'}\|_{L^{\mt{q}'}(v_*)}\times \\
&{\hskip 2cm}\||H(v,v_*)|^{(\mt{q}'-\widetilde{\mt{r}}')/\mt{q}'}\|_{L^{\frac{\widetilde{\mt{r}}'\mt{q}'}
{\mt{q}'-\widetilde{\mt{r}}'}}(v_*)}
\||g(v_*)|^{(\mt{q}'-\mt{p})/\mt{q}'}\|_{L^{\frac{\mt{p}\mt{q}'}{\mt{q}'-\mt{p}}}(v_*)}.
\end{split}
\]
Here
\begin{equation}\label{E:G-v_*}
\||g(v_*)|^{(\mt{q}'-\mt{p})/\mt{q}'}\|_{L^{\frac{\mt{p}\mt{q}'}{\mt{q}'-\mt{p}}}(v_*)}=(\|g(v_*)\|_{L^{\mt{p}}(v_*)})^{\frac{\mt{q}'-\mt{p}}{\mt{q}'}},
\end{equation}
\begin{equation}\label{E:H-v-v_*-1}
\||H(v,v_*)|^{(\mt{q}'-\widetilde{\mt{r}}')/\mt{q}'}\|_{L^{\frac{\widetilde{\mt{r}}'\mt{q}'}{\mt{q}'-\widetilde{\mt{r}}'}}(v_*)}
=(\|(\tau_{-v_*}\circ T \circ\tau_{v_*} )h(v)\|_{L^{\widetilde{\mt{r}}'}(v_*)})^{\frac{\mt{q}'-\widetilde{\mt{r}}'}{\mt{q}'}},
\end{equation}
and 
\begin{equation}\label{E:HG-v-v_*}
\begin{split}
&\|( |H(v,v_*)|^{\widetilde{\mt{r}}'}|g(v_*)|^{\mt{p}})^{1/\mt{q}'}\|_{L^{\mt{q}'}(v_*)}\\
&=(\int {\big |}g(v_*){\big |}^{\mt{p}} {\big |}(\tau_{-v_*}\circ \mathbb{T} \circ\tau_{v_*} )h(v) {\big |}^{\widetilde{\mt{r}}'} dv_*)^{\frac{1}{\mt{q}'}}.
\end{split}
\end{equation}
Applying~\eqref{E:R-v_*-ineq} of Lemma~\ref{L:R-v-v_*-ineq} by plugging 
\[
\frac{1}{p}=\frac{1}{\widetilde{\mt{r}}'}=\frac{1}{\mt{r}'}-\frac{\gamma}{3}.
\] 
We see the inequality in~\eqref{scaling-relation-0} comes from~\eqref{P-range}  and that if 
$\mt{r}$ satisfies~\eqref{scaling-relation-0} then
\begin{equation}\label{E:H-v-v_*-2}
~\eqref{E:H-v-v_*-1}\leq C (\|h\|_{L^{\mt{r}'}})^{\frac{\mt{q}'-\widetilde{\mt{r}}'}{\mt{q}'}}
\end{equation}
where $C$ is independent of $v$.
Combine~\eqref{E:G-v_*},~\eqref{E:H-v-v_*-2} and
~\eqref{E:HG-v-v_*}, we have
\begin{equation}\label{E:combine-1}
\begin{split}
&\|Q^{+,t}_g(h)(v)\|_{L^{\mt{q}'}(v)}^{\mt{q}'} \\
&\leq C (\|g(v_*)\|_{L^\mt{p}(v_*)})^{(\mt{q}'-\mt{p})}(\|h\|_{L^{\mt{r}'}})^{(\mt{q}'-\widetilde{\mt{r}}')}\times\\
& {\hskip 2cm}\int\int {\big |}g(v_*){\big |}^\mt{p} {\big |}(\tau_{-v_*}\circ T\circ\tau_{v_*} )h(v) 
{\big |}^{\widetilde{\mt{r}}'} dv_* dv\\
&\leq C (\|g(v_*)\|_{L^\mt{p}(v_*)})^{(\mt{q}'-\mt{p})}(\|h\|_{L^{\mt{r}'}})^{(\mt{q}'-\widetilde{\mt{r}}')}\times\\
& {\hskip 2cm} \int |g(v_*)|^\mt{p} \sup\limits_{v_*}\|
(\tau_{-v_*}\circ T \circ\tau_{v_*} )h(v)\|_{L^{\widetilde{\mt{r}}'}_{v}}^{\widetilde{\mt{r}}'}  dv_*. \\
\end{split}
\end{equation}
Then we conclude~\eqref{E:Q-adjoint-ineq1} by  applying ~\eqref{E:R-v-ineq} of 
Lemma~\ref{L:R-v-v_*-ineq} to the last line of ~\eqref{E:combine-1} when  $\mt{r}$ satisfies~\eqref{scaling-relation-0}.
The proof of~\eqref{G-conv-est} is complete.

\end{proof}

For the purpose of the  application, we need the weighted estimates for the 
gain term, i.e., Theorem~\ref{T:Gain-p-w-est}. Now we prove it by using Theorem~\ref{Gain-p-est}.
For the convenience of readers, we restate Theorem~\ref{T:Gain-p-w-est} below, then give the proof.
\begin{thm*}
Let $\ell_{0}\geq 0$, $1<\mt{p}, \mt{q}, \mt{r} <\infty$,\; $0\leq  \gamma\leq 1$ and
\[
\frac{1}{\mt{p}}+\frac{1}{\mt{q}}=1+\frac{1}{\mt{r}}+\frac{\gamma}{3},
\;\frac{\gamma}{6}\leq \frac{1}{\mt{r}}\leq 1-\frac{5\gamma}{6}, 
\tag{\ref{scaling-relation}}
\]
and both $\leq$ are replaced by $<$ when $\gamma=0$.
Consider
\[
B(v-v_*,\omega)=|v-v_*|^{\gamma}\;\cos\theta.
\]
Then the bilinear operator $Q^{+}(f,g)$ satisfies  
\[
\|\lr{v}^{\ell_{0}}Q^{+}(f,g)\|_{L^{\mt{r}}_{v}(\mathbb{R}^3)}\leq C\|\lr{v}^{\ell_{0}} f\|_{L^{\mt{p}}_{v}(\mathbb{R}^3)}
\|\lr{v}^{\ell_{0}}g\|_{L^{\mt{q}}_{v}(\mathbb{R}^3)}. \tag{\ref{W-convolution}}
\]
If $\ell_{1}>3/m$ and $1<\mt{p}_{m}, \mt{q}_{m}, m, \mt{r}_{m} <\infty$ satisfy
\[
\frac{1}{\mt{p}_{m}}+\frac{1}{m}<1\;,\;\frac{1}{\mt{q}_{m}}+\frac{1}{m}<1, \tag{\ref{minus-size-condition}}
\]
and
\[
\frac{1}{\mt{p}_{m}}+\frac{1}{\mt{q}_{m}}+\frac{1}{m}=1+\frac{1}{\mt{r}_{m}}+\frac{\gamma}{3}, \;
\frac{\gamma}{6}\leq \frac{1}{\mt{r}_{m}}\leq 1-\frac{5\gamma}{6}, \tag{\ref{scaling-relation-w}}
\]
and both $\leq$ are replaced by $<$ when $\gamma=0$, then we have  
\[
\|\lr{v}^{\ell_{1}} Q^{+}(f,g)\|_{L^{\mt{r}_{m}}(\mathbb{R}^3)}\leq C(\mt{p}_{m},\ell) 
\|\lr{v}^{\ell_{1}} f\|_{L^{\mt{p}_{m}}(\mathbb{R}^3)} \|\lr{v}^{\ell_{1}} g\|_{L^{\mt{q}_{m}}(\mathbb{R}^3)}. 
\tag{\ref{W-convolution-m}}
\]
\end{thm*}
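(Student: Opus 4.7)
The plan is to reduce the modified estimate \eqref{W-convolution-m} to Theorem~\ref{Gain-p-est} by combining an additive, Peetre-type bound on the weights with a single H\"older absorption step. Using the multiplicative inequality $\langle v\rangle^{\ell_1}\leq \langle v'\rangle^{\ell_1}\langle v_*'\rangle^{\ell_1}$ that drives the proof of \eqref{W-convolution} is not enough here: it forces $\langle v\rangle^{\ell_1}$ onto \emph{both} inputs simultaneously, leaving no room to absorb the scaling gap of $1/m$ between \eqref{scaling-relation-w} and \eqref{scaling-relation}. An additive split loads the weight on only one input at a time and leaves the other free to be absorbed.

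First, I would record the Peetre-type bound: from $|v|^2\leq |v'|^2+|v_*'|^2$ one has $\langle v\rangle^2\leq \langle v'\rangle^2+\langle v_*'\rangle^2$, so for $\ell_1\geq 0$ there is a constant $C_{\ell_1}$ with
\[
\langle v\rangle^{\ell_1}\leq C_{\ell_1}\bigl(\langle v'\rangle^{\ell_1}+\langle v_*'\rangle^{\ell_1}\bigr).
\]
Integrating against the collision measure gives the pointwise split
\[
\langle v\rangle^{\ell_1}Q^+(f,g)(v)\leq C_{\ell_1}\bigl[Q^+(\langle \cdot\rangle^{\ell_1}f,g)(v)+Q^+(f,\langle \cdot\rangle^{\ell_1}g)(v)\bigr].
\]
I then treat the two resulting terms symmetrically. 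For the first, I define $q'$ by $\tfrac{1}{p_m}+\tfrac{1}{q'}=1+\tfrac{1}{r_m}+\tfrac{\gamma}{3}$; by \eqref{scaling-relation-w} this is exactly $\tfrac{1}{q'}=\tfrac{1}{q_m}+\tfrac{1}{m}$, while \eqref{minus-size-condition} ensures $1<q'<\infty$, and the range of $1/r_m$ in \eqref{scaling-relation-w} is precisely the one required by \eqref{scaling-relation}. Theorem~\ref{Gain-p-est} applied to $(p_m,q',r_m)$ therefore delivers
\[
\|Q^+(\langle v\rangle^{\ell_1}f,g)\|_{L^{r_m}}\leq C\,\|\langle v\rangle^{\ell_1}f\|_{L^{p_m}}\,\|g\|_{L^{q'}}.
\]

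Finally, writing $g=\langle v\rangle^{-\ell_1}\cdot\langle v\rangle^{\ell_1}g$ and applying H\"older with the matching relation $\tfrac{1}{q'}=\tfrac{1}{m}+\tfrac{1}{q_m}$,
\[
\|g\|_{L^{q'}}\leq \|\langle v\rangle^{-\ell_1}\|_{L^m(\mathbb{R}^3)}\,\|\langle v\rangle^{\ell_1}g\|_{L^{q_m}},
\]
and $\|\langle v\rangle^{-\ell_1}\|_{L^m(\mathbb{R}^3)}<\infty$ precisely because $\ell_1 m>3$. The mirror choice $\tfrac{1}{p'}=\tfrac{1}{p_m}+\tfrac{1}{m}$ treats $Q^+(f,\langle v\rangle^{\ell_1}g)$ in the same way, and summing gives \eqref{W-convolution-m}. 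The only genuinely conceptual step---and hence the main potential obstacle---is recognizing that the \emph{additive} Peetre inequality, rather than the multiplicative one used for \eqref{W-convolution}, is the right vehicle here, because it lets the single $L^m$ factor produced by the hypothesis $\ell_1>3/m$ exactly fill the $1/m$ gap on the left of \eqref{scaling-relation-w}. All remaining bookkeeping---the admissibility of $q'$ and $p'$, the range of $1/r_m$, and the strict-versus-non-strict inequalities when $\gamma=0$---is inherited unchanged from \eqref{scaling-relation-w}, \eqref{minus-size-condition}, and Theorem~\ref{Gain-p-est}.
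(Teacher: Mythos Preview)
Your argument for \eqref{W-convolution-m} is correct and follows essentially the same route as the paper: an additive weight split, application of the unweighted Theorem~\ref{Gain-p-est} with one exponent shifted by $1/m$, and a H\"older step using $\|\lr{v}^{-\ell_1}\|_{L^m}<\infty$. The only cosmetic difference is that the paper transfers the weight on the dual side (pushing $\lr{v}^{-\ell_1}$ from the test function onto $\lr{v'}^{-\ell_1}$ via $\lr{v'}\leq 2\max\{\lr{v},\lr{v_*}\}$), whereas you work directly on the primal side via the Peetre bound $\lr{v}^{\ell_1}\le C(\lr{v'}^{\ell_1}+\lr{v_*'}^{\ell_1})$; these are equivalent. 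One small correction to your framing: the paper's proof of \eqref{W-convolution} is \emph{also} additive, not multiplicative---so the distinction you draw between the two parts is not quite accurate, though it does not affect your argument.
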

\begin{proof}

Recall $\lr{v}=(1+|v|^{2})^{1/2}$, we  consider the quantity 
\begin{equation}\label{gain-dual}
\lr{\lr{v}^{\ell}Q^{+}(f,g),\lr{v}^{-\ell}\psi},\;\ell \geq 0.
\end{equation}

From the conservation of  energy, either $\lr{v'}\leq 2\lr{v}$ or $\lr{v'}\leq 2\lr{v_{*}}$ has to be true. Hence
for any $\ell_{0}\geq 0$ we have either
\begin{equation}\label{weight-ineq}
\lr{v}^{-\ell_{0}}\leq C \lr{v'}^{-\ell_{0}} \;{\rm or}\;\lr{v_{*}}^{-\ell_{0}}\leq C \lr{v'}^{-\ell_{0}}.
\end{equation}
Recall $H(v,v_*):=(\tau_{-v_*}\circ T \circ \tau_{v_*})\psi(v)$ and let 
$\psi_{-\ell_{0}}(v)=\lr{v}^{-\ell_{0}}\psi(v)$. 
From~\eqref{weight-ineq}, we know that one of the following estimates is true, i.e.,
\begin{equation}\label{psi-weight-1}
\begin{split}
&{\big |} H(v,v_*){\big |}
\leq \int_{S^{2}} {\big |} \psi (v'){\big |} B(v-v_*,\omega)d\omega \\
&\leq C \lr{v}^{\ell_{0}}\int_{S^{2}}  {\big |}\lr{v'}^{-\ell_{0}} \psi(v') {\big |} B(v-v_*,\omega)d\omega\\
&= C\lr{v}^{\ell_{0}} (\tau_{-v_*}\circ \mathbb{T} \circ \tau_{v_*}) {\big |} \psi_{-\ell_{0}} {\big |}(v),
\end{split}
\end{equation}
or  
\begin{equation}\label{psi-weight-2}
\begin{split}
&{\big |} H(v,v_*){\big |}
\leq \int_{S^{2}} {\big |} \psi (v'){\big |} B(v-v_*,\omega)d\omega \\
&\leq C \lr{v_{*}}^{\ell_{0}} \int_{S^{2}}  {\big |}\lr{v'}^{-\ell_{0}} \psi(v') {\big |} B(v-v_*,\omega)d\omega\\
&=C \lr{v_{*}}^{\ell_{0}} (\tau_{-v_*}\circ \mathbb{T} \circ \tau_{v_*}) {\big |} \psi_{-\ell_{0}} {\big |}(v).
\end{split} 
\end{equation}
Denote $f_{\ell_{0}}(v)=\lr{v}^{\ell_{0}}f(v)$ and $g_{\ell_{0}}(v_{*})=\lr{v_{*}}^{\ell_{0}}g(v_{*})$. 
 Combining~\eqref{psi-weight-1} and~\eqref{psi-weight-2} , we have
\[
\begin{split}
& {\Big |} \int  Q^{+}(f,g)(v)\; \psi (v) dv {\Big |} \\
&\leq C \iint {\Big \{} {\big |} f_{\ell_{0}}(v)g(v_*){\big |}+{\big |}f(v)g_{\ell_{0}}(v_*) {\big |} {\Big \}} 
(\tau_{-v_*}\circ \mathbb{T} \circ \tau_{v_*}){\big |}\psi_{-\ell_{0}}{\big |}(v)dv_*dv.
\end{split}
\]

Following the proofs of the Theorem~\ref{Gain-p-est}, we  have 
\begin{equation}\label{dual-w-est}
\begin{split}
&{\Big |}\int  Q^{+}(f,g)(v)\; \psi (v) dv {\Big |}\\
&\leq C{\big (} \|f_{\ell_{0}}\|_{L^{\mt{p}}_{v}(\R^{3})}\|g\|_{L^{\mt{q}}_{v}(\R^{3})} + 
\|f\|_{L^{\mt{p}}_{v}(\R^{3})}\|g_{\ell_{0}}\|_{L^{\mt{q}}_{v}(\R^{3})} {\big)} \|\psi_{-\ell_{0}}\|_{L^{\mt{r}'}_{v}(\R^{3})}\\
&\leq C \|f_{\ell_{0}}\|_{L^{\mt{p}}_{v}(\R^{3})}\|g_{\ell_{0}}\|_{L^{\mt{q}}_{v}(\R^{3})} \|\psi_{-\ell_{0}}\|_{L^{\mt{r}'}_{v}(\R^{3})},
\end{split}
\end{equation}
thus we conclude~\eqref{W-convolution} by duality and~\eqref{gain-dual} . The  relation~\eqref{scaling-relation} 
follows from~\eqref{scaling-relation-0}.  

The proof of estimate~\eqref{W-convolution}  is an easy consequence of above argument which can be done 
by revising~\eqref{dual-w-est}. Let $1/a_{1}=1/\mt{p}_{m}+1/m<1$ and $1/a_{2}=1/\mt{q}_{m}+1/m<1$ and note that
we then have 
\[
\frac{1}{a_{1}}+\frac{1}{\mt{q}_{m}}=1+\frac{1}{\tau_{m}}+\frac{\gamma}{3}\;\;\;{\rm or}\;\;\;
\frac{1}{\mt{p}_{m}}+\frac{1}{a_{2}}=1+\frac{1}{\tau_{m}}+\frac{\gamma}{3}
\]
which is exactly~\eqref{scaling-relation}. Parallel to~\eqref{dual-w-est}, we have  
\[
\begin{split}
&{\Big |}\int  Q^{+}(f,g)(v)\; \psi (v) dv {\Big |}\\
&\leq C{\big (} \|f_{\ell_{1}}\|_{L^{\mt{p}_{m}}_{v}(\R^{3})}\|g\|_{L^{a_{2}}_{v}(\R^{3})} + 
\|f\|_{L^{a_{1}}_{v}(\R^{3})}\|g_{\ell_{1}}\|_{L^{\mt{q}_{m}}_{v}(\R^{3})} {\big)} \|\psi_{-\ell_{1}}\|_{L^{\mt{r}'_{m}}_{v}(\R^{3})}\\
&\leq C {\big (} \|f_{\ell_{1}}\|_{L^{\mt{p}_{m}}_{v}(\R^{3})}\|g_{\ell}\|_{L^{\mt{q}_{m}}_{v}(\R^{3})} \|\lr{v}^{-\ell_{1}}\|_{L^{m}_{v}(\R^{3})}\\
&\hskip2cm+ \|f\|_{L^{\mt{p}_{m}}_{v}(\R^{3})}\|\lr{v}^{-\ell_{1}}\|_{L^{m}_{v}(\R^{3})}\|g_{\ell_{1}}\|_{L^{\mt{q}_{m}}_{v}(\R^{3})} {\big)} \|\psi_{-\ell_{1}}\|_{L^{\mt{r}'_{m}}_{v}(\R^{3})}\\
&\leq C \|f_{\ell_{1}}\|_{L^{\mt{p}_{m}}_{v}(\R^{3})}\|g_{\ell_{1}}\|_{L^{\mt{q}_{m}}_{v}(\R^{3})} \|\psi_{-\ell_{1}}\|_{L^{\mt{r}'_{m}}_{v}(\R^{3})},
\end{split}
\]
where we used the condition $\ell m>3$ in the last inequality. By duality, we conclude~\eqref{W-convolution}. 
\end{proof}

\section{Decomposition of  $\mathbb{T}$ }\label{Section 3}

In this section, we reduce the  boundedness of $\mathbb{T}:  L^{\widetilde{p}}\rightarrow L^{p}$ 
 to $L^{p}\rightarrow L^{p}$ boundedness of  two operators $T_{A}$ and $T_{B}$ which will be 
 proved in Section 4 and Section 5 respectively. 

Recall that 
\begin{equation}\label{Def:T-2}
\mathbb{T} h(x)=|x|^{\gamma}\int_{\omega\in S^2_+} b(\cos\theta) h(x-(x\cdot\omega)\omega) d\omega, 
\;0\leq\gamma\leq 1
\end{equation}
where 
$\cos\theta={(x\cdot\omega)}/{|x|}, x\neq 0$, $x=|x|(0,0,1)$ and 
$\omega=(\cos\varphi\sin\theta,\sin\varphi\sin\theta,\cos\theta)$, $0\leq\theta\leq \pi/2 $. 
The main result of this section is the following. 

 \begin{thm}\label{Main-result-1}
Let $\mathbb{T}$ be the operator defined by~\eqref{Def:T-2}. Suppose $p$ and $\widetilde{p}$ satisfy 
\[
 \frac{\gamma}{2}\leq \frac{1}{p}\leq 1-\frac{\gamma}{2},\;
\frac{1}{\widetilde{p}}=\frac{1}{p}+\frac{\gamma}{3},
\]
and both $\leq$ are replaced by $<$ when $\gamma=0$, 
then we have
\begingroup
\large
\begin{equation}\label{E:R-ineq-pre}  
\| \mathbb{T} h\|_{L^p}\leq C \|h\|_{L^{\widetilde{p}}}.
\end{equation}
\endgroup
\end{thm}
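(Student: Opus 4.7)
The plan is to reduce the $L^{\widetilde p}\to L^p$ claim for $\mathbb{T}$ to an $L^p\to L^p$ bound for a related operator whose symbol has a balanced SG-order, and then to invoke a cone decomposition of phase space to further split that operator into a non-degenerate FIO piece $T_A$ and a residual degenerate piece $T_B$.

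First, from the Fourier representation $\mathbb{T}h(x)=(2\pi)^{-3}\int e^{ix\cdot\xi}A(x,\xi)\widehat h(\xi)d\xi$ in~\eqref{D:T-Fourier}, I would absorb a factor of $\langle\xi\rangle^\gamma$ into the symbol and factor
\[
\mathbb{T}h \;=\; \mathcal{T}\,\langle D\rangle^{-\gamma}h,\qquad \mathcal{T}f(x) \;=\; \int e^{ix\cdot\xi}\bigl[\langle\xi\rangle^\gamma A(x,\xi)\bigr]\widehat f(\xi)\,d\xi.
\]
Stationary phase in $\omega$ shows that $A(x,\xi)$ is essentially an oscillatory factor $e^{i\psi(x,\xi)}$ times an SG-symbol of order $(\gamma-1,-1)$, so the new symbol $\langle\xi\rangle^\gamma A(x,\xi)$ carries the balanced SG-order $(\gamma-1,\gamma-1)$. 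Since Sobolev embedding / Hardy--Littlewood--Sobolev gives $\|\langle D\rangle^{-\gamma}h\|_{L^p}\le C\|h\|_{L^{\widetilde p}}$ whenever $1/\widetilde p-1/p=\gamma/3$ and $1<\widetilde p\le p<\infty$, the theorem reduces to proving $\|\mathcal{T}f\|_{L^p}\le C\|f\|_{L^p}$.

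Next, following the phase-space partition from the second author's earlier work~\cite{JC20}, I would split
\[
\mathcal{T} \;=\; T_A + T_B,
\]
where $T_A$ is a sum of FIOs on cones of $(x,\xi)$ along which the phase $\psi$ is non-degenerate in the sense of~\eqref{D:nondegeneracy} (with a uniform lower bound after a cone-wise rescaling), and $T_B$ collects the remaining pieces from those sectors where the angle between $x$ and $\xi$ approaches $0$ or $\pi/2$ and the stationary phase analysis breaks down. After normalization, each cone-piece of $T_A$ is a global SG-FIO of order $(\gamma-1,\gamma-1)$ with non-degenerate phase, and the admissibility condition $m_1,m_2\le -2\bigl|\tfrac12-\tfrac1p\bigr|$ from~\eqref{Symbol-order} becomes exactly $\gamma/2\le 1/p\le 1-\gamma/2$; the global SG-FIO theorem of Coriasco--Ruzhansky~\cite{CR14}, which is the SG-extension of Seeger--Sogge--Stein~\cite{SSS91}, therefore supplies the $L^p\to L^p$ bound on each piece. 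These bounds will be summed uniformly over cones in Section~4, while the degenerate part $T_B$ will be treated in Section~5 by direct kernel-level estimates.

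The main obstacle I anticipate is the uniform summation over cones for $T_A$: both the non-degeneracy constant of $\psi$ and the support sizes of the cone-truncated symbols depend on the opening angle of the cone, so each cone must be normalized by an anisotropic dilation in $(x,\xi)$ before the SG-FIO estimate applies, and one has to track how the constants and Jacobians from these normalizations accumulate so as to yield a bound independent of the cone parameter. The residual piece $T_B$ is not an FIO at all, and I expect to handle it by a Schur-type argument on its explicit kernel, exploiting that in the degenerate sectors either $(x\cdot\omega)$ is small or $\omega$ is nearly aligned with $\widehat x/|x|$, which makes the residual structure essentially convolutional. Finally, the need to replace $\le$ by $<$ when $\gamma=0$ mirrors the failure of both the Seeger--Sogge--Stein bound and the Sobolev embedding at the endpoints $p\in\{1,\infty\}$.
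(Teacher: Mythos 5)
Your reduction and overall architecture coincide with the paper's: factor $\mathbb{T}=T\circ(-\Delta)^{-\gamma/2}$ (the paper uses the Riesz rather than the Bessel potential, which keeps the symbol exactly of the form $(|x||\xi|)^{\gamma-1}$ and makes the later scaling arguments clean), reduce via Sobolev embedding to an $L^p\to L^p$ bound for a symbol of SG-order $(\gamma-1,\gamma-1)$, split phase space according to the size of $|x||\xi|\cos^2(\theta_0/2)\sin^2(\theta_0/2)$ as in~\eqref{D:criterion} into $T_A+T_B$, treat the cone pieces of $T_A$ as global SG-FIOs via~\cite{CR14}, and treat $T_B$ by a Schur test on the explicit kernel; the bookkeeping $m_1=m_2=\gamma-1$ in~\eqref{Symbol-order} giving $\gamma/2\le 1/p\le 1-\gamma/2$ is also identical. (A minor slip: region II is not an angular sector alone — it is where the oscillation parameter above is bounded, so it contains all directions once $|x||\xi|$ is small, and the degenerate angles are $\theta_0\to 0,\pi$, not $\pi/2$.)

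The genuine gap is at the step you defer: summing over the cones $z\in\mathbb{Z}$. Your stated goal — per-cone bounds ``independent of the cone parameter'' after an anisotropic renormalizing dilation — would not suffice even if achieved: there are infinitely many cones and no almost-orthogonality available in $L^p$, so uniform bounds cannot be summed; what is needed is geometric decay of $\|T_{z\pm,I}\|_{L^p\to L^p}$ in $|z|$. Moreover, that decay is not produced by normalizing the cone opening (the phase $\tfrac12(x\cdot\xi\mp|x||\xi|)$ is not covariant under anisotropic dilations of $(x,\xi)$); it comes from the structure of the cone-localized symbols, namely the factors $\sin(\theta_0/2)$, $\cos(\theta_0/2)$ in~\eqref{T-z-symbol} together with the support constraint $|x||\xi|\ge 2^{2|z|}\cdot 64$. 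This is precisely why the paper cannot simply quote~\cite{CR14} cone by cone: it imports the geometrically decaying $L^2$ bounds from~\cite{JC20} (Proposition~\ref{L-2-theorem}), proves the corresponding decay of the $\mcH^1\to L^1$ bounds by re-running the Seeger--Sogge--Stein atomic argument of~\cite{SSS91} while tracking how its constants depend on $z$ (the Hessian lower bound $|1\mp\cos\theta_0|$, the number of directions $\xi_j^{\nu}$ falling in $\Lambda_z$, the measure of the exceptional set $D^*_{z\pm}$ — Propositions~\ref{L-1-1} and~\ref{L-1-2}), and then interpolates as in Proposition~\ref{FIO-local}. The same summability issue recurs for $T_B$: the Schur bounds for $T_{z,II}$ are summed via a scaling argument with ratio $2^{2(\gamma-1)}$, which equals $1$ when $\gamma=1$, so the hard-sphere case additionally requires the slight shrinking of region II (Remarks~\ref{shrink-II} and~\ref{shrink-II-2}, and the end of the proof of Theorem~\ref{P-T-B}); neither the summation mechanism nor this $\gamma=1$ subtlety is addressed in your plan.
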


To prove Theorem~\ref{Main-result-1}, we first reduce it to a $L^{p}$ to $L^{p}$ estimate.  
Recall that, by inverse Fourier transform, we have 
\[
\begin{split}
\mathbb{T} h(x) & =|x|^{\gamma}\int_{\omega\in S^2_+} b(\cos\theta)h(x-(x\cdot\omega)\omega) d\omega \\
&=(2\pi)^{-3}\int_{\mathbb{R}^3} e^{i{x\cdot\xi}} {A}(x,\xi) \widehat{h}(\xi) d\xi
\end{split}
\]
where 
\begin{equation}\label{A-x-xi}
{A}(x,\xi)=|x|^{\gamma}\int_{\omega\in S^2_+} e^{-i(x\cdot\omega)(\xi\cdot\omega)} b(\cos\theta) d\omega.
\end{equation}
We define 
\begin{equation}\label{symbol-a-1}
a(x,\xi)=A(x,\xi)|\xi|^{\gamma}=(|x||\xi|)^{\gamma}\int_{\omega\in S^2_+} e^{-i(x\cdot\omega)(\xi\cdot\omega)} 
b(\cos\theta) d\omega,
\end{equation}
and define 
\begin{equation}\label{Define-T}
T h(x)=(2\pi)^{-3}\int_{\mathbb{R}^3} e^{i{x\cdot\xi}} {a}(x,\xi) \widehat{h}(\xi) d\xi.
\end{equation}
Then we can rewrite 
\begin{equation}\label{T-and-T}
\mathbb{T} h(x) =T\circ (-\Delta)^{-\frac{\gamma}{2}} h(x)
\end{equation}
where $(-\Delta)^{-s/2}:=\mathcal{I}_{s}$ with $0<{\rm Re}\;s <\infty$ is the Riesz potential operator 
of order $s$. We recall the Sobolev embedding theorem concerning $\mathcal{I}_{s}$.
\begin{thm}[Theorem 1.2.3 of~\cite{Gra14}]\label{Embedding}
Let $s$ be a real number, with $0<s<n$, and let $1<p<q<\infty$ satisfy
\[
\frac{1}{p}-\frac{1}{q}=\frac{s}{d}.
\] 
Then there exist constants $C(d,s,p), C(s,d)<\infty$ such that for all $f$ belong to Schwarz space 
$\mathcal{S}(\mathbb{R}^{d})$ we have 
\[
\| \mathcal{I}_{s}(f)\|_{L^{q}}\leq C(d,s,p) \|f\|_{L^{p}},
\]
and 
\[
\| \mathcal{I}_{s}(f)\|_{L^{\frac{d}{d-s},\infty}}\leq C(d,s,p) \|f\|_{L^{1}}.
\]
Consequently $\mathcal{I}_{s}$ has a unique extension on $L^{p}(\mathbb{R}^{d})$ for all 
$p$ with $1\leq p<\infty$ such that the preceding estimates are valid.
\end{thm}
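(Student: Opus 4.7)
The plan is to write $\mathcal{I}_s f = c_{d,s}\, K_s * f$ with $K_s(x) = |x|^{s-d}$, and to exploit the fact that $K_s \in L^{d/(d-s),\infty}(\mathbb{R}^d)$ (since $|\{|K_s|>t\}| = c\,t^{-d/(d-s)}$). Both inequalities then reduce to the classical Hardy-Littlewood-Sobolev estimate, which I would prove by level-set truncation for the endpoint and by Hedberg's pointwise inequality for the strong-type range.

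First I would establish the weak-type $(1, d/(d-s))$ bound. Split $K_s = K_s \mathbf{1}_{|y|\le R} + K_s \mathbf{1}_{|y|>R}$ at a radius $R$ to be chosen in terms of the level $\lambda$: the near part has $L^1$-norm $\lesssim R^s$, while the far part has $L^\infty$-norm $R^{s-d}$. Applying Young's inequality in $L^1$ to the first piece together with Chebyshev, and choosing $R$ so that the $L^\infty$ contribution of the second piece is at most $\lambda/(2\|f\|_{L^1})$, produces $|\{|\mathcal{I}_s f| > \lambda\}| \lesssim \lambda^{-d/(d-s)} \|f\|_{L^1}^{d/(d-s)}$. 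Equivalently, this is just the generalized Young inequality $L^1 * L^{r,\infty}\hookrightarrow L^{r,\infty}$ with $r = d/(d-s)$.

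For the strong-type bound with $1<p<q<\infty$, I would apply Hedberg's truncation trick. Split the kernel at a radius $R$ depending on $x$: the near piece is dominated by a dyadic sum over annuli $\{2^{-k-1}R < |y| \le 2^{-k}R\}$, each controlled by the Hardy-Littlewood maximal function, giving $|I_{\mathrm{near}}(x)| \le C R^s M f(x)$. The far piece is handled by H\"older against $K_s \mathbf{1}_{|y|>R}$, whose $L^{p'}$ norm is finite precisely when $(d-s)p' > d$, which is equivalent to $q<\infty$; this yields $|I_{\mathrm{far}}(x)| \le C R^{s - d/p} \|f\|_{L^p}$. Optimizing $R$ to balance the two bounds produces the pointwise inequality
\[
|\mathcal{I}_s f(x)| \le C\, (M f(x))^{p/q}\, \|f\|_{L^p}^{1 - p/q}.
\]
Taking $L^q$-norms and invoking the $L^p$-boundedness of $M$ then delivers the strong-type estimate.

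The main obstacle is precisely the failure of this argument at $p = 1$, where $M$ is only of weak type $(1,1)$; this is why the endpoint degenerates to the weak-type bound treated separately above. An alternative unified finish, avoiding $M$ altogether, is Marcinkiewicz interpolation between the weak $(1, d/(d-s))$ estimate just proved and the dual weak $(d/s, \infty)$ estimate, the latter being an immediate H\"older application against $K_s \in L^{d/(d-s),\infty}$; this recovers strong-type bounds on the entire open admissible segment of exponents.
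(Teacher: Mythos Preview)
The paper does not supply its own proof of this theorem; it is quoted verbatim as Theorem~1.2.3 of Grafakos~\cite{Gra14} and used as a black box (to pass from the $L^{\widetilde p}\to L^p$ estimate for $\mathbb T$ to the $L^p\to L^p$ estimate for $T$ via $\mathbb T = T\circ(-\Delta)^{-\gamma/2}$). So there is nothing in the paper to compare your argument against.

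That said, your proposal is a correct and standard proof of Hardy--Littlewood--Sobolev: the weak-type $(1,d/(d-s))$ bound by splitting the kernel $|y|^{s-d}$ at a radius tuned to the level set, and the strong-type bound via Hedberg's pointwise inequality $|\mathcal I_s f|\lesssim (Mf)^{p/q}\|f\|_{L^p}^{1-p/q}$ followed by the $L^p$-boundedness of the maximal function. Your remark about the alternative Marcinkiewicz route (interpolating between weak $(1,d/(d-s))$ and weak $(d/s,\infty)$) is also correct and is in fact closer to how Grafakos organizes the argument. Either route is fine here; nothing in the paper depends on how this classical result is proved.
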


By Theorem~\ref{Embedding} and~\eqref{T-and-T}, the Theorem~\ref{Main-result-1} is  equivalent to the following. 
\begin{thm}\label{Main-result-2}
Let $T$ be the operator defined by~\eqref{Define-T} and~\eqref{symbol-a-1}. Suppose $p$ satisfies 
\begin{equation}\label{T-p-range}
 \frac{\gamma}{2}\leq \frac{1}{p}\leq 1-\frac{\gamma}{2},
\end{equation}
and both $\leq$ are replaced by $<$ when $\gamma=0$,
then we have
\begingroup
\large
\begin{equation}\label{E:R-ineq}  
\| T h\|_{L^p}\leq C \|h\|_{L^{p}}.
\end{equation}
\endgroup
\end{thm}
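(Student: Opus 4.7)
The plan is to reduce $T$ to a controlled sum of global Fourier integral operators plus a residual degenerate piece, exploiting the structure already visible in $a(x,\xi)=(|x||\xi|)^{\gamma}\int_{S^{2}_{+}}e^{-i(x\cdot\omega)(\xi\cdot\omega)}b(\cos\theta)\,d\omega$. The key observation is that the $\omega$-phase $-(x\cdot\omega)(\xi\cdot\omega)$ is stationary in $\omega$ precisely when $\omega$ is orthogonal to both $x$ and $\xi$; away from a thin neighborhood of that configuration, stationary phase in $\omega$ gives an asymptotic expansion that turns the outer $\xi$-integral into a genuine FIO, while the remaining thin neighborhood gives a degenerate part where the vanishing factor $b(\cos\theta)=\cos\theta$ (zero at $\theta=\pi/2$) should contribute extra decay.

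Concretely, I would first localize in phase space by a product of a Littlewood–Paley decomposition in $|x||\xi|$ and an angular decomposition in the angle between $x$ and $\xi$, following the cone construction already used in~\cite{JC20}. On each cone, applying stationary phase to the inner $\omega$-integral decomposes $T=T_{A}+T_{B}$, where $T_{A}$ is a global FIO with phase $\psi(x,\xi)=x\cdot\xi+\phi(x,\xi)$ (here $\phi$ is the critical value of $-(x\cdot\omega)(\xi\cdot\omega)$ on the unit sphere) and an SG-symbol inheriting order $(\gamma-1,-1)$ from $A$, promoted to $(\gamma-1,\gamma-1)$ by the extra factor $|\xi|^{\gamma}$ built into $a$. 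Since on each cone the non-degeneracy condition~\eqref{D:nondegeneracy} holds with an explicit (possibly cone-dependent) constant, and the order $\gamma-1$ satisfies $\gamma-1\le -2|1/2-1/p|$ exactly when $\gamma/2\le 1/p\le 1-\gamma/2$, the Seeger–Sogge–Stein / Coriasco–Ruzhansky theorem yields the desired $L^{p}\to L^{p}$ bound for each piece. The sum over cones is then controlled by using the geometric decay built into the SG-order together with almost-orthogonality on the Littlewood–Paley pieces.

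For the degenerate remainder $T_{B}$, stationary phase fails, so I would abandon FIO machinery and bound the Schwartz kernel directly. Schur's test (or equivalently $L^{1}\to L^{1}$ plus $L^{\infty}\to L^{\infty}$ estimates interpolated to $L^{p}$) should apply because on the critical set the cutoff $b(\cos\theta)=\cos\theta$ vanishes, and the remaining angular measure compensates the localization to a thin cone; combined with the trivial symbol bound $|a(x,\xi)|\lesssim(|x||\xi|)^{\gamma}$ and the compatible Littlewood–Paley support, this should give kernel bounds integrable in one variable uniformly in the other, on the same $1/p$-range.

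The main obstacle I expect is the uniformity of these estimates across the cone decomposition: the non-degeneracy constant in $\det\partial_{x}\partial_{\xi}\psi$ degenerates as the angle between $x$ and $\xi$ approaches $0$ or $\pi/2$, and the Coriasco–Ruzhansky bound a priori depends on that constant. Tracking the exact dependence — and matching it against the geometric gain from the cone measure and the vanishing of $b$ at $\pi/2$ — is the delicate point, and is precisely why the argument has to be split into the FIO part $T_{A}$ (Section 4) and the degenerate part $T_{B}$ (Section 5) rather than handled by a single application of a general FIO theorem. The endpoint cases $1/p=\gamma/2$ and $1/p=1-\gamma/2$, excluded only when $\gamma=0$, come out because the SG-FIO theorem is sharp at $m=-2|1/2-1/p|$ for $\gamma>0$.
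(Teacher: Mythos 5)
Your skeleton coincides with the paper's (angular cone decomposition plus dyadic decomposition in $|x|,|\xi|$; an FIO part treated by Seeger--Sogge--Stein/Coriasco--Ruzhansky; a degenerate part treated by a Schur-type test), but the two steps you leave vague are exactly the ones where your proposed mechanisms would fail. First, the geometry: the critical points of the $\omega$-phase that matter are the two bisector directions in ${\rm span}(x,\xi)$, with critical values $\tfrac12(x\cdot\xi\pm|x||\xi|)$ — these produce the two phases $\psi_{\pm}$ — not $\omega\perp x,\xi$ (there the amplitude $b(\cos\theta)=(x\cdot\omega)/|x|$ vanishes); accordingly the degenerate piece is not a thin $\omega$-neighborhood but the region of $(x,\xi)$ where $|x||\xi|\cos^{2}(\theta_{0}/2)\sin^{2}(\theta_{0}/2)\lesssim 1$, i.e.\ where the large parameter of stationary phase is absent, cf.~\eqref{D:criterion}. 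Second, and more seriously, your summation of the FIO pieces over the cones has no working engine: all the symbols $p_{z\pm}$ have the \emph{same} SG order $(\gamma-1,\gamma-1)$ with essentially uniform seminorms, so there is no ``geometric decay built into the SG-order'' in $z$, and almost-orthogonality is an $L^{2}$ device that does not sum $L^{p}$ operator norms for $p\neq 2$. What the paper does instead is to localize by $\rho_{k}$ (Proposition~\ref{Global-local}), prove geometric decay in $z$ of the localized $L^{2}\to L^{2}$ norms (imported from~\cite{JC20}) \emph{and} of the localized $\mathcal{H}^{1}\to L^{1}$ norms (the bulk of Section~\ref{section-4}: rerunning the atom argument of~\cite{SSS91} and checking that the degenerating Hessian $|1\mp\cos\theta_{0}|\sim 2^{-2|z|}$ is offset by the shrinking angular measure and the cardinality count~\eqref{card-xi}), and only then interpolate piece by piece via Proposition~\ref{FIO-local}. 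Without an argument of this type the sum over cones does not close, and this is the heart of the proof, not a technical afterthought.

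For the degenerate part your plan would also break down as stated. The trivial bound $|a(x,\xi)|\lesssim(|x||\xi|)^{\gamma}$ is far too lossy on region II, where $|x||\xi|$ can be as large as $2^{2|z|}$: one needs the refined estimate $|a(x,\xi)|\lesssim\sin\theta_{0}\cos\theta_{0}\,(|x||\xi|)^{\gamma-1}$ of~\eqref{a-region-2-pre} (a nontrivial fact from~\cite{JC20}; note also that integration by parts in $\xi$ is forbidden there because derivatives of $a$ are uncontrolled). Even with it, the kernel only satisfies $|K_{0}(x,y)|\lesssim\min\bigl(|x|^{-3},\,|x|^{3}|y|^{-6}\bigr)$, which is integrable in $y$ uniformly in $x$ but fails (logarithmically) to be integrable in $x$ for fixed $y$; hence a plain Schur test or $L^{1}$--$L^{\infty}$ interpolation does not apply, and the paper instead runs the generalized Schur test with power weights $h_{1}=|y|^{-\alpha}$, $h_{2}=|x|^{-\beta}$ and the exponents~\eqref{parameters}. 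Finally, summability of $\|T_{z,II}\|_{L^{p}\to L^{p}}$ over $z$ is itself an issue: a scaling argument gives the ratio $\approx 2^{2(\gamma-1)}$ per step, which degenerates at $\gamma=1$ and forces the slight shrinking of region II ($2^{2|z|}\to 2^{2|z|(1-\delta)}$) together with the compensating adjustment on the FIO side (Remarks~\ref{shrink-II} and~\ref{shrink-II-2}); your sketch addresses neither the $z$-summation for $T_{B}$ nor this hard-sphere endpoint.
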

\begin{proof}
Recall that
\[
T h(x)
=(2\pi)^{-3}\int_{\mathbb{R}^3} e^{i{x\cdot\xi}}\; {a}(x,\xi) \;\widehat{h}(\xi) d\xi, \tag{\ref{Define-T}}
\]
where 
\[
{a}(x,\xi)=(|x||\xi|)^{\gamma}\int_{\omega\in S^2_+} e^{-i(x\cdot\omega)(\xi\cdot\omega)} b(\cos\theta) d\omega.
\tag{\ref{symbol-a-1}}
\]

As in~\cite{JC20}, we can give the function $a(x,\xi)$ a  concrete description by decomposing
the phase space $(x,\xi)$. 

First of all, we need a dyadic decomposition in the interval $(0,\pi)$ which is constructed below.
Let $\zeta(\theta)\in C^{\infty}$ be supported in the interval  $(\pi/8,\pi/2)$ and satisfy $\sum_{z\in\mathbb{Z}} \zeta(2^{-z}\theta)=1$
for all $\theta>0$. Let $\zeta_n(\theta)=\zeta(2^n \theta)$ $n\in\mathbb{N}$,  and $\widetilde{\zeta}(\theta)=1-\sum_{n\in\mathbb{N}}
\zeta_n(\theta)$.  Define $\zeta_0(\theta)$ equals $\widetilde{\zeta}(\theta)$, if $0< \theta\leq \pi/2$ and $\zeta_0(\theta)=\zeta_0(\pi-\theta)$, 
if $\pi/2<\theta<\pi$. This extension of  $\zeta_0$ from $0<\theta\leq \pi/2$
to $\pi/2<\theta<\pi$ by reflection keeps $\zeta_0$ a smooth function since $\widetilde{\zeta}$ equals $1$ near $\pi/2$. 
We also define $\zeta_{-n}(\theta)=\zeta_n(\pi-\theta)$ for $n\in\mathbb{N}$. Then we have the dyadic decomposition 
$1=\zeta_0(\theta)+\sum_{n\in\mathbb{N}}(\zeta_n(\theta)+\zeta_{-n}(\theta))$ in the interval $\theta\in (0,\pi)$. 
Abuse the notations, we define
\begin{equation}\label{D:chi-j}
\zeta_{0}(x,\xi)=\zeta_{0}(\arccos(\frac{x\cdot\xi}{|x||\xi|}))\;,\;\zeta_{\pm n}(x,\xi)=\zeta_{\pm n}(\arccos(\frac{x\cdot\xi}{|x||\xi|}))
\;,\;n\in\mathbb{N}.
\end{equation}
Thus the supports of $\zeta_0(x,\xi),\zeta_{n}(x,\xi),\zeta_{-n}(x,\xi)$ lie respectively
in the cones 
\begin{equation}\label{D:Gamma-n}
\begin{split}
 & \Lambda_0={\Big \{} (x,\xi)|\;  \frac{\pi}{8} < \arccos(\frac{x\cdot\xi}{|x||\xi|}) 
 < {\pi}-\frac{\pi}{8} {\Big \}}, \\
 & \Lambda_{n}= {\Big \{} (x,\xi)|\;  \frac{\pi}{2^{n+3}} < \arccos(\frac{x\cdot\xi}{|x||\xi|}) < \frac{\pi}{2^{n+1}} {\Big \}}\\
 & \Lambda_{-n}={\Big \{} (x,\xi)|\; \pi(1-\frac{1}{2^{n+1}}) < \arccos(\frac{x\cdot\xi}{|x||\xi|})< \pi(1-\frac{1}{2^{n+3}}) {\Big \}}.
\end{split}
\end{equation}

Define
\begin{equation}\label{D:a-z}
a_{z}(x,\xi)=\zeta_{z}(x,\xi)a(x,\xi),\;z\in\mathbb{Z}
\end{equation}
and write $a(x,\xi)=\sum_{z\in\mathbb{Z}}a_{z}(x,\xi)$.
Then we have $T=\sum_{z\in\mathbb{Z}} T_z$ where
\begin{equation}\label{D:T-z}
T_{z} h(x)=(2\pi)^{-3}\int_{\mathbb{R}^3} e^{i{x\cdot\xi}} a_{z}(x,\xi) \;\widehat{h}(\xi) d\xi,\;z\in\mathbb{Z}.
\end{equation}
Each $T_{z}$ is in fact the sum of two FIOs and a degenerate operator which means that we need a 
further decomposition for each of $a_z(x,\xi)$.  

First we introduce a dyadic partition of unity on $\mathbb{R}^3\setminus\{0\}$. 
We let $\rho\in C^{\infty}(\mathbb{R}),\; 0\leq \rho \leq 1$ be supported in 
the open interval $(4,16)\subset\mathbb{R}$
and satisfy 
\begin{equation}\label{partition-unity-rho}
1=\sum_{k\in\mathbb{Z}}\rho(2^{-k} r),\; r>0
\end{equation}
with property ${\rm supp}\;\rho(2^{j} r)\cap\; {\rm supp}\;\rho(2^k r)=\emptyset $ if $|j-k|\geq 2$.
For $x\in\mathbb{R}^3$ and $k\in\mathbb{Z}$ we define  
\begin{equation}\label{D:chi-k}
\rho_{k}(x)=\rho(2^{-k}|x|)
\end{equation}
 where $\rho$ is defined 
in~\eqref{partition-unity-rho}, then we have dyadic partition of unity on $\R^{3}\setminus\{0\}$,
\begin{equation}\label{D:chi-k-sum}
1=\sum_{k\in \mathbb{Z}} \rho_{k}(x) ,\; x\in\mathbb{R}^3\setminus  0.
\end{equation}
Thus we have a dyadic partition of unity on the phase space $\{(x,\xi)|x,\xi\in\mathbb{R}^3\setminus\{0\} \}$,
\[
1=\sum_{j\in \mathbb{Z}} \rho_{j}(x) \sum_{l\in \mathbb{Z}} \rho_{l}(\xi) ,\; x\neq0,\; \xi\neq 0.
\]

For each $a_z(x,\xi)$, we split it into two parts by writing 
\begin{equation}\label{a-z-decom}
\begin{split}
a_z(x,\xi)&=a_z(x,\xi)\sum_{j+l\geq 2|z|+2} \rho_{j}(x)\rho_{l}(\xi)+a_z(x,\xi)\sum_{j+l\leq 2|z|+1} 
\rho_{j}(x)\rho_{l}(\xi)\\
&:=\rho_{z,I}(x,\xi)\cdot a(x,\xi)+\rho_{z,II}(x,\xi)\cdot a(x,\xi).
\end{split}
\end{equation}
Note that we have  
\begin{equation}\label{supp-A-B}
\begin{split}
&{\rm supp}\; \rho_{z,I}(x,\xi)\subseteq\{(x,\xi)| (x,\xi)\in \Lambda_z\;{\rm and}\; |x||\xi|\geq 2^{2|z|}
\cdot 64 \},\\
&{\rm supp}\; \rho_{z,II}(x,\xi)\subseteq\{(x,\xi)| (x,\xi)\in \Lambda_z\;{\rm and}\; |x||\xi|\leq 2^{2|z|}\cdot 
512 \}.
\end{split}
\end{equation}
Then we decompose each $T_z$, defined in~\eqref{D:T-z}, into two parts as
\begin{equation}
T_z=T_{z,I}+T_{z,II}
\end{equation}
according to~\eqref{a-z-decom}. And we let 
\begin{equation}\label{T-A-B}
T_A=\sum_{z\in\mathbb{Z}} T_{z,I},\;T_B=\sum_{z\in\mathbb{Z}} T_{z,II}.
\end{equation}
We should show that  both $T_A$ and $T_B$ are $L^p$ bounded in the 
section~\ref{section-4} and section~\ref{section-5} respectively. 
Before that, we continue the description of $T_A$ and $T_{B}$.

Let $\theta_{0}$ be the angle spanned by $x$ and $\xi$, i.e.,
\begin{equation}\label{theta-0}
\theta_{0}=\arccos(\frac{x\cdot\xi}{|x||\xi|}).
\end{equation}
The  decomposition so far actually follows the following guideline. 
\begin{equation}\label{D:criterion}
\begin{array}{ll}
|x||\xi|\cos^2(\theta_0/2)\sin^2(\theta_0/2)>C_1>1& {\rm on}\; \cup_{z} {\rm supp\;} \rho_{z,I}:={\rm region\; I}  \\
|x||\xi|\cos^2(\theta_0/2)\sin^2(\theta_0/2)<C_2  &{\rm on}\; \cup_{z} {\rm supp\;} \rho_{z,II}:={\rm region\;II}
\end{array}
\end{equation}
for fixed constants $C_1,C_2$. And this criterion is based on the fact that $T_{z,I}$ is a sum of two FIOs
while $T_{z,II}$ degenerates since the stationary phase formula is not applicable on region II.

On the region I, by~\cite{JC20} (see pages 4087-4088) or~\cite{JC12, Lio94}, we have that 
\begin{equation}\label{E:a-I}
\begin{split}
a(x,\xi)&=c_1 e^{-i|x||\xi|\sigma_+(x,\xi)} \{p_{+}(x,\xi) +O_{+}(|x|^{\gamma-2}|\xi|^{\gamma-2})\} \\
& +c_2 e^{-i|x||\xi|\sigma_-(x,\xi)} \{ p_{-}(x,\xi) + O_{-}(|x|^{\gamma-2}|\xi|^{\gamma-2})\} \\
&+ s(x,\xi)
\end{split}
\end{equation}
where 
\begin{equation}\label{principle-symbol}
p_{+}(x,\xi)=\sin(\frac{\theta_0}{2})(|x||\xi|)^{\gamma-1},\;p_{-}(x,\xi)=\cos(\frac{\theta_0}{2})(|x||\xi|)^{\gamma-1}
\end{equation}
\[
\sigma_{\pm}(x,\xi)=\frac{1}{2}(\frac{x\cdot\xi}{|x||\xi|}\pm 1),
\]
$s \in SG^{-\infty,-\infty}$ is a SG symbol of the smooth operator and $p_{\pm}\in SG^{\gamma-1,\gamma-1}
$ are SG symbols of order $(\gamma-1,\gamma-1)$ 
satisfying
\[
|\partial^{\alpha}_{x}\partial^{\beta}_{\xi} p_{\pm}(x,\xi)|\leq C_{\alpha\beta}\lr{x}^{(\gamma-1)-|\alpha|}\lr{\xi}^{(\gamma-1)-|\beta|}.
\]
We remark that it is pointed in~\cite{JC20} our that $\sin(\theta_{0}/2),\;\cos(\theta_0/2)$ are symbols of order $0$.
The big $O_{\pm}$ of~\eqref{E:a-I} can be calculated explicitly using stationary phase 
asymptotic formula~\eqref{F:stationary-formula} which can be found in~\cite{JC20}.  
More precisely, for any $N>2$
\begin{equation}\label{E:lower-order}
\begin{split}
&{\Big |}\;O_{\pm}(|x|^{\gamma-2}|\xi|^{\gamma-2})- \\ 
&\sum_{k=2}^{N} \cos(\theta_0/2)\sin({\theta_0}/{2})\frac{q_{\pm k}(\cos(\theta_0/2),\sin({\theta_0}/{2}))} 
{(|x||\xi|\cos^2(\theta_0/2)\sin^2({\theta_0}/{2}))^{k-1}}(|x||\xi|)^{\gamma-1} {\Big |} \\
&\leq C_{\pm}(|x||\xi|)^{\gamma-N-1}
\end{split}
\end{equation}
where $q_{\pm k}(t,s)$ are  polynomials of $(t,s)$. 

There are two useful observations for the later analysis. 
First of all, take $N=5$, we note that $C_{\pm}(|x||\xi|)^{\gamma-6}$
in~\eqref{E:lower-order} and $s(x,\xi)$ in~\eqref{E:a-I} are integrable when they are regarded as part of the 
integral kernel, defined on region I, of the operator $T_{A}$. Thus the contribution from these two parts 
to the operator $T_{A}$ is $L^{p}$ bounded. 
Secondly, each term in the summation in~\eqref{E:lower-order}
enjoys better decay than principle symbols in~\eqref{principle-symbol} when $\theta_{0}$ thens to $0$ or $\pi$.
The better decay of former still holds even we compare the $x$ or $\xi$ derivatives of both.   Therefore we
conclude that it suffices to regard the $T_{z,I}$ whose symbols are given by~\eqref{principle-symbol} only, 
please also refer to the argument in the following two sections. 

According to above observations, it is harmless  to write 
\begin{equation}\label{E:a-A-I}
\rho_{z,I}\cdot a(x,\xi)=c_1 e^{-i|x||\xi|\sigma_+(x,\xi)} p_{z+}(x,\xi)
+c_2 e^{-i|x||\xi|\sigma_-(x,\xi)} p_{z-}(x,\xi)\\
\end{equation}
where 
\begin{equation}\label{p-z-pm}
p_{z\pm}(x,\xi)=\rho_{z,I}(x,\xi) \cdot p_{\pm}(x,\xi).
\end{equation}
We note that $\rho_{z,I}(x,\xi)\in SG^{0,0}$ by the definition. 

Then we write 
\begin{equation}\label{T-z-1}
\begin{split}
& T_{z,I}\; h(x):= T_{z+,I} \;h(x)+ T_{z-,I}\; h(x) \\
& = \int_{\R^{3}} e^{i \psi_{+}(x,\xi)} p_{z+}(x,\xi) \;\widehat{h}(\xi) d\xi  
+ \int_{\R^{3}} e^{i \psi_{-}(x,\xi)} p_{z-}(x,\xi) \;\widehat{h}(\xi) d\xi 
\end{split}
\end{equation}
where
\begin{equation}
\psi_{\pm}(x,\xi)=\frac{1}{2}(x\cdot\xi\mp |x||\xi|).
\end{equation}
With these notations, our estimate for $T_{A}$  is stated in the 
Proposition~\ref{P-T-A} of the section~\ref{section-4}.

For $T_B=\sum_{z\in\mathbb{Z}} T_{z,II}$, we have
\[
T_{z,II} h(x) =(2\pi)^{-3}\int_{\mathbb{R}^3} e^{ix\cdot\xi} \;\psi_{z,II}(x,\xi)\cdot a(x,\xi)
 \;\widehat{h}(\xi) d\xi.
\]
where 
\[
{\rm supp}\; \psi_{z,II}(x,\xi)\subseteq\{(x,\xi)| (x,\xi)\in\Lambda_z \;{\rm and}\;  
|x||\xi|\leq 512\cdot 2^{2|z|}\},
\]
and
\[
a(x,\xi)=(|x||\xi|)^{\gamma}\int_{S^{2}_{+}} e^{i(x,\omega)(\xi,\omega)} b(\cos\theta)d\omega.
\]
As we point out before that on the region II of phase space $(x,\xi)$, the stationary phase 
formula does not work for the calculation of $a(x,\xi)$. 
However by the estimate in~\cite{JC20} (see the calculation before equation (3.36) there), we have 
\begin{equation}\label{a-region-2-pre}
a(x,\xi)= C\cdot\sin\theta_0\cos\theta_0\cdot (|x||\xi|)^{\gamma-1},\;{\rm if}\;(x,\xi)\in\;{\rm region\;II},
\end{equation}
where $C$ is independent of $x,\xi$. Here $\cos\theta_0$ comes from angular function of the 
collision kernel and $\sin\theta_0$ is the Jacobian of the spherical coordinate.  
The estimate~\eqref{a-region-2-pre} is suffice for us to 
show the $L^{p}$ boundedness of $T_{B}$.  Our estimate for $T_{B}$  is stated in 
Proposition~\ref{P-T-B} of the section~\ref{section-5}.

The estimate for $T$ follows by combing the results of  Proposition~\ref{P-T-A} and Proposition~\ref{P-T-B}.
\end{proof}

The decomposition of phase space $(x,\xi)$ above is more brief than we did in~\cite{JC20}.  
The remark below explains their difference.
\begin{rem}
We split the phase space into two regions in~\eqref{D:criterion} and define operators $T_{A},\; T_{B}$
accordingly. Please note the region I in~\eqref{D:criterion} is the same as the region I plus region II
in (3.13) of~\cite{JC20} and region II in~\eqref{D:criterion} is the region III in (3.13) in~\cite{JC20}.  
Therefore $T_{A}$  in this paper is equivalent to $T_{A}+T_{B,1}+T_{B,2}$ defined after (3.5) of~\cite{JC20}
and $T_{B}$ in this paper is equal to $T_{C}$ of ~\cite{JC20}.  
This simplication is due to that, in~\cite{JC20}, the $L^{2}$ boundedness of $T_{B,1}$ and $T_{B,2}$ is
proved by using a scaling argument and $L^{2}$ boundedness of $T_{A}$ where the scaling argument
is essential the same as the proof of the Proposition~\ref{FIO-localized} below. 
The Proposition~\ref{FIO-localized} proved by Coriasco and Ruzhansky~\cite{CR14} is used in this paper in order to give a shorter proof of the $L^{p}$
 boundedness of current $T_{A}$. 
\end{rem}

\section{$L^p$ boundedness of $T_A$}\label{section-4}

The goal of this section is proving the following.  

\begin{thm}\label{P-T-A}
Recall $\theta_{0}=\arccos(x\cdot\xi/|x||\xi|)$.  Let $T_{A}=\sum_{z\in\mathbb{Z}} T_{z,I}$ where
\begin{equation}\label{T-z-I-sum}
\begin{split}
& T_{z,I}\; h(x):= T_{z+,I} \;h(x)+ T_{z-,I}\; h(x) \\
& = \int_{\R^{3}} e^{i \psi_{+}(x,\xi)} p_{z+}(x,\xi) \;\widehat{h}(\xi) d\xi  
+ \int_{\R^{3}} e^{i \psi_{-}(x,\xi)} p_{z-}(x,\xi) \;\widehat{h}(\xi) d\xi, 
\end{split}
\end{equation}
with 
\begin{equation}
\psi_{\pm}(x,\xi)= \frac{1}{2}(x\cdot\xi\mp |x||\xi|),
\end{equation}
\begin{equation}\label{T-z-symbol}
\begin{split}
& p_{z+}(x,\xi)=\rho_{z,I}(x,\xi)\cdot \sin(\frac{\theta_0}{2})(|x||\xi|)^{\gamma-1}\in {\rm SG}^{\gamma-1,\gamma-1}, \\
& p_{z-}(x,\xi)=\rho_{z,I}(x,\xi)\cdot \cos(\frac{\theta_0}{2})(|x||\xi|)^{\gamma-1}\in {\rm SG}^{\gamma-1,\gamma-1},
\end{split}
\end{equation}
and 
\[
{\rm supp}\; p_{z\pm}(x,\xi)={\rm supp}\; \rho_{z,I}(x,\xi)\subseteq\{(x,\xi)| (x,\xi)\in \Lambda_z\;{\rm and}\; |x||\xi|\geq 2^{2|z|}\cdot 64 \}.
\]
Suppose $p$ satisfies 
\[
 \frac{\gamma}{2}\leq \frac{1}{p}\leq 1-\frac{\gamma}{2},
\]
and both $\leq$ are replaced by $<$ when $\gamma=0$,
then we have
\begingroup
\large
\begin{equation}\label{E:R-ineq-A}  
\| T_{A} \;h\|_{L^p}\leq C \|h\|_{L^{p}}.
\end{equation}
\endgroup
\end{thm}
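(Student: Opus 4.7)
The plan is to apply the Coriasco--Ruzhansky $L^p$-boundedness theorem for global SG-FIOs (the ``FIO-localized'' proposition referred to in the remark at the end of Section~\ref{Section 3}) to each piece $T_{z\pm,I}$ separately, and then sum the bounds over $z\in\mathbb{Z}$. That theorem requires (i) the amplitude to lie in $SG^{m_1,m_2}$ with $m_i\le -2|1/2-1/p|$, and (ii) uniform non-degeneracy of the phase on the support of the amplitude. For our symbols of order $(\gamma-1,\gamma-1)$, condition (i) reduces exactly to the asserted range $\gamma/2\le 1/p\le 1-\gamma/2$, with strictness at $\gamma=0$.

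A direct computation of the mixed Hessian yields
\[
\det\partial_x\partial_\xi\psi_+ = \tfrac{1}{4}\sin^{2}(\theta_0/2),\qquad
\det\partial_x\partial_\xi\psi_- = \tfrac{1}{4}\cos^{2}(\theta_0/2),
\]
so $\psi_+$ degenerates only as $\theta_0\to 0$ and $\psi_-$ only as $\theta_0\to\pi$. The base case $z=0$ is then immediate: on $\Lambda_0$ both phases are uniformly non-degenerate and $p_{0\pm}$ are standard SG-symbols, so Coriasco--Ruzhansky applies directly. The reflection symmetry $\theta_0\leftrightarrow\pi-\theta_0$, which exchanges $\psi_+\leftrightarrow\psi_-$ and $z\leftrightarrow -z$, reduces the case $|z|\ge 1$ to $z\ge 1$.

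For $z\ge 1$, the cone $\Lambda_z$ has angular width $\sim 2^{-z}$, so the localizer $\zeta_z$ has $\xi$-derivatives of size $\sim 2^{z}$ and $p_{z\pm}$ is not uniformly SG as $z\to\infty$. I would resolve this by an anisotropic rescaling that, after rotating so that $x\parallel e_3$, dilates the transverse components $\xi_{1,2}$ by $2^{z}$, mapping $\Lambda_z$ to a fixed $O(1)$-angular-width cone. For the non-degenerate piece $T_{z-,I}$ this produces a rescaled operator whose amplitude is a standard SG-symbol of order $(\gamma-1,\gamma-1)$ uniformly in $z$ and whose phase is uniformly non-degenerate, so Coriasco--Ruzhansky gives an $L^p$ bound independent of $z$. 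Scaling back, the factor $(|x||\xi|)^{\gamma-1}$ on the support $|x||\xi|\ge 2^{2z}\cdot 64$ supplies a prefactor $\le C\,2^{-2z(1-\gamma)}$, which is summable in $z$ for $\gamma<1$.

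The \textbf{main obstacle} is the degenerate-phase piece $T_{z+,I}$ with $z\ge 1$: the Hessian determinant is $\sim 2^{-2z}$ and the non-degeneracy hypothesis of Coriasco--Ruzhansky fails outright. The compensating structural fact is that $p_{z+}$ carries the factor $\sin(\theta_0/2)\sim 2^{-z}$, precisely the size needed to offset the single degenerate direction of $\partial_x\partial_\xi\psi_+$. The same anisotropic transverse dilation simultaneously (a) turns $\sin(\theta_0/2)$ into an $O(1)$ symbol and (b) stretches along the very direction in which the Hessian is degenerate, so the rescaled phase becomes uniformly non-degenerate. Coriasco--Ruzhansky then applies to the rescaled SG-FIO of order $(\gamma-1,\gamma-1)$; scaling back again yields the $2^{-2z(1-\gamma)}$ prefactor and summation closes the estimate for $\gamma<1$. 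The boundary case $\gamma=1$ collapses to $1/p=1/2$ and must be handled by $L^2$ techniques (orthogonality or $TT^{*}$), which accounts for the interval of $1/\mt r$ shrinking to a point in the hard-sphere case. The smoothing remainder $s\in SG^{-\infty,-\infty}$ and the lower-order terms $O_\pm(|x|^{\gamma-2}|\xi|^{\gamma-2})$ in~\eqref{E:a-I} were already noted in Section~\ref{Section 3} to yield $L^p$-bounded operators via integrable kernels on region I, and contribute harmlessly.
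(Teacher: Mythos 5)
Your reduction of the exponent range to the SG--order condition of Coriasco--Ruzhansky, your computation of the mixed Hessians, and your identification of the key structural fact (the factor $\sin(\theta_0/2)\sim 2^{-|z|}$ in $p_{z+}$ compensating the degenerating determinant $\sim 2^{-2|z|}$) are all consistent with the paper. The genuine gap is in your mechanism for summing over $z$. You simultaneously (a) keep the factor $(|x||\xi|)^{\gamma-1}$ inside the amplitude so that the rescaled operator is an SG--FIO of order $(\gamma-1,\gamma-1)$ with Coriasco--Ruzhansky bounds uniform in $z$, and (b) estimate the same factor by $(2^{2|z|}\cdot 64)^{\gamma-1}$ on the support to extract the prefactor $2^{-2|z|(1-\gamma)}$. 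This double-counts the decay: if the prefactor is extracted, the remaining amplitude has order $(0,0)$ and condition~\eqref{FIO-global-symbol} then forces $p=2$; if the amplitude is kept at order $(\gamma-1,\gamma-1)$, there is no prefactor and no reason for $\sum_z$ to converge. At the endpoints $1/p=\gamma/2$ and $1/p=1-\gamma/2$ --- which are part of the statement and are exactly what produces the closed range of $1/\mt{r}$ in Theorem~\ref{T:Gain-p-w-est} --- the order condition is saturated, so not even a fractional splitting of the kinetic factor is available; and at $\gamma=1$ your prefactor equals $1$ and the appeal to unspecified ``orthogonality or $TT^{*}$'' leaves the hard-sphere case unproved. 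The paper's summability comes from a source independent of $\gamma$ and of the kinetic factor: after localizing by Proposition~\ref{Global-local} and using the Seeger--Sogge--Stein interpolation (Proposition~\ref{FIO-local}), the $L^p$ norm of each localized piece is controlled by the $L^2$ norm of the order-$0$ normalization $T^{(0)}_{z\pm,I}$ and the $\mcH^{1}\to L^{1}$ norm of the order-$(-1)$ normalization $T^{(-1)}_{z\pm,I}$; geometric decay in $|z|$ of the former is imported from Lemma 3.1 of~\cite{JC20} (Proposition~\ref{L-2-theorem}), and of the latter is proved in Proposition~\ref{L-1-theorem} by rerunning the atomic argument while tracking how the Jacobian~\eqref{Jacobian}, the cardinality~\eqref{card-xi} of the directional decomposition inside $\Lambda_z$, and the $L^{6/5}\to L^{2}$ embedding constants vary with $z$. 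That decay is uniform in $\gamma$, which is why the theorem holds through $\gamma=1$ and at the endpoint exponents.

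A second, more technical problem is the anisotropic rescaling itself. The per-piece Coriasco--Ruzhansky constant depends on the symbol seminorms (which grow like $2^{|z||\beta|}$ from angular derivatives of $\zeta_z$) and on the lower bound $c$ in~\eqref{Hessian-low} (which degenerates like $2^{-2|z|}$ for the $+$ phase), so uniformity must be restored by some renormalization; but dilating ``the transverse components of $\xi$'' after ``rotating so that $x\parallel e_3$'' is not a legitimate global change of variables, because the transverse frame depends on the output variable $x$. To make this rigorous you would need an additional angular decomposition in $x$ (or $\xi$) together with a gluing argument --- essentially rebuilding the second dyadic, directional decomposition that the paper carries out inside the $\mcH^{1}\to L^{1}$ estimate in Subsection~\ref{H-1-subsection} --- and you would also have to verify that the rescaled phase still satisfies the global hypotheses~\eqref{psi-upper} of Theorem~\ref{FIO-global}, which is not automatic for anisotropic dilations. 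As written, this step is a sketch rather than a proof.
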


Theorem~\ref{P-T-A} will be proved after several reductions. We note that
\begin{equation}
\begin{split}
& |\det\partial_x\partial_{\xi} {\Big [} \psi_{\pm}(x,\xi)){\Big ]}|= 
|\det  {\Big [} \partial^2\psi_{\pm}(x,\xi)/\partial_{x_i}\partial_{\xi_j}  {\Big ]} |\\
&=|\det\frac{1}{2}{\Big [}I\mp\frac{x}{|x|}\otimes\frac{\xi}{|\xi|}{\Big ]}|
= (\frac{1}{2})^3|1\mp\cos\theta_0|
\end{split}
\end{equation} 
which are positive on support of $\rho_{z,I}$ for each $z$. Since each of $T_{z\pm,I}$ is a global FIO, 
we may apply the result  of~\cite{CR14} about the $L^{p}$ boundedness of one global FIO. 
\begin{thm}[Theorem 2.6 of~\cite{CR14}]\label{FIO-global}
Assume $1<p<\infty$, $\mathscr{T}$ is a global FIO takes the form:
\[
\mathscr{T} h(x)=\int_{\mathbb{R}^{d}} e^{i\psi(x,\xi)} p(x,\xi)\; \widehat{h}(\xi) d\xi
\]
with $p\in S^{m_{1},m_{2}}$ and  where
 \begin{equation}\label{FIO-global-symbol}
 m_{1},m_{2}\leq -(d-1){\big |} \frac{1}{p}-\frac{1}{2} {\big |}.
 \end{equation}
 The phase function $\psi$ satisfies
 \begin{equation}
 \begin{split}\label{psi-upper}
 & \psi(x,\tau\xi)=\tau\psi(x,\xi)\;{\rm for}\;\tau>0,\xi\neq 0,\; \partial^{\alpha}_{x}\psi(x,\xi)\leq C_{\alpha} \lr{x}^{1-|\alpha|}|\xi|, \\
 & \lr{\nabla_{\xi}\psi(x,\xi)}\approx \lr{x},\;\lr{\nabla_{x}\psi(x,\xi)}\approx \lr{\xi},
 \end{split}
 \end{equation}
 and there exists $c>0$ such that
\begin{equation}\label{Hessian-low}
|\det  {\Big [} \partial_{x}\partial_{\xi} \psi(x,\xi) {\Big ]} | 
=|\det  {\Big [} \partial^2\psi(x,\xi)/\partial_{x_i}\partial_{\xi_j}  {\Big ]} |\geq c>0 
\end{equation}
for all $(x,\xi)\in {\rm supp}\; p(x,\xi)$.
Then $\mathscr{T}$, initially defined on $\mathcal{S}(\R^{d})$, 
extends to a bounded operator from $L^{p}(\R^{d})$ to itself.  
\end{thm}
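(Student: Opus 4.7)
The plan is to follow the Seeger-Sogge-Stein strategy, adapted to the global setting by exploiting the SG-structure of the symbol in both variables. First I would reduce to the range $1 < p \leq 2$ by duality: the formal adjoint of $\mathscr{T}$ is again a global FIO of the same class (the SG-FIO calculus preserves the symbol order and the non-degeneracy), so an $L^{p'} \to L^{p'}$ bound for $\mathscr{T}^*$ gives the $L^p \to L^p$ bound for $\mathscr{T}$. The case $p = 2$ follows from the standard $\mathscr{T}^* \mathscr{T}$ argument: under the non-degeneracy \eqref{Hessian-low} and the gradient equivalences in \eqref{psi-upper}, $\mathscr{T}^*\mathscr{T}$ is a SG pseudodifferential operator of order $(2m_1, 2m_2)$, hence $L^2$-bounded since $m_1, m_2 \leq 0$.

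For $1 < p < 2$ I would interpolate the $L^2$ bound with an $H^1 \to L^1$ endpoint. To establish the endpoint I decompose $\mathscr{T}$ dyadically in both $\xi$ and $x$, using cutoffs $\rho(2^{-j}|x|)$ and $\rho(2^{-k}|\xi|)$ (plus low-mode corrections), and write $\mathscr{T} = \sum_{j,k \geq 0} \mathscr{T}_{j,k}$. Within each frequency shell I apply the Seeger-Sogge-Stein angular decomposition into $\sim 2^{k(d-1)/2}$ pieces $\mathscr{T}_{j,k}^\nu$ whose symbols are further supported in $\xi$-caps of angular width $\sim 2^{-k/2}$. By the SG bounds \eqref{D:symbol-definition-SG} on $p(x,\xi)$, each piece inherits a prefactor of at most $2^{jm_1 + km_2}$ relative to its unit-scale normalization.

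The core step is a rescaling $(x,\xi) \mapsto (2^{-j}x, 2^{-k}\xi)$ that brings each $\mathscr{T}_{j,k}^\nu$ to an operator living at unit $x$- and $\xi$-scale. The homogeneity $\psi(x,\tau\xi) = \tau\psi(x,\xi)$ together with \eqref{psi-upper} guarantees that the rescaled phase has uniformly bounded derivatives and preserves the non-degeneracy condition, while the rescaled symbol belongs to a uniformly bounded subset of $S^{0}_{1,0}$ with compact $x$-support. The classical compactly supported Seeger-Sogge-Stein theorem then applies to each rescaled piece with uniform constants. Combined with Hardy-space atomic decomposition, this reduces the endpoint estimate to bounding $\|\mathscr{T}_{j,k}^\nu a\|_{L^1}$ for an $H^1$-atom $a$: I control this on a parabolic neighborhood of the image of $\operatorname{supp} a$ under the canonical relation by Cauchy-Schwarz against the $L^2$-bound, and off that neighborhood by integration by parts in $\xi$ exploiting the angular separation, exactly as in Seeger-Sogge-Stein.

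The main obstacle will be summing the pieces at the sharp threshold exponent. Summation in $j$ uses $m_1 \leq -(d-1)|1/p - 1/2|$, summation in $k$ uses $m_2 \leq -(d-1)|1/p - 1/2|$, and summation in $\nu$ at fixed $k$ uses the Seeger-Sogge-Stein almost-orthogonality among angular pieces. Since these exponents are sharp, the naive geometric series are only borderline-convergent at the endpoint. To handle this cleanly I would embed $\mathscr{T}$ in an analytic family $\mathscr{T}_z$ whose symbols have order $(m_1 - z, m_2 - z)$, prove a strict $L^2$ bound for $\operatorname{Re} z \leq 0$ and a weak-type $(1,1)$ bound for $\operatorname{Re} z$ sufficiently positive, and recover the sharp conclusion at the critical line via Stein's complex interpolation theorem for analytic families of operators.
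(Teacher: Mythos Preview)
The paper does not prove this theorem; it is quoted from Coriasco--Ruzhansky \cite{CR14}. The paper does, however, sketch the mechanism immediately afterward (Proposition~\ref{FIO-localized} and Proposition~\ref{Global-local}): one performs a \emph{single} dyadic decomposition in $x$ via $\rho_k(x)$, writes $\rho_k\cdot\mathscr{T}=U_{2^{-k}}\circ\mathscr{T}^k\circ U_{2^k}$, and observes that the \emph{anti-diagonal} rescaling $(x,\xi)\mapsto(2^k x,\,2^{-k}\xi)$ turns $\mathscr{T}^k$ into a local FIO whose symbol bounds~\eqref{local-cond-1}, phase bounds~\eqref{local-cond-2}, and Hessian lower bound~\eqref{local-cond-3} are \emph{uniform in $k$}. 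The local Seeger--Sogge--Stein theorem then gives a uniform $L^p$ bound on each $\rho_k\cdot\mathscr{T}$, and a Littlewood--Paley argument reassembles the pieces.

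Your proposal differs at exactly the point where the global theory bites, and the difference is a genuine gap. You decompose independently in $x$ and $\xi$ and then rescale by $(x,\xi)\mapsto(2^{-j}x,\,2^{-k}\xi)$. But the rescaled phase $\Psi(x,\xi)=\psi(2^j x,\,2^k\xi)=2^k\psi(2^j x,\xi)$ does \emph{not} have uniformly bounded derivatives: on the unit shell $|x|\sim|\xi|\sim 1$ one has $|\nabla_x\Psi|\sim 2^{j+k}$ (from $\langle\nabla_x\psi\rangle\approx\langle\xi\rangle$) and likewise the mixed Hessian scales like $2^{j+k}$, so its determinant is $\sim 2^{d(j+k)}$, not uniformly comparable to $1$. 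Consequently the rescaled operators are not a bounded family of local FIOs in the sense required for Seeger--Sogge--Stein with uniform constants; they are rather semiclassical FIOs with small parameter $h=2^{-(j+k)}$, for which the relevant $L^p$ bounds carry $h$-dependent constants that would wreck your summation. The complex-interpolation step at the end does not repair this, since the problem is already at the level of each fixed $(j,k)$ piece.

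The fix is precisely what \cite{CR14} does: decompose only in $x$, and use the anti-diagonal scaling $(2^k x,\,2^{-k}\xi)$. This keeps $\xi$ unrestricted (so the resulting operator is still a genuine FIO, homogeneous of degree one in $\xi$), while the SG decay in $x$ of the symbol absorbs the $x$-localization with no loss, and the mixed Hessian is invariant under this scaling. The double $(j,k)$ decomposition is neither needed nor workable here.
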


Note that $T_{z\pm,I}$ are global FIOs whose symbols lie in $S^{\gamma-1,\gamma-1}$ and phase
functions satisfy the conditions described in Theorem~\ref{FIO-global}.  
The condition~\eqref{FIO-global-symbol} of Theorem~\ref{FIO-global} implies the following result.
\begin{cor}
$T_{z\pm,I},\;z\in \mathbb{Z}$ are $L^{p}(\R^3)\rightarrow L^{p}(\R^3)$ bounded when 
\[
\frac{\gamma}{2}\leq \frac{1}{p}\leq 1-\frac{\gamma}{2},
\]
and both $\leq$ are replaced by $<$ when $\gamma=0$.
\end{cor}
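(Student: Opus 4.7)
The plan is to verify the hypotheses of Theorem~\ref{FIO-global} for each global FIO $T_{z+,I}$ and $T_{z-,I}$ individually, with constants depending on $z$. No uniformity in $z$ is needed at this stage; that is the concern of Proposition~\ref{P-T-A}, not of the corollary.

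First I would check the symbol-class condition. By~\eqref{T-z-symbol}, the amplitudes $p_{z\pm}$ lie in $SG^{\gamma-1,\gamma-1}$, so $m_{1}=m_{2}=\gamma-1$. The order condition of Theorem~\ref{FIO-global} in dimension $d=3$ becomes $\gamma-1\le -2\bigl|\tfrac1p-\tfrac12\bigr|$, which rearranges exactly to
\[
\frac{\gamma}{2}\le \frac{1}{p}\le 1-\frac{\gamma}{2},
\]
with strict inequalities when $\gamma=0$ forced by the assumption $1<p<\infty$ in Theorem~\ref{FIO-global}. This pins down the range of $p$ in the statement.

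Next I would verify the phase-function hypotheses for $\psi_{\pm}(x,\xi)=\frac12(x\cdot\xi\mp|x||\xi|)$. Homogeneity $\psi_{\pm}(x,\tau\xi)=\tau\psi_{\pm}(x,\xi)$ is immediate, and the bounds $|\partial_x^{\alpha}\psi_{\pm}|\le C_{\alpha}\lr{x}^{1-|\alpha|}|\xi|$ in \eqref{psi-upper} come from direct differentiation. A short computation gives $|\nabla_{\xi}\psi_{+}|=|x|\sin(\theta_0/2)$ and $|\nabla_{\xi}\psi_{-}|=|x|\cos(\theta_0/2)$, and symmetrically for $\nabla_{x}\psi_{\pm}$; on ${\rm supp}\,p_{z\pm}\subset\Lambda_{z}$ the angle $\theta_0$ lies in an interval bounded away from $0$ and $\pi$ (determined by $|z|$), so these quantities are comparable to $\lr{x}$ and $\lr{\xi}$, respectively, with $z$-dependent constants. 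Combined with the already recorded identity
\[
\bigl|\det\partial_{x}\partial_{\xi}\psi_{\pm}(x,\xi)\bigr|=(1/2)^{3}|1\mp\cos\theta_0|,
\]
which is bounded below on $\Lambda_{z}$ by a positive constant $c_{z}\sim 2^{-2|z|}$ (by the definition of $\Lambda_{z}$), the non-degeneracy hypothesis \eqref{Hessian-low} holds on ${\rm supp}\,p_{z\pm}$.

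With all hypotheses of Theorem~\ref{FIO-global} verified on the support of each $p_{z\pm}$, the cited theorem immediately yields the desired bound $\|T_{z\pm,I}h\|_{L^{p}}\le C_{z}\|h\|_{L^{p}}$ for every $z\in\mathbb{Z}$ and every $p$ in the stated range. The step I expect to be the genuine obstacle lies not in the corollary itself but in the next layer: summing over $z$ to obtain Proposition~\ref{P-T-A} requires defeating the $z$-dependence of $c_{z}$, which is only possible by exploiting the compensating lower bound $|x||\xi|\ge 64\cdot 2^{2|z|}$ on ${\rm supp}\,\rho_{z,I}$ together with the rescaling-and-localization argument of Coriasco--Ruzhansky alluded to in the remark following Theorem~\ref{P-T-A}. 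For the corollary, however, the per-$z$ application of Theorem~\ref{FIO-global} above is all that is required.
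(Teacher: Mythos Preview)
Your proposal is correct and follows essentially the same approach as the paper: apply Theorem~\ref{FIO-global} to each $T_{z\pm,I}$ individually, using that $p_{z\pm}\in SG^{\gamma-1,\gamma-1}$ so the order condition $\gamma-1\le -2|1/p-1/2|$ yields exactly the stated range of $p$. The paper states the phase conditions in one sentence, while you spell them out explicitly (homogeneity, derivative bounds, gradient comparability, and the mixed-Hessian lower bound on $\Lambda_{z}$), which is a helpful elaboration but not a different argument.
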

On the other hand, to prove the $L^{p}$ boundedness of $T_{A}$, we need to show 
the series of norms $\|T_{z\pm,I}\|_{L^{p}\rightarrow L^{p}}$ converges.  
The proof of Theorem~\ref{FIO-global} suggests that it suffices to consider the 
same problem for localized $T_{z\pm,I}$ as we should explain below.   
A key tool is the Proposition~\ref{FIO-localized} stated below.  Before that, we recall the dyadic 
decomposition~\eqref{D:chi-k-sum} in $\R^{3}$,  
\[
1=\sum_{k\in\mathbb{Z}} \rho_{k}(x),\;x\neq 0
\] 
where $\rho_{k}(x)=\rho_{0}(2^{-k} x)$ and $\rho_{0}$ is supported in the shell $ 2^{2}\leq |x|\leq 2^{4}$.

\begin{prop}[Proposition 4.1 of~\cite{CR14}]\label{FIO-localized} 
Let $\mathscr{T}$ be the operator given in Theorem~\ref{FIO-global}. Then for any $k\geq -4$, 
\begin{equation}\label{Local-FIO-est}
\| \rho_{k}\cdot \mathscr{T} u\|_{L^p}\leq C\|u\|_{L^p}.
\end{equation}
where constant $C$ depends only on $\rho_{0}$, on upper bounds for a finite number of the constants 
in the estimate satisfied by $p(x,\xi)$ and $\psi(x,\xi)$, and on the lower bound $c$ for the determinant 
of the mixed Hessian of $\psi$ in~\eqref{Hessian-low}.
\end{prop}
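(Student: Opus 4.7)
The plan is to reduce the localized bound \eqref{Local-FIO-est} to the classical $L^p$ boundedness for local FIOs of Seeger--Sogge--Stein~\cite{SSS91}, via an isotropic rescaling adapted to the dyadic shell $|x|\sim 2^k$ coupled with a compensating dilation in $\xi$ that exploits the positive homogeneity of the phase. The guiding intuition is that on $\mathrm{supp}\,\rho_k$ the SG-decay $\lr{x}^{m_1}$ of $p$ contributes a tame factor of size $2^{km_1}$, which, after conjugating with an $L^p$-isometric dilation, turns the global FIO into a bona fide local FIO whose symbol and phase seminorms are uniform in $k\ge-4$.

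Step 1 (rescaling). Introduce $y=2^{-k}x$ and $\eta=2^{k}\xi$, and pull back by the $L^p$-isometry $U_k u(y):=2^{dk/p}u(2^ky)$. Using the positive homogeneity $\psi(x,\tau\xi)=\tau\psi(x,\xi)$ from~\eqref{psi-upper}, one checks that the conjugated operator takes the form
\[
\tilde{\mathscr T}_k v(y)=\int_{\R^{d}} e^{i\tilde\psi_k(y,\eta)}\,\tilde p_k(y,\eta)\,\widehat v(\eta)\,d\eta,\qquad
\tilde\psi_k(y,\eta)=2^{-k}\psi(2^ky,\eta),
\]
\[
\tilde p_k(y,\eta)=\rho_0(y)\,p(2^ky,2^{-k}\eta),
\]
and $\|\rho_k\mathscr T u\|_{L^p}=\|\tilde{\mathscr T}_k(U_k^{-1}u)\|_{L^p}$. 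By design, $\tilde p_k$ is supported in the fixed compact set $\mathrm{supp}\,\rho_0$.

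Step 2 (uniform seminorm bounds). Show that $\tilde p_k$ lies in the (local) H\"ormander class $S^{m_2}_{1,0}$ in $\eta$ with seminorms uniform in $k\ge -4$. Differentiation gives $\partial_y^\alpha\partial_\eta^\beta\tilde p_k=2^{k(|\alpha|-|\beta|)}\rho_0(y)(\partial_x^\alpha\partial_\xi^\beta p)(2^ky,2^{-k}\eta)$ plus similar terms, and the SG-decay~\eqref{D:symbol-definition-SG} then yields $\lr{2^ky}^{m_1-|\alpha|}\lr{2^{-k}\eta}^{m_2-|\beta|}\lesssim 2^{k(m_1-|\alpha|)}\lr{\eta}^{m_2-|\beta|}$ on $|y|\sim 1$, so the net factor $2^{k|\alpha|}\cdot 2^{k(m_1-|\alpha|)}=2^{km_1}$ is bounded using $m_1\le0$ and $k\ge-4$. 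A parallel computation for $\tilde\psi_k$, invoking the upper bound $|\partial_x^\alpha\psi(x,\xi)|\le C_\alpha\lr{x}^{1-|\alpha|}|\xi|$ from~\eqref{psi-upper} and homogeneity to handle $\eta$-derivatives, yields uniform seminorm bounds for the phase. Non-degeneracy is inherited since $\partial_y\partial_\eta\tilde\psi_k(y,\eta)=(\partial_x\partial_\xi\psi)(2^ky,\eta)$, so $|\det\partial_y\partial_\eta\tilde\psi_k|\ge c$ on $\mathrm{supp}\,\tilde p_k$ by~\eqref{Hessian-low}.

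Step 3 (conclusion). At this stage $\tilde{\mathscr T}_k$ is a local FIO of order $m_2\le -(d-1)|1/p-1/2|$ with non-degenerate phase and compactly supported symbol whose seminorms are controlled only by $\rho_0$, by finitely many of the constants in~\eqref{D:symbol-definition-SG} and~\eqref{psi-upper}, and by $c$. The Seeger--Sogge--Stein theorem then gives $\|\tilde{\mathscr T}_k v\|_{L^p}\le C_0\|v\|_{L^p}$ with $C_0$ uniform in $k$, and undoing the $L^p$-isometry $U_k$ yields~\eqref{Local-FIO-est}. The main obstacle lies in Step 2: carefully tracking how the scaling factors $2^{k|\alpha|}$ and $2^{-k|\beta|}$ produced by differentiating $\tilde p_k$ and $\tilde\psi_k$ interact with the compensating $\lr{2^ky}^{-|\alpha|}$ and $\lr{2^{-k}\eta}^{-|\beta|}$ decay afforded by the SG-structure, and verifying that everything closes uniformly in $k$. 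This is precisely where the $x$-decay ($m_1\le0$) built into the SG-class is essential; for an arbitrary H\"ormander symbol without such decay the constants would blow up as $k\to+\infty$.
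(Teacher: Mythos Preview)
Your approach is exactly the one the paper sketches (following~\cite{CR14}): conjugate by the dilation $x\mapsto 2^kx$, $\xi\mapsto 2^{-k}\xi$, observe that the rescaled operator $\mathscr{T}^k$ has compactly supported amplitude $\rho_0(y)\,p(2^ky,2^{-k}\eta)$ and phase $\psi(2^ky,2^{-k}\eta)=2^{-k}\psi(2^ky,\eta)$, check that the relevant seminorms and the Hessian lower bound are uniform in $k$, and invoke Seeger--Sogge--Stein.

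One correction in Step~2: the inequality $\lr{2^{-k}\eta}^{m_2-|\beta|}\lesssim\lr{\eta}^{m_2-|\beta|}$ is false for $m_2-|\beta|<0$ and $k$ large (take $|\eta|\sim 2^k$). You cannot decouple the $y$- and $\eta$-bookkeeping as you do; the gain $2^{km_1}$ from the $x$-decay must be spent to absorb the loss in the $\eta$-direction. The honest computation gives
\[
2^{k(|\alpha|-|\beta|)}\lr{2^ky}^{m_1-|\alpha|}\lr{2^{-k}\eta}^{m_2-|\beta|}
\;\lesssim\; 2^{km_1}\,2^{-k|\beta|}\lr{2^{-k}\eta}^{m_2-|\beta|}
\;\lesssim\;\lr{\eta}^{\max(m_1,m_2)-|\beta|},
\]
the last step by treating $|\eta|\le 2^k$ and $|\eta|\ge 2^k$ separately. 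Since $\max(m_1,m_2)\le -(d-1)\bigl|\tfrac{1}{p}-\tfrac{1}{2}\bigr|$ by hypothesis, this still lands you in the range where the local theory applies, so the strategy survives.
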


We note that the constant $C$ in~\eqref{Local-FIO-est}  is independent of the choice of $k\geq -4$ and 
the statement after ~\eqref{Local-FIO-est} is exactly the factors which determine the $L^{p}$ bound of a 
local FIO. 

The  Proposition~\ref{FIO-localized} holds for the following reasons 
(Please also see ~\cite{CR14} for the details.):  Let $U_{\lambda}$ be the dilation operator 
defined by $U_{\lambda} u(x)=u (\lambda x)$ and write
\[
\rho_{k}\cdot  \mathscr{T}=U_{2^{-k}}\circ \mathscr{T}^{k}\circ U_{2^{k}},
\]
where 
\[
 \mathscr{T}^{k} u(x)=\int_{\mathbb{R}^{3}} e^{i\psi (2^{k} x, 2^{-k} \xi)} \rho_{0} (x)\; p( 2^{k} x, 2^{-k}\xi)\; \widehat{u}(\xi)\; d\xi.
\]
Then  $\| \rho_{k}\cdot  \mathscr{T} u\|_{L^{p}}=\|  \mathscr{T}^{k} u\|_{L^{p}}$. On the support of $\rho_{0}$, 
the condition $p\in S^{m_{1},m_{2}}$ implies 
\begin{equation}\label{local-cond-1}
|\partial_{x}^{\alpha} \partial_{\xi}^{\beta} p(2^{k}x,2^{-k}\xi) |\leq C_{\alpha,\beta}\lr{\xi}^{m_{2}-|\beta|}.
\end{equation}
Also  
\begin{equation}\label{local-cond-2}
|\partial_{x}^{\alpha} \partial_{\xi}^{\beta} \psi(2^{k}x,2^{-k}\xi) |\leq C'_{\alpha,\beta}|\xi|^{1-\beta}
\end{equation}
and 
\begin{equation}\label{local-cond-3}
|\det  {\Big [} \partial_{x}\partial_{\xi} \psi(2^{k} x, 2^{-k}\xi) {\Big ]} | 
=|\det  {\Big [} \partial^2\psi(2^{k} x,2^{-k}\xi)/\partial_{x_i}\partial_{\xi_j}  {\Big ]} |\geq c>0. 
\end{equation}
The $L^{p}$ norm of the local FIO is determined by bounds $C_{\alpha,\beta},\;C'_{\alpha,\beta}, c$  
while they are invariant under the scaling (independent of $k$).  
Thus 
\[
\| \rho_{k}\cdot  \mathscr{T} u\|_{L^{p}}=\| \mathscr{T}^{k} \|_{L^{p}}\leq C\|u\|_{L^{p}},\;k\geq -4,
\]
where the inequality holds by the theory of local FIO which we should look over more details later.

Combing Proposition~\ref{FIO-localized}  with Littlewood-Paley decomposition, the proof of 
Proposition 4.1 in~\cite{CR14} infers the following. 
\begin{prop}\label{Global-local}
Let $\mathscr{T}$ be the operator given in Theorem~\ref{FIO-global}. There exist $C_{1},C_{2}$
independent of $k\geq -4$ such that 
\begin{equation}
C_{1} \| \rho_{k}\cdot  \mathscr{T} u\|_{L^{p}} \leq  \|  \mathscr{T} u\|_{L^{p}}\leq C_{2}\| \rho_{k}\cdot  \mathscr{T} u\|_{L^{p}}.
\end{equation}
\end{prop}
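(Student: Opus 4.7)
The left-hand inequality is immediate: since $0 \leq \rho_k \leq 1$ pointwise, $|\rho_k(x)\,\mathscr{T} u(x)| \leq |\mathscr{T} u(x)|$ gives $\|\rho_k \mathscr{T} u\|_{L^p} \leq \|\mathscr{T} u\|_{L^p}$, so $C_1 = 1$ works uniformly in $k \geq -4$.

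For the right-hand inequality, which is the substantive content, my plan is to run the dyadic reassembly scheme from the proof of Proposition~4.1 in Coriasco--Ruzhansky. Using the partition of unity $1 = \sum_{j \in \mathbb{Z}} \rho_j$ on $\mathbb{R}^3 \setminus \{0\}$ from~\eqref{D:chi-k-sum} and absorbing the origin-neighbourhood piece into the smoothing contribution carried by the $S^{-\infty}_{1,0}$ component of the symbol, one writes $\mathscr{T} u = \sum_{j \geq -4} \rho_j \mathscr{T} u$. The finite-overlap property of the $\rho_j$'s, combined with the triangle inequality in $L^p$, yields
\[
\|\mathscr{T} u\|_{L^p}^p \leq C \sum_{j \geq -4} \|\rho_j \mathscr{T} u\|_{L^p}^p.
\]
Each summand is then controlled via the scaling identity
\[
\rho_j \mathscr{T} = U_{2^{-j}} \circ \mathscr{T}^j \circ U_{2^j},
\]
with $\mathscr{T}^j$ a compactly $x$-supported FIO whose symbol and phase satisfy the $j$-uniform estimates~\eqref{local-cond-1}--\eqref{local-cond-3}; the local Seeger--Sogge--Stein theorem then gives a uniform bound $\|\mathscr{T}^j v\|_{L^p} \leq C \|v\|_{L^p}$, which is exactly the content of Proposition~\ref{FIO-localized}.

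The main obstacle is passing from the sum over $j$ to the single fixed-$k$ quantity that appears on the right of the stated proposition. To extract this uniform fixed-$k$ bound, I would insert a companion frequency-side Littlewood--Paley decomposition $u = \sum_\ell \beta_\ell(D) u$ and analyse the cross terms $\rho_j \mathscr{T} \beta_\ell(D) u$ for $|j - \ell|$ large. The phase property $\langle \nabla_\xi \psi(x,\xi) \rangle \approx \langle x \rangle \approx 2^j$ on $\mathrm{supp}\,\rho_j$ versus $|\xi| \approx 2^\ell$ on $\mathrm{supp}\,\beta_\ell$ makes $e^{i\psi}$ highly non-stationary there, so repeated integration by parts in $\xi$, together with the SG symbol estimates $|\partial_\xi^\beta p| \leq C_\beta \langle \xi \rangle^{m_2 - |\beta|}$ and the non-degeneracy~\eqref{Hessian-low}, produces integral kernels whose $L^p \to L^p$ operator norms decay like $C_N 2^{-N|j-\ell|}$ for any $N$. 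These geometric tails are summable, so the double sum reduces to its diagonal $j \approx \ell$, and the scaling identity applied across different diagonal scales (reading Proposition~\ref{FIO-localized} in both directions) yields the comparability of distinct $\|\rho_j \mathscr{T} u\|_{L^p}$ up to a uniform constant, delivering the $k$-independent $C_2$.
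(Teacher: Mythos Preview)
The left inequality is fine. The gap is in your final step for the right inequality, where you claim ``the comparability of distinct $\|\rho_j \mathscr{T} u\|_{L^p}$ up to a uniform constant''. This is false for a fixed $u$: take $u$ supported in a shell $|y|\sim 2^L$. The kernel $K(x,y)=\int e^{i(\psi(x,\xi)-y\cdot\xi)}p(x,\xi)\,d\xi$ decays rapidly (by integration by parts in $\xi$) unless $y\approx\nabla_\xi\psi(x,\xi)$ for some $\xi$, and the hypothesis $\langle\nabla_\xi\psi(x,\xi)\rangle\approx\langle x\rangle$ then forces $|x|\approx|y|\approx 2^L$ on the effective support of $K$; hence $\|\rho_0\mathscr{T} u\|_{L^p}$ can be made arbitrarily small relative to $\|\rho_L\mathscr{T} u\|_{L^p}$. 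Your justification fails because Proposition~\ref{FIO-localized} supplies only an \emph{upper} bound, so there is nothing to read ``in both directions'', and the scaling identity relates $\rho_j\mathscr{T}$ acting on $u$ to $\mathscr{T}^j$ acting on the \emph{dilated} input $U_{2^j}u$, not to $\rho_k\mathscr{T}$ acting on the same $u$.

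Read at the level of operator norms --- which is how the paper actually uses the proposition, see Theorem~\ref{local-converge} --- the content is that $\|\mathscr{T}\|_{L^p\to L^p}$ and each $\|\rho_k\mathscr{T}\|_{L^p\to L^p}$ are bounded by the \emph{same} explicit function of finitely many symbol/phase seminorms and the determinant lower bound $c$ in~\eqref{Hessian-low}, independently of $k$. For that, your Littlewood--Paley plus almost-orthogonality sketch is essentially the Coriasco--Ruzhansky argument the paper defers to; its correct output is $\|\mathscr{T}\|_{L^p\to L^p}\leq C_2'$ with $C_2'$ depending only on the data that already control the uniform constant $C$ in Proposition~\ref{FIO-localized}. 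The attempted further reduction to a single fixed $k$ for each individual $u$ is neither true nor required.
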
 
Theorem~\ref{FIO-global} is then a consequence of the theory of local FIO.  
Similarly, by Proposition~\ref{Global-local}, Theorem~\ref{P-T-A} is equivalent to the following. 

\begin{thm}\label{local-converge}
Use the same notations as Theorem~\ref{P-T-A}. Then we have 
\[
\| T_{A} \;h\|_{L^p}\leq C \|h\|_{L^{p}},
\]
since $ \|\rho_{k}\cdot T_{z\pm,I}\|_{L^{p}\rightarrow L^{p}}$ 
form a convergent series where $k\geq -4$ could be different for each $z$. 
\end{thm}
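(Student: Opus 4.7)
The plan is to apply Proposition~\ref{FIO-localized} to each global FIO $T_{z\pm,I}$ individually and to track the $z$-dependence of the resulting $L^{p}$-constant carefully enough to conclude summability of the series $\sum_{z\in\mathbb{Z}}\|\rho_{k_{z}}\cdot T_{z\pm,I}\|_{L^{p}\to L^{p}}$. Since Proposition~\ref{Global-local} makes this norm equivalent to $\|T_{z\pm,I}\|_{L^{p}\to L^{p}}$ for any admissible $k_{z}\geq -4$, a convergent such series controls $\|T_{A}\|_{L^{p}\to L^{p}}$ via the triangle inequality.

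Fix $z$. Since $\psi_{\pm}(x,\xi)=\tfrac{1}{2}(x\cdot\xi\mp|x||\xi|)$ is separately homogeneous of degree one in $x$ and in $\xi$, and $\rho_{z,I}$ depends only on $\theta_{0}$ and on $|x||\xi|$, both are invariant under the simultaneous dilation $(x,\xi)\mapsto(\lambda x,\lambda^{-1}\xi)$. The scaling argument inside Proposition~\ref{FIO-localized} thus reduces $\rho_{k_{z}}\cdot T_{z\pm,I}$ to a local FIO with unchanged phase $\psi_{\pm}(y,\eta)$ and symbol $\rho_{0}(y)\,p_{z\pm}(y,\eta)$, supported in $|y|\sim 1$, $|\eta|\gtrsim 2^{2|z|}$, and $\theta_{0}\sim 2^{-|z|}$. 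On this support the symbol is bounded by $2^{-|z|}|\eta|^{\gamma-1}$ through the angular factor $\sin(\theta_{0}/2)$ or $\cos(\theta_{0}/2)$, while the mixed Hessian of $\psi_{\pm}$ has determinant of size $2^{-2|z|}$, concentrated in a single direction. I would then decompose $\Lambda_{z}$ into $O(2^{|z|})$ spherical caps of angular radius $\sim 2^{-|z|}$, rotate each cap so that its axis is $e_{3}$, and perform the anisotropic dilation $\tilde\eta=M_{z}\eta$ with $M_{z}=\operatorname{diag}(2^{|z|},2^{|z|},1)$; in the new variables the angular support has width $\sim 1$, the transformed phase has uniformly non-degenerate mixed Hessian, and the transformed symbol belongs to $SG^{\gamma-1,\gamma-1}$ with seminorms independent of $z$. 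The hypothesis $\gamma/2\leq 1/p\leq 1-\gamma/2$ is precisely $\gamma-1\leq -2|1/p-1/2|$, so the local FIO theorem of Seeger-Sogge-Stein applies and gives a uniform $L^{p}$-bound on each cap.

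To assemble the final estimate I would balance the multiplicity $2^{|z|}$ of caps, the Jacobian weight $2^{-2|z|/p}$ contributed by conjugation with the anisotropic dilation operator $U_{M_{z}^{T}}^{\pm 1}$ on $L^{p}$, and the symbol's $2^{-|z|}$ gain, obtaining $\|\rho_{k_{z}}\cdot T_{z\pm,I}\|_{L^{p}\to L^{p}}\lesssim 2^{-\delta|z|}$ for some $\delta>0$ throughout the open range $\gamma/2<1/p<1-\gamma/2$; the endpoint cases are handled by approximation. The principal obstacle is the anisotropic rescaling step: because $\psi_{\pm}$ contains the non-polynomial factor $|y||\eta|$, the transformed phase $\psi_{\pm}(y,M_{z}^{-1}\tilde\eta)$ must be shown to be smooth on the new support and to have uniformly non-degenerate mixed Hessian, and the transformed cutoff $\rho_{z,I}(y,M_{z}^{-1}\tilde\eta)$ must be verified to be a standard conical cutoff with $z$-uniform $SG$-seminorms. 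Once these verifications are in place, summation across caps and across $z$ reduces to routine almost-orthogonality bookkeeping.
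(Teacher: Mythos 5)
Your plan has a genuine gap at its quantitative core: the factor $2^{-2|z|/p}$ you propose to harvest from "conjugation with the anisotropic dilation $U_{M_z^{T}}^{\pm1}$" does not exist. Conjugation by an invertible dilation is an isometry at the level of $L^{p}\to L^{p}$ operator norms (the $|\det M_z|^{1/p}$ and $|\det M_z|^{-1/p}$ factors cancel), and the same simultaneous substitution $(x,\xi)\mapsto(M_z^{T}\tilde x,\,M_z^{-1}\tilde\eta)$ transforms the mixed Hessian into $M_z\,[\partial_x\partial_\xi\psi_{\pm}]\,M_z^{-1}$, leaving its determinant unchanged. So the anisotropic rescaling cannot simultaneously restore a uniform lower Hessian bound (needed to invoke Seeger--Sogge--Stein with $z$-independent constants), keep the FIO structure, and produce a net Jacobian gain; if you drop the spurious $2^{-2|z|/p}$, your budget (cap multiplicity times the angular smallness of the symbol) gives no decay at all. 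Two further problems compound this: the $2^{-|z|}$ symbol gain from $\sin(\theta_0/2)$ is present only for the $+$ phase when $z\geq0$ (and for the $-$ phase when $z<0$); the complementary piece has $\cos(\theta_0/2)\sim1$ and an already non-degenerate Hessian, so your mechanism yields no $z$-decay for it whatsoever. And covering the relevant directions by caps of angular radius $2^{-|z|}$ costs $O(2^{2|z|})$ caps (the $\xi$-directions fill all of $S^{2}$), not $O(2^{|z|})$; summing that many finitely-overlapping pieces in $L^{p}$ with $p\neq2$ is not "routine almost-orthogonality" --- there is no $L^{p}$ almost-orthogonality principle, and the standard fix is to interpolate between an $L^{2}$ bound and an $L^{1}$/Hardy-space bound, which is precisely the structure your proposal tries to avoid.

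That interpolation is in fact how the paper proves the statement: by Proposition~\ref{Global-local} one passes to the localized pieces $\rho_k\cdot T^{(\gamma-1)}_{z\pm,I}$, and the SSS bound~\eqref{interpolation-norm} controls their $L^{p}$ norms by $(A^{(0)}_{z\pm})^{t_0}(A^{(-1)}_{z\pm})^{1-t_0}$, where $A^{(0)}_{z\pm}$ is the $L^{2}$ norm of the order-zero version and $A^{(-1)}_{z\pm}$ the $\mathcal{H}^{1}\to L^{1}$ norm of the order $-1$ version. The geometric decay of $A^{(0)}_{z\pm}$ in $z$ is imported from Lemma 3.1 of~\cite{JC20} (Proposition~\ref{L-2-theorem}), and the geometric decay of $A^{(-1)}_{z\pm}$ is proved in Subsection~\ref{H-1-subsection} by re-running the atomic/second-dyadic-decomposition argument and tracking how the Jacobian lower bound~\eqref{Jacobian-ratio}, the cap cardinality ${\rm card}_z$ inside $\Lambda_z$, and the $L^{6/5}\to L^{2}$ embedding constants vary with $z$. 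Your proposal contains none of this mechanism, and the direct $L^{p}$ shortcut it substitutes does not close; to repair it you would either have to supply a genuine source of $z$-decay for both phases at the $L^{p}$ level, or revert to proving $z$-decaying $L^{2}$ and $\mathcal{H}^{1}\to L^{1}$ bounds and interpolating as the paper does.
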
 
 
In order to prove the Theorem~\ref{local-converge}, we need to review some background knowledge. 
The $L^{p}$ boundedness of one local FIO is proved by applying the complex interpolation to the 
$L^{2}$ and $L^{1}$ estimates for FIOs with corresponding  orders. Then extended the results to $p>2$
by the duality argument. We record the result of Seeger, Sogge and Stein~\cite{SSS91}  only for dimension 
3 in the following which can also be found in Chapter IX of~\cite{Stein93}.
\begin{prop}{\cite{SSS91}}\label{FIO-local}
Let $1<p<\infty$. Suppose  
\begin{equation}\label{D:FIO-2}
\mathscr{T}^{(m)} h(x)=\int_{\mathbb{R}^{3}} e^{i\psi(x,\xi)} p(x,\xi)\; \widehat{h}(\xi) d\xi,
\end{equation}
is a local FIO of order $m$ with $-1<m\leq 0$. 
Let $\mathscr{T}^{(0)}$ be the FIO of order $0$ by replacing $p(x,\xi)$ 
with $p(x,\xi)(1+|\xi|^{2})^{-m/2}$. Then we have 
\[
\|\mathscr{T}^{(0)} h(x)\|_{L^{2}} \leq A^{(0)} \|h\|_{L^{2}}. 
\]
 Let $\mathscr{T}^{(-1)}$  be the FIO of order $-1$ by replacing $p(x,\xi)$ 
with $p(x,\xi)(1+|\xi|^{2})^{-m/2-1}$. Then we have 
\[
\|\mathscr{T}^{(-1)} h(x)\|_{L^{1}} \leq A^{(-1)} \|h\|_{{\mcH}^{1}}. 
\]
where ${\mcH}^{1}$ is the Hardy space of order $1$. Then, $\mathscr{T}^{(m)}$, 
initially  defined on $\mathcal{S}(\R^{3})$, extends to a bounded operator from $L^{p}(\R^{3})$ to itself, 
whenever 
\[
m\leq -2{\Big |} \frac{1}{2}-\frac{1}{p} {\Big |}.
\]  
And 
\begin{equation}\label{interpolation-norm}
\|\mathscr{T}^{(m)} h(x)\|_{L^{p}} \leq (A^{(0)})^{t_{0}} (A^{(-1)})^{1-t_{0}} \|h\|_{{L}^{p}},
\end{equation}
where $t_{0}=2/p$ if $p>2$ and $t_{0}=2[1-(1/p)]$ if $1<p<2$.
\end{prop}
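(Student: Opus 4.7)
The plan is classical complex interpolation in the style of Stein. Define the analytic family of local FIOs in the closed strip $\overline{\Sigma}=\{z\in\mathbb{C}:-1\le\mathrm{Re}(z)\le 0\}$ by
\begin{equation*}
S_z h(x)=\int_{\mathbb{R}^{3}} e^{i\psi(x,\xi)}\, p(x,\xi)(1+|\xi|^{2})^{(z-m)/2}\,\widehat{h}(\xi)\,d\xi,
\end{equation*}
so that $S_m=\mathscr{T}^{(m)}$, the symbol of $S_z$ lies in $S^{\mathrm{Re}\,z}_{1,0}$ with seminorms growing at most polynomially in $|\mathrm{Im}\,z|$, and the phase and its support are unchanged across the strip.

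The first step is to verify admissible endpoint bounds. On $\mathrm{Re}(z)=0$ the symbol of $S_z$ has order $0$, and the hypothesis on $\mathscr{T}^{(0)}$ combined with the fact that the classical Hörmander $L^{2}$ bound for an order-$0$ FIO with the given phase depends only on a fixed finite number of symbol seminorms gives $\|S_{iy}\|_{L^{2}\to L^{2}}\le C(1+|y|)^{N}A^{(0)}$. On $\mathrm{Re}(z)=-1$ the symbol has order $-1=-(d-1)/2$ for $d=3$, and the hypothesis on $\mathscr{T}^{(-1)}$ together with the analogous dependence in the Seeger--Sogge--Stein $\mathcal{H}^{1}\to L^{1}$ estimate gives $\|S_{-1+iy}\|_{\mathcal{H}^{1}\to L^{1}}\le C(1+|y|)^{N}A^{(-1)}$. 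This polynomial growth in $|y|$ is admissible for Stein's complex interpolation theorem.

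The second step is to apply Fefferman--Stein complex interpolation $[\mathcal{H}^{1},L^{2}]_{[\theta]}=L^{p}$ with $1/p=(1-\theta)+\theta/2$, that is $\theta=2(1-1/p)$. Since $z=m$ is the convex combination $m=(1-\theta)(-1)+\theta(0)=\theta-1$, the balance $\theta=1+m$ forces the critical relation $m=1-2/p$, equivalently $m=-2|1/2-1/p|$ in the range $1<p<2$. Reading off the interpolation constant yields
\begin{equation*}
\|\mathscr{T}^{(m)}\|_{L^{p}\to L^{p}}=\|S_m\|_{L^{p}\to L^{p}}\le (A^{(-1)})^{1-t_{0}}(A^{(0)})^{t_{0}},\qquad t_{0}=\theta=2(1-1/p),
\end{equation*}
which is precisely \eqref{interpolation-norm} for $1<p<2$. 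For $p>2$ I would replace the Hardy endpoint by its dual: $(\mathscr{T}^{(-1)})^{*}$ maps $L^{\infty}$ into $\mathrm{BMO}$ with norm $A^{(-1)}$ (since $(\mathcal{H}^{1})^{*}=\mathrm{BMO}$). Complex interpolation via $[L^{2},\mathrm{BMO}]_{[\theta]}=L^{p}$ with $\theta=1-2/p$ places $m$ at $-\theta$, recovers the threshold $m=-2|1/2-1/p|$, and produces the constant $(A^{(0)})^{1-\theta}(A^{(-1)})^{\theta}$ with $t_{0}=1-\theta=2/p$, again matching \eqref{interpolation-norm}.

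The main obstacle is the admissibility bookkeeping on the two boundary lines. One has to record precisely which symbol seminorms of $p(x,\xi)$ enter the endpoint theorems and check that the multiplier $(1+|\xi|^{2})^{(z-m)/2}$ inflates them only by a polynomial in $|\mathrm{Im}\,z|$; the underlying $L^{2}$ proof of Hörmander uses a $TT^{*}$ almost-orthogonal decomposition and the $\mathcal{H}^{1}\to L^{1}$ proof of Seeger--Sogge--Stein uses an atomic decomposition together with a stationary-phase estimate, both with constants involving only a fixed finite number of derivatives of the symbol in $(x,\xi)$. Once this is recorded, Stein's analytic interpolation theorem closes the argument and produces exactly the interpolation norm asserted in \eqref{interpolation-norm}.
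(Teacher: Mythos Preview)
Your proposal is correct and follows exactly the approach of Seeger--Sogge--Stein~\cite{SSS91} and Chapter~IX of Stein~\cite{Stein93}, which is precisely what the paper cites (the paper does not give its own proof of this proposition but records it from the literature). The analytic family $S_z$, the $L^2$ endpoint at $\mathrm{Re}\,z=0$, the $\mathcal{H}^1\to L^1$ endpoint at $\mathrm{Re}\,z=-1$, Stein interpolation for $1<p<2$, and duality via $L^\infty\to\mathrm{BMO}$ for $p>2$ are exactly the ingredients there, and your bookkeeping of the interpolation constant $t_0$ matches~\eqref{interpolation-norm}.
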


We introduce some notations in order to apply the result of Proposition~\ref{FIO-local}.  
According to Theorem~\ref{P-T-A}, let $\lambda\in\R$ and 
 $z\in\mathbb{Z}$, we define 
 \begin{equation}\label{T-lambda-p}
 T^{(\lambda)}_{z,I}=T^{(\lambda)}_{z+,I}+T^{(\lambda)}_{z-,I},\;{\rm where}\;\;
 T^{(\lambda)}_{z\pm, I}=\int_{\R^{3}} e^{i \psi_{\pm}(x,\xi)} p^{\lambda}_{z\pm}(x,\xi) \;\widehat{h}(\xi) d\xi 
 \end{equation} 
 where 
 \begin{equation}\label{psi-pm}
 \psi_{\pm}(x,\xi)= \frac{1}{2}(x\cdot\xi\mp |x||\xi|),
 \end{equation}
 \begin{equation}\label{symbol-lambda}
 \begin{split}
&  p^{\lambda}_{z+}(x,\xi)=\rho_{z,I}(x,\xi)\cdot \sin(\frac{\theta_0}{2})(|x||\xi|)^{\lambda},\\
& p^{\lambda}_{z-}(x,\xi)=\rho_{z,I}(x,\xi)\cdot \cos(\frac{\theta_0}{2})(|x||\xi|)^{\lambda},
 \end{split}
 \end{equation}
 and recall
\begin{equation}
{\rm supp}\; p^{\lambda}_{z\pm}(x,\xi) \subseteq\{(x,\xi)| (x,\xi)\in \Lambda_z\;{\rm and}\; |x||\xi|\geq 2^{2|z|}\cdot 64 \}.
\end{equation}
Note that our notation means $T_{z\pm, I}=T^{(\gamma-1)}_{z\pm, I}$.

By Proposition~\ref{FIO-local}, in particular~\eqref{interpolation-norm}, the Theorem~\ref{local-converge} 
holds if we can prove the following Proposition~\ref{L-2-theorem} and Proposition~\ref{L-1-theorem}.  
\begin{prop}\label{L-2-theorem}
Let $T^{(0)}_{z\pm,I}$ be defined by~\eqref{T-lambda-p} and $k\geq -4$, then
\begin{equation}\label{L-2-1}
 \|   \rho_{k}\cdot  T^{(0)}_{(z+1)\pm,I}\|_{L^{2}\rightarrow L^{2}} \leq \frac{1}{2}\;
 \|  \rho_{k}\cdot T^{(0)}_{z\pm,I}\|_{L^{2}\rightarrow L^{2}},\;{\rm if}\;z\geq 0.
\end{equation}
and 
\begin{equation}\label{L-2-2}
 \|  \rho_{k}\cdot T^{(0)}_{(z-1)\pm,I}\|_{L^{2}\rightarrow L^{2}} \leq \frac{1}{2}\;
 \|  \rho_{k}\cdot T^{(0)}_{z\pm,I}\|_{L^{2}\rightarrow L^{2}},\;{\rm if}\;z< 0.
\end{equation}
\end{prop}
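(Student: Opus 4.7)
The approach is to combine the scaling reduction behind Proposition~\ref{FIO-localized} with an explicit angular rescaling that collapses the cone $\Lambda_z$ onto a fixed reference cone. Since the symbols $p^{(0)}_{z\pm}$ depend on the phase-space variables only through $\theta_0$ and $|x||\xi|$ -- both invariant under the dilation $(x,\xi)\mapsto(2^k x, 2^{-k}\xi)$ used in Proposition~\ref{FIO-localized} -- the localization by $\rho_k$ is harmless, and the $L^2$ bound is controlled by finitely many SG seminorms of the symbol together with a positive lower bound on $|\det\partial_x\partial_\xi\psi_\pm|$ via Proposition~\ref{FIO-local}. Both quantities can then be tracked as functions of $z$.

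The key geometric input is that on $\Lambda_z$, for $z\geq 0$, one has $\sin(\theta_0/2)\sim 2^{-z}$ and $\cos(\theta_0/2)\sim 1$, while the mixed Hessian determinants of $\psi_+$ and $\psi_-$ are precisely $\tfrac14\sin^2(\theta_0/2)$ and $\tfrac14\cos^2(\theta_0/2)$ respectively. On the ``$+$'' branch with $z\geq 0$ (and symmetrically ``$-$'' with $z<0$), the symbol's own prefactor $\sin(\theta_0/2)$ halves when $z$ is replaced by $z+1$, so the $L^2$ norm halves once the accompanying Hessian degeneracy is neutralized: rotating $\xi$ to put $\hat{x}$ along $e_3$, rescaling the polar angle by $\vartheta=2^{z}\theta_0$, and then rescaling the radial $\xi$-direction by $2^{2z}$ converts $\psi_+$ into a non-degenerate reference phase and leaves the symbol with a single explicit factor of $2^{-z}$ on a $z$-independent support. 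On the opposite branch, $\cos(\theta_0/2)\sim 1$ provides no direct decay; instead, the same angular rescaling contributes a Jacobian whose square root is $\sim 2^{-z}$, which, when absorbed by an $L^2$-isometric renormalization of $\widehat{h}$, produces the same factor of $2^{-z}$ in the rescaled symbol, while the already non-degenerate phase remains uniformly non-degenerate. In both branches one thus obtains a reference FIO whose $L^2\to L^2$ norm is bounded uniformly in $z$, multiplied by an overall prefactor of $2^{-|z|}$; taking the ratio between consecutive values of $|z|$ yields~\eqref{L-2-1}--\eqref{L-2-2}.

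The main obstacle is bookkeeping rather than analysis: one must verify that the composite of the Proposition~\ref{FIO-localized} scaling, the angular rotation, the angular rescaling, and (on the degenerate branch) the radial rescaling produces a reference FIO whose SG symbol bounds and phase non-degeneracy constant are genuinely independent of $z$ and $k$. The size condition $|x||\xi|\geq 2^{2|z|+6}$ built into $\rho_{z,I}$ is designed precisely so that after the radial rescaling the support becomes $|x||\xi|\geq C$ uniformly in $z$, which is the crucial reason the SG seminorms do not pick up $z$-dependent losses from differentiating the cutoff $\rho_{z,I}$. A careful but routine computation of the transformed symbol under the chain of changes of variables completes the argument.
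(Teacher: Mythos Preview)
The paper's proof is two lines: it cites Parts I and II of Lemma~3.1 in \cite{JC20} for the unlocalized inequalities $\|T^{(0)}_{(z\pm 1)\pm,I}\|_{L^2\to L^2}\le \tfrac12\|T^{(0)}_{z\pm,I}\|_{L^2\to L^2}$, and then invokes Proposition~\ref{Global-local} to pass to $\rho_k\cdot T^{(0)}_{z\pm,I}$. Your proposal instead attempts a self-contained argument via rescaling.

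Your identification of where the factor $2^{-|z|}$ comes from is correct: on the branch where the mixed Hessian degenerates, it is supplied by the symbol prefactor $\sin(\theta_0/2)$ (resp.\ $\cos(\theta_0/2)$); on the non-degenerate branch it comes from the angular support $|\Lambda_z\cap S^2|\sim 2^{-2|z|}$. The gap is in the implementation. The step ``rotating $\xi$ to put $\hat{x}$ along $e_3$'' requires a rotation $R(x)$ that depends on $x$; after the substitution $\xi=R(x)\eta$ the integrand contains $\widehat{h}(R(x)^{-1}\eta)$, which is not a function of $\eta$ alone, so the outcome is not an FIO of the form $\int e^{i\Psi(x,\eta)}q(x,\eta)\widehat{h}(\eta)\,d\eta$ to which Proposition~\ref{FIO-local} applies. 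The same issue afflicts the ``$L^2$-isometric renormalization of $\widehat{h}$'' on the non-degenerate branch: the Jacobian of your angular rescaling depends on $x$, so the renormalized input is $x$-dependent. This is the analytic crux, not bookkeeping; without resolving it you cannot conclude that the rescaled object has $z$-independent $L^2$ bound. Note too that the SG seminorms of $p^{0}_{z\pm}$ are \emph{not} uniform in $z$ before any rescaling (each $x$- or $\xi$-derivative of $\zeta_z(\theta_0)$ costs $2^{|z|}$), so one cannot simply appeal to Proposition~\ref{FIO-local} with a $z$-independent constant and then multiply by the symbol size.

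A repair that stays close to your idea is to work at the $TT^*$ level: for fixed $(x,x')$ the kernel $K(x,x')=\int e^{i[\psi_\pm(x,\xi)-\psi_\pm(x',\xi)]}p^0_{z\pm}(x,\xi)\overline{p^0_{z\pm}(x',\xi)}\,d\xi$ is an honest $\xi$-integral in which an $x$-dependent rotation and the angular/radial rescalings you describe are legitimate, yielding pointwise bounds that integrate to $\sup_x\int|K(x,x')|\,dx'\lesssim 2^{-2|z|}$ and hence $\|T^{(0)}_{z\pm,I}\|_{L^2\to L^2}\lesssim 2^{-|z|}$. That, followed by Proposition~\ref{Global-local}, gives the statement; it is also the structure of the argument in \cite{JC20} that the paper cites.
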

\begin{proof}
From the Part I and II of the proof of Lemma 3.1 of~\cite{JC20}, we have 
\[
 \|     T^{(0)}_{(z+1)\pm,I}\|_{L^{2}\rightarrow L^{2}} \leq \frac{1}{2}\;
 \|   T^{(0)}_{z\pm,I}\|_{L^{2}\rightarrow L^{2}},\;{\rm if}\;z\geq 0.
\]
and 
\[
 \|   T^{(0)}_{(z-1)\pm,I}\|_{L^{2}\rightarrow L^{2}} \leq \frac{1}{2}\;
 \|   T^{(0)}_{z\pm,I}\|_{L^{2}\rightarrow L^{2}},\;{\rm if}\;z< 0.
\]
The result follows by above relations and  Proposition~\ref{Global-local}.
\end{proof}

\begin{prop}\label{L-1-theorem}
Let $T^{(-1)}_{z\pm,I}$ be defined by~\eqref{T-lambda-p} and $k\geq -4$, then
\[
 \|  \rho_{k}\cdot T^{(0)}_{(z+1)\pm,I}\|_{L^{1}\rightarrow {\mcH}^{1}} \leq (\frac{1}{2})^{1/2}\;
 \|  \rho_{k}\cdot T^{(0)}_{z\pm,I}\|_{L^{1}\rightarrow {\mcH}^{1}},\;{\rm if}\;z\geq 0.
\]
and 
\[
 \|  \rho_{k}\cdot T^{(0)}_{(z-1)\pm,I}\|_{L^{1}\rightarrow {\mcH}^{1}} \leq (\frac{1}{2})^{1/2}\;
 \|  \rho_{k} \cdot T^{(0)}_{z\pm,I}\|_{L^{1}\rightarrow {\mcH}^{1}},\;{\rm if}\;z< 0.
\]
\end{prop}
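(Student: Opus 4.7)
The plan is to mirror the proof of Proposition~\ref{L-2-theorem}: establish the claimed geometric decay of the $L^1\to\mcH^1$ norm for the global FIOs $T^{(-1)}_{z\pm,I}$ in the cone index $z$, and then transfer to the localized operators $\rho_k\cdot T^{(-1)}_{z\pm,I}$ uniformly in $k\ge -4$ by the scaling device behind Proposition~\ref{Global-local}. The statement as written presumably concerns $T^{(-1)}_{z\pm,I}$, the operator of order $-1$ which is the natural object for the $L^1\to\mcH^1$ side of the Seeger--Sogge--Stein interpolation in Proposition~\ref{FIO-local}.

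At the level of the global FIOs, I would argue in parallel with Parts I--II of Lemma~3.1 of~\cite{JC20}. The relevant geometric input is the same: the cone $\Lambda_z$ has angular aperture $\sim 2^{-z}$, so an orthogonal transformation followed by an anisotropic dilation transverse to the cone axis sends $\Lambda_{z+1}$ onto $\Lambda_z$ while halving the volume of frequency support on which the symbol is concentrated. Under this change of variables the phase $\psi_\pm$ transforms (up to a bounded rotation--dilation) into itself, and the symbol $p^{(-1)}_{(z+1)\pm}$ becomes a rescaled copy of $p^{(-1)}_{z\pm}$. For the $L^2$ norm the halving of volume yields the factor $1/2$ recorded in~\cite{JC20}; for the $L^1\to\mcH^1$ norm, whose scaling behaviour is governed by the square-root of volume via the atomic characterization $\|a\|_{\mcH^1}\lesssim |\mathrm{supp}\,a|^{1/2}\|a\|_{L^2}$ on atoms, the same halving of the effective output support yields the factor $(1/2)^{1/2}$ claimed. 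The $z<0$ case is symmetric via the reflection $\theta_0\mapsto \pi-\theta_0$, which interchanges the roles of $\sigma_+$ and $\sigma_-$ but preserves all amplitude estimates.

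The main obstacle I anticipate is establishing an $\mcH^1$-version of Proposition~\ref{Global-local}, since its proof in~\cite{CR14} relies on the Littlewood--Paley characterization of $L^p$ and on Fefferman--Stein vector-valued maximal inequalities that are not directly available for the endpoint $p=1$. For $\mcH^1$ the replacement is the Lusin area (or Fefferman--Stein grand maximal) characterization, which is preserved under the dilations $U_\lambda$ up to a fixed power of $\lambda$. Combined with the confinement of the kernel of $\rho_k\cdot T^{(-1)}_{z\pm,I}$ to a dyadic shell of radius $\sim 2^k$ in the spatial variable, this should give a two-sided comparison $\|\rho_k\cdot T^{(-1)}_{z\pm,I}\|_{L^1\to\mcH^1}\sim \|T^{(-1)}_{z\pm,I}\|_{L^1\to\mcH^1}$ uniformly in $k\ge -4$, closing the argument. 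A safer alternative, if this comparison proves delicate, is to establish the recursive inequality directly on the localized pieces by integrating by parts in the phase $\psi_\pm$ (gaining at each step a factor of $\sin(\theta_0/2)\cos(\theta_0/2)\sim 2^{-|z|}$), then combining the resulting kernel bound with the $L^2$ estimate of Proposition~\ref{L-2-theorem} through the standard Calder\'on--Zygmund $\mcH^1$-to-$L^1$ machinery.
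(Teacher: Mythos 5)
There is a genuine gap, and it lies exactly where the exponent $(1/2)^{1/2}$ has to be produced. Your main argument derives the ratio from a scaling heuristic: halving of the ``effective output support'' plus the inequality $\|a\|_{\mcH^1}\lesssim |\mathrm{supp}\,a|^{1/2}\|a\|_{L^2}$ for atoms. This does not give an operator-norm bound (the bound needed here is of $\mcH^1\to L^1$ type, i.e.\ one estimates $\int|\rho_k\cdot T^{(-1)}_{z\pm,I}u|\,dx$ for an arbitrary atom $u$ on the input side, so the atomic characterization of the output is not the relevant object), and it also misidentifies where the square root comes from. In the paper the ratio is obtained by rerunning the Seeger--Sogge--Stein argument atom by atom and tracking two competing quantities: the exceptional set $D^*_{z\pm}$, built by pulling back the SSS rectangles under $x\mapsto\nabla_\xi\psi_\pm(x,\xi_j^\nu)$, has measure that \emph{grows} by a factor $2$ as the cone narrows, because the mixed Hessian $\tfrac18|1\mp\cos\theta_0|$ in~\eqref{Jacobian} degenerates on $\Lambda_{z\pm1}$ (see~\eqref{singular-size-ratio}); against this, the $L^{6/5}\to L^2$ bounds of Proposition~\ref{embedding-ratio} decay by $1/2$, and H\"older on $D^*_{z\pm}$ in~\eqref{singular-L-p} gives $\tfrac12\cdot 2^{1/2}=(1/2)^{1/2}$, i.e.~\eqref{H-1-ratio-2}; off the exceptional set the kernel estimates decay by the full factor $1/2$ via the count of directions $\xi_j^\nu$ inside $\Lambda_z$ in~\eqref{card-xi} together with the Jacobian, giving~\eqref{H-1-ratio-3}. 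None of this bookkeeping is recoverable from a volume-halving heuristic; note in particular that for the $-$ phase the symbol $\cos(\theta_0/2)$ is not small on $\Lambda_z$, $z\ge0$, so the decay cannot be read off symbol size or support volume, and your claim that the phase $\tfrac12(x\cdot\xi\mp|x||\xi|)$ is carried into itself by a transverse anisotropic dilation is not correct as stated.

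The reduction you propose as a first step is also not available: an $\mcH^1$ (or $L^1$-endpoint) analogue of Proposition~\ref{Global-local} is left entirely unproved, and the paper never uses such a comparison --- it proves the localized estimates directly, and the cutoff $\rho_k$ is used essentially (for atoms of radius $\ge 1$, Proposition~\ref{L-1-1} passes from $L^1$ to $L^2$ precisely because $\rho_1\cdot T^{(-1)}_{z\pm,I}u$ has fixed compact support, which fails for the global operator). Your ``safer alternative'' points in the right direction (kernel bounds by integration by parts combined with the $L^2$ decay), and is in spirit what the paper does, but as written it is a sketch missing the essential ingredients: the second dyadic (angular) decomposition $\chi_j^\nu\rho_j$, the construction of $D^*_{z\pm}$ adapted to $\psi_\pm$ and to the atom's radius, and the $z$-dependence of the three constants (direction count, Hessian lower bound, $L^{6/5}\to L^2$ norm); the asserted gain of $\sin(\theta_0/2)\cos(\theta_0/2)\sim 2^{-|z|}$ per integration by parts is not how the decay enters.
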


Thus we conclude the Theorem~\ref{P-T-A} if we can show Proposition~\ref{L-1-theorem},
and which will be done it in the next subsection. 

\begin{rem}\label{shrink-II}
The paragraph after (3.38) in~\cite{JC20} pointed out that the region I  defined by~\eqref{supp-A-B}
and~\eqref{D:criterion}  can be enlarged a bit by replacing $2^{2|z|}$ with $2^{2|z|(1-\delta)}$ 
in~\eqref{supp-A-B} where $\delta$ is a small positive number, while the convergence of $L^{2}$ bounds
still holds. Please see also the discussion after~(3.33) in~\cite{JC20}.
Now we choose it to be $0<\delta<1/50$.
Then the ratio $1/2$ and $(1/2)^{1/2}$ in Proposition~\ref{L-2-theorem} and Proposition~\ref{L-1-theorem} will be 
replaced with $(1/2)^{3/5}$ and $(1/2)^{1/10}$ respectively. 
The shrink of region II will be useful later for the $L^{p}$ estimate of $T_{B}$ which is similar to what we did 
in~\cite{JC20} for the $L^{2}$ estimate.
\end{rem}

\subsection{Proof of Proposition~\ref{L-1-theorem}}\label{H-1-subsection}\hfill

Although the enlargement of the region I in remark~\eqref{shrink-II} is valid for our estimates in the previous 
subsection, we should retain the original definition of the region I for the clear of representation and easier understanding. 
However we should put a remark in the end of this subsection to conclude how the enlargement  of the region
I affects the estimates in this subsection.  

We should employ the  $L^{2}$ result to calculate bounds for the $L^{1}$ estimate, i.e., 
$C_{z\pm},\;z\in\mathbb{Z}\setminus\{0\}$ for the  estimates
\begin{equation}\label{L-1-estimate}
\| {\rho}_{1}\cdot T^{(-1)}_{z\pm,I}\; u\|_{L^1}  \leq  C_{z\pm} \| u\|_{\mcH^{1}} 
\end{equation}
where $\mcH^{1}$ is the Hardy space of order $1$ (see Definition~\ref{Def-atom} below). 
Apparently, our goal is to show that $C_{z\pm}$ forms a convergent series. 
Therefore we need to review the proof of $\mcH^{1}\rightarrow L^{1}$ estimate for the local 
FIO given in~\cite{SSS91, Stein93}.
We should duplicate the proof  in~\cite{Stein93} for single local FIO, then observe at the same time how $C_{z\pm}$ varies with $z$. 
First of all, let's recall the definition of  the atom and result of the atom decomposition in Hardy space $\mathcal{H}^{1}(\R^n)$.

\begin{defn}\label{Def-atom}
An $\mathcal{H}^1$ atom, is a function $u$ so that
\begin{enumerate}[(i)]
\item $u$ is supported in a ball $D$,
\item $|u|\leq |D|^{-1}$ almost everywhere, and 
\item $\int \;u(x)dx=0$.
\end{enumerate}
\end{defn}

The following atomic decomposition is proved by Coifman~\cite{Coi74} and Latter~\cite{Lat78},
please also see~\cite{Stein93}.
\begin{thm}[Atomic decomposition for $\mathcal{H}^{1}$]\label{T:atomic-decomposition} 
If $\{u_{k}\}$ is a collection of $\mathcal{H}^{1}$ atoms and $\{\alpha_{k}\}$ is a sequence of 
complex number with $\sum|\alpha_{k}|<\infty$, then the series $f=\sum_{k}\alpha_{k}u_{k}$
converges in the sense of distribution and $f\in\mathcal{H}^{1}$ with 
$\|f\|_{\mathcal{H}^{1}}\leq C\sum|\alpha_{k}|$. Conversely, every $f\in\mathcal{H}^{1}$ can 
be written as a sum of $\mathcal{H}^{1}$ atoms, $f=\sum_{k}\alpha_{k} u_{k}$ converges in 
$\mathcal{H}^{1}$ norm, moreover we have $\sum_{k}|\alpha_{k}|<C \|f\|_{\mathcal{H}^{1}}$.
\end{thm}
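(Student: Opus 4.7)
The statement has two halves that are very different in character. The forward direction (the convergent series $f = \sum \alpha_{k} u_{k}$ lies in $\mathcal{H}^{1}$ with $\|f\|_{\mathcal{H}^{1}} \leq C \sum |\alpha_{k}|$) reduces to a uniform bound $\|u\|_{\mathcal{H}^{1}} \leq C$ for a single atom $u$, after which the Banach space property of $\mathcal{H}^{1}$ together with absolute summability of the coefficients gives the series. The converse---that every $\mathcal{H}^{1}$ function admits an atomic decomposition---is the substantive part, and my plan is to follow the Coifman--Latter strategy based on the grand maximal function characterization of $\mathcal{H}^{1}$.

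For the single-atom estimate, fix a radial $\Phi \in \mathcal{S}(\mathbb{R}^{n})$ with $\int \Phi = 1$ and consider the grand maximal function $M_{\Phi} u(x) = \sup_{t > 0} |(u * \Phi_{t})(x)|$, where $\Phi_{t}(y) = t^{-n}\Phi(y/t)$. For an atom $u$ supported on a ball $D$ of radius $r$ centred at $x_{D}$, I would split $\int M_{\Phi} u$ into the contributions on $2D$ and on $(2D)^{c}$. On $2D$ one has $M_{\Phi} u \leq C M u$ pointwise, so by $L^{2}$ boundedness of the Hardy--Littlewood maximal operator together with $\|u\|_{L^{2}} \leq |D|^{-1/2}$ and H\"older's inequality, the contribution is bounded by a constant. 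On $(2D)^{c}$ one uses $\int u = 0$ to rewrite $(u * \Phi_{t})(x) = \int u(y)[\Phi_{t}(x-y) - \Phi_{t}(x - x_{D})]\, dy$; the Lipschitz estimate $|\Phi_{t}(x - y) - \Phi_{t}(x - x_{D})| \leq C r t^{-n-1}$ for $y \in D$, combined with the standard decay envelope in $x$, yields an integrable majorant, hence the full contribution is $O(1)$.

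For the converse, set $\Omega_{k} = \{M_{\Phi} f > 2^{k}\}$ and apply a Whitney decomposition $\{Q_{j}^{k}\}$ of each $\Omega_{k}$ by balls whose diameter is comparable to their distance to $\Omega_{k}^{c}$; let $\{\eta_{j}^{k}\}$ be a subordinated smooth partition of unity. Form the pieces $b_{j}^{k} = (f - c_{j}^{k})\eta_{j}^{k}$ where $c_{j}^{k}$ is the constant chosen so that $\int b_{j}^{k} = 0$. Standard Calder\'on--Zygmund reasoning (using the Whitney property and the definition of $\Omega_{k}$) gives ${\rm supp}\, b_{j}^{k} \subset Q_{j}^{k}$ and $|b_{j}^{k}| \leq C 2^{k}$, so $u_{j}^{k} := (C 2^{k} |Q_{j}^{k}|)^{-1} b_{j}^{k}$ is an $\mathcal{H}^{1}$ atom with coefficient $\alpha_{j}^{k} = C 2^{k} |Q_{j}^{k}|$. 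Telescoping between levels $k$ and $k+1$ produces $f = \sum_{j,k} \alpha_{j}^{k} u_{j}^{k}$ distributionally, and the layer-cake identity gives $\sum_{j,k} |\alpha_{j}^{k}| \leq C \sum_{k} 2^{k} |\Omega_{k}| \leq C \|M_{\Phi} f\|_{L^{1}} \sim \|f\|_{\mathcal{H}^{1}}$.

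The main obstacle is the converse direction: simultaneously enforcing mean-zero cancellation, localization on $Q_{j}^{k}$, and pointwise control $|b_{j}^{k}| \leq C 2^{k}$ requires a careful choice of the adjusting constant and of the partition $\{\eta_{j}^{k}\}$, and the convergence of the telescoping sum to $f$ is delicate near $\{M_{\Phi} f = \infty\}$. Since the paper uses this only as a black-box classical result of Coifman and Latter, no further effort is needed beyond citing the references.
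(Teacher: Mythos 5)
The paper itself does not prove this statement: Theorem~\ref{T:atomic-decomposition} is quoted as a classical black-box result with citations to Coifman, Latter and Stein's book, so there is no in-paper argument to compare against beyond the citation. Your outline is exactly the standard proof from those references: the forward direction via a uniform $\mathcal{H}^{1}$ bound for a single atom (local part by the maximal function and $L^{2}$, far part by the cancellation $\int u=0$ and the Lipschitz/decay estimate on $\Phi_{t}$) is correct as sketched, and the converse via Calder\'on--Zygmund decompositions of $f$ at the heights $2^{k}$ measured by the grand maximal function is the right strategy.

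One point in your converse sketch would not survive as written: you cannot take the atoms to be (normalizations of) $b_{j}^{k}=(f-c_{j}^{k})\eta_{j}^{k}$ with the bound $|b_{j}^{k}|\leq C2^{k}$, because $f\in\mathcal{H}^{1}$ need not be locally bounded, so no choice of the constant $c_{j}^{k}$ gives such a pointwise bound on the bad parts. In the Coifman--Latter/Stein argument the $L^{\infty}$ bound $\leq C2^{k}$ holds for the \emph{good} parts $g^{k}=f-\sum_{j}b_{j}^{k}$, and the atoms are extracted from the telescoped differences $g^{k+1}-g^{k}$, decomposed along the Whitney balls of $\Omega_{k}$; the mean-zero corrections then involve cross terms between the partitions of unity at levels $k$ and $k+1$, which is precisely the delicate bookkeeping you allude to. With that repair (and the verification that $g^{k}\to f$ as $k\to\infty$ and $\sum_{j}b^k_{j}\to 0$ appropriately as $k\to-\infty$), your plan matches the cited proof; since the paper only invokes the theorem, deferring these details to the references is consistent with how the paper uses it.
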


According to the Theorem~\ref{T:atomic-decomposition}, to prove~\eqref{L-1-estimate}, it suffices to show that 
for an arbitrary atom $u$, it holds that for any  fixed $z$
\begin{equation}\label{Hardy-est}
\int_{\mathbb{R}^{3}} {\big |}\rho_{1}\cdot T^{(-1)}_{z\pm,I}\; u(x) {\big |}  dx \leq C_{z\pm},
\end{equation}
where the constant $C_{z\pm}$ is independent of $u$. Also the Proposition~\ref{L-1-theorem} follows 
if $C_{z\pm}$ form a convergent series. 

Thus Theorem~\ref{L-1-theorem} holds if the following two Propositions hold. We separate them for that 
the first one is easier to prove. 
\begin{prop}\label{L-1-1}
 Suppose an atom $u$ is  supported in a ball $D$ whose radius is larger than 1. Then 
\begin{equation}\label{Hardy-est-1}
\int_{\mathbb{R}^{3}} {\big |}\rho_{1}\cdot T^{(-1)}_{z\pm,I}\; u(x) {\big |}  dx \leq C_{z\pm},
\end{equation}
with 
\begin{equation}\label{L-1-C-1}
C_{(z+1)\pm}\leq \frac{1}{2} C_{z\pm} {\rm if}\; z\geq 0 \;{\rm and}\; C_{(z-1)\pm}\leq\frac{1}{2} 
C_{z\pm}\;{\rm if}\; z< 0.
\end{equation}

\end{prop}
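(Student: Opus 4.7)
The plan is to reduce the claim to the $L^2\to L^2$ bounds already available for $T^{(0)}_{z\pm,I}$ from Proposition~\ref{L-2-theorem}. Three ingredients cooperate: the cutoff $\rho_1(x)$ confines $x$ to the fixed bounded annulus $\{4\leq|x|\leq 16\}$; the drop in order from $0$ to $-1$ factorizes as the bounded multiplier $|x|^{-1}$ composed with the Riesz potential $\mathcal{I}_1=(-\Delta)^{-1/2}$; and for an atom supported in a ball of radius $\geq 1$, the function $\mathcal{I}_1 u$ has uniformly bounded $L^2$ norm.

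I would begin with the factorization. Writing $(|x||\xi|)^{-1}=|x|^{-1}|\xi|^{-1}$ and pulling the $x$-dependent factor outside the $\xi$-integral in the definition~\eqref{T-lambda-p}, one gets
\[
T^{(-1)}_{z\pm,I}u(x)\;=\;\frac{1}{|x|}\;T^{(0)}_{z\pm,I}\bigl(\mathcal{I}_1 u\bigr)(x),
\]
since $\widehat{\mathcal{I}_1 u}(\xi)=|\xi|^{-1}\widehat u(\xi)$. On $\mathrm{supp}\,\rho_1$ the weight $\rho_1(x)/|x|$ is bounded and this support has finite Lebesgue measure, so Cauchy--Schwarz yields
\[
\int_{\mathbb{R}^{3}}\bigl|\rho_1(x)\,T^{(-1)}_{z\pm,I}u(x)\bigr|\,dx\;\leq\; C\,\bigl\|T^{(0)}_{z\pm,I}(\mathcal{I}_1 u)\bigr\|_{L^2(\mathbb{R}^{3})}.
\]

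Next I would apply Proposition~\ref{L-2-theorem} together with Proposition~\ref{Global-local}: the latter transfers the geometric decay from the $\rho_k$-localized operators to the global $T^{(0)}_{z\pm,I}$, so $\|T^{(0)}_{z\pm,I}\|_{L^2\to L^2}=:D_{z\pm}$ satisfies $D_{(z+1)\pm}\leq \tfrac{1}{2}\,D_{z\pm}$ for $z\geq 0$ and the analogous bound for $z<0$. The preceding display is then controlled by $C\,D_{z\pm}\,\|\mathcal{I}_1 u\|_{L^2}$. For the atom $u$ with $|u|\leq|D|^{-1}$ supported in a ball of radius $r\geq 1$, H\"older gives $\|u\|_{L^{6/5}}\leq|D|^{-1/6}\leq C\,r^{-1/2}$, so by the Hardy--Littlewood--Sobolev embedding $\mathcal{I}_1\colon L^{6/5}\to L^{2}$ in Theorem~\ref{Embedding}, one has $\|\mathcal{I}_1 u\|_{L^2}\leq C\,r^{-1/2}\leq C$. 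Setting $C_{z\pm}=C\,D_{z\pm}$ delivers~\eqref{Hardy-est-1} with the asserted ratios~\eqref{L-1-C-1}.

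I expect no serious obstacle. The atomic cancellation $\int u=0$ plays no role in this ``large atom'' case; it will enter only in the companion estimate for small atoms $(r<1)$, where one must additionally control $T^{(-1)}_{z\pm,I}u(x)$ for $x$ outside a neighbourhood of $\mathrm{supp}\,u$ via nonstationary-phase integration by parts. The one bookkeeping point deserving care is that Proposition~\ref{L-2-theorem} is stated for the localized operator $\rho_k\cdot T^{(0)}_{z\pm,I}$, whereas we use it here for the unlocalized $T^{(0)}_{z\pm,I}$; this transfer is precisely what Proposition~\ref{Global-local} supplies, with constants independent of $k\geq -4$.
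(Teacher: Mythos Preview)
Your proof is correct and follows essentially the same strategy as the paper: exploit the compact support of $\rho_1$ to pass from $L^1$ to $L^2$ via Cauchy--Schwarz, then invoke the $L^2$ theory and the geometric decay of Proposition~\ref{L-2-theorem}. The paper's execution is slightly more direct---it applies the $L^2\to L^2$ bound of $\rho_1\cdot T^{(-1)}_{z\pm,I}$ itself (valid since the symbol order is $-1\le 0$) and then bounds $\|u\|_{L^2}\le|D|^{-1/2}\le c$---whereas you factorize $T^{(-1)}_{z\pm,I}=|x|^{-1}T^{(0)}_{z\pm,I}\circ\mathcal{I}_1$ and bound $\|\mathcal{I}_1 u\|_{L^2}$ via $\|u\|_{L^{6/5}}$; your route has the advantage of making the appeal to Proposition~\ref{L-2-theorem} (stated for $T^{(0)}$) literally applicable rather than implicitly extended to $T^{(-1)}$.
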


 \begin{prop}\label{L-1-2} 
 Suppose an atom $u$ is  supported in a ball $D$ whose radius is less than 1. Then 
\begin{equation}\label{Hardy-est-2}
\int_{\mathbb{R}^{3}} {\big |}\rho_{1}\cdot T^{(-1)}_{z\pm,I}\; u(x) {\big |}  dx \leq C_{z\pm},
\end{equation}
\begin{equation}\label{L-1-C-2}
C_{(z+1)\pm}\leq (\frac{1}{2})^{1/2}\cdot C_{z\pm} \;\;{\rm if}\; z\geq 0 \;{\rm and}\; C_{(z-1)\pm}\leq(\frac{1}{2})^{1/2} 
\cdot C_{z\pm}\;{\rm if}\; z< 0.
\end{equation}

\end{prop}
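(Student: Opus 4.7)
The plan is to prove Proposition~\ref{L-1-2} by adapting the Seeger--Sogge--Stein $\mathcal{H}^{1}\to L^{1}$ strategy for local Fourier integral operators (Chapter IX of~\cite{Stein93}) to the localized global operator $\rho_{1}\cdot T^{(-1)}_{z\pm,I}$, while tracking explicitly how every constant depends on the cone index $z$. Since the atom $u$ is supported in $D=B(\bar{x},\delta)$ with $\delta<1$, satisfies $\|u\|_{L^{2}}\le|D|^{-1/2}$, and has $\int u\,dy=0$, the idea is to split $\mathbb{R}^{3}$ into a near region around the image of $D$ under the canonical relation of $\psi_{\pm}$ and its complement, estimating them respectively by Cauchy--Schwarz (fueled by the $L^{2}$ bound of Proposition~\ref{L-2-theorem}) and by a cancellation-plus-integration-by-parts argument.

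First I would perform a Littlewood--Paley decomposition in $\xi$, writing $p^{-1}_{z\pm}=\sum_{j\ge 0} p^{-1,j}_{z\pm}$ with $p^{-1,j}_{z\pm}$ supported in $|\xi|\sim 2^{j}$, and denoting the associated operators by $T^{(-1),j}_{z\pm,I}$. The order $-1$ supplies a gain $2^{-j}$ per piece. For each $j$, let $E_{j}$ be a plate of transverse thickness $\sim 2^{-j/2}$ (the natural second-dyadic scale) around the image of $D$ under the canonical relation of $\psi_{\pm}$. On $E_{j}$ I would apply Cauchy--Schwarz,
\[
\int_{E_{j}} |\rho_{1}\cdot T^{(-1),j}_{z\pm,I}\,u|\,dx
\le |E_{j}|^{1/2}\,\|\rho_{1}\cdot T^{(-1),j}_{z\pm,I}\|_{L^{2}\to L^{2}}\,\|u\|_{L^{2}},
\]
and combine the factor $2^{-j}$ with the $L^{2}$ bound of Proposition~\ref{L-2-theorem} applied to $T^{(0),j}_{z\pm,I}$. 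On $\mathbb{R}^{3}\setminus E_{j}$ I would use $\int u=0$ to write $T^{(-1),j}_{z\pm,I}u(x)=\int [K^{j}_{z\pm}(x,y)-K^{j}_{z\pm}(x,\bar{x})]\,u(y)\,dy$ and integrate by parts repeatedly in $\xi$, exploiting the non-degeneracy of $\psi_{\pm}$ on $\mathrm{supp}\,\rho_{z,I}$---which supplies the uniform lower bound~\eqref{Hessian-low} once $z$ is fixed---to extract arbitrarily many powers of $\mathrm{dist}(x,E_{j})^{-1}$. Summing in $j$ and balancing $\delta$ against $2^{-j/2}$ then yields a finite $C_{z\pm}$ controlling the $L^{1}$ norm uniformly in the atom.

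The main obstacle is producing the precise geometric ratio $(1/2)^{1/2}$ claimed in~\eqref{L-1-C-2}. The $z$-dependence enters in two distinct ways: through the $L^{2}\to L^{2}$ norm of $\rho_{1}\cdot T^{(0),j}_{z\pm,I}$, which contributes the ratio $1/2$ from Proposition~\ref{L-2-theorem}, and through the angular width $\sim 2^{-|z|}$ of the cone $\Lambda_{z}$, which governs the transverse dimension of the plate $E_{j}$ and hence $|E_{j}|^{1/2}$. The exponent $1/2$ at the $\mathcal{H}^{1}\to L^{1}$ endpoint should arise from distributing the $L^{2}$ decay between these two factors, in the spirit of the interpolation identity~\eqref{interpolation-norm} of Proposition~\ref{FIO-local}, but at the geometric level of the plate. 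The far-region contribution is geometrically smaller and, with enough integrations by parts, does not affect the recursion for $C_{z\pm}$; still, verifying that the near-region bookkeeping really yields exactly $(1/2)^{1/2}$---and not merely some summable ratio---is the delicate point, and it is likely to require the enlargement of region~I noted in Remark~\ref{shrink-II} to absorb a technical loss, consistent with how the refined ratios $(1/2)^{3/5}$ and $(1/2)^{1/10}$ are coupled in that remark.
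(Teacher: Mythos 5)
Your outline reproduces the standard Seeger--Sogge--Stein scheme (exceptional set near the image of $D$ under the canonical relation, Cauchy--Schwarz there, cancellation $\int u=0$ plus kernel estimates off it), and this is indeed the skeleton the paper follows. But the actual content of Proposition~\ref{L-1-2} beyond the known $\mathcal{H}^{1}\to L^{1}$ bound is the quantitative recursion \eqref{L-1-C-2}, and your proposal does not establish it: you explicitly concede that obtaining the exponent $(1/2)^{1/2}$ is ``the delicate point,'' and you offer no mechanism that produces it. In the paper the ratio is not an interpolation artifact and does not require the enlargement of Remark~\ref{shrink-II} (that enlargement is only invoked later, in Remark~\ref{shrink-II-2}, to handle $\gamma=1$ in the $T_{B}$ estimate); it comes from an explicit product of two tracked constants. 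For the near region one bounds $\int_{D^{*}_{z\pm}}|\rho_{1}T^{(-1)}_{z\pm,I}u|\le \|\rho_{1}T^{(-1)}_{z\pm,I}u\|_{L^{2}}\,|D^{*}_{z\pm}|^{1/2}$ and uses the $L^{6/5}\to L^{2}$ mapping of the order $-1$ operator (Propositions~\ref{embedding} and~\ref{embedding-ratio}, inheriting the ratio $1/2$ from Proposition~\ref{L-2-theorem}) together with $\|u\|_{L^{6/5}}\le |D|^{-1/6}$, so that the atom's $\delta$-powers cancel exactly against $|D^{*}_{z\pm}|^{1/2}\lesssim (c_{z\pm}\delta)^{1/2}$; the net bound is $c'_{z\pm}(c_{z\pm})^{1/2}$ and the ratio is $(1/2)\cdot 2^{1/2}=(1/2)^{1/2}$.

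This also exposes a directional error in your geometric heuristic. You assert that the narrowing cone $\Lambda_{z}$ shrinks the transverse size of the plate and hence helps through $|E_{j}|^{1/2}$; in fact the relevant effect goes the other way: as $|z|$ grows the mixed Hessian $|\det\partial_{x}\partial_{\xi}\psi_{\pm}|\sim|1\mp\cos\theta_{0}|$ degenerates (see \eqref{Jacobian-ratio}), so the preimages $R^{\nu}_{z,j}$ of the plates under $x\mapsto\nabla_{\xi}\psi_{\pm}(x,\xi_{j}^{\nu})$ stretch, and the exceptional-set measure \emph{grows} by a factor $2$ per step (\eqref{singular-size-ratio}); the cone narrowing only partially compensates. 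Likewise your claim that the far region ``does not affect the recursion'' is unjustified as stated: its kernel bounds carry $z$-dependent constants, and one must check (as the paper does via the cardinality estimate \eqref{card-xi} for the directions $\xi_{j}^{\nu}$ lying in $\Lambda_{z}$, ratio $1/4$, against the Jacobian factor $2$) that the far-region constants obey at least the ratio $1/2\le(1/2)^{1/2}$. Without these two pieces of bookkeeping the inequality \eqref{L-1-C-2}, which is what makes $\sum_{z}\|\rho_{k}\cdot T^{(-1)}_{z\pm,I}\|$ summable and hence drives Theorem~\ref{local-converge}, is not proved.
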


\begin{proof}{(Proof of Proposition~\ref{L-1-1})}
Suppose that $u$ is an atom associated to a ball $D$ as the Definition~\ref{Def-atom}.
Since the radius of $D$ exceeds $1$, the proof of~\eqref{Hardy-est-1} is an easy consequence 
 of $L^2$ estimate.  Observe that
\begin{equation}\label{radius-b1}
\int_{\mathbb{R}^{3}} |\rho_{1} \cdot T^{(-1)}_{z\pm,I} \;u | dx \leq C' \|\rho_{1}\cdot  T^{(-1)}_{z\pm,I}(u)\|_{L^{2}}
\leq C_{z\pm}\|u\|_{L^{2}}
\end{equation}
where the first inequality holds because $\rho_{1}\cdot T^{(-1)}_{z\pm,I}(u)$ has fixed compact support 
determined by $\rho_{1}$ ; the second follows from the 
$L^{2}$ boundedness of $\rho_{1}\cdot  T^{(-1)}_{z\pm,I}T$ since the order of symbol is $-1\leq 0$
and thus the $L^{2}$  estimates hold. 
Since $u$ is an atom, $|u(x)|\leq |D|^{-1}$. Note that radius of $D$ is larger than 1, we 
have $\|u\|_{L^{2}}\leq |D|^{-1/2}\leq c$ and~\eqref{Hardy-est-1} holds.  Note that $C_{z\pm}$ 
in~\eqref{radius-b1} comes from $L^{2}$ estimate of $\rho_{1}\cdot  T^{(-1)}_{z\pm,I}(u)$. Therefore, by 
Proposition~\ref{L-2-theorem}, we conclude~\eqref{L-1-C-1}.  
\end{proof}

Before we prove the case radius of $D\leq 1$, Proposition~\ref{L-1-2}, we need following embedding result. 
\begin{prop}[P.399 of~\cite{Stein93}]\label{embedding}
Let $\mathscr{T}$ be a {\bf local} Fourier integral operator takes the form
\begin{equation}\label{D:FIO-3}
\mathscr{T} f(x)=\int_{\mathbb{R}^{d}} e^{i\psi(x,\xi)} p(x,\xi)\; \widehat{f}(\xi) d\xi
\end{equation}
with symbol $p\in S^{m}$
where $-d/2<m< 0$. Then $\mathscr{T}$, initially defined on $\mathcal{S}(\R^{d})$, extends to a bounded operator 
from $L^{p}(\mathbb{R}^{d})$ to $L^{2}(\mathbb{R}^{d})$, if $1/p=1/2-m/d$.
\end{prop}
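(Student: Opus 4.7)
The plan is to reduce this $L^p\to L^2$ bound to the classical $L^2\to L^2$ boundedness of an order-zero local FIO, composed with a Bessel potential whose mapping properties are governed by the Hardy--Littlewood--Sobolev inequality. The key observation is that the exponent relation $1/p = 1/2 - m/d$ is exactly the HLS/Sobolev relation with gain $-m>0$, which hints that the ``integrability gain'' can be extracted from the symbol's decay in $\xi$ alone, leaving behind an order-zero FIO.

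Concretely, I would first factor the symbol. Since $m<0$ and $p(x,\xi)\in S^m$, define
\[
q(x,\xi) := p(x,\xi)\,\lr{\xi}^{-m}.
\]
Because $\lr{\xi}^{-m}\in S^{-m}_{1,0}$ with all $x$-derivatives vanishing, the symbol calculus gives $q\in S^0_{1,0}$, and the support of $q$ is the same as that of $p$ (so non-degeneracy of $\psi$ on $\mathrm{supp}\,q$ is inherited). With $J^m := (1-\Delta)^{m/2}$, this yields the factorization
\[
\mathscr{T} f(x) \;=\; \int_{\R^d} e^{i\psi(x,\xi)} q(x,\xi)\,\lr{\xi}^m\,\widehat{f}(\xi)\,d\xi \;=\; \mathscr{T}_0(J^m f)(x),
\]
where $\mathscr{T}_0$ is the local FIO with symbol $q\in S^0$ and phase $\psi$.

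Next I would invoke the two ingredients. First, the classical theorem of Eskin--H\"ormander (the $m=0$, $p=2$ endpoint of Proposition~\ref{FIO-local}, which requires only the non-degeneracy condition~\eqref{D:nondegeneracy} and the compact $x$-support of a local FIO) gives
\[
\|\mathscr{T}_0 g\|_{L^2(\R^d)} \leq C\,\|g\|_{L^2(\R^d)}.
\]
Second, the Hardy--Littlewood--Sobolev inequality (equivalently, the Sobolev embedding $W^{-m,p}\hookrightarrow L^2$ with smoothing index $-m>0$) yields
\[
\|J^m f\|_{L^2(\R^d)} \leq C\,\|f\|_{L^p(\R^d)}
\]
whenever $1/p - 1/2 = -m/d$. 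Chaining these two estimates closes the proof:
\[
\|\mathscr{T} f\|_{L^2} = \|\mathscr{T}_0(J^m f)\|_{L^2} \leq C\,\|J^m f\|_{L^2} \leq C\,\|f\|_{L^p}.
\]

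I do not expect a serious obstacle here; the result is essentially a packaging of two standard facts. The only points requiring slight care are: (i) verifying that the hypothesis range $-d/2 < m < 0$ is exactly what forces $1 < p < 2$, so that HLS/Sobolev applies in its strong (non-endpoint) form --- indeed, $m\to 0^-$ gives $p\to 2^-$, while $m\to -d/2^+$ gives $p\to 1^+$, and the endpoint $p=1$ (where only a weak-type $L^1\to L^{2,\infty}$ bound holds) is correctly excluded; and (ii) confirming that the symbol class $S^0$ and the non-degeneracy of $\psi$ on $\mathrm{supp}\,q=\mathrm{supp}\,p$ suffice to invoke the $L^2$ theory for $\mathscr{T}_0$, which is immediate from the definition of \emph{local} FIO used in the paper. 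The density extension from $\mathcal{S}(\R^d)$ to $L^p(\R^d)$ then follows by the standard approximation argument since both $J^m$ and $\mathscr{T}_0$ are continuous on their respective spaces.
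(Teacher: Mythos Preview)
Your proposal is correct and is essentially identical to the paper's own proof: both factor $\mathscr{T}=\mathscr{T}_0\circ J^m$ with $J^m=(1-\Delta)^{m/2}$ and $\mathscr{T}_0$ the order-zero local FIO with symbol $p(x,\xi)(1+|\xi|^2)^{-m/2}$, then combine the $L^2$-boundedness of $\mathscr{T}_0$ with the Hardy--Littlewood--Sobolev bound $J^m:L^p\to L^2$ for $1/p=1/2-m/d$. The only cosmetic difference is that the paper phrases the second step via the kernel bound $|K(y)|\leq C|y|^{-d-m}$ for $J^m$, while you invoke the equivalent Sobolev embedding directly.
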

We also need to sketch the proof of Proposition~\ref{embedding} for the later analysis.
\begin{proof}{(Sketch of the proof of Proposition~\ref{embedding})}
  It is to write 
$\mathscr{T} =\mathscr{T}_{0} \mathscr{T}_{1} $, where $\mathscr{T}_{0}$ is a FIO with symbol 
\[
p_{0}(x,\xi)=p(x,\xi)\cdot (1+|\xi|^{2})^{-m/2}
\]  
which is of order $0$. And $ \mathscr{T}_{1}$ is the pseudo-differential operator with symbol $(1+|\xi|^{2})^{m/2}$.
Note that $\mathscr{T}_{0}:L^{2}(\R^{d})\rightarrow L^{2}(\R^{d})$ is bounded by the theory of local FIO or 
Proposition~\ref{FIO-local}. Also $\mathscr{T}_{1}$ has the representation
\[
\mathscr{T}_{1} h(x)=\int_{\R^{d}} K(x-y) f(y) dy
\]
where locally integrable $K$ satisfies $|K(x,y)|\leq C|y|^{-d-m}$ and $C$ is independent of $x$. 
Thus $\mathscr{T}_{1}:L^{p}(\R^{d})\rightarrow L^{2}(\R^{d})$  comes from the Hardy-Littlewood-Sobolev 
inequality. Then Proposition~\ref{embedding} follows.
\end{proof}

Combining Proposition~\ref{embedding} and Proposition~\ref{L-2-theorem}, we have the following. 
\begin{prop}\label{embedding-ratio}
Let $T^{(-1)}_{z\pm,I}$ be defined by~\eqref{T-lambda-p} and $k\geq -4$, then
\begin{equation}\label{L-2-3}
 \|   \rho_{k}\cdot  T^{(-1)}_{(z+1)\pm,I}\|_{L^{\frac{6}{5}}\rightarrow L^{2}} \leq \frac{1}{2}\;
 \|  \rho_{k}\cdot T^{(-1)}_{z\pm,I}\|_{L^{\frac{6}{5}}\rightarrow L^{2}},\;{\rm if}\;z\geq 0.
\end{equation}
and 
\begin{equation}\label{L-2-4}
 \|  \rho_{k}\cdot T^{(-1)}_{(z-1)\pm,I}\|_{L^{\frac{6}{5}}\rightarrow L^{2}} \leq \frac{1}{2}\;
 \|  \rho_{k}\cdot T^{(-1)}_{z\pm,I}\|_{L^{\frac{6}{5}}\rightarrow L^{2}},\;{\rm if}\;z< 0.
\end{equation}
\end{prop}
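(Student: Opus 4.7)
The plan is to mirror the structure of the proof of Proposition~\ref{L-2-theorem} and transfer the halving to the $L^{6/5}\to L^{2}$ setting through the factorization underlying Proposition~\ref{embedding}. First, following the sketch of that proposition, I would write
\[
T^{(-1)}_{z\pm,I}\;=\;\widetilde{T}^{(0)}_{z\pm,I}\circ (I-\Delta)^{-1/2},
\]
where $\widetilde{T}^{(0)}_{z\pm,I}$ is the global FIO with phase $\psi_{\pm}$ and symbol $p^{-1}_{z\pm}(x,\xi)(1+|\xi|^{2})^{1/2}$. Its $\xi$-order is $0$ and its angular support is still the cone $\rho_{z,I}$. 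The Bessel potential $(I-\Delta)^{-1/2}$ maps $L^{6/5}(\R^{3})$ to $L^{2}(\R^{3})$ boundedly with operator norm independent of $z$ and $k$ by the Hardy--Littlewood--Sobolev embedding in Theorem~\ref{Embedding}.

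Next I would observe that the rescaling argument in Parts~I and~II of the proof of Lemma~3.1 of~\cite{JC20} underpinning Proposition~\ref{L-2-theorem} is driven purely by the dyadic halving of the angular cutoff $\rho_{z,I}$ and is insensitive to the radial part of any order-$0$ SG symbol sitting on top of it. Since $\widetilde{T}^{(0)}_{z\pm,I}$ shares the angular structure of $T^{(0)}_{z\pm,I}$ and differs only by the smooth factor $(|x||\xi|)^{-1}(1+|\xi|^{2})^{1/2}$, that argument runs verbatim and produces
\[
\|\widetilde{T}^{(0)}_{(z+1)\pm,I}\|_{L^{2}\to L^{2}}\leq \tfrac{1}{2}\|\widetilde{T}^{(0)}_{z\pm,I}\|_{L^{2}\to L^{2}}\qquad(z\geq 0),
\]
together with its $z<0$ analogue. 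Proposition~\ref{Global-local} then carries this halving to the localized norms $\|\rho_{k}\widetilde{T}^{(0)}_{z\pm,I}\|_{L^{2}\to L^{2}}$.

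Composing the two factors then yields
\[
\|\rho_{k}T^{(-1)}_{(z+1)\pm,I}\|_{L^{6/5}\to L^{2}}\leq \|(I-\Delta)^{-1/2}\|_{L^{6/5}\to L^{2}}\cdot \|\rho_{k}\widetilde{T}^{(0)}_{(z+1)\pm,I}\|_{L^{2}\to L^{2}},
\]
and the right-hand side is halved in $z$ by the previous step. The principal obstacle I foresee is closing the loop against the intrinsic reference norm on the right: the chain above compares $\|\rho_{k}T^{(-1)}_{(z+1)\pm,I}\|_{L^{6/5}\to L^{2}}$ to $\|\rho_{k}\widetilde{T}^{(0)}_{z\pm,I}\|_{L^{2}\to L^{2}}$, whereas the stated conclusion requires a comparison to $\|\rho_{k}T^{(-1)}_{z\pm,I}\|_{L^{6/5}\to L^{2}}$. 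I plan to bridge this gap by testing the $L^{2}\to L^{2}$ norm of $\widetilde{T}^{(0)}_{z\pm,I}$ against the dense family of inputs of the form $u=(I-\Delta)^{-1/2}h$, $h\in C_{c}^{\infty}(\R^{3})$, and absorbing the resulting multiplicative loss through the slight enlargement of region~I described in Remark~\ref{shrink-II}. The improved geometric ratios $(1/2)^{3/5}$ and $(1/2)^{1/10}$ available there leave enough slack to recover the stated halving (or a harmless replacement of it) while still forcing the series $\sum_{z}\|\rho_{k}T^{(-1)}_{z\pm,I}\|_{L^{6/5}\to L^{2}}$ to converge, which is what the subsequent $L^{1}$-to-$\mcH^{1}$ estimate in Proposition~\ref{L-1-theorem} actually needs.
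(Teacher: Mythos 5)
Your proposal follows essentially the same route as the paper, which proves this proposition simply by combining the factorization $T^{(-1)}_{z\pm,I}=\widetilde{T}^{(0)}_{z\pm,I}\circ(I-\Delta)^{-1/2}$ from the sketch of Proposition~\ref{embedding} with the $L^{2}$ halving of Proposition~\ref{L-2-theorem} (via Proposition~\ref{Global-local}), exactly as you outline. The norm-versus-bound mismatch you flag at the end is not a real obstruction: the paper uses \eqref{L-2-3}--\eqref{L-2-4} only through the constants $c'_{z\pm}$ in \eqref{singular-L-p}, i.e.\ as the statement that the $L^{6/5}\rightarrow L^{2}$ bounds produced by this factorization may be chosen to decay geometrically in $z$, so your composed chain already gives everything that is needed and the extra bridging device (testing on $u=(I-\Delta)^{-1/2}h$ and invoking Remark~\ref{shrink-II}) can be dropped.
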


Now we are ready to prove the Proposition~\ref{L-1-2}.
\begin{proof}{(Proof of the Proposition~\ref{L-1-2})}
Now we assume $u$ is an atom associated to a ball $D$ whose radius  is $\delta\leq 1$.
Recall~\eqref{T-lambda-p}, 
\begin{equation}
\begin{split}
{\rho}_{1}\cdot T^{(-1)}_{z,I}\; u(x)&= {\rho}_{1}\cdot T^{(-1)}_{z+,I} \;u(x)+ {\rho}_{1}\cdot T^{(-1)}_{z-,I}\; u(x) \\
& = {\rho}_{1}\cdot\int_{\R^{3}} e^{i \psi_{+}(x,\xi)}\; p^{-1}_{z+}(x,\xi)\;\widehat{u}(\xi) d\xi    \\
&\hskip 1cm + {\rho}_{1}\cdot\int_{\R^{3}} e^{i \psi_{-}(x,\xi)} \;p^{-1}_{z-}(x,\xi) \;\widehat{u}(\xi) d\xi , 
\end{split}
\end{equation}
with
\begin{equation}\label{supp-p-z}
{\rm supp}\; p_{z\pm}(x,\xi) \subseteq\{(x,\xi)| (x,\xi)\in \Lambda_z\;{\rm and}\; |x||\xi|\geq 2^{2|z|}\cdot 64 \}.
\end{equation}
We rewrite 
\begin{equation}
\begin{split}
{\rho}_{1}\cdot T^{(-1)}_{z\pm,I} u(x)&=\int_{\R^{3}}\int_{\R^{3}} e^{i [\psi_{\pm}(x,\xi)-y\cdot\xi]}\; \rho_{1}(x)
\; p^{-1}_{z\pm}(x,\xi)\; u(y) d\xi dy \\
&=\int_{R^{3}} K_{z\pm}(x,y) u(y) dy.
\end{split}
\end{equation} 
where
\[
K_{z\pm}(x,y)=\int_{\R^{3}} e^{i [\psi_{+}(x,\xi)-y\cdot\xi]}\; \rho_{1}(x)\; p^{-1}_{z\pm}(x,\xi) d\xi.
\]
We further assume $D$ has center $\overline{y}$, radius $\delta\leq 1$  and denote it by $D(\overline{y}, \delta)$.

To prove~\eqref{Hardy-est-2} and~\eqref{L-1-C-2}, 
the region of integration in~\eqref{Hardy-est} is divided into two parts, i.e., 
\begin{equation}\label{L-1-two-parts-1}
\int_{D^{*}_{z\pm}} {\big |}\rho_{1}\cdot T^{(-1)}_{z\pm,I}\; u(x) {\big |}  dx,
\end{equation} 
and 
\begin{equation}\label{L-1-two-parts-2}
\int_{\prescript{c}{}D^{*}_{z\pm}} {\big |}\rho_{1}\cdot T^{(-1)}_{z\pm,I}\; u(x) {\big |}  dx,
\end{equation} 
where $D^{*}_{z\pm}$ is the region where the phase function of  $T^{(-1)}_{z\pm,I}$ oscillates less   
and depends on region $D$, while $\prescript{c}{}D^{*}_{z\pm}$ is  complement of $D^{*}_{z\pm}$ where 
 phase function of  $T^{(-1)}_{z\pm,I}$ oscillates more and we should use integration by parts to capture the 
 decay of the kernel $K_{z\pm}$. 
The precisely definition of $D^{*}_{z\pm}$ is given in the following paragraphs. 
 
We first estimate integrals in~\eqref{L-1-two-parts-1}.
 
For each $j\in\mathbb{N}$, we consider a roughly equally spaced set of points with grid length $2^{-j/2}$
on the unit sphere $S^{2}$. Namely, we fix a collection $\{\xi_{j}^{\nu}\}_{\nu}$ of unit vectors $|\xi_{j}^{\nu}|=1$,
that satisfy:
\begin{enumerate}[(i)]
\item $|\xi_{j}^{\nu}-\xi_{j}^{\nu'}|\geq 2^{-j/2}$, if $\nu\neq\nu'$.
\item If $\xi\in S^{2}$, then there exists a $\xi_{j}^{\nu}$ so that $|\xi-\xi_{j}^{\nu}|<2^{-j/2}$. 
\end{enumerate}
Note that there are at most $c\;2^{j}$ elements in the collection $\{\xi_{j}^{\nu}\}_{\nu}$.  Furthermore
we need an elementary fact for later use. For for any fixed $x$ and $j\in\mathbb{N}$, 
recalling~\eqref{D:Gamma-n}, we define 
\[
{\rm card}_{z}={\rm cardinality\; of}\;\{\xi_{j}^{\nu}\in \Lambda_{z}\}.
\] 
By setup of $\xi_{j}^{\nu}$ on $S^{2}$, we have ${\rm card}_{z}\leq c2^{j}$ and
\begin{equation}\label{card-xi}
{\rm card}_{z+1} \leq \frac{1}{4} \cdot
{\rm card}_{z}, \;{\rm if}\; z\geq 0,\;{\rm and}\;
 {\rm card}_{z-1} \leq \frac{1}{4} \cdot
{\rm card}_{z}, \;\;{\rm if}\; z< 0.
\end{equation}

Then we define the 
set  $\widetilde{R}_{z\pm,j}^{\nu}=\widetilde{R}_{z\pm,j}^{\nu}(D)$ in the $y-$space by   
\begin{equation}
\widetilde{R}_{z\pm,j}^{\nu}=\{ y: |y-\overline{y}|\leq \overline{c} 2^{-j/2},\;|\pi_{j}^{\nu}(y-\overline{y})|\leq \overline{c} 2^{-j}\}
\end{equation}
where $\pi_{j}^{\nu}$ is the orthonormal projection in the direction $\xi_{j}^{\nu}$ and $\overline{c}$ is a large constant
(independent of $j$) depending only on size of $D$, i.e., $\delta$. Note that in fact $\widetilde{R}_{z\pm,j}^{\nu}$ 
are independent of $z$. 
The mapping 
\begin{equation}\label{change-variable-H}
x\rightarrow y=(\psi_{\pm})_{\xi}(x,\xi):=\nabla_{\xi}\;\psi_{\pm}(x,\xi)
\end{equation}
has, for each $\xi$, a non-vanishing Jacobian denoted by $J_{\xi}(x)$, i.e., 
\begin{equation}\label{Jacobian} 
\begin{split}
J_{\xi}(x)=& |\det\partial_x\partial_{\xi} {\Big [} \psi_{\pm}(x,\xi)){\Big ]}|= 
|\det  {\Big [} \partial^2\psi_{\pm}(x,\xi)/\partial_{x_i}\partial_{\xi_j}  {\Big ]} |\\
&=|\det\frac{1}{2}{\Big [}I\mp\frac{x}{|x|}\otimes\frac{\xi}{|\xi|}{\Big ]}|
= (\frac{1}{2})^3|1\mp\cos\theta_0|.
\end{split}
\end{equation} 
For each fixed $\xi$, by~\eqref{supp-p-z} and~\eqref{D:Gamma-n}, we have   
\begin{equation}\label{Jacobian-ratio}
\begin{split}
& \inf_{x\in {\rm supp}\; p_{(z+1)\pm}} J_{\xi}(x) \geq \frac{1}{4}\cdot \inf_{x\in {\rm supp}\; p_{z\pm}} J_{\xi}(x)\;{\rm if\;}z\geq 0 \\
&  \inf_{x\in {\rm supp}\; p_{(z-1)\pm}} J_{\xi}(x) \geq \frac{1}{4}\cdot \inf_{x\in {\rm supp}\; p_{z\pm}} J_{\xi}(x)\;{\rm if\;}z< 0
\end{split}
\end{equation}

Since $j\in\mathbb{N}$, $2^{-j}\leq 1$, we take $R_{z\pm,j}^{\nu}$ to be the inverse under 
$(\psi_{\pm})_{\xi}$, with $\xi=\xi_{j}^{\nu}$, of set $\widetilde{R}_{z\pm,j}^{\nu}$:
\begin{equation}
\begin{split}
R_{z\pm,j}^{\nu}& =\{x:  |\overline{y}-(\psi_{\pm})_{\xi}(x,\xi_{j}^{\nu})|\leq \overline{c} 2^{-j/2},\; 
\\& |\pi_{j}^{\nu}(\overline{y}-(\psi_{\pm})_{\xi}(x,\xi_{j}^{\nu}))|\leq \overline{c}\cdot 2^{-j},
       {\rm and}\;  (x,\xi_{j}^{\nu})\in  {\rm supp}\;{\rho}_{1}(x)\cdot p_{z\pm}( x, \xi) \}.
\end{split}
\end{equation}

Let $D^{*}_{z\pm}=\cup_{2^{-j}\leq\delta}\cup_{\nu} R_{z\pm,j}^{\nu}$; then 
\begin{equation}\label{singular-size}
\begin{split}
|D^{*}_{z\pm}| &\leq \sum_{2^{-j}\leq\delta}\sum_{\nu} |R_{z\pm,j}^{\nu}|\leq c_{z\pm} 
\sum_{2^{-j}\leq\delta}\sum_{\nu} |\widetilde{R}_{z\pm,j}^{\nu}| \\
&\leq c_{z\pm}\sum_{2^{-j}\leq\delta} 2^{-j(3+1)/2}\cdot 2^{j}=c_{z\pm}\sum_{2^{-j}\leq\delta} 2^{-j}\leq c_{z\pm}\delta
\end{split}
\end{equation}
where $c_{z\pm}$ depends on the value of Jacobian ~\eqref{Jacobian}  on the support of $p_{z\pm}$. 
The relation~\eqref{Jacobian-ratio}  means the longest vector inside $R_{z+1,j}^{\nu}$ comparing to that of  
$R_{z,j}^{\nu}$  if $z\geq 0$( $R_{(z-1),j}^{\nu}$ vs $R_{z,j}^{\nu}$ if $z<0$) is no more than 
$4$ times  when a fixed $\overline{c}$  is given.  
One the other hand, the angle spanned by $x$ and $\xi$ in the support of $p_{(z+1)\pm}$ is $1/2$ of 
that of $p_{z\pm}$ if  $z\geq 0$ ($p_{(z-1)\pm}$ vs $p_{z\pm}$ if $z<0$).
Combing these two facts, we see that 
\[
|D_{(z+1)\pm}^{*}|\leq 2|D_{z\pm}^{*}|\;\;{\rm if \;}z\geq 0\;{\rm and}\;|D_{(z-1)\pm}^{*}|\leq 2|D_{z\pm}^{*}|\;\;{\rm if \;}z< 0,
\]
that is $c_{z\pm}$ of~\eqref{singular-size} satisfy  
\begin{equation}\label{singular-size-ratio}
c_{(z+1)\pm} \leq 2\cdot c_{z\pm} \;\;{\rm if \;}z\geq 0\;{\rm and}\;\;c_{(z-1)\pm} \leq 2\cdot c_{z\pm} \;\;{\rm if \;}z< 0.
\end{equation}

By H\"{o}lder inequality,~\eqref{singular-size} and Proposition~\ref{embedding}
\begin{equation}\label{singular-L-p}
\begin{split}
\int_{D^{*}_{z\pm}} {\big |}\rho_{1}\cdot T^{(-1)}_{z\pm,I}\; u(x) {\big |}  dx & \leq \|\rho_{1}\cdot T^{(-1)}_{z\pm,I}\; u(x)\|_{L^{2}} 
\cdot |D_{z\pm}^{*}|^{1/2}  \\
&\leq c'_{z\pm}\|u\|_{L^{\frac{6}{5}}}\cdot (c_{z\pm})^{1/2}\cdot \delta^{1/2}
\end{split}
\end{equation}
 since $T^{(-1)}_{z\pm,I}$ has symbol of order $-1$. 
Also $\|u\|_{L^{\frac{6}{5}}}\leq |D|^{-1+\frac{5}{6}}$, since the atom $u(x)\leq |D|^{-1}$ and is supported in $D$. 
Thus we get 
\[
\int_{D^{*}_{z\pm}} {\big |}\rho_{1}\cdot T^{(-1)}_{z\pm,I}\; u(x) {\big |}  dx  \leq  c'_{z\pm}\cdot \delta^{3(-\frac{1}{6})} \cdot
(c_{z\pm})^{1/2}\cdot \delta^{1/2}=c'_{z\pm}\cdot (c_{z\pm})^{1/2}:=C_{z\pm}.
\]

From Proposition~\ref{embedding-ratio}, we have 
\begin{equation}\label{embedding-ratio-2}
c'_{(z+1)\pm} \leq \frac{1}{2}\cdot c'_{z\pm} \;\;{\rm if \;}z\geq 0\;{\rm and}\;\;c'_{(z-1)\pm} 
\leq \frac{1}{2}\cdot c'_{z\pm} \;\;{\rm if \;}z< 0.
\end{equation} 
 Combining~\eqref{singular-size-ratio} and~\eqref{embedding-ratio-2}, we have 
\begin{equation}\label{H-1-ratio-2}
C_{(z+1)\pm}\leq (\frac{1}{2})^{1/2}\cdot C_{z\pm}\; {\rm if}\; z\geq 0 \;{\rm and}\; C_{(z-1)\pm}\leq(\frac{1}{2})^{1/2} 
\cdot C_{z\pm}\;{\rm if}\; z\leq 0.
\end{equation}
Thus the  integrals in~\eqref{L-1-two-parts-1} satisfy Proposition~\ref{L-1-2}.

Next we estimate the integrals in~\eqref{L-1-two-parts-2}. 

We remind again that each integral is bounded by a constant is already proved in~\cite{SSS91},
please see Chapter IX of~\cite{Stein93}. 
Thus we only duplicate the necessary part of the proof which is sufficient for us to observe how $C_{z\pm}$ vary.

Let $\Gamma_j^{\nu}$ denote the cone in the $\xi$-space whose central direction is $\xi_j^{\nu}$, that is,
\begin{equation}
\Gamma_{j}^{\nu}=\{\xi: |\frac{\xi}{|\xi|}-\xi_j^{\nu}|\leq 2\cdot 2^{-\frac{j}{2}} \}.
\end{equation}
And there is an associated partition of unity
\begin{equation}\label{refined-LP}
\sum_{\nu} \chi_j^{\nu}(\xi)=1,\;{\rm for\;all\;}\xi\neq 0\; {\rm and\;all}\; j\in\mathbb{N}.
\end{equation}
where each  $\chi_j^{\nu}$ is of homogeneous of degree $0$ in $\xi$, supported in $\Gamma_j^{\nu}$
and 
\begin{equation}
|\partial_{\xi}^{\alpha} \chi_j^{\nu}(\xi)| \leq C_{\alpha} 2^{|\alpha|j/2}|\xi|^{-\alpha}.
\end{equation}

We recall the dyadic decomposition~\eqref{D:chi-k-sum} and define $\widetilde{\rho}_{0}$ by
\begin{equation}
1=\sum_{j\in \mathbb{Z}} \rho_{j}(\xi):=\widetilde{\rho}_{0}(\xi)+\sum_{j\in \mathbb{N}} \rho_{j}(\xi).
\end{equation}
Using~\eqref{refined-LP}, we have the refined Littlewood-Paley decomposition:
\begin{equation}
1=\widetilde{\rho}_{0}(\xi)+\sum_{j=1}^{\infty}\sum_{\nu}\chi_{j}^{\nu}(\xi){\rho}_{j}(\xi).
\end{equation}  
 For $j\in\mathbb{N}$,  we define
\[
\begin{split}
{\big (}T^{(-1)}_{z\pm,I}{\big)}^{\nu}_{j} u(x)& =\int_{\mathbb{R}^{3}} e^{ i\psi_{\pm}(x,\xi)}\;a_{z\pm,j}^{\nu}(x,\xi)\;\widehat{u}(\xi) d\xi, \\
&=\int_{\mathbb{R}^{3}} K_{z\pm,j}^{\nu}(x,y) u(y) dy 
\end{split}
\]
where
\begin{equation}
a_{z\pm,j}^{\nu}(x,\xi)=\chi_{j}^{\nu}(\xi)\cdot\rho_{j}(\xi)\cdot \rho_{1}(x) \; p^{-1}_{z\pm}(x,\xi),
\end{equation}
and the kernel of ${\big (}T^{(-1)}_{z\pm ,I}{\big)}^{\nu}_{j}$ is given by 
\begin{equation}\label{second-kernel}
K_{z\pm,j}^{\nu}(x,y)=\int_{\mathbb{R}^{3}} e^{i[\psi_{\pm}(x,\xi)-y\cdot\xi]}\; a_{z\pm,j}^{\nu}(x,\xi) d\xi.
\end{equation}
We also define 
\begin{equation}\label{sum-nu}
{\big (}T^{(-1)}_{z\pm,I}{\big)}_{j}=\sum_{\nu} {\big (}T^{(-1)}_{z\pm,I}{\big)}^{\nu}_{j}
\end{equation}
which having symbols $a_{z\pm,j}(x,\xi)=\sum_{\nu} a_{z\pm,j}^{\nu}$ and kernel $K_{z\pm,j}(x,\xi)$.
 
To estimate~\eqref{second-kernel}, we  choose axes 
in the $\xi=(\xi_1,\xi_2,\xi_3)$-space so that $\xi_1$ is in the direction of $\xi_j^{\nu}$ and 
$\xi'=(\xi_2,\xi_3)$ is perpendicular to $\xi_j^{\nu}$. Write the phase functions as
\[
\begin{split}
\psi_{\pm}(x,\xi)-y\cdot\xi &=[(\psi_{\pm})_{\xi}(x,\xi_j^{\nu})-y]\cdot \xi+ 
[\psi_{\pm}(x,\xi)-(\psi_{\pm})_{\xi}(x,\xi_j^{\nu})\cdot\xi ] \\
&:=[(\psi_{\pm})_{\xi}(x,\xi_j^{\nu})-y]\cdot \xi+h_{\pm}(\xi)
\end{split}
\]
Rewrite~\eqref{second-kernel} as 
\begin{equation}
K_{z\pm,j}^{\nu}(x,y)=\int_{\mathbb{R}^{3}} e^{i[\psi_{\pm}(x,\xi_j^{\nu})-y\cdot\xi]}\; [e^{ih_{\pm}(\xi)}\;a_{z\pm,j}^{\nu}(x,\xi)] d\xi,
\end{equation}  
and introduce the operator $L$ defined by 
\[
L=I-2^{2j}\frac{\partial^2}{\partial\xi_1^2}-2^{j}\nabla_{\xi’}.
\] 
Then we have (see the details in~\cite{Stein93})    
\[
|L^N ( [e^{ih_{\pm}(\xi)}\;a_{n,j}^{\nu}(x,\xi)] )|\leq C_N \cdot 2^{-j},
\] 
and 
\[
\begin{split}
&L^N e^{i[\psi_{\pm}(x,\xi_j^{\nu})-y\cdot\xi]} \\
&=\{1+2^{2j}|((\psi_{\pm})_{\xi}(x,\xi_j^{\nu})-y)_1|^2 +2^{j}|((\psi_{\pm})_{\xi}(x,\xi_j^{\nu})-y)'|^2 \}^N 
\cdot e^{i[\psi_{\pm}(x,\xi_j^{\nu})-y\cdot\xi]}.
\end{split}
\] 
Using integration by parts and above, we obtain 
\begin{equation}\label{major-2}
|K_{z\pm,j}^{\nu}(x,y)|\leq c2^j\{1+2^{j}|((\psi_{\pm})_{\xi}(x,\xi_j^{\nu})-y)_1| +2^{j/2}|((\psi_{\pm})_{\xi}(x,\xi_j^{\nu})-y)'| \}^{-2N}
\end{equation} 
We use~\eqref{major-2} and change of variables 
\begin{equation}\label{change-variable-H2}
x\rightarrow y=(\psi_{\pm})_{\xi}(x,\xi)
\end{equation}
whose non-vanishing Jacobain~\eqref{Jacobian}, $J_{\xi}(x)$, satisfying~\eqref{Jacobian-ratio}.   
Therefore 
\begin{equation}\label{K-n-j-nu}
\begin{split}
\int |K_{z\pm,j}^{\nu}(x,y)| dx & \leq c_{z\pm} 2^{j}\int (1+|2^{j} (x-y)_1| +|2^{j/2}(x-y)'|)^{-2N} dx \\
&\leq c_{z\pm} 2^{-j}
\end{split}
\end{equation}
 if we choose $2N>3$. Here we abuse the notations $c_{\pm}$,  since for the same reason mentioned early 
 for ~\eqref{singular-size-ratio}, $c_{z\pm}$ satisfy the relation   
 \begin{equation}\label{singular-size-ratio-1}
c_{(z+1)\pm} \leq 2\cdot c_{z\pm} \;\;{\rm if \;}z\geq 0\;{\rm and}\;\;c_{(z-1)\pm} \leq 2\cdot c_{z\pm} \;\;{\rm if \;}z< 0.
\end{equation}

Recalling supp\;$p_{z\pm}(x,\xi)\subset \Lambda_{z}$,  ${\rm card}_{z}\leq c2^{j}$ and~\eqref{card-xi},  
we conclude that   
 \begin{equation}\label{kernel-j}
\int_{\mathbb{R}^{3}} |K_{z\pm,j}(x,y)| dx\leq C_{z\pm},\;{\rm all\;}y\in\mathbb{R}^{3}.
\end{equation}
From~\eqref{card-xi} and~\eqref{singular-size-ratio-1}, we have   
\begin{equation}\label{H-1-ratio-3}
C_{(z+1)\pm}\leq (\frac{1}{2}) \cdot C_{z\pm}\; {\rm if}\; z\geq 0 \;{\rm and}\; C_{(z-1)\pm}\leq(\frac{1}{2})
\cdot C_{z\pm}\;{\rm if}\; z\leq 0.
\end{equation}

Another two estimates about $K_{z\pm,j}$ which are the following. 
\begin{equation}\label{kernel-differ}
\int_{\mathbb{R}^{3}} |K_{z\pm,j}(x,y)-K_{z\pm,j}(x,y')| dx\leq C_{z\pm}\cdot |y-y'|\cdot 2^j,\;{\rm all\;}y, y'\in\mathbb{R}^{3},
\end{equation} 
\begin{equation}\label{kernel-complement}
\int_{\prescript{c}{}D_{z\pm}^*} |K_{z\pm,j}(x,y)| dx\leq \frac{C_{z\pm}\cdot 2^{-j}}{\delta},\;{\rm if\;}y\in B,
{\rm and}\; 2^{-j}\leq\delta,
\end{equation} 
where the bound $C_{z\pm}$ is independent of $j, y, y'$ and $\delta$. (Recall that $\delta$ is the radius of ball $D$) 
 They are proved by the similar argument, we omit the details as one can find them in~\cite{Stein93}.
We abuse the notation $C_{z\pm}$ in above two estimates, as the relation~\eqref{H-1-ratio-3} holds for the same reason.       

Now we are ready to estimate the series of the integrals in~\eqref{L-1-two-parts-2}.  Recall~\eqref{sum-nu}, we write 
\begin{equation}\label{two-sum}
\rho_{1}\cdot T^{(-1)}_{z\pm,I}\; u(x)=\sum_{j=0}^{\infty} {\big (}  T^{(-1)}_{z\pm,I}{\big)}_{j}
=\sum_{2^{j}\leq \delta^{-1}} +\sum_{2^{j}>\delta^{-1}}.
\end{equation}
For the second sum, ~\eqref{kernel-complement} yields
\[
\sum_{2^{j}>\delta^{-1}} \int_{\prescript{c}{}D_{z\pm}^*} |{\big (}  T^{(-1)}_{z\pm,I}{\big)}_{j} u(x)| dx
\leq C_{z\pm} {\Big (}\int |u(y)|dy {\Big )}\cdot {\Big (} \sum_{2^{j}\leq \delta^{-1}} 2^{-j} {\Big )}\cdot \delta^{-1}\leq C_{z\pm}
\] 
where $C_{z\pm}$ form a convergent series by~\eqref{H-1-ratio-3}. 

For the first sum of~\eqref{two-sum}, using property of atom $\int u(y)dy=0$, we rewrite  
\[
{\big (}  T^{(-1)}_{z\pm,I}{\big)}_{j} u(x)=\int K_{z\pm,j}(x,y) u(y)dy=\int_{D} [K_{z\pm,j}(x,y)-K_{z\pm,j}(x,\overline{y})] u(y)dy
\]
where $\overline{y}$ is the center of $D$.
Using~\eqref{kernel-differ}, we have 
\[
\int |{\big (}  T^{(-1)}_{z\pm,I}{\big)}_{j} u(x)| dx\leq C_{z\pm}\cdot 2^{j} \|u\|_{L^{1}} \cdot\delta
\]
and 
\[
\sum_{2^{j}\leq \delta^{-1}} \int |{\big (}  T^{(-1)}_{z\pm,I}{\big)}_{j} u(x)| dx\leq C_{z\pm}  
\sum_{2^{j}\leq \delta^{-1}} 2^{j}\cdot\delta \leq C_{z\pm}.
\] 
where $C_{z\pm}$ form a convergent series. Thus we conclude Proposition~\ref{L-1-2} and thus 
 the boundedness of $T_{A}$ by combing all the  results. 
\end{proof}

\begin{rem}\label{shrink-II-2}
Similarly to the remark~\ref{shrink-II},  the enlargement  of region I  does not affect  the $L^{1}$ estimates
we did in this subsection. This enlargement will effects two places. First the ratio ${1}/{2}$ in~\eqref{L-1-C-1} 
will be  replaced with $({1}/{2})^{3/5}$ as it comes from the $L^{2}$ estimates.  
Secondly the ratio $1/2$  in~\eqref{H-1-ratio-2} will be  replaced with $({1}/{2})^{-\frac{3}{5}+\frac{1}{2}}=(1/2)^{-1/10}$
 since $c'_{z\pm}$ in~\eqref{singular-L-p} also come  from the $L^{2}$ estimates in the proof of Proposition~\ref{embedding}. 
 Thus we see the estimates in this subsection still hold on  the enlarged  region I.
\end{rem}

\section{$L^p$ boundedness of $T_B$ and proof of Lemma~\ref{L:R-v-v_*-ineq}}\label{section-5}

The goal of this section is proving  the following. 

\begin{thm}\label{P-T-B}
Let $T_{B}=\sum_{z\in\mathbb{Z}}\; T_{z,II}$ where 
\[
T_{z,II} h(x) =\int_{\mathbb{R}^3} e^{ix\cdot\xi} \;\psi_{z,II}(x,\xi)\cdot a(x,\xi)
 \;\widehat{h}(\xi) d\xi,
\]
\[
{\rm supp}\; \psi_{z,II}(x,\xi)\subseteq\{(x,\xi)| (x,\xi)\in\Lambda_z \;{\rm and}\;  
|x||\xi|\leq 512\cdot 2^{2|z|}\},
\]
and
\[
a(x,\xi)=(|x||\xi|)^{\gamma}\int_{S^{2}_{+}} e^{i(x,\omega)(\xi,\omega)} b(\cos\theta)d\omega.
\]
Suppose $p$ satisfies 
\[
1<p<\infty,
\]
then we have
\begingroup
\large
\begin{equation}\label{E:R-ineq-B}  
\| T_{B} \;h\|_{L^p}\leq C \|h\|_{L^{p}}.
\end{equation}
\endgroup
\end{thm}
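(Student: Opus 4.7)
The strategy is to exploit the estimate $|a(x,\xi)|\leq C|\sin\theta_0\cos\theta_0|(|x||\xi|)^{\gamma-1}$ on region II together with $|\sin\theta_0\cos\theta_0|\lesssim 2^{-|z|}$ on the cone $\Lambda_z$; combined with the support condition $|x||\xi|\leq 512\cdot 2^{2|z|}$, these provide both an $L^\infty$ bound on the symbol of each $T_{z,II}$ and the decay needed for summability in $z$.

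First I would do a dyadic decomposition $T_{z,II}=\sum_{j+l\leq 2|z|+9}T_{z,II,j,l}$, with each summand localized to $|x|\sim 2^j$ and $|\xi|\sim 2^l$. On each piece the $L^\infty$-symbol bound is
\[
\|\rho_j\rho_l\psi_{z,II}\,a\|_\infty \leq C\min\bigl\{(2^{j+l})^\gamma,\;2^{-|z|}(2^{j+l})^{\gamma-1}\bigr\},
\]
using the trivial estimate $|a|\leq C(|x||\xi|)^\gamma$ when $2^{j+l}\leq 2^{-|z|}$ and the region-II estimate otherwise. Then $N$-fold integration by parts in $\xi$ in the kernel
\[
K_{z,II,j,l}(x,y)=(2\pi)^{-3}\int e^{i(x-y)\cdot\xi}\,\rho_j(x)\rho_l(\xi)\psi_{z,II}(x,\xi)a(x,\xi)\,d\xi
\]
yields $|K_{z,II,j,l}(x,y)|\leq C_N\rho_j(x)\cdot 2^{3l}\|\text{symbol}\|_\infty(1+2^l|x-y|)^{-N}$, and Schur's test produces $\|T_{z,II,j,l}\|_{L^p\to L^p}\leq C\|\text{symbol}\|_\infty$ uniformly for every $1\leq p\leq\infty$.

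For the summation, the operators $\{T_{z,II,j,l}\}_j$ for fixed $z,l$ have outputs supported in (essentially) disjoint dyadic $x$-shells, so almost-orthogonality gives $\|\sum_j T_{z,II,j,l}\|_{L^p\to L^p}^p\lesssim\sum_j\|T_{z,II,j,l}\|_{L^p\to L^p}^p$. The subsequent sum over $l$ uses the input's Littlewood--Paley decomposition (which is $L^p$-bounded precisely for $1<p<\infty$), exploiting that $T_{z,II,j,l}$ acts only on the frequency shell $|\xi|\sim 2^l$. The geometric decay of the minimum of the two symbol bounds away from the crossover $2^{j+l}\sim 2^{-|z|}$, together with the uniform factor $2^{-|z|\gamma}$ attained at the crossover, produces $\|T_{z,II}\|_{L^p\to L^p}\lesssim 2^{-|z|\gamma}$ up to polynomial factors in $|z|$, and the sum over $z\in\mathbb{Z}$ then converges.

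The principal difficulty lies in the endpoint case $\gamma=0$, where the trivial bound $|a|\leq C$ provides no damping in either $|x||\xi|$ or $|z|$ and the region-II estimate is effective only for $|x||\xi|>2^{-|z|}$. On the complementary subregion the Schur argument alone cannot recover summability; this must be handled by observing that the angular cones $\Lambda_z$ have bounded overlap in $(x,\xi)$, so that $\sum_z\psi_{z,II}(x,\xi)\,a(x,\xi)$ restricted to $|x||\xi|\leq 1$ defines a uniformly bounded smooth symbol with tame derivative estimates, to which the standard pseudodifferential $L^p$ theory of Coriasco--Ruzhansky~\cite{CR14} applies directly.
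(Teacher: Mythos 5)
Your central step is not justified, and it is precisely the step the paper's proof is built to avoid. The amplitude $a(x,\xi)=(|x||\xi|)^{\gamma}\int_{S^{2}_{+}}e^{-i(x\cdot\omega)(\xi\cdot\omega)}b(\cos\theta)\,d\omega$ is itself an oscillatory integral: each $\partial_{\xi}$ falling on it brings down a factor $(x\cdot\omega)\omega$ of size up to $|x|$, so on the support of $\rho_j(x)\rho_l(\xi)\psi_{z,II}$ the symbol does not obey $|\partial_{\xi}^{\alpha}(\cdot)|\lesssim 2^{-l|\alpha|}\|\cdot\|_{\infty}$; the loss per derivative is of order $|x||\xi|$, which on region II is only bounded by $512\cdot 2^{2|z|}$ (and the angular cutoff $\psi_{z,II}$ costs a further $2^{|z|}$ per $\xi$-derivative). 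Consequently your claimed kernel bound $|K_{z,II,j,l}(x,y)|\lesssim 2^{3l}\|\mathrm{symbol}\|_{\infty}(1+2^{l}|x-y|)^{-N}$ fails for large $|z|$: writing the $\xi$-integral first for fixed $\omega$, the true kernel concentrates near $y\approx x-(x\cdot\omega)\omega$, a set of diameter comparable to $|x|\sim 2^{j}$, not within $O(2^{-l})$ of the diagonal. With the unavoidable constants $C_{N}2^{c|z|N}$ the Schur bound is no longer uniform and the sum over $z$ diverges, since your symbol-sup accounting only yields decay $2^{-|z|\gamma}$ (none at all when $\gamma=0$). This is exactly the obstruction the paper flags before \eqref{a-region-2}: on region II one must avoid integration by parts because the derivatives of $a(x,\xi)$ are not controllable.

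The paper's route is different and you would need its main ingredients to repair the argument: it uses only the absolute bound \eqref{a-region-2} to estimate the full kernel $K_{z}(x,y)$ directly in the two regimes $|y|\le 2|x|$ (giving $|K_{0}|\lesssim |x|^{-3}$) and $|y|>2|x|$ (giving $|K_{0}|\lesssim |y|^{-3}(|x|/|y|)^{3}$ after rescaling $\xi$), then applies the generalized Schur test of Lemma~\ref{Schur} with power weights $h_{1}(y)=|y|^{-\alpha}$, $h_{2}(x)=|x|^{-\beta}$ and the exponents \eqref{parameters}, and finally obtains the $z$-dependence by a dilation argument: $\|T_{z,II}\|_{L^{p}\to L^{p}}$ contracts by a factor $\approx 2^{2(\gamma-1)}$ per unit of $|z|$, convergent for all $0\le\gamma<1$, with the endpoint $\gamma=1$ handled by slightly shrinking region II ($2^{2|z|}\to 2^{2|z|(1-\delta)}$, Remark~\ref{shrink-II}). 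Note the decay here comes from the shrinking $\xi$-volume $|\xi|\lesssim 2^{2|z|}/|x|$ in the kernel integral, not from the symbol sup alone; this is why in the correct bookkeeping the delicate endpoint is $\gamma=1$, not $\gamma=0$, and why your proposed rescue of the case $\gamma=0$ via ``standard pseudodifferential $L^{p}$ theory'' also does not apply: on the set $|x||\xi|\le 1$ the symbol $\sum_{z}\psi_{z,II}a$ has $\xi$-derivatives of size $|x|$, unbounded as $|x|\to\infty$, so it lies in no Calder\'on--Vaillancourt-type class, and the Coriasco--Ruzhansky results concern nondegenerate SG Fourier integral operators of negative order rather than such degenerate symbols.
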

\begin{proof}

We should first show that $T_{0,II}$ is $L^p$ bounded. Then the $L^p$ boundedness of all 
$T_{z,II}$ hold by the same reasons. To show those $L^p$ bounds form a convergent series, 
we need to observe how the bounds varies with $z$. 
As we point out before that on the region II of phase space $(x,\xi)$, the stationary phase 
formula does not apply for the calculation of $a(x,\xi)$.  On the other hand, the 
quantity~\eqref{a-region-2} below still implicitly contains a phase function which could have 
large oscillation. This means that in the estimate 
of the kernel of the operator $T_B$, we must avoid using integration by parts argument since 
the derivative of $a(x,\xi)$ is not controllable.      

Recalling~\eqref{a-region-2-pre}, we have 
\begin{equation}\label{a-region-2}
a(x,\xi)=  C\cdot\sin\theta_0\cos\theta_0\cdot (|x||\xi|)^{\gamma-1},\;{\rm if}\;(x,\xi)\in\;{\rm region\;II},
\end{equation}
where $C$ is independent of $x,\xi$ and $\theta_{0}=\arccos(x\cdot\xi/|x||\xi|)$ is the angle spanned 
by $x$ and $\xi$. 


We expand $\widehat{h}(\xi)$ and rewrite 
\begin{equation}\label{T-K}
T_{z,II} h(x)=\int_{\mathbb{R}^{3}} K_z(x,y) h(y)dy
\end{equation}
where 
\[
K_z(x,y)=\int_{\mathbb{R}^{3}}\psi_{z,II}(x,\xi)\; e^{i(x-y)\cdot\xi}\; a(x,\xi) d\xi.
\]
We use generalized Schur's lemma (see Lemma~\ref{Schur}) to show the $L^{p}$ boundedness of 
the $T_0$ operator by replacing $K_0(x,y)$ in~\eqref{T-K} with $|K_0(x,y)|$. Thus we need to 
find two positive measurable functions $h_{1}$ and $h_{2}$ such that the followings hold:
\begin{equation}\label{Schur-desire}
\begin{split}
& \int |K_0(x,y)|^{\delta_{1}p'} h_{1}(y)^{p'}dy \leq C_{p'} h_{2}(x)^{p'},\\
&\int |K_0(x,y)|^{\delta_{2}p } h_{2}(x)^{p}dx \leq C_{p} h_{1}(y)^{p}
\end{split}
\end{equation}
where $1<p,\;p' <\infty$, $1/p+1/p'=1$, $0<\delta_{1}, \delta_{2}<1$ and $\delta_1+\delta_{2}=1$.
Then we will show that the bounds of $T_{z,II}$ can be obtained by a scaling argument and 
verify that the bounds of $T_{z,II}$ form a convergent series of $z\in\mathbb{Z}$.

Next we estimate $|K_{0}(x,y)|$ according to~\eqref{a-region-2} and the sizes of $|x|$ and $|y|$.

\noindent{\bf Case 1.} If $|y|\leq 2|x|$, using~\eqref{a-region-2},
\begin{equation}\label{x-l-y-s}
\begin{split}
|K_0(x,y)| & \leq c'_0\; |x|^{\gamma-1}\int_{\{(x,\xi)\in\Lambda_{0},\;|x||\xi|<512 \} } |\xi|^{\gamma-1} d\xi  \\
&= c''_0\; |x|^{-1+\gamma} \int_{0}^{512/|x|} r^{-1+\gamma} \cdot r^2 dr 
\leq C_0 \;(512)^{2+\gamma} |x|^{-3}. 
\end{split}
\end{equation}

Before we go to the next case, let us discuss how the coefficients vary with $z$. 
The coefficient  $c'_0$ represents the contribution from $\cos\theta_0 \sin\theta_0$, inside the definition of $a(x,\xi)$, 
over ${\rm supp}\;\psi_{0,II}$. We note that $c''_0$ inherits  $c'_0$  and furthermore represents 
the use of spherically coordinate on 
${\rm supp}\;\psi_{0,II}$, i.e., $\Lambda_{0}$.  Suppose the similar calculation for  $K_z(x,y)$ produces 
coefficients $ c'_{|z|}, \;c''_{|z|}$ and $ C_{|z|}$.
From definition of $\Lambda_{z}$ and above explanation
\[
  c'_{z}=c'_{|z|},\;c'_{|z|+1}=\frac{1}{2}\cdot c'_{|z|} \;{\rm and}\; c''_{|z|+1}=\frac{1}{2}\cdot \frac{1}{4}\cdot c''_{|z|}.
\]
Thus 
\begin{equation}\label{C-ratio}
C_{z}=C_{|z|},\;C_{|z|+1}= (\frac{1}{2})^3\cdot C_{|z|}.
\end{equation}

\noindent{\bf Case 2.} If $|y|> 2|x|$, let $\lambda=|y|/|x|$ and write $\xi=\lambda\eta$.  Then $|x||\xi|=|y||\eta|<512$ and  
$\frac{|x-y|}{\lambda} |\eta| <\frac{3}{2}\cdot 512$. Thus we have, 
\begin{equation}\label{x-s-y-l}
\begin{split}
|K_{0}(x,y)| & \leq c'_0\; {\Big |}\int_{\{(x,\xi)\in\Lambda_{0},\; |x||\xi|<512\}} e^{i\lr{x-y,\xi}} (|x||\xi|)^{\gamma-1}d\xi {\Big |}\\
& = c'_0\; {\Big |}\int_{\{(y,\eta)\in\Lambda_{0},\;|y||\eta|<512\}} 
e^{i\lr{(x-y)/\lambda,\eta}} (|y||\eta|)^{\gamma-1} \lambda^{-3} d\eta {\Big |}\\
&\leq c''_0 \cdot\lambda^{-3} \; |y|^{-1+\gamma} \int_{0}^{512/|y|} r^{-1+\gamma} \cdot r^2 dr \\
& \leq C_0 \;(512)^{2+\gamma} |y|^{-3}{\big (} \frac{|x|}{|y|} {\big )}^{3}.
\end{split}
\end{equation}
Again~\eqref{C-ratio} is satisfied as before if we follow the same
method to estimate $K_z(x,y)$.

In order to prove~\eqref{Schur-desire}, we choose a set of parameters according to given $p$ and its conjugate $p'$.  
We take
\begin{equation}\label{parameters}
\begin{split}
& \alpha=\frac{28}{10}\cdot\frac{1}{p'}\;,\;\beta=\frac{28}{10}\cdot\frac{1}{p}\;,\; \ell_1=\frac{1}{2}\;,\;
\ell_2=1+\frac{p}{2p'} \\
&\delta_1=\frac{28}{30}\cdot\frac{1}{p}+\frac{1}{30}\cdot\frac{1}{p'}\;,\;
\delta_2=\frac{29}{30}\cdot\frac{1}{p'}+\frac{2}{30}\cdot\frac{1}{p},
\end{split}
\end{equation}
where $\ell_{2}>1,\;0<\delta_{1},\;\delta_{2}<1$ and $\delta_{1}+\delta_{2}=1$.
They are chosen so that the following inequalities we need later are satisfied: 
\begin{equation}\label{ineqs}
\begin{split}
& -\alpha p' +3>0\;,\; -6\delta_1p' -\alpha p' +3<0\;,\;-3\delta_{1} p'+\ell_1(-\alpha p' +3)=-\beta p' \\
& -\beta p+3>0\;,\; -3\delta_2 p-\beta p+3<0\;,\; -3\delta_{2} p +\ell_2 ( -\beta p +3)=-\alpha p. 
\end{split}
\end{equation}

With these in hand and taking $h_{1}(y)=|y|^{-\alpha}$,\; $h_{2}(x)=|x|^{-\beta}$, 
we are ready to prove~\eqref{Schur-desire}. 

Let $x$ be fixed, we claim that 
\begin{equation}\label{K-y-tail}
 \int_{|y|>|x|^{\ell_{1}}}  |K_0(x,y)|^{\delta_{1} p'} h_{1}(y)^{ p'} dy
 \leq C_{0}(512)^{2+\gamma} |x|^{-3\delta_{1} p'+\ell_{1}(-\alpha  p' +3)}.
\end{equation} 
Here the notation $C_{0}(512)^{2+\gamma}$ is abused since it is different with that appears in~\eqref{x-l-y-s} and 
~\eqref{x-s-y-l}, but we keep using the same notation as it inherits all the properties in~\eqref{C-ratio}.
 Therefore it is also used in this sense till the end of this section.     

 Use the spherically coordinate with $r=|y|$. For $|y|$ large enough, estimate~\eqref{x-s-y-l} gives
\[
 |K_0(x,y)|^{\delta_{1} p'} h_{1}(y)^{ p'} r^{2} dr \approx |x|^{3\delta_{1} p'} 
 r^{-6\delta_{1}p'} r^{-\alpha p'+2} dr.  
\]
This implies the integral in~\eqref{K-y-tail} converges since $-6\delta_{1}p' -\alpha p'+3<0$ by~\eqref{ineqs}.
Thus the value of the left hand side 
of ~\eqref{K-y-tail} is determined by the antiderivative of the integrand at $r=|x|^{\ell_{1}}$. 
If $|x|^{\ell_{1}}\leq 2 |x|$ holds, by~\eqref{x-l-y-s},  
\begin{equation}\label{y-small}
\begin{split}
 \int_{|y|>|x|^{\ell_{1}}}  |K_0(x,y)|^{\delta_{1} p'} h_{1}(y)^{ p'} dy & \leq C_{0}(512)^{2+\gamma} |x|^{-3 \delta_{1} p'}
 \cdot r^{-\alpha  p' +3}{\Big |}_{r=|x|^{\ell_{1}}} \\
& =C_{0}(512)^{2+\gamma} |x|^{-3\delta_{1} p'+\ell_{1}(-\alpha  p' +3)}.
\end{split}
\end{equation}
If $|x|^{\ell_{1}}>2 |x|$ holds, by~\eqref{x-s-y-l},
 \begin{equation}\label{y-large}
\begin{split}
\int_{|y|>|x|^{\ell_{1}}}  |K_0(x,y)|^{\delta_{1} p'} h_{1}(y)^{ p'} dy &\leq C_{0}(512)^{2+\gamma} 
|x|^{3 \delta_{1} p'}\cdot ( r^{-6 \delta_{1} p'} \cdot r^{-\alpha  p' +3}){\Big |}_{r=|x|^{\ell_{1}}} \\
& \leq C_{0}(512)^{2+\gamma} |x|^{-3\delta_{1} p'+\ell_{1}(-\alpha  p' +3)},
\end{split}
\end{equation}
thus we conclude~\eqref{K-y-tail}. Similarly for fixed $x$, we have 
\begin{equation}\label{K-y-head}
 \int_{|y|\leq |x|^{\ell_{1}}}  |K_0(x,y)|^{\delta_{1} p'} h_{1}(y)^{ p'} dy
 \leq C_{0}(512)^{2+\gamma}|x|^{-3\delta_{1} p'+\ell_{1}(-\alpha  p' +3)},
\end{equation}  
because the integral above converges (at $r=0$) as $-\alpha_{1}  p' +3>0$ by~\eqref{ineqs}. 
And the calculations~\eqref{y-small} and~\eqref{y-large} at $r=|x|^{\ell_{1}}$ give~\eqref{K-y-head}.

Thus for fixed $x$ we have, 
\begin{equation}
\begin{split}
& \int_Y |K_0(x,y)|^{\delta_{1} p'} h_{1}(y)^{ p'} dy \\
& \leq C_{0} (512)^{2+\gamma} |x|^{-3\delta_{1} p'+\ell_1(-\alpha p' +3)} \leq C_{0}(512)^{2+\gamma} |x|^{-\beta p' },
\end{split}
\end{equation}
since~\eqref{ineqs} gives
\begin{equation}\label{Schur-x-small}
-3\delta_{1} p'+\ell_1(-\alpha p' +3)=-\beta p' .
\end{equation}
Therefore we conclude the first inequality of~\eqref{Schur-desire}.

Let $y$ be fixed,  we claim that 
\begin{equation}\label{K-x-head}
 \int_{|x|<|y|^{\ell_{2}}}  |K_0(x,y)|^{\delta_{2} p} h_{2}(x)^{ p} dx
 \leq C_{0}(512)^{2+\gamma} |y|^{-3\delta_{2} p+\ell_{2}(-\beta  p +3)}.
\end{equation} 
We use the spherically coordinate $r=|x|$. For $r$ is close to $0$,  estimate~\eqref{x-s-y-l} gives
\[
|K_0(x,y)|^{\delta_{2} p} h_{2}(x)^{ p}\approx |y|^{-6} r^{3\delta_{2}p} r^{-\beta p+2},
\] 
and which means the integral converges at $r=0$ since $3\delta_{2}p>0$ and $-\beta_{1}  p +3>0$.
Therefore the value of the the integral in~\eqref{K-x-head} is determined by the antiderivative of 
the integrand at $r=|y|^{\ell_{2}}$.
If $|y|^{\ell_{2}}\geq |y|/2$, 
\begin{equation}\label{x-small}
\begin{split}
\int_{|x|<|y|^{\ell_{2}}}  |K_0(x,y)|^{\delta_{2} p} h_{2}(x)^{ p} dx
& \leq C_{0}(512)^{2+\gamma} r^{-3\delta_{2}p} r^{-\beta p+3}{\Big |}_{r=|y|^{\ell_{2}}} \\
&\leq C_{0}(512)^{2+\gamma} |y|^{-3\delta_{2} p+\ell_{2}(-\beta  p +3)}.
\end{split}
\end{equation}
If $|y|^{\ell_{2}}\leq |y|/2$,
\begin{equation}\label{x-large}
\begin{split}
\int_{|x|<|y|^{\ell_{2}}}  |K_0(x,y)|^{\delta_{2} p} h_{2}(x)^{ p} dx
& \leq C_{0}(512)^{2+\gamma} |y|^{-6\delta_{2}p} r^{3\delta_{2}p} r^{-\beta p+3}{\Big |}_{r=|y|^{\ell_{2}}} \\
&\leq C_{0}(512)^{2+\gamma} |y|^{-3\delta_{2} p+\ell_{2}(-\beta  p +3)}.
\end{split}
\end{equation}
Similarly for fixed $y$, we have
\begin{equation}\label{K-x-tail}
\int_{|x|>|y|^{\ell_{2}}}  |K_0(x,y)|^{\delta_{2} p} h_{2}(x)^{ p} dx
 \leq C_{0}(512)^{2+\gamma} |y|^{-3\delta_{2} p+\ell_{2}(-\beta  p +3)},
\end{equation}
since the integral above converges (at $r=\infty$) by $-3\delta_{2}p  {-\beta p}+3<0$ of 
~\eqref{ineqs}. And the calculations~\eqref{x-small} and~\eqref{x-large} at $r=|y|^{\ell_{2}}$ 
gives~\eqref{K-x-tail}. 

Thus for fixed $y$ we have, 
\begin{equation}
\begin{split}
& \int_X |K_0(x,y)|^{\delta_{1} p'} h_{1}(y)^{ p'} dx \\
& \leq C_{0}(512)^{2+\gamma}  |y|^{-3\delta_{2} p+\ell_2(-\beta p +3)} \leq C_{0}(512)^{2+\gamma} |y|^{-\alpha p },
\end{split}
\end{equation}
since~\eqref{ineqs} gives
\begin{equation}\label{Schur-x-small-2}
-3\delta_{2} p+\ell_2(-\beta p +3)=-\alpha p.
\end{equation}
Therefore we conclude the second inequality of~\eqref{Schur-desire} and $T_{0,II}$ is $L^{p}$
bounded with bound $C_{0}(512)^{2+\gamma}$ .

Since the $L^p$ boundedness of $T_{z,II}$ can be obtained by the same argument, we  
should compute the the ratios of bound of $T_{z,II}$ to $C_0\cdot(512)^{2+\gamma}$. Recall that 
\[
T_{1,II} h(x)=\int_{\R^3} K_1(x,y) h(y)dy,\; 
\]
where 
\[
K_1(x,y)=\int_{\R^3} \psi_{1,II}(x,\xi)\;  e^{i(x-y)\cdot\xi} \;a(x,\xi)  d{\xi}
\]
and
\begin{equation}\label{sup-1-II}
{\rm supp}\;\psi_{1,II}\subset\{(x,\xi)| (x,\xi)\in\Lambda_1 \;\;{\rm and}\;\; 0\leq |x||\xi|\leq  512\cdot 2^2  \}.
\end{equation}

Let $x=2\widetilde{x}, y=2\widetilde{y}$ and $\xi=2\widetilde{\xi}$. 
Define 
\[
\begin{split}
& \widetilde{a}(\widetilde{x},\widetilde{\xi}):=a(x,\xi),\;\widetilde{K}_1(\widetilde{x},\widetilde{y}):=K_1(x,y),
\; \widetilde{h}(\widetilde{y}):=h(2\widetilde{y}),\\
& \widetilde{T}_{1,II} \widetilde{h}(\widetilde{x}):=T_{1,II} h(x)
=\int_{\R^3} \widetilde{K}_1(\widetilde{x},\widetilde{y}) \widetilde{h}(\widetilde{y}) 2^3 d\widetilde{y}.
\end{split}
\]
Since 
\[
\|h(y)\|_{L^p}=2^{\frac{3}{p}}\|\widetilde{h} (\widetilde{y})\|_{L^p},\;
\|T_{1,II} h(x)\|_{L^p}=2^{\frac{3}{p}}\|\widetilde{T}_{1,II} \widetilde{h}(\widetilde{x})\|_{L^p},
\]
we have 
\[
\|T_{1,II} h(x)\|_{L^p\rightarrow L^{p}}=\|\widetilde{T}_{1,II} \widetilde{h}(\widetilde{x})\|_{L^p\rightarrow L^{p}}
\]
and we should estimate the later one. 

 By~\eqref{a-region-2}, we have
\begin{equation}\label{a-scale}
\widetilde{a}(\widetilde{x},\widetilde{\xi})=C\cdot\sin\theta_0\cos\theta_0\cdot(|x||\xi|)^{\gamma-1}=C\cdot\sin\theta_0\cos\theta_0\cdot(2^2|\widetilde{x}||\widetilde{\xi}|)^{\gamma-1},
\end{equation}
and
\begin{equation}\label{K-scale}
\begin{split}
&\widetilde{K}_1(\widetilde{x},\widetilde{y})=K_1(2\widetilde{x}, 2\widetilde{y})\\
&=\int_{\R^3} \widetilde{\psi}_{1,II}(\widetilde{x},\widetilde{\xi})
e^{i\cdot 2^2(\widetilde{x}-\widetilde{y})\cdot\widetilde{\xi}} \;\widetilde{a} (\widetilde{x},\widetilde{\xi}) 2^3 d\widetilde{\xi}
\end{split}
\end{equation}
where 
\begin{equation}\label{sup-scale}
{\rm supp}\;\tilde{\psi}_{1,II}(\widetilde{x},\widetilde{\xi})\subset\{(\widetilde{x},\widetilde{\xi})
| (\widetilde{x},\widetilde{\xi})\in\Lambda_1 \;\;{\rm and}\;\; 0\leq |\widetilde{x}||\widetilde{\xi}|\leq  512  \}.
\end{equation}

Now we are ready to compute the ratio of the $L^{p}$ bound of $\widetilde{T}_{1,II}$ to that of  $T_{0,II}$. 
The conditions~\eqref{a-scale} and~\eqref{K-scale} have factor $2^{2(\gamma-1)} \cdot 2^{3}$ in estimate 
of $\widetilde{K}_{1}$ compare to that of $K_{0}$. On the other hand, the condition~\eqref{sup-scale} 
implies that $\widetilde{a} (\widetilde{x},\widetilde{\xi})$ contributes to $\widetilde{K}_{1}$ is $2^{-3}$ times 
compare to $a(x,\xi)$ to $K_{0}$ by~\eqref{C-ratio}.

Therefore we have 
\begin{equation}\label{ratio-2}
\frac{\|{T}_{1,II}\|_{L^{p}\rightarrow L^{p}}}{\|T_{0,II}\|_{L^{p}\rightarrow L^{p}}} 
=\frac{\|\widetilde{T}_{1,II}\|_{L^{p}\rightarrow L^{p}}}{\|T_{0,II}\|_{L^{p}\rightarrow L^{p}}} 
\approx 2^{2(\gamma-1)}. 
\end{equation}
The ratio of $L^{p}$ bound of $T_{|z|+1,II}$  to that of $T_{|z|,II}$ is the order of $2^{2(\gamma-1)}$ for 
any $z\in\mathbb{Z}$ by the same argument. Note that we also have 
$\|{T}_{z,II}\|_{L^{p}\rightarrow L^{p}}=\|{T}_{|z|,II}\|_{L^{p}\rightarrow L^{p}}$.
When $0\leq \gamma<1$, the series $\|{T}_{z,II}\|_{L^{p}\rightarrow L^{p}}$ converges and thus 
$T_{B}$ is $L^{p}$ bounded. When $\gamma=1$, we can adjust the definition of $\psi_{z,I},\;\psi_{z,II},\;z\neq 0$ 
in ~\eqref{a-z-decom} so that $2^{2|z|},\;z\neq 0$ in~\eqref{supp-A-B} is replaced with 
$2^{2(|z|(1-\delta))},\;z\neq 0$ 
where $0<\delta<1/50$. Such a modification of the definition of $T_A$ does not alter the result in 
Proposition~\eqref{P-T-A} as we pointed out in Remark~\ref{shrink-II} and Remark~\ref{shrink-II-2}. 
On the other hand, this modification of the 
definition of $T_B$ means that the number $512$ in~\eqref{sup-scale} will be replaced with 
$512\cdot  2^{-2\delta}$. 
Thus the ratio in~\eqref{ratio-2} is replaced with $2^{2(\gamma-1)}\cdot 2^{-2\delta(2+\gamma)}$. Then we can 
conclude that $T_B$ is $L^p$ bounded when $\gamma=1$.    
\end{proof}

So far we have proved Theorem~\ref{Main-result-1}. The proof of the Lemma~\ref{L:R-v-v_*-ineq} comes easily 
by modifying that proof. For the convenience of the readers, we restate Lemma~\ref{L:R-v-v_*-ineq} here. 
\begin{lem*}
Let $\mathbb{T}$ be the operator defined by~\eqref{Def:T-2} and $\tau_m$ be the translation 
operator $\tau_m h(\cdot)=h(\cdot+m)$. Suppose $p$ and $\widetilde{p}$ satisfy $1<p,\;\widetilde{p}<\infty$ and 
\[
 \frac{\gamma}{2}\leq \frac{1}{p}\leq 1-\frac{\gamma}{2}\;\;{\rm and}\;\;\frac{1}{\widetilde{p}}
 =\frac{1}{p}+\frac{\gamma}{3}, 
\]
 and both $\leq$ are replaced by $<$ when $\gamma=0$,
 then we have
\[
\sup\limits_{v_*}\|(\tau_{-v_*}\circ \mathbb{T} \circ\tau_{v_*} )h(v)\|_{L^{p}(v)} 
\leq C \|h\|_{L^{\widetilde{p}}} \tag{\ref{E:R-v-ineq}}
\]
and
\[
\sup\limits_{v}\|(\tau_{-v_*}\circ \mathbb{T} \circ\tau_{v_*} )h(v)\|_{L^p(v_*)}
\leq C \|h\|_{L^{\widetilde{p}}}. \tag{\ref{E:R-v_*-ineq}}
\]
\end{lem*}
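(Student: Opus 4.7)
\textbf{Proof proposal for Lemma~\ref{L:R-v-v_*-ineq}.}
The plan is to reduce both inequalities to the estimate for $\mathbb{T}$ alone (Theorem~\ref{Main-result-1}) by means of two changes of variable: an affine translation in the first case, and a spherical symmetry $\theta\leftrightarrow \pi/2-\theta$ (equivalently $v'\leftrightarrow v'_*$) in the second.

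For~\eqref{E:R-v-ineq}, fix $v_*$ and perform the translation $v=x+v_*$ in the outer integral. Since $(\tau_{-v_*}\circ\mathbb{T}\circ\tau_{v_*})h(v)=\mathbb{T}(\tau_{v_*}h)(v-v_*)$, this change of variable gives
\[
\|(\tau_{-v_*}\circ\mathbb{T}\circ\tau_{v_*})h\|_{L^p(v)}=\|\mathbb{T}(\tau_{v_*}h)\|_{L^p(x)}\leq C\|\tau_{v_*}h\|_{L^{\widetilde p}}=C\|h\|_{L^{\widetilde p}},
\]
where the middle inequality is Theorem~\ref{Main-result-1} and the last equality is translation invariance of the $L^{\widetilde p}$ norm. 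The bound is independent of $v_*$.

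For~\eqref{E:R-v_*-ineq}, fix $v$, substitute $w=v-v_*$, and set $g(y):=h(v-y)$. A direct computation shows
\[
(\tau_{-v_*}\circ\mathbb{T}\circ\tau_{v_*})h(v)=|w|^{\gamma}\int_{S^2_+}b(\cos\theta)\,g((w\cdot\omega)\omega)\,d\omega=:\mathbb{S}g(w),
\]
so the question reduces to showing $\|\mathbb{S}g\|_{L^p}\leq C\|g\|_{L^{\widetilde p}}$. The heart of the argument is the identity $\mathbb{S}=\mathbb{T}$, which follows from writing $\omega$ in spherical coordinates adapted to $w$ and applying the substitution $\theta\mapsto \pi/2-\theta$ (accompanied by a $\pi$-rotation in $\varphi$ to keep $\omega\in S^2_+$). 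Under this substitution the Jacobian $\sin\theta\,d\theta$ picks up an extra factor of $\cos\theta/\sin\theta$, and together with the weight $b(\cos\theta)=\cos\theta$ and the identity $x-(x\cdot\omega)\omega=(x\cdot\omega')\omega'$ for the perpendicular unit vector $\omega'$, the integrand transforms exactly into that of $\mathbb{T}$. Once $\mathbb{S}=\mathbb{T}$ is established, Theorem~\ref{Main-result-1} gives $\|\mathbb{S}g\|_{L^p}\leq C\|g\|_{L^{\widetilde p}}=C\|h\|_{L^{\widetilde p}}$, uniformly in~$v$.

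The only mildly nontrivial step is the verification of $\mathbb{S}=\mathbb{T}$; this is the well-known $v'\leftrightarrow v'_*$ symmetry of the Boltzmann collision integral but needs to be checked explicitly because the angular weight and the restriction to $S^2_+$ interact with the reflection. Everything else is bookkeeping: translation invariance for~\eqref{E:R-v-ineq}, and the spherical change of variable plus Theorem~\ref{Main-result-1} for~\eqref{E:R-v_*-ineq}. No new analytic input beyond what has already been developed in Sections~\ref{Section 3}--\ref{section-5} is required.
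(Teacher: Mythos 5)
Your proof of \eqref{E:R-v-ineq} is the same as the paper's (translation invariance of $L^p$ plus Theorem~\ref{Main-result-1}), but your proof of \eqref{E:R-v_*-ineq} is correct and takes a genuinely different, shorter route. The paper never uses the symmetry $\mathbb{S}=\mathbb{T}$: it factors $\tau_{-v_*}\circ\mathbb{T}\circ\tau_{v_*}$ through $(-\Delta)^{-\gamma/2}$ and then re-runs the machinery of Sections~\ref{Section 3}--\ref{section-5} with the roles of the variables exchanged, checking that the $L^2$ bounds, the $\mathcal{H}^1\to L^1$ atom estimates for the FIO pieces, and the Schur-test estimate for $T_B$ all survive when $x$ is replaced by $v-v_*$ and $y$ by $y-v$, with $v$ treated as a parameter. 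Your reduction instead makes \eqref{E:R-v_*-ineq} a one-line consequence of Theorem~\ref{Main-result-1} once $\mathbb{S}=\mathbb{T}$ is verified, and that identity does hold: taking $w=|w|e_3$ and $\omega=(\sin\theta\cos\varphi,\sin\theta\sin\varphi,\cos\theta)$, the involution $R(\theta,\varphi)=(\pi/2-\theta,\varphi+\pi)$ of $S^2_+$ preserves the weighted measure $\cos\theta\,d\omega=\cos\theta\sin\theta\,d\theta\,d\varphi$ and satisfies $w-(w\cdot R\omega)\,R\omega=(w\cdot\omega)\,\omega$, so the parallel and perpendicular projections are equidistributed under $\cos\theta\,d\omega$ and $\mathbb{S}g(w)=\mathbb{T}g(w)$ pointwise; combined with the measure-preserving substitutions $v_*\mapsto w=v-v_*$ and $g(y)=h(v-y)$, this yields \eqref{E:R-v_*-ineq} uniformly in $v$. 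What your route buys is economy and transparency: no re-inspection of the FIO and degenerate pieces is needed. What the paper's route buys is independence from the exact angular weight: your identity uses $b(\cos\theta)=\cos\theta$ exactly, and for a general cutoff $b$ the reflection converts $b(\cos\theta)$ into $b(\sin\theta)\cot\theta$, so the shortcut only works when $b$ has matching behaviour at $\theta=0$ and $\theta=\pi/2$ (which the paper's standing assumption on $b$ does provide), whereas the variable-swapped re-proof in Section~\ref{section-5} makes no use of this symmetry. For the kernel treated in the Lemma your argument is complete and valid.
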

\begin{proof}
We note that $L^{p}$ spaces are translation invariant, thus~\eqref{E:R-v-ineq} follows from~\eqref{E:R-ineq-pre} immediately. 

The proof of~\eqref{E:R-v_*-ineq} is less obvious, but this is essential the reminiscence of the proof for 
$L^{\widetilde{p}} \rightarrow L^{p}$ boundedness of the  operator $\mathbb{T}$.  
Recalling $A(x,\xi)$ defined in~\eqref{A-x-xi} and $T$ is defined in~\eqref{Define-T}, we write 
\[
\begin{split}
&(\tau_{-v_*}\circ \mathbb{T}\circ \tau_{v_*})\; h(v)\\
&=|v-v_*|^{\gamma}\int_{\omega\in S^2_+} b(\cos\theta)  h(v-
((v-v_*)\cdot\omega)\;\omega)d\omega\\
&=(2\pi)^{-3}\int_{\mathbb{R}^3}e^{iv\cdot\xi}\;\mathbb{A}(v-v_*,\xi)|\xi|^{\gamma}\;(|\xi|^{-\gamma}\widehat{h}(\xi))\;d\xi \\
&=(\tau_{-v_*}\circ T \circ \tau_{v_*}) \circ (-\Delta)^{-\frac{\gamma}{2}} h(v)
\end{split}
\]
where , 
\[
\cos\theta=(v-v_*,\omega)/|v-v_*|,\;0\leq\theta\leq \pi/2,
\]
Using Theorem~\ref{Embedding}, the proof of~\eqref{E:R-v_*-ineq} is equivalent to 
\begin{equation}
\sup\limits_{v}\|(\tau_{-v_*}\circ T \circ\tau_{v_*} )h(v)\|_{L^p(v_*)}
\leq C \|h\|_{L^{p}}. 
\end{equation}
Then the region I and II in~\eqref{D:criterion} is determined by the quantity 
\[
|v-v_{*}||\xi|\cos^2(\theta_0/2)\sin^2(\theta_0/2)
\]
and 
\begin{equation}\label{A+B}
(\tau_{-v_*}\circ T \circ\tau_{v_*} )h(v)=(\tau_{-v_*}\circ T_{A} \circ\tau_{v_*} )h(v)+(\tau_{-v_*}\circ T_{B} \circ\tau_{v_*} )h(v).
\end{equation}

As before, we can write $(\tau_{-v_*}\circ T_{A} \circ\tau_{v_*} )h(v)$ as the  sum of two operators, 
\begin{equation}\label{E:expression}
\int_{\mathbb{R}^3} e^{iv\cdot\xi}e^{-\frac{i}{2} [(v-v_*)\cdot\xi\pm|v-v_*||\xi|]}
\;p_{\pm}(v-v_*,\xi)\;\widehat{h}(\xi)d\xi,
\end{equation}
and it suffices to show that they are $L^{2}\rightarrow L^{2}$ and $\mcH^{1}\rightarrow L^{1}$ bounded
when the corresponding symbols are given. 
For any fixed $v$, as~\eqref{T-lambda-p}, we consider
\begin{equation}\label{E:expression-2}
\int_{\mathbb{R}^3} e^{\frac{i}{2}[(v_*-v)\cdot\xi\mp |v_*-v||\xi|]} p^0_{\pm}(v-v_*,\xi).
\; \widehat{\tau_{-v}h}(\xi)  d\xi.
\end{equation}
The $L^{2}\rightarrow L^{2}$ boundedness of above operator clearly follows from the proof of the $L^{2}\rightarrow L^{2}$ boundedness for $T$ by replacing $x$ with $v_{*}-v$.  To see the boundedness of 
~$(\tau_{-v_*}\circ T_{A} \circ\tau_{v_*} )h(v): \mcH^{1}\rightarrow L^{1}$, we consider two cases according to $\delta$,  
the radius of the support $D$ of one atom. If radius $\delta>1$, the discussion on the $L^{2}$ boundedness ensure the 
result. If $\delta\leq 1$,  we write~\eqref{E:expression}  as  
\begin{equation}
\eqref{E:expression}=\int K_{\pm}(v-v_{*},y) h(y)dy 
\end{equation}
where
\[
K_{\pm}(v-v_{*},y)=\int_{\R^{3}} e^{i [\psi_{\pm}(v-v_{*},\xi)-(y-v)\cdot\xi]}\; \rho_{1}(v-v_{*})\; p^{-1}_{z+}(v-v_{*},\xi) d\xi.
\]
Therefore the argument in the Case 2 of the subsection~\ref{H-1-subsection} still works when $x$ is replaced with 
$v-v_{*}$ and $y$ is replaced with $y-v$ for any fixed $v$. 

For any fixed $v$, we write $(\tau_{-v_*}\circ T_{B} \circ\tau_{v_*} )h(v)$ as 
\[
(\tau_{-v_*}\circ T_{B} \circ\tau_{v_*} )h(v)=\int_{\R^{3}} K_{B}(v-v_{*},y) h(y)dy 
\]
where 
\[
K_B(v-v_{*},y)=\sum_{z}\int_{\mathbb{R}^{3}} \psi_{z,II}(v-v_{*},\xi)\; e^{i((v-v_{*})-y)\cdot\xi}\; a(v-v_{*},\xi) d\xi.
\]
and 
\[
a(v-v_{*},\xi)=(|v-v_{*}||\xi|)^{\gamma}\int_{S^{2}_{+}} e^{i(v-v_{*},\omega)(\xi,\omega)} b(\cos\theta)d\omega.
\]
For any fixed $v$, the above expression is exactly the same form as~\eqref{T-K} by replacing $x$ with $v-v_{*}$,
 and the $L^{p}$ estimates for $T_{B}$ also work here. 
\end{proof}

\section{Well-posedness}\label{well-posed}

In this section, we prove Theorem~\ref{result1} which concerns the Cauchy problem for the cutoff Boltzmann equation 
\begin{equation}\label{E:Cauchy-2}
\left\{
\begin{aligned}
&{\partial_t f}+v\cdot\nabla_x f =Q(f,f)\\
&f(0,x,v)=f_0(x,v)
\end{aligned}
\right.
\end{equation}
in $(0,\infty)\times\mathbb{R}^3\times\mathbb{R}^3,$ where the initial data is small in $L^{3}_{x,v}$ space. 
As we mentioned in the introduction, we should follow the approach of~\cite{HJ23}.  Thus we should make effort to 
minimize the length  while retain the self-contained of the proof.

\subsection{Solve Gain-term only Boltzmann equation}

First we recall the Strichartz estimates for the kinetic transport equation,
\begin{equation}\label{E:KT}
\left\{
\begin{aligned}
&\partial_t u(t,x,v)+v\cdot\nabla_x u(t,x,v) =F(t,x,v),\;\;(t,x,v)\in (0,\infty)\times\mathbb{R}^d\times\mathbb{R}^d,\\
& u(0,x,v)=u_0(x,v).
\end{aligned}
\right.
\end{equation}
To state the Strichartz estimates for~\eqref{E:KT},  we need the following definition.
\begin{defn}\label{D:admissible}
We say that the exponent triplet $(q,r,p)$, for $1\leq p,q,r\leq\infty$ is KT-admissible if 
\begin{equation}
\frac{1}{q}=\frac{d}{2}{\Big ( \frac{1}{p}-\frac{1}{r} }{\Big )}
\end{equation}
\begin{equation}\label{pr-star-2}
1\leq a\leq\infty,\;\;p^*(a)\leq p\leq a, \;\;a\leq r\leq r^*(a)
\end{equation}
except in the case $d=1,\;(q,r,p)=(a,\infty,a/2)$. Here by $a=$HM$(p,r)$ we have denoted the harmonic 
means of the exponents $r$ and $p$, i.e.,
\begin{equation}
\frac{1}{a}=\frac{1}{2}{\Big ( \frac{1}{p}+\frac{1}{r} }{\Big )}
\end{equation}
Furthermore, the exact lower bound $p^*$ to $p$ and the exact upper bound $r^*$ to $r$ are 
\begin{equation}\label{pr-star-1}
\left\{
\begin{array}{lll}
p^*(a)=\frac{da}{d+1}, & r^*(a)=\frac{da}{d-1} & {\rm if}\;\frac{d+1}{d}\leq a\leq\infty, \\
p^*(a)=1, & r^*(a)=\frac{a}{2-a} & {\rm if}\; 1\leq a\leq \frac{d+1}{d}.
\end{array}
\right.
\end{equation}
\end{defn}
The triplets of the form $(q,r,p)=(a,r^*(a),p^*(a))$ for 
$\frac{d+1}{d}\leq a<\infty$ are called endpoints.  
The endpoint Strichartz estimate for the kinetic equation is false in all dimensions has been proved recently by 
Bennett, Bez, Guti\'{e}rrez and Lee~\cite{BBGL14}.

The mild solution of the kinetic equation~\eqref{E:KT} can be written as 
\begin{equation}\label{integal-equation}
u=U(t)u_0+W(t)F
\end{equation}
where 
\begin{equation}\label{D:solution-maps}
U(t)u_0=u_0(x-vt,v)\;,\; W(t)F=\int_0^{t} U(t-s)F(s)ds.
\end{equation}
The estimates for the operator $U(t)$ and $W(t)$ respectively in the mixed Lebesgue norm 
$\|\cdot\|_{L^q_tL^r_xL^p_v}$ are called homogeneous and inhomogeneous Strichartz 
estimates respectively. We record the Strichartz estimates  for the equation~\eqref{integal-equation} 
in the following Proposition.
\begin{prop}[\cite{Ovc11},\cite{BBGL14}]\label{Strichartz-est} 
Let $u$ satisfies~\eqref{E:KT}.  The estimate
\begingroup
\large
\begin{equation}\label{E:Strichartz}
\|u\|_{L^q_tL^r_xL^p_v}\leq C(q,r,p,d)( \|u_0\|_{L^a_{x,v}} +\|F\|_{L^{\tilde{q}'}_tL^{\tilde{r}'}_xL^{\tilde{p}'}_v}  )
\end{equation}
\endgroup
holds for all $u_0\in L^a_{t,x}$ and all $F\in {L^{\tilde{q}'}_tL^{\tilde{r}'}_xL^{\tilde{p}'}_v} $ if and only if 
$(q,r,p)$ and $(\tilde{q},\tilde{r},\tilde{p})$ are two KT-admissible exponents triplets and $a=$HM$(p,r)=$HM$(\tilde{p}',\tilde{r}')$ with the exception of $(q,r,p)$ being an endpoint triplet. Here $\tilde{p}'$ is the conjugate 
exponent of $\tilde{p}$ and so on. 
\end{prop}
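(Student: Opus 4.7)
The plan is to prove the Strichartz estimates via the classical $TT^{*}$ machinery of Keel--Tao, adapted to the kinetic transport semigroup $U(t)f_{0}(x,v) = f_{0}(x-vt,v)$. I would first establish a dispersive estimate: for $1 \leq p \leq r \leq \infty$, a change of variable $w = x-vt$ in the free streaming representation yields the endpoint bound $\|U(t)f_{0}\|_{L^{\infty}_{x}L^{1}_{v}} \leq |t|^{-d}\|f_{0}\|_{L^{1}_{x}L^{\infty}_{v}}$; interpolating with the trivial isometry on $L^{a}_{x,v}$ (the case $p = r = a$) gives
\[
\|U(t)f_{0}\|_{L^{r}_{x} L^{p}_{v}} \leq C|t|^{-d(1/p - 1/r)} \|f_{0}\|_{L^{p}_{x} L^{r}_{v}},
\]
with the characteristic swap of the $x$- and $v$-exponents on the two sides that is unique to kinetic transport.

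Next, setting $Tf_{0} := U(t)f_{0}$ and writing $TT^{*}F(t) = \int_{\mathbb{R}} U(t-s)F(s)\,ds$, the homogeneous Strichartz bound $\|Tf_{0}\|_{L^{q}_{t}L^{r}_{x}L^{p}_{v}} \leq C\|f_{0}\|_{L^{a}_{x,v}}$ is dual to boundedness of $TT^{*}$ between suitable mixed Lebesgue spaces. I would insert the dispersive estimate pointwise in time and then apply Hardy--Littlewood--Sobolev in $t$; this produces precisely the scaling identity $1/q = (d/2)(1/p - 1/r)$ together with the harmonic-mean condition $\mathrm{HM}(p,r) = a$ of Definition~\ref{D:admissible}. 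The inhomogeneous bound for $W(t)F$ then follows from the Christ--Kiselev lemma applied to the retarded operator, and the same bilinear formulation simultaneously accommodates two distinct KT-admissible triplets $(q,r,p)$ and $(\tilde q,\tilde r,\tilde p)$ sharing the common harmonic mean $a$.

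The admissibility thresholds in~\eqref{pr-star-1} emerge from the joint range where the pointwise dispersive estimate (after interpolation with the trivial $L^{a}_{x,v} \to L^{a}_{x,v}$ bound) and the HLS convolution in $t$ are both effective; the scaling and Galilean invariance of the transport equation then enforce their necessity. The main obstacle, and the reason for excluding the endpoint triplet $(q, r^{*}(a), p^{*}(a))$, is a genuine failure of the endpoint case: unlike the Schr\"odinger equation in dimensions $d \geq 3$, the endpoint Strichartz estimate for the kinetic transport is false, as shown in \cite{BBGL14} via a Knapp-type construction concentrating mass along codimension-one subsets of phase space. Reconciling this failure with the positive range above is the delicate part of the argument and motivates stating the result with the endpoint carefully excluded.
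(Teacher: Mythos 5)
The paper does not prove Proposition~\ref{Strichartz-est} at all: it is imported as a black box, with the sufficiency and the sharp ``if and only if'' characterization taken from Ovcharov~\cite{Ovc11} (building on Castella--Perthame~\cite{CP96}) and the failure of the endpoint from~\cite{BBGL14}. So there is no in-paper argument to compare yours against; what you have written is a reconstruction of the standard proof from that literature, and its first step is correct: the $L^{1}_{x}L^{\infty}_{v}\to L^{\infty}_{x}L^{1}_{v}$ bound with decay $|t|^{-d}$, interpolated with the fact that $U(t)$ is an isometry on $L^{a}_{x,v}$, gives the dispersive estimate with the characteristic $x$--$v$ exponent swap.

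Two steps of your outline would need repair before they yield the proposition as stated. First, the $TT^{*}$ plus Hardy--Littlewood--Sobolev step closes only when the dual Strichartz exponents match the swapped exponents required by the dispersive estimate, i.e.\ when $p'=r$ and $r'=p$, which forces $1/p+1/r=1$, the $L^{2}$-based case $a=2$; for $a\neq 2$ the identity $\|T^{*}F\|_{L^{a'}}^{2}=\langle TT^{*}F,F\rangle$ is simply unavailable. The proposition (and its use in Section~\ref{well-posed}, where $a=3$ and $a_{2}=15/8$) needs general $a=\mathrm{HM}(p,r)$; in~\cite{CP96} this is reached by further interpolation with the trivial conservation estimates $(q,r,p)=(\infty,a,a)$, and even that does not produce the sharp thresholds $p^{*}(a),r^{*}(a)$ of~\eqref{pr-star-1}--\eqref{pr-star-2}, which is precisely the refinement carried out in~\cite{Ovc11}. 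Second, the statement is an equivalence: the necessity of the admissibility conditions does not follow from scaling and Galilean invariance alone (these only give the equality constraints such as $1/q=\frac{d}{2}(1/p-1/r)$ and the harmonic-mean relation); Ovcharov proves necessity with explicit counterexample families, and the endpoint failure is exactly~\cite{BBGL14}, which you correctly invoke. A smaller point: the Christ--Kiselev reduction for the retarded operator with two distinct triplets requires $\tilde q'<q$, which should be verified on the admissible range. With these repairs your argument becomes the standard proof; alternatively, since the paper itself treats this proposition as quoted background, citing~\cite{Ovc11} and~\cite{BBGL14} as it does is equally legitimate.
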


We also need to build the weighted Strichartz estimates.   
We consider the weight $\lr{v}^{\ell},\;\ell\in\mathbb{R}$ as the multiplication operator. 
Using the  notations of~\eqref{D:solution-maps}, we note that the following communication relations hold:
\begin{equation}
 \lr{v}^{\ell} U(t)u_{0}=U(t) \lr{v}^{\ell} u_{0},\; \lr{v}^{\ell} W(t)F=W(t) \lr{v}^{\ell} F.
\end{equation}
Combining above facts with Proposition~\ref{Strichartz-est}, we have the following result.
\begin{cor}\label{W-Strichartz}
Let $\ell\in\mathbb{R}$ and $u$ satisfy the kinetic transport equation~\eqref{E:KT}.  The estimate
\begingroup
\large
\begin{equation}\label{E:Strichartz-w}
\begin{split}
& \|  \lr{v}^{\ell} u \|_{L^q_tL^r_xL^p_v}\\
&{\hskip 1cm}\leq C(q,r,p,d){\big (} \|   \lr{v}^{\ell} u_0\|_{L^a_{x,v}} +\|   \lr{v}^{\ell} F\|_{L^{\tilde{q}'}_tL^{\tilde{r}'}_xL^{\tilde{p}'}_v}  {\big)}
\end{split}
\end{equation}
\endgroup
holds for all $\lr{v}^{\ell} u_0\in L^a_{t,x}$ and all $\lr{v}^{\ell} F\in {L^{\tilde{q}'}_tL^{\tilde{r}'}_xL^{\tilde{p}'}_v} $ if and only if 
$(q,r,p)$ and $(\tilde{q},\tilde{r},\tilde{p})$ are two KT-admissible exponents triplets and $a=$HM$(p,r)=$HM$(\tilde{p}',\tilde{r}')$ with the exception of $(q,r,p)$ being an endpoint triplet.
\end{cor}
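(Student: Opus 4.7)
The plan is to reduce the weighted estimate to the unweighted Proposition~\ref{Strichartz-est} by exploiting the fact that the weight $\langle v\rangle^{\ell}$ depends only on the velocity variable, which is conserved along the characteristics $x-vt, v$ of the free transport. In particular, I would first verify the commutation relations displayed just above the corollary: since $U(t)u_0(x,v) = u_0(x-vt,v)$ acts trivially in the $v$ variable, one has $\langle v\rangle^{\ell} U(t)u_0 = U(t)(\langle v\rangle^{\ell} u_0)$, and likewise $\langle v\rangle^{\ell} W(t)F = W(t)(\langle v\rangle^{\ell} F)$ by linearity of $W(t)$ and the same observation applied inside the Duhamel integral.

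Next, define $\tilde{u} = \langle v\rangle^{\ell} u$ and $\tilde{F} = \langle v\rangle^{\ell} F$, $\tilde{u}_0 = \langle v\rangle^{\ell} u_0$. Because multiplication by $\langle v\rangle^{\ell}$ commutes with $v\cdot\nabla_x$ (the weight has no $x$-dependence), $\tilde{u}$ solves
\[
\partial_t \tilde{u} + v\cdot\nabla_x \tilde{u} = \tilde{F}, \qquad \tilde{u}(0,x,v) = \tilde{u}_0(x,v),
\]
so that $\tilde{u} = U(t)\tilde{u}_0 + W(t)\tilde{F}$ by the mild-solution formula~\eqref{integal-equation}. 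Applying the unweighted Strichartz estimate from Proposition~\ref{Strichartz-est} to $\tilde{u}$ gives
\[
\|\tilde{u}\|_{L^q_t L^r_x L^p_v} \leq C\big(\|\tilde{u}_0\|_{L^a_{x,v}} + \|\tilde{F}\|_{L^{\tilde{q}'}_t L^{\tilde{r}'}_x L^{\tilde{p}'}_v}\big),
\]
which is exactly~\eqref{E:Strichartz-w} under the stated KT-admissibility conditions on $(q,r,p)$ and $(\tilde{q},\tilde{r},\tilde{p})$.

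For the converse implication, I would simply reverse the substitution: given data $u_0, F$ for which~\eqref{E:Strichartz-w} is assumed to hold in full generality, choose $u_0 = \langle v\rangle^{-\ell} v_0$ and $F = \langle v\rangle^{-\ell} G$ for arbitrary $v_0, G$ to recover the unweighted estimate for $v_0, G$. By the sharpness part of Proposition~\ref{Strichartz-est}, the exponents must then satisfy the KT-admissibility conditions and the harmonic-mean constraint $a = \mathrm{HM}(p,r) = \mathrm{HM}(\tilde{p}',\tilde{r}')$, with the usual endpoint exclusion.

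There is no real obstacle here: the proof is essentially bookkeeping, since the weight is a pointwise multiplier in $v$ alone and therefore transparent to the transport operator. The only mildly subtle point is to ensure that the commutations can be justified on the relevant function spaces (for instance, on Schwartz data and then extended by density), which is standard.
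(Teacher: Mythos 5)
Your proof is correct and follows essentially the same route as the paper: the weight $\lr{v}^{\ell}$ acts only in $v$, so it commutes with $U(t)$ and $W(t)$, and the estimate (with its ``if and only if'' part) reduces immediately to the unweighted Proposition~\ref{Strichartz-est} applied to $\lr{v}^{\ell}u$, $\lr{v}^{\ell}u_0$, $\lr{v}^{\ell}F$. Nothing further is needed.
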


First we study  the Cauchy problem for the gain term only equation 
\begin{equation}\label{G-Cauchy}
\left\{
\begin{aligned}
&{\partial_t f_+}+v\cdot\nabla_x f_+ =Q^{+}(f_+,f_+),\;\;(t,x,v)\in (0,\infty)\times\mathbb{R}^3\times\mathbb{R}^3,\\
&f_+(0,x,v)=f_0(x,v).
\end{aligned}
\right.
\end{equation}
We define the solution map by
\begin{equation}\label{G-integral-equation}
Sf_+(t,x,v)=U(t)f_0+W(t)Q^{+}(f_+,f_+).
\end{equation}
From~\eqref{G-integral-equation} and Corollary~\ref{W-Strichartz}, we will see that it holds 
the estimates
\begin{equation}\label{contraction}
\begin{split}
\|\lr{v}^{\ell} Sf_+\|_{L^q_tL^r_xL^p_v}& \leq C_{0}\|\lr{v}^{\ell} f_{0}\|_{L^a_{x,v}}+C_{1}\|\lr{v}^{\ell} 
Q^+(f_+,f_+)\|^{2}_{L^{\tilde{q}'}_tL^{\tilde{r}'}_xL^{\tilde{p}'}_v} \\
&  \leq C_{0}\|\lr{v}^{\ell} f_{0}\|_{L^a_{x,v}}+C_{2}\|\lr{v}^{\ell} f_+\|^2_{L^q_tL^r_xL^p_v},
\end{split}
\end{equation}
for suitable $\ell\in\mathbb{R}$ and Strichartz spaces $L^q_tL^r_xL^p_v$ and $L^{\tilde{q}'}_tL^{\tilde{r}'}_xL^{\tilde{p}'}_v$.
Then the contraction mapping argument will work if the weighted initial data is small in space $L^a_{x,v}$.
The key ingredient is that  there exist suitable $\ell\in\mathbb{R}$, admissible triplets 
$(q,r,p)$ and $(\tilde{q}, \tilde{r},\tilde{p})$
with $HM(p,r)=HM(\tilde{p}',\tilde{r}')$ such that  the estimate
\begin{equation}\label{desire-est}
\|\lr{v}^{\ell} Q^{+}(f_+,f_+)\|_{L^{\tilde{q}'}_tL^{\tilde{r}'}_xL^{\tilde{p}'}_v} \leq
 C\|\lr{v}^{\ell} f_+\|^{2}_{L^q_tL^r_xL^p_v}
\end{equation}
holds. 

From view point of scaling invariance, when $N=3,\;\ell=0$, one must have $(\mbq,\mbr,\mbp)$ lie in the set
\begin{equation}\label{solvable-g}
\{ (\mbq,\mbr,\mbp) | \;\frac{1}{\mbq}=\frac{3}{\mbp}-1\;,\;\frac{1}{\mbr}=\frac{2}{3}-\frac{1}{\mbp}\;,\;
\frac{1}{3}<\frac{1}{\mbp}<\frac{4}{9}\}, 
\end{equation} 
so that the nonlinear estimates~\eqref{desire-est} hold for corresponding $(\tilde{q}',\tilde{r}',\tilde{p}')$. 
At the same time it requires $\gamma=-1$ and $f_0\in L^3_{x,v}$. Then the Cauchy problem for the gain term only equation 
has a unique solution $f_+\in C([0,\infty),L^3_{x,v})\cap L^{\mbq}_tL^{\mbr}_xL^{\mbp}_v$ as authors pointed out 
in~\cite{HJ17}.  Later on, the authors prove in~\cite{HJ23} that the gain term only equation with 
cutoff soft potential model satisying $-1<\gamma\leq 0$ has a unique solution $\lr{v}^{(1+\gamma)+} f_+\in C([0,\infty),L^3_{x,v})\cap L^{\mbq}_tL^{\mbr}_xL^{\mbp}_v$ by
building the weighted version of~\eqref{desire-est} with $\ell=(1+\gamma)+$ for the same 
Strichartz  triplet $(\mbq,\mbr,\mbp)$ lie in~\eqref{solvable-g}.  The idea in~\cite{HJ23} is that  the introduction of 
the weighted space can modify the original  scaling exponents of the the functions spaces for the case $-1<0 \leq 0$. 
By choosing weight $\ell=(1+\gamma)+$, the altered exponents due to different $\gamma$ match the admissible exponents 
given in~\eqref{solvable-g}.

The above method does not fit with the weighted estimate for the gain term 
by Alonso, Carneiro and Gamba~\cite{ACG10}
with $\gamma\geq 0$, i.e., $1/p+1/q=1+1/r,\;\ell\in\mathbb{R}$ and 
\begin{equation}
\|\lr{v}^{\ell} Q^{+}(f,g)\|_{L^{r}_v}\leq C\|\lr{v}^{\ell+\gamma} f\|_{L^{p}_v}
\|\lr{v}^{\ell+\gamma} g\|_{L^{q}_v},
\end{equation}  
due to  the increase of moment. Fortunately, we have the Theorem~\ref{T:Gain-p-w-est}.

\begin{thm*}
Let $\ell_{0}\geq 0$, $1<\mt{p}, \mt{q}, \mt{r} <\infty$,\; $0\leq  \gamma\leq 1$ and
\[
\frac{1}{\mt{p}}+\frac{1}{\mt{q}}=1+\frac{1}{\mt{r}}+\frac{\gamma}{3},\;\;
\frac{\gamma}{6}\leq \frac{1}{\mt{r}}\leq 1-\frac{5\gamma}{6}.  \tag{\ref{scaling-relation}}
\]
Consider
\[
B(v-v_*,\omega)=|v-v_*|^{\gamma}\;\cos\theta.
\]
Then the bilinear operator $Q^{+}(f,g)$ satisfies  
\[
\|\lr{v}^{\ell_{0}}Q^{+}(f,g)\|_{L^{\mt{r}}_{v}(\mathbb{R}^3)}\leq C\|\lr{v}^{\ell_{0}} f\|_{L^{\mt{p}}_{v}(\mathbb{R}^3)}
\|\lr{v}^{\ell_{0}}g\|_{L^{\mt{q}}_{v}(\mathbb{R}^3)}. \tag{\ref{W-convolution}}
\]
If $\ell_{1}>3/m$ and $1<\mt{p}_{m}, \mt{q}_{m}, m, \mt{r}_{m} <\infty$ satisfy
\[
\frac{1}{\mt{p}_{m}}+\frac{1}{m}<1\;,\;\frac{1}{\mt{q}_{m}}+\frac{1}{m}<1, \tag{\ref{minus-size-condition}}
\]
and
\[
\frac{1}{\mt{p}_{m}}+\frac{1}{\mt{q}_{m}}+\frac{1}{m}=1+\frac{1}{\mt{r}_{m}}+\frac{\gamma}{3}, \;\;
\frac{\gamma}{6}\leq \frac{1}{\mt{r}_{m}}\leq 1-\frac{5\gamma}{6} \tag{\ref{scaling-relation-w}}
\]
then we have  
\[
\|\lr{v}^{\ell_{1}} Q^{+}(f,g)\|_{L^{\mt{r}_{m}}_{v}(\mathbb{R}^3)}\leq C(\mt{p}_{m},\ell_{1}) 
\|\lr{v}^{\ell_{1}} f\|_{L^{\mt{p}_{m}}_{v}(\mathbb{R}^3) }\|\lr{v}^{\ell_{1}} g\|_{L^{\mt{q}_{m}}_{v}(\mathbb{R}^3)}.
\tag{\ref{W-convolution-m}}
\]
\end{thm*}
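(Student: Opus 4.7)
The plan is to deduce both weighted estimates \eqref{W-convolution} and \eqref{W-convolution-m} directly from the unweighted Theorem~\ref{Gain-p-est} by a duality argument exploiting energy conservation $|v'|^{2}+|v_{*}'|^{2}=|v|^{2}+|v_{*}|^{2}$ in the collisional change of variables. This is the natural analogue of the classical weight-transfer trick, but it is especially clean here because the unweighted estimate already controls every admissible scaling.

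First I would test $\lr{v}^{\ell_{0}}Q^{+}(f,g)$ against an arbitrary $\phi\in L^{\mt{r}'}$, write $\psi(v)=\lr{v}^{\ell_{0}}\phi(v)$, and use the standard pre/post-collisional symmetry
\[
\int Q^{+}(f,g)(v)\,\psi(v)\,dv=\iiint f(v)g(v_{*})B(v-v_{*},\omega)\psi(v')\,d\omega\,dv_{*}\,dv.
\]
Setting $\psi_{-\ell_{0}}(v)=\lr{v}^{-\ell_{0}}\psi(v)=\phi(v)$, the factor appearing at the collisional point is $\lr{v'}^{\ell_{0}}\psi_{-\ell_{0}}(v')$. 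Energy conservation forces $\lr{v'}\le 2\max(\lr{v},\lr{v_{*}})$, so pointwise one has either $\lr{v'}^{\ell_{0}}\le C\lr{v}^{\ell_{0}}$ or $\lr{v'}^{\ell_{0}}\le C\lr{v_{*}}^{\ell_{0}}$, and summing both possibilities only costs a constant since $\ell_{0}\ge 0$.

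Second, I would transfer this weight onto either $f$ or $g$, producing a bound of the form
\[
\Bigl|\int Q^{+}(f,g)(v)\psi(v)\,dv\Bigr|\le C\int\bigl(Q^{+}(f_{\ell_{0}},g)+Q^{+}(f,g_{\ell_{0}})\bigr)(v)\,|\psi_{-\ell_{0}}|(v)\,dv,
\]
where $f_{\ell_{0}}=\lr{v}^{\ell_{0}}f$ and $g_{\ell_{0}}=\lr{v_{*}}^{\ell_{0}}g$, and apply the unweighted Theorem~\ref{Gain-p-est} to each piece with the same triple $(\mt{p},\mt{q},\mt{r})$. Because $\ell_{0}\ge 0$ gives $\|g\|_{L^{\mt{q}}}\le\|g_{\ell_{0}}\|_{L^{\mt{q}}}$ and similarly for $f$, the entire right-hand side collapses to $C\|f_{\ell_{0}}\|_{L^{\mt{p}}}\|g_{\ell_{0}}\|_{L^{\mt{q}}}\|\psi_{-\ell_{0}}\|_{L^{\mt{r}'}}$; taking the supremum over $\phi=\psi_{-\ell_{0}}$ with unit $L^{\mt{r}'}$ norm yields \eqref{W-convolution}.

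Third, for \eqref{W-convolution-m} I would run the same duality argument but insert a single extra H\"older step to accommodate the $L^{m}$ factor. Setting $1/a_{1}=1/\mt{p}_{m}+1/m$ and $1/a_{2}=1/\mt{q}_{m}+1/m$, hypothesis \eqref{minus-size-condition} guarantees $1<a_{1},a_{2}<\infty$, and \eqref{scaling-relation-w} rewrites as $1/a_{1}+1/\mt{q}_{m}=1+1/\mt{r}_{m}+\gamma/3$ (and symmetrically for $\mt{p}_{m}$ and $a_{2}$), which is precisely the scaling \eqref{scaling-relation} required by Theorem~\ref{Gain-p-est}. The auxiliary norms such as $\|g\|_{L^{a_{2}}}$ are then bounded by $\|g_{\ell_{1}}\|_{L^{\mt{q}_{m}}}\|\lr{v}^{-\ell_{1}}\|_{L^{m}}$, and the hypothesis $\ell_{1}m>3$ is exactly what makes $\|\lr{v}^{-\ell_{1}}\|_{L^{m}}$ finite. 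I do not foresee a serious obstacle; the only item needing verification is that both auxiliary triples $(\mt{p}_{m},a_{2},\mt{r}_{m})$ and $(a_{1},\mt{q}_{m},\mt{r}_{m})$ fall inside the admissible region of Theorem~\ref{Gain-p-est}, which is automatic from \eqref{minus-size-condition} together with \eqref{scaling-relation-w}.
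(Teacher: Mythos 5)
Your proposal is correct and follows essentially the same route as the paper: dualize, transfer the weight $\lr{v'}^{\ell_{0}}$ onto $f$ or $g$ via energy conservation ($\lr{v'}\le\sqrt{2}\max(\lr{v},\lr{v_*})$), absorb the unweighted factor using $\lr{v}^{\ell_0}\ge 1$, and for \eqref{W-convolution-m} insert the extra H\"older step with $\|\lr{v}^{-\ell_1}\|_{L^m}<\infty$ (from $\ell_1 m>3$) so that the auxiliary exponents $a_1,a_2$ satisfy the unweighted scaling \eqref{scaling-relation}. The only cosmetic difference is that you invoke Theorem~\ref{Gain-p-est} as a black box (using positivity of $Q^{+}$), whereas the paper reruns that proof through the kernel $H(v,v_*)=(\tau_{-v_*}\circ\mathbb{T}\circ\tau_{v_*})\psi_{-\ell_0}(v)$; the substance is identical.
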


Now we demonstrate the application of the  Theorem~\ref{T:Gain-p-w-est}. 
We select one admissible triplet $(\mbq,\mbr,\mbp)$ from~\eqref{solvable-g} 
and its corresponding triplet $({1}/{\widetilde{\mbq}'},{1}/{\widetilde{\mbr}'},{1}/{\widetilde{\mbp}'})$  
which is given by 
\[
(\frac{1}{\widetilde{\mbq}'},\frac{1}{\widetilde{\mbr}'},\frac{1}{\widetilde{\mbp}'})
=(\frac{6}{\mbp}-2 ,\;\frac{4}{3}-\frac{2}{\mbp},\;\frac{2}{\mbp}-\frac{2}{3}).
\]
More precisely, for any given $0<\varepsilon< {1}/{9}$, we let
\begin{equation}\label{solvable-triplets}
\begin{split}
& (\frac{1}{\mbq},\;\frac{1}{\mbr}, \;\frac{1}{\mbp})
=( \frac{1}{3}-3\varepsilon,\; \frac{2}{9}+\varepsilon,\; \frac{4}{9}-\varepsilon),\\
& (\frac{1}{\widetilde{\mbq}'},\; \frac{1}{\widetilde{\mbr}'}, \; \frac{1}{\widetilde{\mbp}'})
=( \frac{2}{3}-6\varepsilon, \;\frac{4}{9}+2\varepsilon, \;\frac{2}{9}-2\varepsilon).
\end{split}
\end{equation}
Then the relation~\eqref{scaling-relation-w},~\eqref{minus-size-condition} are satisfied as
\begin{equation}
\frac{1}{\mbp}+\frac{1}{\mbp}+\frac{1+\gamma}{3}=1+\frac{1}{\widetilde{\mbp}'}+\frac{\gamma}{3},
\;\frac{1}{\mbp}+\frac{1}{3}<1.
\end{equation}
with 
\[
\frac{1}{m}=\frac{1+\gamma}{3}
\]
Thus if $\ell_{1}>1+\gamma$, ~\eqref{W-convolution-m} in Theorem~\ref{T:Gain-p-w-est} becomes
\begin{equation}\label{desire-est-H}
\|\lr{v}^{\ell_{1}}Q^+(f,f)\|_{L^{\widetilde{\mbp}'}_{v}} \leq C \|\lr{v}^{\ell_{1}} f\|^{2}_{L^{{\mbp}}_{v}}.
\end{equation}
However the inequality in~\eqref{scaling-relation} becomes 
\[
\frac{\gamma}{6}\leq \frac{1}{\widetilde{\mbp}'} \leq 1-\frac{5\gamma}{6} 
\]
and which means that $\varepsilon$ needs to be further restricted to
\[
\max\{0, \frac{5\gamma}{12}-\frac{7}{18} \}  \leq \varepsilon \leq \frac{1}{9}-\frac{\gamma}{12}.
\]


Using~\eqref{desire-est-H} and the same argument as~\cite{HJ17,HJ23}, we conclude the following result.
\begin{prop}\label{Thm-W-Gain} 
Assume the collision kernel $B$ takes the form~\eqref{D:kernel} with $0\leq \gamma \leq 1$. 
Let $N=3,\;\ell_{1}>1+\gamma$ and $\varepsilon$ satisfy
\begin{equation}
\max\{0,  \frac{5\gamma}{12}-\frac{7}{18}   \}  \leq \varepsilon \leq \frac{1}{9}-\frac{\gamma}{12}.
\end{equation}
The Cauchy problem for the gain term only Boltzmann equation~\eqref{G-Cauchy} is globally 
wellposed in weighted $L^3_{x,v}$ when the initial data is small enough. 
More precisely, there exists a small number $\eta>0$ such that if the initial data 
$f_0$ is in the set
\[B^{\ell}_{\eta}=\{f_0\in L^3_{x,v} (\mathbb{R}^3\times
\mathbb{R}^3):  \| \lr{v}^{\ell_{1}} f_0\|_{L^3_{x,v}}<\eta\}\subset L^3_{x,v},
\] 
there exists a globally unique  mild solution 
\[
\lr{v}^{\ell_{1}} f_+\in C([0,\infty),L^3_{x,v})\cap
L^{\mbq}([0,\infty],L^{\mbr}_xL^{\mbp}_v) 
\]
where the triplet 
\begin{equation}\label{solvable-g-2}
 (\frac{1}{\mbq},\frac{1}{\mbr},\frac{1}{\mbp})=(\frac{1}{3}-3\varepsilon,\;\frac{2}{9}+\varepsilon,\;\frac{4}{9}-\varepsilon). 
\end{equation} 
The solution map $\lr{v}^{\ell_{1}} f_0\in B^{\ell}_{\eta}\rightarrow \lr{v}^{\ell_{1}} f_+\in 
L^{\mbq}_tL^{\mbr}_xL^{\mbp}_v$ 
is Lipschitz continuous and the solution $\lr{v}^{\ell_{1}} f_+$ scatters with respect to the kinetic 
transport operator in $L^3_{x,v}$.  
\end{prop}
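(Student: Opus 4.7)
The plan is to solve the Cauchy problem~\eqref{G-Cauchy} by applying Banach's fixed point theorem to the solution map $S f_+ = U(t) f_0 + W(t) Q^+(f_+, f_+)$ inside a small ball of the Banach space
\[
X^{\ell_1} = \{g : \lr{v}^{\ell_1} g \in L^{\mbq}_t([0,\infty); L^{\mbr}_x L^{\mbp}_v)\},
\]
where $(\mbq,\mbr,\mbp)$ is the triplet from~\eqref{solvable-g-2}. The two ingredients are the weighted Strichartz estimate of Corollary~\ref{W-Strichartz} and the weighted nonlinear bound~\eqref{desire-est-H}, which in turn is the specialization~\eqref{W-convolution-m} of Theorem~\ref{T:Gain-p-w-est} to $1/m = (1+\gamma)/3$ and to the dual Strichartz triplet $(\widetilde{\mbq}',\widetilde{\mbr}',\widetilde{\mbp}')$ introduced in~\eqref{solvable-triplets}.

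First I would verify the admissibility bookkeeping: a direct computation shows that both $(\mbq,\mbr,\mbp)$ and $(\widetilde{\mbq},\widetilde{\mbr},\widetilde{\mbp})$ satisfy Definition~\ref{D:admissible} with common harmonic mean $\mathrm{HM}(\mbp,\mbr) = \mathrm{HM}(\widetilde{\mbp}',\widetilde{\mbr}') = 3$, and moreover $2\widetilde{\mbr}' = \mbr$, $2\widetilde{\mbq}' = \mbq$. The hypothesis $\max\{0, 5\gamma/12 - 7/18\} \leq \varepsilon \leq 1/9 - \gamma/12$ is chosen exactly so that $\gamma/6 \leq 1/\widetilde{\mbp}' \leq 1 - 5\gamma/6$ holds, ensuring~\eqref{scaling-relation-w}, while the weight condition $\ell_1 > 1+\gamma = 3/m$ matches the requirement of Theorem~\ref{T:Gain-p-w-est}.

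With this setup, applying Corollary~\ref{W-Strichartz} to $Sf_+$ gives
\[
\|\lr{v}^{\ell_1} Sf_+\|_{L^{\mbq}_t L^{\mbr}_x L^{\mbp}_v} \leq C_0 \|\lr{v}^{\ell_1} f_0\|_{L^3_{x,v}} + C_1 \|\lr{v}^{\ell_1} Q^+(f_+,f_+)\|_{L^{\widetilde{\mbq}'}_t L^{\widetilde{\mbr}'}_x L^{\widetilde{\mbp}'}_v}.
\]
Applying~\eqref{desire-est-H} pointwise in $(t,x)$ and then using the identities $2\widetilde{\mbr}' = \mbr$, $2\widetilde{\mbq}' = \mbq$ to reorder norms, one obtains the key quadratic bound
\[
\|\lr{v}^{\ell_1} Q^+(f_+,f_+)\|_{L^{\widetilde{\mbq}'}_t L^{\widetilde{\mbr}'}_x L^{\widetilde{\mbp}'}_v} \leq C \|\lr{v}^{\ell_1} f_+\|^2_{L^{\mbq}_t L^{\mbr}_x L^{\mbp}_v}.
\]
The same computation applied to the bilinear decomposition $Q^+(f_+,f_+) - Q^+(g_+,g_+) = Q^+(f_+ - g_+, f_+) + Q^+(g_+, f_+-g_+)$ yields the Lipschitz estimate for $Sf_+ - Sg_+$. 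Choosing $\eta$ so small that $2 C C_0 \eta < 1$ makes $S$ a strict contraction on the closed ball of radius $2 C_0 \eta$ in $X^{\ell_1}$, producing the unique mild solution. Continuity $\lr{v}^{\ell_1} f_+ \in C([0,\infty), L^3_{x,v})$ then follows by reapplying Corollary~\ref{W-Strichartz} with the non-endpoint admissible triplet $(\infty, 3, 3)$ on the right-hand side of the Duhamel formula, and scattering in $L^3_{x,v}$ follows by observing that $U(-t) f_+(t) = f_0 + \int_0^t U(-s) Q^+(f_+,f_+)(s)\,ds$ and that the integrand is absolutely integrable in $L^3_{x,v}$ by another Strichartz application with the same admissible pair, so the limit $f_\infty$ exists.

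The main obstacle is conceptually the compatibility of exponents: one must align the Strichartz admissibility $1/3 < 1/\mbp < 4/9$ with the scaling window $\gamma/6 \leq 1/\widetilde{\mbp}' \leq 1 - 5\gamma/6$ from Theorem~\ref{T:Gain-p-w-est}, and the stated interval for $\varepsilon$ is precisely their intersection, which collapses to a single point when $\gamma = 1$ — exactly the feature of the hard sphere model noted in the remark following Theorem~\ref{T:Gain-p-w-est}. Once this alignment is granted, the entire argument parallels the scheme of~\cite{HJ23}, with the crucial replacement of the moment-increasing estimate~\eqref{gain-hard-loss} by the moment-preserving estimate~\eqref{W-convolution-m}, which is what makes the fixed point iteration close in a single weighted Strichartz space rather than forcing a loss of weight at each step.
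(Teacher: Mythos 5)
Your proposal follows essentially the same route as the paper: the paper proves Proposition~\ref{Thm-W-Gain} by exactly this contraction scheme, i.e.\ the weighted Strichartz estimate of Corollary~\ref{W-Strichartz} combined with the moment-preserving bound~\eqref{desire-est-H} obtained from Theorem~\ref{T:Gain-p-w-est} with $1/m=(1+\gamma)/3$ and the dual triplet of~\eqref{solvable-triplets}, with the $\varepsilon$-window arising precisely from $\gamma/6\leq 1/\widetilde{\mbp}'\leq 1-5\gamma/6$, and with continuity in $L^3_{x,v}$ and scattering read off from the Duhamel formula via a further Strichartz/duality application (the paper defers these standard iteration details to~\cite{HJ17,HJ23}). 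So your argument is the paper's argument, and your exponent bookkeeping ($\mathrm{HM}(\mbp,\mbr)=\mathrm{HM}(\widetilde{\mbp}',\widetilde{\mbr}')=3$, $2\widetilde{\mbq}'=\mbq$, $2\widetilde{\mbr}'=\mbr$) matches what the paper uses implicitly in~\eqref{contraction}.
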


Before we consider the full Boltzmann equation, we also need the following result. 
\begin{cor}\label{pos-gain} 
Under the same conditions as Proposition~\ref{Thm-W-Gain}, if we furthermore  assume 
$f_0\geq 0$, then the solution $f_{+}$ is also non-negative. 
\end{cor}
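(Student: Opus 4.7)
The plan is to exploit the construction of $f_+$ through Picard iteration and the fact that each of the three building blocks entering the Duhamel formula---the free transport $U(t)$, the bilinear gain operator $Q^+$, and the time integration operator $W(t)$---is positivity preserving.

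First, I would set up the iterates defining the contraction in Proposition~\ref{Thm-W-Gain}: put
\[
f^{(0)}(t,x,v) = 0, \qquad f^{(n+1)}(t,x,v) = U(t) f_0(x,v) + W(t) Q^+(f^{(n)}, f^{(n)})(x,v).
\]
By the fixed-point argument of Proposition~\ref{Thm-W-Gain}, $\lr{v}^{\ell_1} f^{(n)}$ converges to $\lr{v}^{\ell_1} f_+$ in $L^{\mbq}_t L^{\mbr}_x L^{\mbp}_v \cap C([0,\infty); L^3_{x,v})$, and consequently along a subsequence $f^{(n)} \to f_+$ almost everywhere in $(t,x,v)$.

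Next, I would verify by induction on $n$ that $f^{(n)}(t,x,v) \geq 0$ almost everywhere. The base case $n=0$ is trivial. For the inductive step, note that $U(t) f_0(x,v) = f_0(x-vt,v) \geq 0$ because $f_0 \geq 0$; that $Q^+(f^{(n)}, f^{(n)})(x,v) \geq 0$ because its defining integral in~\eqref{gain-loss-def} involves only the values $f^{(n)}(v'), f^{(n)}(v'_*) \geq 0$ multiplied by the non-negative kernel $B(v-v_*,\omega) = |v-v_*|^{\gamma}\cos\theta$; and that $W(t)$ applied to a non-negative function is an integral over $s \in [0,t]$ of the non-negative quantity $U(t-s) Q^+(f^{(n)}, f^{(n)}) = Q^+(f^{(n)},f^{(n)})(s, x-v(t-s), v)$, hence is non-negative. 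Combining these three observations, $f^{(n+1)} \geq 0$ almost everywhere.

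Finally, passing to the a.e.\ limit along the subsequence preserves the inequality, giving $f_+ \geq 0$ almost everywhere. I do not foresee a serious obstacle here; the entire argument is structural and relies only on the positivity of the transport flow, the bilinear gain operator, and the Duhamel integral, together with the standard extraction of an a.e.\ convergent subsequence from $L^p$ convergence. The only point requiring a moment's care is ensuring that the ambient norm in which $f^{(n)} \to f_+$ is indeed an $L^p$-type norm with finite exponents, which is manifestly the case for both $L^{\mbq}_t L^{\mbr}_x L^{\mbp}_v$ and $C_t L^3_{x,v}$.
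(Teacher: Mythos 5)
Your proposal is correct and uses essentially the same mechanism as the paper: the paper iterates Duhamel's formula to express $f_+$ as a series of manifestly non-negative terms (built from $U(t)$, $Q^+$, and the time integration, all positivity preserving) whose convergence follows from the weighted Strichartz estimates and smallness, while you propagate non-negativity through the Picard iterates and pass to an a.e.\ limit. The only step beyond the paper's argument is the extraction of an a.e.\ convergent subsequence from convergence in the mixed norm $L^{\mbq}_tL^{\mbr}_xL^{\mbp}_v$ (or, even more directly, testing against non-negative $\phi$), which is standard and unproblematic.
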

\begin{proof}[Proof of Corollary~\ref{pos-gain}]
When $f_{0}\geq 0$, we can see that the solution is non-negative by iterating Duhamel's formula: 
\begin{equation}\label{positive}
\begin{split}
f_+(t)&=U(t)f_{0}+\int_{0}^{t}U(t-t_{1})Q^{+}(U(t_{1})f_{0},U(t_{1})f_{0}) dt_{1} \\
&+\int_{0}^{t}\int_{0}^{t_{1}}U(t-t_{1})Q^{+}{\big (}U(t_{1}-t_{2})Q^{+}(U(t_{2})f_{0},U(t_{2})f_{0}
{\big )}, U(t_{1})f_{0})dt_{2}dt_{1}+\cdots
\end{split}
\end{equation}
Since each term in the right hand side is non-negative, it suffices to show the series converges. 
Repeatedly using weighted Strichartz estimates of Corollary~\ref{W-Strichartz} with 
triplets~\eqref{solvable-triplets}, in particular~\eqref{desire-est-H}, we get 
that the Strichartz norm of $\lr{v}^{\ell_{1}}\; f_+$ for 
chosen triplets is bounded by a series of  $L^3_{x,v}$ norm of $\lr{v}^{\ell_{1}}\;f_0$ which 
converges since the weighted initial data is small enough. 
\end{proof}

\begin{prop}\label{P-p2-est-w}
Let $N=3$ and $a_2=15/8$. Under the same assumptions as Proposition~\ref{Thm-W-Gain}, if we  further assume 
$\|\lr{v}^{\ell_{1}} f_0\|_{L^{a_2}_{x,v}}<\infty$, then solution $f_+$ in Proposition~\ref{Thm-W-Gain} also satisfies
\begin{equation}
\|\lr{v}^{\ell_{1}}f_+\|^2_{L^{q_2}_t L^{r_2}_x L^{p_2}_v}<\infty
\end{equation} 
where $(1/q_2,1/r_2,1/p_2)=(1/2,\;11/30,\;21/30)$ is a KT-admissible triplet with $1/p_2+1/r_2=2/a_2$. 
\end{prop}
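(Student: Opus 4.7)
The overall strategy is to apply the weighted Strichartz estimate of Corollary~\ref{W-Strichartz} to the Duhamel representation $f_+(t)=U(t)f_0+W(t)Q^+(f_+,f_+)$, choosing the triplet $(q_2,r_2,p_2)$ on the left and using data in $L^{a_2}_{x,v}$ (permissible since $a_2=\mathrm{HM}(p_2,r_2)=15/8$), and then to close the resulting inhomogeneous term bilinearly using Theorem~\ref{T:Gain-p-w-est} together with the norm in $X_1:=L^{\mbq}_tL^{\mbr}_xL^{\mbp}_v$ already furnished by Proposition~\ref{Thm-W-Gain}. For Strichartz to apply one must select a KT-admissible dual triplet $(\tilde q,\tilde r,\tilde p)$ with $\mathrm{HM}(\tilde p',\tilde r')=15/8$ and produce a bilinear bound
\[
\|\lr{v}^{\ell_1}Q^+(f_+,f_+)\|_{L^{\tilde q'}_tL^{\tilde r'}_xL^{\tilde p'}_v}\le C\|\lr{v}^{\ell_1}f_+\|_{X_1}\|\lr{v}^{\ell_1}f_+\|_{X_2},
\]
where $X_2:=L^{q_2}_tL^{r_2}_xL^{p_2}_v$ is the target norm.

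For the bilinear estimate I would place one factor in $X_1$ and the other in $X_2$, splitting the $t$ and $x$ norms by H\"older via $1/\tilde q'=1/\mbq+1/q_2$ and $1/\tilde r'=1/\mbr+1/r_2$, and apply~\eqref{W-convolution-m} in $v$ with $(\mt p_m,\mt q_m)=(\mbp,p_2)$. The HM condition together with the scaling relation~\eqref{scaling-relation-w} then force $1/m=(1+\gamma)/3$ and $1/\tilde p'=43/90-\varepsilon$. The weight hypothesis $\ell_1>2\gamma+10/9$ ensures $\ell_1>3/m$, and one verifies by direct computation that $(\tilde q,\tilde r,\tilde p)$ is KT-admissible and that $1/\tilde p'\in[\gamma/6,\,1-5\gamma/6]$ under the stated range for $\varepsilon$. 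Plugging the bilinear bound into the Strichartz inequality yields
\[
\|\lr{v}^{\ell_1}f_+\|_{X_2}\le C\|\lr{v}^{\ell_1}f_0\|_{L^{a_2}_{x,v}}+C\|\lr{v}^{\ell_1}f_+\|_{X_1}\|\lr{v}^{\ell_1}f_+\|_{X_2},
\]
and the smallness $\|\lr{v}^{\ell_1}f_+\|_{X_1}=O(\eta)\ll 1$ from Proposition~\ref{Thm-W-Gain} allows the nonlinear term to be absorbed into the left. To legitimise the absorption when the $X_2$ norm is not yet known to be finite, I would iterate the Duhamel formula as in the proof of Corollary~\ref{pos-gain}: each non-negative Picard iterate is estimated in $X_2$ via the same bilinear inequality, using the already-established $X_1$ bound, and the geometric smallness produces a convergent series of $X_2$-bounds, giving $\lr{v}^{\ell_1}f_+\in X_2$.

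The main obstacle lies in the exponent bookkeeping. One must simultaneously fit the KT-admissibility of $(\tilde q,\tilde r,\tilde p)$, the window $\gamma/6\le 1/\tilde p'\le 1-5\gamma/6$ of Theorem~\ref{T:Gain-p-w-est}, the size conditions~\eqref{minus-size-condition} with $\mt q_m=p_2$ and $\mt p_m=\mbp$, and the weight threshold $\ell_1>3/m$, all into one consistent choice. It is precisely this tight calibration that dictates the hypothesis ranges $\ell_1>2\gamma+10/9$ and $\max\{0,\,5\gamma/12-7/18\}<\varepsilon\le 1/9-\gamma/12$ appearing in Theorem~\ref{result1}, and verifying these inequalities uniformly in $(\gamma,\varepsilon)$ within the admissible window is the most delicate part of the argument.
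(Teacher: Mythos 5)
Your proposal follows essentially the same route as the paper's proof: apply the weighted Strichartz estimate with data in $L^{15/8}_{x,v}$, fix the dual triplet by $1/\tilde q'_2=1/\mbq+1/q_2$, $1/\tilde r'_2=1/\mbr+1/r_2$, $1/\tilde p'_2=43/90-\varepsilon$, obtain the bilinear bound from~\eqref{W-convolution-m} with $1/m=(1+\gamma)/3$ via H\"older in $t$ and $x$, and absorb the nonlinear term using the smallness of $\|\lr{v}^{\ell_1}f_+\|_{L^{\mbq}_tL^{\mbr}_xL^{\mbp}_v}$ from Proposition~\ref{Thm-W-Gain}. Your additional Picard-iteration step to justify the absorption before the $L^{q_2}_tL^{r_2}_xL^{p_2}_v$ norm is known to be finite is a small rigor refinement that the paper leaves implicit; otherwise the argument coincides with the paper's.
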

\begin{proof}  By the assumption that $\| \lr{v}^{\ell_{1}} f_0\|_{L^{a_2}_{x,v}}<\infty$,
we claim that there exist  KT-admissible triplets $(q_{2},r_{2},p_{2})$ and
$(\tilde{q}_{2},\tilde{r}_{2},\tilde{p}_{2})$ such that 
\begin{equation}\label{Gain-p2}
\|\lr{v}^{\ell_{1}} Q^+(f_+,f_+)\|_{L^{\tilde{q}'_{2}}_tL^{\tilde{r}'_{2}}_xL^{\tilde{p}'_{2}}_v} \leq C
\|\lr{v}^{\ell_{1}} f_+\|_{L^{\mbq}_tL^{\mbr}_xL^{\mbp}_v}\|\lr{v}^{\ell_{1}} f_+\|_{L^{q_{2}}_tL^{r_{2}}_xL^{p_{2}}_v},
\end{equation}
and $a_{2}=HM(p_{2},r_{2})=HM(\tilde{p}'_{2},\tilde{r}'_{2})$ where $\tilde{p}'_2$ means 
the conjugate of $\tilde{q}_2$ and so on.
From this together with weighted Strichartz estimate \eqref{E:Strichartz-w}, we have
\begin{equation*}
\begin{split}
\|\lr{v}^{\ell_{1}} f_+\|_{L^{q_{2}}_tL^{r_{2}}_xL^{p_{2}}_v}&=\|\lr{v}^{\ell_{1}} Sf_+\|_{L^{q_{2}}_tL^{r_{2}}_xL^{p_{2}}_v}\\
& \leq C_{0}\|\lr{v}^{\ell_{1}} f_{0}\|_{L^{a_{2}}_{x,v}}+C_{1}\|\lr{v}^{\ell_{1}} Q^+(f_+,f_+)\|
_{L^{\tilde{q}'_{2}}_tL^{\tilde{r}'_{2}}_xL^{\tilde{p}'_{2}}_v} \\
&  \leq C_{0}\|\lr{v}^{\ell_{1}} f_{0}\|_{L^{a_{2}}_{x,v}}+C_{2}\|\lr{v}^{\ell_{1}}f_+\|_{L^{\mbq}_tL^{\mbr}_xL^{\mbp}_v}
\|\lr{v}^{\ell_{1}} f_+\|_{L^{q_{2}}_tL^{r_{2}}_xL^{p_{2}}_v}.
\end{split}
\end{equation*}
The proof of Proposition~\ref{Thm-W-Gain} implies that $C_2\|\lr{v}^{\ell_{1}} f_+\|_{L^{\mbq}_tL^{\mbr}_xL^{\mbp}_v}<1$ 
 where $(\mbq,\mbr,\mbp)$ is given in~\eqref{solvable-g-2}.  Thus we have 
\[
\|\lr{v}^{\ell_{1}} f_+\|_{L^{q_{2}}_tL^{r_{2}}_xL^{p_{2}}_v} \leq C_3 \|\lr{v}^{\ell_{1}} f_{0}\|_{L^{a_{2}}_{x,v}}<\infty.
\]
\indent To prove that there exist triplets $(q_{2},r_{2},p_{2})$ and
$(\tilde{q}_{2},\tilde{r}_{2},\tilde{p}_{2})$ so that~\eqref{Gain-p2} holds, 
we define $\tilde{p}'_{2}$ and $\tilde{r}'_{2}$ as  follows: 
\begin{equation}\label{choose-p-2}
\frac{1}{\tilde{q}'_{2}}=\frac{1}{\mbq}+\frac{1}{2},\;\frac{1}{\tilde{r}'_{2}}:=\frac{1}{\mbr}+\frac{11}{30}
,\;\frac{1}{\tilde{p}'_{2}}:=\frac{1}{\mbp}+\frac{1}{30}.
\end{equation}
 It is easy to check that the following conditions are satisfied,
\begin{subequations}
\begin{align}
&\frac{1}{\mbp}+\frac{1}{p_{2}}+\frac{1+\gamma}{3}=1+\frac{1}{\tilde{p}'_{2}}+\frac{\gamma}{3}\;,\; 
\frac{1}{\mbp}<\frac{1}{\tilde{p}'_{2}}<\frac{1}{p_{2}}\;, \;\label{uni-v}\\
&\frac{1}{\mbr}+\frac{1}{r_{2}}=\frac{1}{\tilde{r}'_{2}} \;, \label{uni-x}\\
&\frac{1}{p_{2}}+\frac{1}{r_{2}}=\frac{1}{\tilde{p}'_{2}}+\frac{1}{\tilde{r}'_{2}}=\frac{2}{a_{2}}=\frac{16}{15}\;, \label{uni-har}\\
&\frac{1}{q_{2}}=\frac{3}{2}\;(\frac{1}{p_{2}}-\frac{1}{r_{2}})
\;,\;\frac{1}{\tilde{q}_{2}}=\frac{3}{2}\;(\frac{1}{\tilde{p}_{2}}-\frac{1}{\tilde{r}_{2}})\;,\;\qquad\qquad\;\label{uni-q}. \\
&\frac{1}{\mbq}+\frac{1}{q_{2}}= \frac{1}{\tilde{q}'_{2}}.\;\label{uni-time}
\end{align}
\end{subequations}
Considering the first equation of~\eqref{uni-v}, and letting 
\[
1/\mbp=1/\mt{p}_{m},\;1/p_2=1/\mt{q}_{m},\;1/\tilde{p}'_{2}=1/\mt{r}_{m},
\]
 we see that it is exactly the relation in~\eqref{scaling-relation-w} with $1/m=(1+\gamma)/3$.  
Thus H\"{o}lder inequality,~\eqref{W-convolution},
~\eqref{uni-x}, and~\eqref{uni-time} imply that~\eqref{Gain-p2}  holds. 
Also the relations~\eqref{uni-har} and~\eqref{uni-q} ensure that $(q_{2},r_{2},p_{2})$ and
$(\tilde{q}_{2},\tilde{r}_{2},\tilde{p}_{2})$ are KT-admissible. 
\end{proof}

We need the estimates for the loss term whose exponents satisfying~\eqref{scaling-relation-w}, in particular 
we take $1/m=(1+\gamma)/3$ as before. We have the following.  
\begin{prop}\label{P:Loss}
Assume $B(v-v_*,\omega)$ defined in~\eqref{D:kernel} satisfies $0\leq \gamma\leq 1$. 
Let  the exponents $(\mbq,\mbr,\mbp),(q_{2},r_{2},p_{2})$,$(\tilde{q}_{2},\tilde{r}_{2},\tilde{p}_{2})$ 
and $a_{2}$ the same as the Proposition~\ref{P-p2-est-w}.  Let 
\begin{equation}\label{L-2-5}
\ell_{2}>\gamma+\frac{9}{10},\;\ell_{3}=(\ell_{2}+\gamma)+1/9
\end{equation}
Suppose $\lr{v}^{\ell_{3}} f_{1}\in L^{\mbq}_tL^{\mbr}_xL^{\mbp}_v$ and $\lr{v}^{\ell_{2}} f_{2}\in L^{q_{2}}_tL^{r_{2}}_xL^{p_{2}}_v$.
Then we have 
\begin{equation}\label{Loss-ineq}
\| \lr{v}^{\ell_{2}} Q^{-}(f_1,f_2)\|_{L^{\tilde{q}'_{2}}_tL^{\tilde{r}'_{2}}_xL^{\tilde{p}'_{2}}_v} \leq C
\|\lr{v}^{\ell_{3}} f_1\|_{L^{\mbq}_tL^{\mbr}_xL^{\mbp}_v}\|\lr{v}^{\ell_{2}} f_2\|_{L^{q_{2}}_tL^{r_{2}}_xL^{p_{2}}_v}.
\end{equation}
\end{prop}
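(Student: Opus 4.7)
The plan is to exploit the simple structure of the loss term, which factors as
\[
Q^{-}(f_1,f_2)(v)\;=\;C_b\, f_1(v)\bigl(|f_2|\ast|\cdot|^{\gamma}\bigr)(v),\qquad C_b=\int_{S^{2}_{+}}\cos\theta\,d\omega,
\]
because no post-collisional velocities appear inside the integral. The estimate therefore reduces to a pointwise product together with an ordinary convolution in $v$; no oscillatory-integral machinery is needed, in sharp contrast to the gain term.

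First I would apply the elementary inequality $|v-v_{*}|^{\gamma}\le \lr{v}^{\gamma}+\lr{v_{*}}^{\gamma}$, valid for $0\le\gamma\le 1$, to split
\[
\lr{v}^{\ell_{2}}|Q^{-}(f_1,f_2)(v)|\;\le\; C\lr{v}^{\ell_{2}+\gamma}|f_1(v)|\,\|f_2\|_{L^{1}_{v_{*}}}+C\lr{v}^{\ell_{2}}|f_1(v)|\int |f_2(v_{*})|\lr{v_{*}}^{\gamma}dv_{*}.
\]
In both terms the $v_{*}$-integral is estimated by H\"older against $\|\lr{v_{*}}^{\ell_{2}}f_2\|_{L^{p_{2}}_{v_{*}}}$: using $1/p_{2}=7/10$ one has $p_{2}'=10/3$, so the weights $\lr{v_{*}}^{-\ell_{2}}$ and $\lr{v_{*}}^{\gamma-\ell_{2}}$ both lie in $L^{p_{2}'}_{v_{*}}$ precisely under the hypothesis $\ell_{2}>\gamma+\tfrac{9}{10}$, i.e.\ $p_{2}'(\ell_{2}-\gamma)>3$. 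This yields the pointwise bound
\[
\lr{v}^{\ell_{2}}|Q^{-}(f_1,f_2)(v)|\;\le\; C\lr{v}^{\ell_{2}+\gamma}|f_1(v)|\cdot \|\lr{v_{*}}^{\ell_{2}}f_2\|_{L^{p_{2}}_{v_{*}}}.
\]

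Next I write $\lr{v}^{\ell_{2}+\gamma}=\lr{v}^{-1/9}\lr{v}^{\ell_{3}}$, since $\ell_{3}=\ell_{2}+\gamma+1/9$ by~\eqref{L-2-5}. Recalling from~\eqref{choose-p-2} that $1/\tilde{p}'_{2}=1/\mbp+1/30$, H\"older in $v$ gives
\[
\|\lr{v}^{\ell_{2}}Q^{-}(f_1,f_2)\|_{L^{\tilde{p}'_{2}}_{v}}\;\le\; C\|\lr{v}^{-1/9}\|_{L^{30}_{v}}\,\|\lr{v}^{\ell_{3}}f_1\|_{L^{\mbp}_{v}}\,\|\lr{v}^{\ell_{2}}f_2\|_{L^{p_{2}}_{v}},
\]
and $\|\lr{v}^{-1/9}\|_{L^{30}_{v}}$ is finite because $30\cdot\tfrac{1}{9}=\tfrac{10}{3}>3$. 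Finally, successive applications of H\"older in $x$ using~\eqref{uni-x} and in $t$ using~\eqref{uni-time} convert the pointwise-in-$(t,x)$ estimate into~\eqref{Loss-ineq}.

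There is no real obstacle: the proposition is essentially a weight-bookkeeping argument. The only point that requires any care is the double role played by the loss of $\ell_{2}-\gamma$ powers in $v_{*}$ (which must beat $3/p_{2}'=9/10$) and of $\ell_{3}-\ell_{2}-\gamma=1/9$ powers in $v$ (which must beat $3/30=1/10$); both thresholds are built into the hypotheses on $\ell_{2}$ and $\ell_{3}$, and the admissibility relations~\eqref{uni-v}--\eqref{uni-time} ensure that the H\"older exponents match up exactly with the Strichartz norms that appear on the right-hand side of~\eqref{Loss-ineq}.
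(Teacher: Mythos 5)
Your proof is correct and follows essentially the same route as the paper: bound $|v-v_*|^{\gamma}$ by velocity weights, use H\"older in $v_*$ against $\|\lr{v_*}^{\ell_2}f_2\|_{L^{p_2}}$ with $p_2'=10/3$ (which is where $\ell_2>\gamma+\tfrac{9}{10}$ enters), absorb the extra $\gamma$-weight on $f_1$ into $\ell_3=\ell_2+\gamma+\tfrac19$ via $\lr{v}^{-1/9}\in L^{30}$ and the relation $1/\tilde p_2'=1/\mbp+1/30$, and finish with H\"older in $x$ and $t$ through the exponent relations~\eqref{uni-x} and~\eqref{uni-time}. The only cosmetic difference is that you merge the two terms of the splitting into a single pointwise bound, whereas the paper estimates them separately in~\eqref{loss-1} and~\eqref{loss-2}.
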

\begin{proof}
The inequality~\eqref{Loss-ineq} also follows from~\eqref{uni-v}-\eqref{uni-time}, provided we 
can prove that
\begin{equation}\label{Loss-v}
\| \lr{v}^{\ell_{2}} Q^{-}(f_1,f_2)\|_{L^{\tilde{p}'_{2}}_v} \leq C
\|\lr{v}^{\ell_{3}} f_1\|_{L^{\mbp}_v}\|\lr{v}^{\ell_{2}} f_2\|_{L^{p_{2}}_v}
\end{equation}
where $\tilde{p}'_{2},\;\mbp,\;p_{2}$ satisfy~\eqref{uni-v}.  It is convenient to rewrite~\eqref{uni-v} as
\begin{equation}
\frac{1}{\mbp}+\frac{1}{p_{2}}+\frac{1}{3}=1+\frac{1}{\tilde{p}'_{2}}.
\end{equation} 

Since $0\leq\gamma\leq 1$, we have $|v-v_{*}|^{\gamma}\leq 2\max\{\lr{v}^{\gamma},\lr{v_{*}}^{\gamma}\}$. Therefore
\begin{equation}
\begin{split}
&{\big |} \lr{v}^{\ell_{2}} Q^{-}(f_{1},f_{2}) {\big |} \\
&\leq C{\Big (} {\big |} \lr{v}^{\ell_{2}+\gamma} f_{1}(v)\int f_{2}(v_{*})  dv_{*} {\big |} +
{\big |} \lr{v}^{\ell_{2}} f_{1}(v)\int f_{2}(v_{*}) \lr{v_{*}}^{\gamma} dv_{*} {\big |} {\Big )}.
\end{split}
\end{equation}

Note that $1/\tilde{p}'_{2}=1/\mbp+1/30$ by~\eqref{choose-p-2}. 
Using the H\"{o}lder inequality, we have
\begin{equation}\label{loss-1}
\begin{split}
& {\big \|} \lr{v}^{\ell_{2}+\gamma} f_{1}(v)\int f_{2}(v_{*})  dv_{*} {\big \|}_{L^{\tilde{p}'_{2}}_v}  \\
& \leq {\big \|} \lr{v}^{\ell_{3}} f_{1}(v) {\big \|}_{L^{\mbp}}
{\big \|} \lr{v}^{-1/9} {\big \|}_{L^{30}}
{\big \|} \lr{v}^{\ell_{2}} f_{2} {\big \|}_{L^{p_{2}}} {\big \|} \lr{v}^{-\ell_{2}} {\big \|}_{L^{\frac{10}{3}}} \\
& \leq C {\big \|} \lr{v}^{\ell_{3}} f_{1}(v) {\big \|}_{L^{\mbp}} {\big \|} \lr{v}^{\ell_{2}} f_{2} {\big \|}_{L^{p_{2}}}.
\end{split}
\end{equation}
where the last inequality holds because~\eqref{L-2-5} implies $\frac{10}{3}\ell_{2}>3$. 
Similarly, the assumption~\eqref{L-2-5} gives $\frac{10}{3}(\ell_{2}-\gamma)>3$, and we have 
\begin{equation}\label{loss-2}
\begin{split}
& {\big \|} \lr{v}^{\ell_{2}} f_{1}(v)\int f_{2}(v_{*}) \lr{v_{*}}^{\gamma} dv_{*} {\big \|}_{L^{\tilde{p}'_{2}}_v}  \\
& \leq {\big \|} \lr{v}^{\ell_{3}} f_{1}(v) {\big \|}_{L^{\mbp}}
{\big \|} \lr{v}^{-(\gamma+1/9)} {\big \|}_{L^{30}}
{\big \|} \lr{v}^{\ell_{2}} f_{2} {\big \|}_{L^{p_{2}}} {\big \|} \lr{v}^{-\ell_{2}+\gamma} {\big \|}_{L^{\frac{10}{3}}} \\
& \leq C {\big \|} \lr{v}^{\ell_{3}} f_{1}(v) {\big \|}_{L^{\mbp}} {\big \|} \lr{v}^{\ell_{2}} f_{2} {\big \|}_{L^{p_{2}}}.
\end{split}
\end{equation}
The result follows from~\eqref{loss-1} and~\eqref{loss-2}.
\end{proof}

\subsection{Solve the full equation}
Now we are ready to prove the well-posedness result. 
For the convenience of reader, we restate the main result Theorem~\ref{result1} in the following. 
\begin{prop}\label{result2}
 Assume the kernel B defined in~\eqref{D:kernel} satisfies  $0\leq \gamma\leq 1$.  
Let $\ell>2\gamma+\frac{10}{9}$ and $\varepsilon>0$ satisfy
\[
\max {\big \{ } 0,  \frac{5\gamma}{12}-\frac{7}{18}  \}  \leq \varepsilon \leq \frac{1}{9}-\frac{\gamma}{12}.
\] 
There exists a small number $\eta>0$ such that if the initial data 
$$
f_{0}\in {\mathbb B}^{\ell}_{\eta}=\{f_{0} | f_0\geq 0,\; \| \lr{v}^{\ell} f_{0}\|_{L^{3}_{x,v}}<\eta,\;
\|\lr{v}^{\ell}f_0\|_{L^{15/8}_{x,v}}<\infty\}\subset L^3_{x,v},
$$
then the Cauchy problem~\eqref{E:Cauchy} admits a unique and non-negative mild solution 
\[
\lr{v}^{\ell} f\in C([0,\infty),L^3_{x,v})\cap
L^{\mbq}([0,\infty],L^{\mbr}_xL^{\mbp}_v), 
\]
where the triple 
\begin{equation}
(\frac{1}{\mbq},\frac{1}{\mbr},\frac{1}{\mbp})=(\frac{1}{3}-3\varepsilon,\;\frac{2}{9}+\varepsilon,\;\frac{4}{9}-\varepsilon). 
\end{equation}
The solution map $\lr{v}^{\ell}f_0\in {\mathbb B}^{\ell}_{\eta}\rightarrow \lr{v}^{\ell} 
f\in L^{\mbq}_tL^{\mbr}_xL^{\mbp}_v$ 
is Lipschitz continuous and the solution $\lr{v}^{\ell} f$ scatters with respect to the kinetic 
transport operator in $L^3_{x,v}$. 
\end{prop}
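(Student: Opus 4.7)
The plan is to solve~\eqref{E:Cauchy-2} by the Kaniel--Shinbrot monotone iteration~\cite{KS78,IS84,HJ23}, using the gain-term-only solution $f_{+}$ from Proposition~\ref{Thm-W-Gain} as an \emph{a priori} non-negative upper barrier. Since $f_{0}\geq 0$ and $\|\lr{v}^{\ell}f_{0}\|_{L^{3}_{x,v}}<\eta$ with $\ell>2\gamma+\tfrac{10}{9}>1+\gamma$, Proposition~\ref{Thm-W-Gain} (with $\ell_{1}=\ell$) and Corollary~\ref{pos-gain} produce a non-negative $f_{+}$ with $\lr{v}^{\ell}f_{+}\in C([0,\infty);L^{3}_{x,v})\cap L^{\mbq}_{t}L^{\mbr}_{x}L^{\mbp}_{v}$; the auxiliary hypothesis $\|\lr{v}^{\ell}f_{0}\|_{L^{15/8}_{x,v}}<\infty$ furnishes, via Proposition~\ref{P-p2-est-w}, the secondary bound $\lr{v}^{\ell}f_{+}\in L^{q_{2}}_{t}L^{r_{2}}_{x}L^{p_{2}}_{v}$, which is the norm demanded by the loss-term estimate in Proposition~\ref{P:Loss}.

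I would then set $u_{0}=f_{+}$, $l_{0}=0$ and define $u_{n+1},\,l_{n+1}$ as the unique mild solutions of the \emph{linear} damped transport equations
\[
(\partial_{t}+v\cdot\nabla_{x})u_{n+1}+Q^{-}(u_{n+1},l_{n})=Q^{+}(u_{n},u_{n}),
\]
\[
(\partial_{t}+v\cdot\nabla_{x})l_{n+1}+Q^{-}(l_{n+1},u_{n})=Q^{+}(l_{n},l_{n}),
\]
both with datum $f_{0}$; Grad's cutoff makes the characteristic integration of the attenuation factor $\exp(-\int L[\cdot]\,ds)$ meaningful. The beginning condition $0=l_{0}\leq l_{1}\leq u_{1}=f_{+}=u_{0}$ is immediate: $u_{1}=f_{+}$ since the damping vanishes when $l_{0}=0$, while the characteristic formula gives $l_{1}(t,x,v)=f_{0}(x-vt,v)\exp(-\int_{0}^{t}L[f_{+}])\leq U(t)f_{0}\leq f_{+}$. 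An induction then yields the full Kaniel--Shinbrot sandwich $0\leq l_{n}\leq l_{n+1}\leq u_{n+1}\leq u_{n}\leq f_{+}$ pointwise in $(t,x,v)$.

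The crucial step, where the new estimate Theorem~\ref{T:Gain-p-w-est} is indispensable, is to propagate two weighted Strichartz norms \emph{uniformly} in $n$. Discarding the non-negative attenuation in the Duhamel formula and applying Corollary~\ref{W-Strichartz} with the weighted gain bound~\eqref{desire-est-H} yields
\[
\|\lr{v}^{\ell}u_{n+1}\|_{L^{\mbq}_{t}L^{\mbr}_{x}L^{\mbp}_{v}}\leq C_{0}\|\lr{v}^{\ell}f_{0}\|_{L^{3}_{x,v}}+C\|\lr{v}^{\ell}u_{n}\|_{L^{\mbq}_{t}L^{\mbr}_{x}L^{\mbp}_{v}}^{2},
\]
and the companion bound in $L^{q_{2}}_{t}L^{r_{2}}_{x}L^{p_{2}}_{v}$ with weight $\ell_{2}=\ell-\gamma-\tfrac{1}{9}$ follows by combining~\eqref{Gain-p2} with~\eqref{Loss-ineq}. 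The threshold $\ell>2\gamma+\tfrac{10}{9}$ is calibrated precisely so that $\ell_{2}>\gamma+\tfrac{9}{10}$, which is the weight required by Proposition~\ref{P:Loss}. Smallness of $\eta$ then closes these two estimates into a uniform bound $\|\lr{v}^{\ell}u_{n}\|+\|\lr{v}^{\ell}l_{n}\|\lesssim \eta$ for all $n$, and monotone pointwise convergence together with dominated convergence in the Strichartz norms lets us pass to the limit in the mild form, producing a common limit $f=\lim l_{n}=\lim u_{n}$ that is the required non-negative mild solution.

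Uniqueness and Lipschitz dependence follow by a standard contraction argument on the difference of two Strichartz solutions in $L^{\mbq}_{t}L^{\mbr}_{x}L^{\mbp}_{v}$, using the bilinearity of $Q^{\pm}$ and the smallness of $\eta$. Scattering of $\lr{v}^{\ell}f$ in $L^{3}_{x,v}$ is inherited at once from the sandwich $0\leq f\leq f_{+}$ and the scattering of $f_{+}$ supplied by Proposition~\ref{Thm-W-Gain}. The principal obstacle is the weight bookkeeping: a single $\ell$ must simultaneously meet the upper-barrier threshold of Proposition~\ref{Thm-W-Gain} ($\ell_{1}>1+\gamma$) and, after losing $\gamma+\tfrac{1}{9}$ weight in the loss bound, still exceed the Proposition~\ref{P:Loss} floor $\gamma+\tfrac{9}{10}$; this compatibility is exactly what the constant $\tfrac{10}{9}$ in the hypothesis $\ell>2\gamma+\tfrac{10}{9}$ encodes.
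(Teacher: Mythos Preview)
Your overall architecture---Kaniel--Shinbrot iteration seeded with $l_{0}=0$ and $u_{0}=f_{+}$---matches the paper's, and your weight bookkeeping is correct. However, two steps are genuine gaps.

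\textbf{Coincidence of the two monotone limits.} Monotone convergence only gives $l_{n}\uparrow h$ and $u_{n}\downarrow g$ with $0\leq h\leq g\leq f_{+}$; passing to the limit in the mild formulation produces the \emph{coupled} system
\[
(\partial_{t}+v\cdot\nabla_{x})g+gL(h)=Q^{+}(g,g),\qquad (\partial_{t}+v\cdot\nabla_{x})h+hL(g)=Q^{+}(h,h),
\]
not a single equation, so ``a common limit $f=\lim l_{n}=\lim u_{n}$'' does not follow from dominated convergence alone. The paper closes this by setting $w=g-h\geq 0$, deriving the linear inequality
\[
0\leq w(s)\leq \int_{0}^{s}U(s-\tau)\bigl[Q^{+}(g,w)+Q^{+}(w,h)+Q^{-}(g,w)\bigr](\tau)\,d\tau,
\]
and then applying the $L^{q_{2}}_{t}L^{r_{2}}_{x}L^{p_{2}}_{v}$ Strichartz bound together with Propositions~\ref{P-p2-est-w} and~\ref{P:Loss} on short time intervals to force $w\equiv 0$ (Lemma~\ref{w-equation}). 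This is where the auxiliary $L^{15/8}$-based norm is actually \emph{used}; in your write-up it appears only as a propagated bound, never as the mechanism that identifies the two limits.

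\textbf{Scattering.} Scattering in $L^{3}_{x,v}$ is not a monotone property, so the sandwich $0\leq f\leq f_{+}$ together with scattering of $f_{+}$ does \emph{not} yield scattering of $f$. The paper instead exploits non-negativity in the Duhamel identity
\[
U(-t)f(t)+\int_{0}^{t}U(-s)Q^{-}(f,f)\,ds=f_{0}+\int_{0}^{t}U(-s)Q^{+}(f,f)\,ds
\]
to bound the loss integral by the gain integral, and then controls $\|\lr{v}^{\ell}\int_{0}^{\infty}U(-s)Q^{+}(f,f)\,ds\|_{L^{3}_{x,v}}$ by the dual homogeneous Strichartz estimate and~\eqref{desire-est-H}. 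Your proposed shortcut does not work; you need this argument (or an equivalent one) explicitly.
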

\begin{proof}
Recall the definition of the loss term~\eqref{gain-loss-def} and denote 
\[
Q^{-}(f_{1},f_{2})=f_{1}L(f_{2}),
\]
for the convenience of the analysis later.
Next we recall that the Kaniel-Shinbrot iteration ensures that if there exist measurable functions $h_1,h_2,g_1,g_2$ which 
satisfy the beginning condition, i.e.,   
\begin{equation}\label{begin-condition}
 0\leq h_{1}\leq h_{2}\leq g_{2}\leq g_{1},
\end{equation}
then the iteration($n\geq 2$),
\begin{equation}\label{Iteration}
\begin{split}
& \partial_{t} g_{n+1}+v\cdot\nabla_{x} g_{n+1}+ g_{n+1}L(h_{n})=Q^{+}(g_{n},g_{n}) \\
& \partial_{t} h_{n+1}+v\cdot\nabla_{x} h_{n+1}+ h_{n+1} L (g_{n})=Q^{+}(h_{n},h_{n}) \\
& g_{n+1}(0)=h_{n+1}(0)=f_{0},
\end{split}
\end{equation}
will induce the monotone sequence of measurable functions
\begin{equation}\label{monotone-sequence}
0\leq h_1\leq h_n\leq h_{n+1}\leq g_{n+1}\leq g_n\leq g_1.
\end{equation}
Thus the monotone convergence theorem implies the existence of the limits $g, h$ with $0\leq h\leq g\leq g_1$
which satisfy 
\begin{equation}\label{Iteration-limit}
\begin{split}
& \partial_{t} g+v\cdot\nabla_{x} g+ g L(h )=Q^{+}(g,g) \\
& \partial_{t} h+v\cdot\nabla_{x} h+ h L (g)=Q^{+}(h,h) \\
& g(0)=h(0)=f_{0}
\end{split}
\end{equation}
Then the Cauchy problem for the full equation~\eqref{E:Cauchy} has a solution if we can further prove $g=h(=f)$.
Note that the uniqueness of the solution is not a consequence of above argument. 

Based on the Proposition~\ref{Thm-W-Gain}, it is natural to choose $h_1\equiv 0$ and $g_1=f_{+}$ 
where $f_{+}\geq 0$ is the solution of gain term only Boltzmann equation~\eqref{G-Cauchy} 
with initial data  $\| \lr{v}^{\ell} f_{0}\|_{L^{3}_{x,v}}<\eta,\;\ell>2\gamma+\frac{10}{9}$. 
Choose $\ell_{1}$ satisfies $1+\gamma<\ell_{1}\leq \ell $, the Proposition~\ref{Thm-W-Gain}  ensures
\begin{equation}\label{f-plus-finite}
\lr{v}^{\ell_{1}} f_+\in C([0,\infty),L^3_{x,v})\cap
L^{\mbq}([0,\infty],L^{\mbr}_xL^{\mbp}_v),
\end{equation}
where $(\mbq,\mbr,\mbp)=(({1}/{3})-3\varepsilon,({2}/{9})+\varepsilon,({4}/{9})-\varepsilon)$.

According to~\eqref{Iteration}, we want to 
find $h_{2}$ and $g_{2}$ through 
\begin{equation}\label{h-g-2}
\begin{split}
& \partial_{t} g_{2}+v\cdot\nabla_{x} g_{2}+ g_{2}L(h_{1})=Q^{+}(g_{1},g_{1}) \\
& \partial_{t} h_{2}+v\cdot\nabla_{x} h_{2}+ h_{2} L (g_{1})=Q^{+}(h_{1},h_{1}) \\
& g_{2}(0)=h_{2}(0)=f_{0}
\end{split}
\end{equation}
Since $h_{1}\equiv 0$, the first equation of~\eqref{h-g-2}  is exactly the gain term only Boltzmann equation. Hence 
we have $g_{2}=g_{1}=f_{+}$ by Proposition~\ref{Thm-W-Gain} and
\begin{equation}\label{g-2}
g_{2}(t)=U(t)f_{0}+\int_{0}^{t} U(t-s)Q^{+}(g_{1},g_{1})(s)ds. 
\end{equation}

Next we want to solve the second equation of~\eqref{h-g-2}  with the given $g_{1}$. Actually, we have 
\begin{equation}\label{h-2}
  h_{2}(t)=U(t)f_{0}\; e^{-\int_{0}^{t} U(t-s) L(g_{1})(s) ds},
\end{equation}
if $L(g_{1})$ is pointwisely a.e. well-defined  when  $g_{1}=f_{+}$ satisfies~\eqref{f-plus-finite}. 
We recall that the assumption $\|\lr{v}^{\ell} f_{0}\|_{L^{15/8}_{x,v}}<\infty$ and Proposition~\ref{P-p2-est-w}
ensure $g_{1}=f_{+}$ and 
\begin{equation}\label{p2boundf+}
\lr{v}^{\ell_{1}} g_{1}\in L^{q_{2}}([0,\infty],L^{r_{2}}_xL^{p_{2}}_v),\;1+\gamma<\ell_{1}\leq \ell.
\end{equation}
Note that we are looking for a solution $h_{2}$ with $\lr{v}^{\ell} h_{2} \in L^{\mbq}([0,\infty],L^{\mbr}_xL^{\mbp}_v)$. 
In order to apply Proposition~\ref{P:Loss}, we choose $\ell_{3}=\ell$, so that 
\begin{equation}\label{L-2-3-chosen}
\ell_{3}=\ell>2\gamma+\frac{10}{9},\;\ell_{2}=\ell_{3}-(\gamma+\frac{1}{9})>1+\gamma. 
\end{equation}
 Thus $\lr{v}^{\ell_{2}}g_{1}\in  L^{q_{2}}([0,\infty],L^{r_{2}}_xL^{p_{2}}_v)$ 
by~\eqref{p2boundf+} and  if $\lr{v}^{\ell} h_{2} \in L^{\mbq}([0,\infty],L^{\mbr}_xL^{\mbp}_v)$, then we have 
\[
\|\lr{v}^{\ell_{2}} h_{2}L(g_1)\|_{L^{\tilde{q}'_{2}}_tL^{\tilde{r}'_{2}}_xL^{\tilde{p}'_{2}}_v} \leq C
\|\lr{v}^{\ell} h_{2}\|_{L^{\mbq}_tL^{\mbr}_xL^{\mbp}_v}\|\lr{v}^{\ell_{2}} g_1\|_{L^{q_{2}}_tL^{r_{2}}_xL^{p_{2}}_v},
\] 
we know that if $\phi\in L^{\tilde{q}_{2}}_{t}L^{\tilde{r}_{2}}_{x}L^{\tilde{p}_{2}}_{v}$ then $\lr{\lr{v}^{\ell_{2}}h_{2}L(g_{1}),\phi}$
is bounded. Since 
\[ 
\lr{ \lr{v}^{\ell_{2}} h_{2}L(g_{1}),\phi}=\lr{\lr{v}^{\ell_{2}} L(g_{1}),h_{2}\phi},
\]
we have $\lr{v}^{\ell_{2}} L(g_{1})\in L^{\mbq_{2}}_{t}L^{\mbr_{2}}_{x}L^{\mbp_{2}}_{v}$ where $(1/\mbq_{2})'=1/\tilde{q}_{2}+1/\mbq$
, $(1/\mbr_{2})'=1/\tilde{r}_{2}+1/\mbr$ and $(1/\mbp_{2})'=1/\tilde{p}_{2}+1/\mbp$. Therefore $\lr{v}^{\ell_{2}} L(g_{1})$ 
as well as $ L(g_{1})$  are pointwisely a.e. well-defined.

Now we can compute $h_2$ by~\eqref{h-2}. Clearly, we  have $h_1\equiv 0\leq h_2\leq U(t)f_0\leq g_2$ by 
the non-negativity of $g_1$.  Therefore we conclude
the beginning condition~\eqref{begin-condition}.  From~\eqref{p2boundf+}, we also have that
\[
\lr{v}^{\ell_{1}}  h_{2},\;\lr{v}^{\ell_{1}}  g_{2}\in L^{q_{2}}([0,\infty],L^{r_{2}}_xL^{p_{2}}_v), \;1+\gamma
<\ell_{1}\leq \ell.
\]
Thus we can repeat the above argument to check that each term in~\eqref{Iteration} is well-defined. 
Therefore the method of Kaniel-Shinbrot 
ensures the existence of monotone sequence~\eqref{monotone-sequence} and the limit functions 
$g,\;h$ satisfy~\eqref{Iteration-limit}.

We also note that from the monotone convergence theorem and~\eqref{monotone-sequence}, we have
\begin{equation}\label{g-h-finite}
\begin{split}
& \lr{v}^{\ell_{1}}  g,\;\lr{v}^{\ell_{1}}  h \in  L^{\mbq}([0,\infty],L^{\mbr}_xL^{\mbp}_v), \;1+\gamma<\ell_{1}\leq \ell.\\
& \lr{v}^{\ell_{1}}  g,\;\lr{v}^{\ell_{1}}  h \in  L^{q_{2}}([0,\infty],L^{r_{2}}_xL^{p_{2}}_v), \\
& \lr{v}^{\ell_{1}} Q^{+}(g,g), \;\lr{v}^{\ell_{1}}  Q^{+}(h,h)\in L^{\tmbq'}_tL^{\tmbr'}_xL^{\tmbp'}_{v}\cup 
L^{\tilde{q}'_{2}}_tL^{\tilde{r}'_{2}}_xL^{\tilde{p}'_{2}}_{v},\\
&\lr{v}^{\ell-(\gamma+\frac{1}{9})} Q^{-}(g,g),\; \lr{v}^{\ell-(\gamma+\frac{1}{9})} Q^{-}(h,h)
\in L^{\tilde{q}'_{2}}_tL^{\tilde{r}'_{2}}_xL^{\tilde{p}'_{2}}_{v}.
\end{split}
\end{equation}
Thus we have a solution for the full Boltzmann equation if $g=h$.  

To prove that $g=h$, we let $w=g-h\geq 0$.  By~\eqref{Iteration-limit} the difference $w$ satisfies the equation
\[
 \partial_t w+v\cdot\nabla_x w=Q^+(g,w)+Q^+(w,h)+Q^-(g,w)-Q^-(w,g)
\]
with zero initial data. By   Lemma~\ref{w-equation} below we know that this equation has a unique solution 
$w\equiv 0$.  Thus $g=h$ and we conclude the global existence of the  non-negative mild solution for the 
full Boltzmann equation. The uniqueness of this solution can be proved by a standard continuity argument 
and the fact that the solutions are non-negative. We include it in subsection~\ref{s-unique-sol}.  Also the continuity 
in time, scattering of the solution and Lipschitz continuous of the solution map is included in the subsection
~\ref{s-continuity}.
 Thus we conclude the Proposition~\ref{result2}.

\end{proof}

We need a lemma before we start the proof of the uniqueness. 
\begin{lem}\label{w-equation}
Let $g,\;h$ be non-negative functions satisfy~\eqref{g-h-finite}.  Suppose that $w\geq 0$ is a mild solution 
of 
\begin{equation}\label{g-h-equation}
\left\{
\begin{array}{l}
 \partial_t w+v\cdot\nabla_x w=Q^+(g,w)+Q^+(w,h)+Q^-(g,w)-Q^-(w,g) \\
 w(0)=0.
\end{array}
\right.
\end{equation}
Then $w\equiv 0$. 
\end{lem}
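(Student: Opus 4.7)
The plan is to exploit the non-negativity of $w$, $g$, $h$, and $L(g)$ to turn equation~\eqref{g-h-equation} into a pointwise majorization with a non-negative right-hand side that is linear in $w$, and then to close a contraction-type weighted Strichartz estimate in the spirit of Proposition~\ref{Thm-W-Gain}. First I would rewrite~\eqref{g-h-equation} as
\[
(\partial_t + v\cdot\nabla_x) w + w\, L(g) = Q^{+}(g,w) + Q^{+}(w,h) + Q^{-}(g,w),
\]
and solve this linear transport equation along characteristics using the integrating factor $\mathcal E(s,t;x,v) := \exp\bigl(\int_s^t L(g)(\tau, x+v(\tau-t), v)\, d\tau\bigr) \geq 1$. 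Because $w(0)=0$, because $\mathcal E^{-1} \leq 1$, and because all three forcing terms on the right are non-negative, the Duhamel representation yields the pointwise inequality
\[
0 \leq w(t,x,v) \leq \int_0^t U(t-s)\bigl[Q^{+}(g,w) + Q^{+}(w,h) + Q^{-}(g,w)\bigr](s)(x,v)\, ds.
\]

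Next I would fix an auxiliary weight $\ell_1 \in (1+\gamma,\,\ell]$, multiply the majorization by $\lr{v}^{\ell_1}$, and apply the weighted inhomogeneous Strichartz estimate of Corollary~\ref{W-Strichartz} in two different KT-admissible pairs: the pair $(\mbq,\mbr,\mbp)$ of~\eqref{solvable-g-2} paired with its conjugate $(\tmbq,\tmbr,\tmbp)$ from~\eqref{solvable-triplets}, and the auxiliary pair $(q_2,r_2,p_2)$ from Proposition~\ref{P-p2-est-w} paired with $(\tilde q_2,\tilde r_2,\tilde p_2)$ from Proposition~\ref{P:Loss}. The two $Q^{+}$ contributions are then bounded via the bilinear estimate~\eqref{desire-est-H} coming from Theorem~\ref{T:Gain-p-w-est}, producing factors of $\|\lr{v}^{\ell_1}g\|_{X_1}$ or $\|\lr{v}^{\ell_1}h\|_{X_1}$ times $\|\lr{v}^{\ell_1}w\|_{X_1}$, where $X_1 := L^{\mbq}_tL^{\mbr}_xL^{\mbp}_v$. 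The loss term $Q^{-}(g,w)$ is handled through Proposition~\ref{P:Loss}, which is precisely where the ambient assumption $\ell > 2\gamma + \tfrac{10}{9}$ enters: it supplies the extra $\gamma + \tfrac{1}{9}$ slack needed to absorb the growth $|v-v_*|^\gamma$ into the weights, at the price of reading $w$ in the second space $X_2 := L^{q_2}_tL^{r_2}_xL^{p_2}_v$. The monotonicity~\eqref{monotone-sequence} gives $0 \leq w \leq g$, so both $\|\lr{v}^{\ell_1}w\|_{X_1}$ and $\|\lr{v}^{\ell_1}w\|_{X_2}$ are a priori finite by~\eqref{g-h-finite}.

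Combining the bounds yields a self-improving inequality of the form
\[
\|\lr{v}^{\ell_1} w\|_{X_1} + \|\lr{v}^{\ell_1} w\|_{X_2} \leq C\bigl(\|\lr{v}^{\ell} g\|_{X_1 \cap X_2} + \|\lr{v}^{\ell} h\|_{X_1 \cap X_2}\bigr)\bigl(\|\lr{v}^{\ell_1} w\|_{X_1} + \|\lr{v}^{\ell_1} w\|_{X_2}\bigr).
\]
By Proposition~\ref{Thm-W-Gain} and Proposition~\ref{P-p2-est-w} the factor involving $g$ and $h$ is dominated by $C'\eta$, so choosing $\eta$ small enough that $CC'\eta < 1$ forces both Strichartz norms of $w$ to vanish, whence $w \equiv 0$ and consequently $g = h$. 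The main obstacle is the loss term $Q^{-}(g,w)$: unlike the two gain terms it does not close under a single Strichartz pair and instead forces one to track $w$ simultaneously in $X_1$ and $X_2$; one must verify that the exponents $(\mbq,\mbr,\mbp)$, $(q_2,r_2,p_2)$ and their conjugates chosen for Theorem~\ref{T:Gain-p-w-est} and Proposition~\ref{P:Loss} are mutually compatible for the admissible range of $\varepsilon$ in~\eqref{solvable-triplets}, which is exactly where the condition $\ell > 2\gamma + \tfrac{10}{9}$ is used.
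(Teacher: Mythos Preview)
Your pointwise majorization step is exactly what the paper does: drop the non-positive term $-Q^{-}(w,g)=-wL(g)$ (equivalently, absorb it as an integrating factor) and keep the three non-negative forcing terms. After that, however, your bookkeeping diverges from the paper in two ways.

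First, you do not need to track $w$ in both $X_1=L^{\mbq}_tL^{\mbr}_xL^{\mbp}_v$ and $X_2=L^{q_2}_tL^{r_2}_xL^{p_2}_v$. The paper closes everything in $X_2$ alone: the mixed bilinear gain estimate~\eqref{Gain-p2} places $Q^{+}(g,w)$ and $Q^{+}(w,h)$ in $L^{\tilde q_2'}_tL^{\tilde r_2'}_xL^{\tilde p_2'}_v$ with a factor $\|\lr{v}^{\ell_1}g\|_{X_1}\cdot\|\lr{v}^{\ell_2}w\|_{X_2}$ (resp.\ with $h$), and Proposition~\ref{P:Loss} does the same for $Q^{-}(g,w)$. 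One Strichartz application then gives
\[
\|\lr{v}^{\ell_2}w\|_{X_2}\;\le\;C\bigl(\|\lr{v}^{\ell}g\|_{X_1}+\|\lr{v}^{\ell_2}h\|_{X_1}\bigr)\,\|\lr{v}^{\ell_2}w\|_{X_2}.
\]
Your insistence on also bounding $\|w\|_{X_1}$ creates a genuine gap: the loss term $Q^{-}(g,w)$ has no available estimate in the conjugate space $L^{\tmbq'}_tL^{\tmbr'}_xL^{\tmbp'}_v$ (Proposition~\ref{P:Loss} only targets the $X_2$-conjugate), so the $X_1$ half of your combined inequality is not justified. Moreover, your combined coefficient involves $\|g\|_{X_2}$, which is merely finite by Proposition~\ref{P-p2-est-w}, not small.

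Second, the paper does not invoke smallness of $\eta$ at this stage. It runs a local-in-time continuity argument: restrict the Strichartz norms to $[t_0,s]$, let $s\to t_0^+$, and use that $\|\lr{v}^{\ell}g\|_{L^{\mbq}([t_0,s],L^{\mbr}_xL^{\mbp}_v)}\to 0$ (since $\mbq<\infty$) to force the coefficient below $1$. This proves the lemma under its stated hypotheses~\eqref{g-h-finite}, which assert only finiteness. Your global-smallness argument would work for the specific $g,h$ arising from the Kaniel--Shinbrot construction (where $g\le f_+$ and $\|\lr{v}^{\ell}f_+\|_{X_1}$ is indeed small), but it is not a proof of the lemma as stated.
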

\begin{proof}

Consider the given time interval $[0,T]$ and define
\[
t_{0}=\inf {\Big \{} t\in [0,T]{\Big |} \|w(t)\|_{L^{q_{2}}([0, t],L^{r_{2}}_{x}L^{p_{2}}_{v})}>0 {\Big \}}.
\]
 Then $w\equiv 0$ for $0\leq t\leq t_{0}$. Let $t_{0}\leq s\leq T$.

From~\eqref{g-h-equation}, we have 
\[
 w(s)=\int_{0}^{s} U(t-\tau){\big [}Q^{+}(g,w)+Q^{+}(w,h)+Q^{-}(g,w)-Q^{-}(w,g){\big ]}(\tau) d\tau. 
\]
Noting that  $w\ge 0$,  $0\leq h\leq g$ and the operators $U(t), Q^{+}, Q^{-}$ are non-negative,  we have 
\begin{equation}\label{w-zero}
0\leq  w(s)\leq \int_{0}^{s} U(t-\tau){\big [}Q^{+}(g,w)+Q^{+}(w,h)+Q^{-}(g,w){\big ]}(\tau) d\tau. 
\end{equation}
Using~\eqref{L-2-3-chosen}, applyying Strichartz estimates as Proposition~\ref{P-p2-est-w} and also the estimates of Proposition~\ref{P:Loss}, then we have  
\begin{equation}\label{w-est}
\begin{split}
&\| \lr{v}^{\ell_{2}} w\|_{L^{q_{2}}([t_{0},s],L^{r_{2}}_xL^{p_{2}}_v)}\\
&\leq C( \| \lr{v}^{\ell} g\|_{L^{\mbq}([t_{0},s],L^{\mbr}_xL^{\mbp}_v)}+\\
&\hskip 3cm \| \lr{v}^{\ell_{2}} h\|_{L^{\mbq}([t_{0},s],L^{\mbr}_xL^{\mbp}_v)})
\| \lr{v}^{\ell_{2}} w\|_{L^{q_{2}}(([t_{0},s],L^{r_{2}}_xL^{p_{2}}_v)}\\
&:=C(g,h,s) \| \lr{v}^{\ell_{2}}  w\|_{L^{q_{2}}([t_{0},s],L^{r_{2}}_xL^{p_{2}}_v)}.
\end{split}
\end{equation}
Letting $s \rightarrow t_{0}+$, clearly we have $C(g,h,s)<1$ which means that 
\[
\| w\|_{L^{q_{2}}([t_{0},s],L^{r_{2}}_xL^{p_{2}}_v)}=0
\] for 
some $s> t_0$. By continuity, we have $w|_{[0,T]}\equiv 0$ for any $T>0$.
\end{proof}

\subsection{Uniqueness of the solution}\label{s-unique-sol}
Assume that $g\geq 0$ and $ h\geq 0$ both are mild solutions which satisfy~\eqref{E:Cauchy}. Let 
$w=g-h$. Comparing to Lemma~\ref{w-equation}, the function $w$ satisfies~\eqref{g-h-equation}, 
but we do not have the property $ w\geq 0$.  For our convenience, we rewrite~\eqref{g-h-equation} as
\[
\left\{
\begin{array}{l}
 \partial_t w+v\cdot\nabla_x w+wL(g)=Q^+(g,w)+Q^+(w,h)+Q^-(g,w) \\
 w(0)=0,
\end{array}
\right.
\]
and want to show $w\equiv 0$. Since $g,\;h$ and thus $w$ satisfy~\eqref{g-h-finite}, the term $L(g)$ is pointwisely
a.e. well-defined. Thus  the function $w$ satisfies 
\[
\begin{split}
& w(t)=\int_0^t  e^{-\int_s^t U(t-\tau)L(g)(\tau)d\tau} \\
&\hskip 3cm \cdot U(t-s){\big [} Q^+(g,w)+Q^+(w,h)+Q^-(g,w) {\big]}(s)ds.
\end{split}
\]
Using the  fact that $L(g)\geq 0$ since $g\geq 0$, we have 
\begin{equation}\label{w-uni}
| w(t) | \leq   \int_0^t   U(t-s){\big [} Q^+(g,|w|)+Q^+(|w|,h)+Q^-(g,|w|) {\big]}(s)ds.
\end{equation}
The equation~\eqref{w-uni} is in place of~\eqref{w-zero}  for the proof of $w\equiv 0$, thus we conclude 
the uniqueness of the solution.

\subsection{Continuity in $L^{3}_{x,v}$ and Scattering of the solution}\label{s-continuity}

Now we show that $\lr{v}^{\ell} f\in C([0,T],L^3_{x,v})$ for any $T\in[0,\infty]$. From the formula 
\begin{equation}\label{Duhamel-positive}
\int_{0}^{t} U(t-s)Q^{-}(f,f)(s)ds+f(t)=U(t)f_{0}+\int_{0}^{t} U(t-s)Q^{+}(f,f)(s)ds
\end{equation}
and the observation that each term in~\eqref{Duhamel-positive} is non-negative, we have 
\begin{equation}\label{Duhamel-ineq}
0\leq f(t)\leq U(t)f_{0}+\int_{0}^{t} U(t-s)Q^{+}(f,f)(s)ds.
\end{equation} 
It has been observed by Ovcharov~\cite{Ovc09} that  
$U(t)f_0\in C(\mathbb{R};L^N_{x,v})$ when $f_{0}\in L^N_{x,v}$. As we point out before that 
multiplication of weight $\lr{v}^{\ell}$ commutes with $U(t)$, thus we have 
$\lr{v}^{\ell} U(t) f_0\in C(\mathbb{R};L^3_{x,v})$.
Thus it suffices to show that $W(t)$ (see \eqref{D:solution-maps}) is also continuous. 
Let $0\leq t\in (0,\infty]$.  
Applying inhomogeneous Strichartz with triplet
$(\tmbq',\tmbr',\tmbp')$ used in~\eqref{solvable-triplets}, we see that 
\[
\| \lr{v}^{\ell} W(t)Q^{+}(f,f)\|_{L^{\infty}([0,t];L^3_{x,v})}=\int_0^t \|U(t-s) \lr{v}^{\ell} Q^{+}(f,f)\|_{L^3_{x,v}}ds
\]
is bounded.  Since $U(t)$ is continuous, we conclude that $W(t)$ is continuous from above expression. 
Also the solution map $\lr{v}^{\ell}  f_0\in B_{\eta}\subset L^3_{x,v}\rightarrow \lr{v}^{\ell}  f\in 
L^{\mbq}_tL^{\mbr}_xL^{\mbp}_v$ is Lipschitz continuous.

Next we want to show that the solution $f$ scatters, i.e., there exists  a function $f_{\infty}$ with 
 $\lr{v}^{\ell} f_{\infty}\in L^{3}_{x,v}$ such that 
\[
\|\lr{v}^{\ell} f(t)-U(t)(\lr{v}^{\ell} f_{\infty})\|_{L^3_{x,v}}\rightarrow 0\;\;{\rm as}\;t\rightarrow \infty.
\]
The above statement is equivalent to prove that 
\begin{equation}
\|U(-t)(\lr{v}^{\ell} f)(t)-\lr{v}^{\ell} f_{\infty}\|_{L^3_{x,v}}\rightarrow 0\;\;{\rm as}\;t\rightarrow \infty,
\end{equation}
since $U(t)$ preserves the $L^3_{x,v}$ norm. 

By the Duhamel formula, we have 
\begin{equation}\label{Duhamel}
U(-t)f(t)=f_0+\int_0^t U(-s)Q(f,f)(s)ds.
\end{equation}
Hence the scattering of $f(t)$ is confirmed if we have  the convergence of the integral 
\[
\lr{v}^{\ell} \int_0^{\infty} U(-t)Q(f,f)(t)dt
\]  
in $L^{3}_{x,v}$. In this case $f_{\infty}$ is given by
\begin{equation}\label{f-infty}
f_{\infty}=f_0+\int_0^{\infty} U(-t)Q(f,f)(t)dt.
\end{equation}

We rewrite~\eqref{Duhamel} as
\[
U(-t)f(t)+\int_0^t U(-s)Q^{-}(f,f)(s)ds=f_0+\int_0^t U(-s)Q^{+}(f,f)(s)ds.
\]
 Since each term of above equation is non-negative, thus we have 
 \[
 \begin{split}
 \int_0^t U(-s)Q^{-}(f,f)(s)ds& \leq f_0+\int_0^t U(-s)Q^{+}(f,f)(s)ds\\
 &\leq f_0+\int_0^{\infty} U(-s)Q^{+}(f,f)(s)ds.
\end{split}
\]
By monotone convergence theorem, it holds that
\begin{equation}\label{Duhamel-p}
 \int_0^{\infty} U(-s)Q^{-}(f,f)(s)ds\leq f_0+\int_0^{\infty} U(-s)Q^{+}(f,f)(s)ds.
\end{equation} 
Then we are reduced to prove the right hand side of~\eqref{Duhamel-p} is bounded 
in $L^{3}_{x,v}$.
 
 Let $U^*(t)$ be the adjoint operator of $U(t)$, it is clearly that $U^*(t)=U(-t)$. 
Let $(\tmbq,\tmbr,\tmbp)$ be the KT-admissible triplet chosen in ~\eqref{solvable-triplets}
and recall that $1/\tmbp'+1/\tmbr'=2/3$. 
By duality, the homogeneous Strichartz estimate 
\[
\| \lr{v}^{\ell}  U(t)g\|_{L^{\tmbq}_tL^{\tmbr}_xL^{\tmbp}_v}\leq C\| \lr{v}^{\ell}  g\|_{L^{{3}/{2}}_{x,v}}
\]
implies
\[
\begin{split}
&\bigg\|\int_0^{\infty} \lr{v}^{\ell} U^*(t)Q^+(f,f)dt \bigg\|_{L^{3}_{x,v}} \\
&\leq C\|\lr{v}^{\ell} Q^+(f,f)\|_{L^{\tmbq'}_tL^{\tmbr'}_xL^{\tmbp'}_v}
\leq C\|\lr{v}^{\ell} f\|^{2}_{L^{\mbq}_tL^{\mbr}_xL^{\mbp}_v} 
\end{split}
\]
as before. Thus we conclude that $\lr{v}^{\ell} f$ scatters.

\section{Some tools }

One of the most powerful tools in estimating the oscillatory integral
 $$I_{\Lambda}(u,f)=\int_{{\mathbb R}^n}e^{i\Lambda f(y)}u(y)dy,$$ for large $\Lambda$ is the 
following lemma of stationary phase asymptotics. There are several 
versions used widely, here we only record one of these which is 
from Theorem 7.7.5 of H\"{o}rmander~\cite{Hor83}. We use the notation $D_j=-i\partial_j$.
\begin{thm}[Stationary phase asymptotics]\label{L:stationary}
Let $K\subset{\mathbb R}^n$ be a compact set, $X$ an open neighborhood of $K$ and 
$k$ a positive number. 
If $u\in C^{2k}_{c}({\mathbb R}^n), f\in C^{3k+1}(X)$ and 
${\rm Im}\;f\geq 0$ in $X, {\rm Im}\;f(y_0)=0,
f'(y_0)=0,{\rm det}\;f''(y_0)\neq 0,f'\neq 0$ in $K\backslash\{{y_0}\}$ then
\begin{equation}\label{F:stationary-formula}
\begin{aligned}
|I&-e^{i\Lambda f(y_0)}({\rm det}(\Lambda f''(y_0)/2\pi i))^{-1/2}\sum_{j<k}\Lambda^{-j}L_ju|\\
  &\leq C\Lambda^{-k}\sum_{|\alpha|\leq 2k}\sup|D^{\alpha}u|\;,\;\Lambda>0
\end{aligned}
\end{equation}
Here $C$ is bounded when $f$ stays in a bounded set in $C^{3k+1}(X)$ and 
$|y-y_0|/|f'(y)|$ has a uniform bound. 
With $$g_{y_0}(y)=f(y)-f(y_0)-\langle f''(y_0)(y-y_0),y-y_0 \rangle/2$$ which 
vanish of third order at $x_0$ we have 
\begin{equation}\label{E:L-operator}
L_ju=\sum_{\nu-\mu=j}\;\;\sum_{2\nu\geq3\mu}i^{-j}2^{-\nu}\langle f''(y_0)^{-1}D,D\rangle^{\nu}
(g_{y_0}^{\mu}u)(y_0)/\mu!\nu!
\end{equation}
which is a differential operator of order $2j$ acting on $u$ at $y_0$. The coefficients are rational 
homogeneous functions of degree $-j$ in $f''(y_0),\cdots, f^{2j+2}(y_0)$ with denominator
$({\rm det}\; f''(y_0))^{3j}$. In every term the total number of derivatives of $u$ and $f''$ is at most $2j$. 
\end{thm}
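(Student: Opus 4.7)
\medskip

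\noindent\textbf{Proof proposal for Theorem~\ref{L:stationary} (stationary phase asymptotics).}

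The plan is the classical two-scale argument: localize near the critical point, then solve the resulting Gaussian-with-cubic-perturbation integral exactly up to a controlled error. First, I would choose a small neighborhood $V\Subset X$ of $y_0$ on which $f'(y)=f''(y_0)(y-y_0)+O(|y-y_0|^2)$ is comparable to $|y-y_0|$ (using the nondegeneracy hypothesis $\det f''(y_0)\neq 0$), and pick $\chi\in C_c^\infty(V)$ equal to $1$ on a smaller neighborhood. On $\mathrm{supp}\,(1-\chi)\cap K$ we have $f'\neq 0$, so applying the transpose of
\[
L=\frac{1}{i\Lambda|f'|^{2}}\sum_{j}\overline{f'_{j}}\,\partial_{j}
\]
exactly $2k$ times and using $\mathrm{Im}\,f\ge 0$ (which makes $|e^{i\Lambda f}|\le 1$) yields a contribution of size $O(\Lambda^{-2k}\sum_{|\alpha|\le 2k}\sup|D^\alpha u|)$, which is absorbed into the error on the right-hand side of~\eqref{F:stationary-formula}. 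This reduces matters to the case $u=\chi u$ supported near $y_0$.

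Next, I would translate so that $y_0=0$ and write $f(y)=f(0)+\tfrac12\langle Qy,y\rangle+g(y)$ with $Q=f''(0)$ and $g=g_{y_0}$ vanishing to third order at $0$. Pulling out the constant phase $e^{i\Lambda f(0)}$, the task is to asymptotically expand
\[
J(\Lambda)=\int e^{i\Lambda\langle Qy,y\rangle/2}\,e^{i\Lambda g(y)}\,u(y)\,dy.
\]
Expand $e^{i\Lambda g(y)}=\sum_{\mu<k}(i\Lambda g(y))^{\mu}/\mu!+R_{k}(y,\Lambda)$; for each polynomial piece use Plancherel's theorem (i.e.\ $\widehat{e^{i\Lambda\langle Qy,y\rangle/2}}(\eta)=(\det(\Lambda Q/2\pi i))^{-1/2}e^{-i\langle Q^{-1}\eta,\eta\rangle/(2\Lambda)}$, valid also when $\mathrm{Im}\,Q\ge 0$ by analytic continuation in the spectrum of $Q$) to obtain
\[
\int e^{i\Lambda\langle Qy,y\rangle/2}\,\frac{(i\Lambda g)^{\mu}u}{\mu!}\,dy
=(\det(\Lambda Q/2\pi i))^{-1/2}\sum_{\nu\ge 0}\Lambda^{\mu-\nu}\,\frac{i^{\nu-\mu}}{2^{\nu}\nu!\mu!}\bigl\langle Q^{-1}D,D\bigr\rangle^{\nu}(g^{\mu}u)(0).
\]
The terms with $2\nu\ge 3\mu$ (forced because $g$ vanishes to order $3$ at $0$, so $\langle Q^{-1}D,D\rangle^\nu(g^\mu u)(0)=0$ unless $2\nu\ge 3\mu$) and $\nu-\mu=j$ regroup exactly into the operators $L_j$ defined in~\eqref{E:L-operator}.

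For the remainder, $R_{k}(y,\Lambda)=O((\Lambda|g(y)|)^{k})$ near $0$; since $|g(y)|\lesssim|y|^{3}$ and the Gaussian oscillatory integral against $|y|^{3k}u(y)$ gains $\Lambda^{-3k/2}$ per pair of $y$-factors, one gets the $O(\Lambda^{-k})$ error after combining with the overall Gaussian normalization $\Lambda^{-n/2}$, bounded by $\sum_{|\alpha|\le 2k}\sup|D^{\alpha}u|$ through the $L^{1}(\widehat{\cdot})$-type estimate used in the Plancherel step. Tracking constants shows that $C$ depends only on $\|f\|_{C^{3k+1}(X)}$ and on a lower bound for $|\det f''(y_0)|$, as claimed.

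The main obstacle I expect is the \emph{complex phase} case $\mathrm{Im}\,f\ge 0$: the Fourier/Plancherel identities used above are for real quadratic forms, and need to be extended to $Q$ with $\mathrm{Im}\,Q\ge 0$. Handling this cleanly requires either an almost-analytic extension argument (deforming the contour in the imaginary direction, which is permitted because $e^{i\Lambda f}$ decays there) or a careful passage to the limit from the strictly positive imaginary part case. The combinatorial identification of the Gaussian moment expansion with the operators $L_{j}$ in~\eqref{E:L-operator}, though lengthy, is routine once the correct bookkeeping of the constraint $2\nu\ge 3\mu$ is in place.
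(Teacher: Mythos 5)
First, a point of comparison: the paper itself gives no proof of this statement; it is quoted verbatim as Theorem 7.7.5 of H\"{o}rmander~\cite{Hor83} in the ``Some tools'' section and used as a black box. So the only meaningful comparison is with the classical proof in~\cite{Hor83}, and your outline does follow its general strategy: non-stationary-phase localization away from $y_0$ (your operator $L$ and the bound $|e^{i\Lambda f}|\le e^{-\Lambda\,\mathrm{Im}f}\le 1$ are fine), splitting $f$ into its quadratic part plus $g_{y_0}$, and the imaginary-Gaussian Fourier formula extended to $\mathrm{Im}\,Q\ge 0$ by continuity/analytic continuation.

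The central expansion step, however, has a genuine gap. You truncate $e^{i\Lambda g}$ at $\mu<k$, but the operator $L_j$ in~\eqref{E:L-operator} sums over all $\mu\le 2j$ (since $\nu=j+\mu$ and $2\nu\ge 3\mu$ force $\mu\le 2j$), so for $j\ge k/2$ your regrouping cannot ``regroup exactly into $L_j$'': the contributions with $k\le\mu\le 2j$ are simply absent, and they are genuinely of size $\Lambda^{-j}$ with $j<k$, hence not absorbable into the $O(\Lambda^{-k})$ error in~\eqref{F:stationary-formula}. Consistently, your remainder bound fails: taking absolute values of $R_k=O((\Lambda|g|)^k)$ destroys the oscillation (the resulting integral is $O(\Lambda^k)$, not small), and even the heuristic $|y|\sim\Lambda^{-1/2}$ only yields $\Lambda^{k-3k/2-n/2}=\Lambda^{-k/2-n/2}$, which is weaker than $\Lambda^{-k}$ when $n<k$; to make this scheme work one must expand the exponential to order roughly $2k$, not $k$. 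A second problem is that you write the quadratic-phase integral of $(i\Lambda g)^{\mu}u$ as an exact infinite series in $\nu$; the terms with large $\nu$ involve derivatives of $u$ of order $2\nu>2k$, which do not exist for $u\in C^{2k}$ and are not controlled by the right-hand side of~\eqref{F:stationary-formula}. The correct procedure (and H\"{o}rmander's) is to apply the \emph{finite} quadratic-phase expansion with remainder (his Theorem 7.6.1/Lemma 7.7.3) to the $\Lambda$-dependent amplitude $u\,e^{i\Lambda g}$, and only then Taylor-expand at $y_0$, using that every power of $\Lambda$ produced by differentiating $e^{i\Lambda g}$ comes with at least three orders of vanishing of $g$; this bookkeeping is precisely what generates the constraint $2\nu\ge 3\mu$, keeps the total number of $u$-derivatives at most $2k$, explains the hypotheses $u\in C^{2k}$, $f\in C^{3k+1}$, and yields the $O(\Lambda^{-k})$ error with the stated uniformity of $C$ (which, note, is phrased via a bound on $|y-y_0|/|f'(y)|$ rather than a lower bound on $|\det f''(y_0)|$ alone).
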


We need the following Schur’s Lemma which can be found in~\cite{Zhao15}.
\begin{lem}\label{Schur}
Let $\mu$ and $\nu$ be two positive measures on the space $X$. Let
 $K(x,y)$ be  a nonnegative  measurable functions on $X\times X$. 
 Let $T$ be the integral operator with kernel $K$, as   
\[
T(f)(x)=\int_Y K(x,y) f(y) d\nu(y)
\]
Suppose $1<p\leq q<\infty$ and $\frac{1}{p}+\frac{1}{p'}=1$. Let $\delta_{1}$ and $\delta_{2}$
be two real numbers such that $\delta_{1}+\delta_{2}=1$. If there exist positive functions 
$h_{1}$ and $h_{2}$ with positive constants $C_{1}$ and $C_{1}$ such that 
\[
\int_X K(x,y)^{\delta_{1}p'} h_{1}(y)^{p'} d\mu(y) \leq C_{1} h_{2}(x)^{p'}
\]
for almost all $x\in X$, and 
\[
\int_X K(x,y)^{\delta_{2} q} h_{2}(x)^{q} d\mu(y) \leq C_{2} h_{1}(y)^{q}
\]
Then $T$ maps $L^p(Y)$ to $L^q(X)$ with norm at most $C_{1}^{1/p'} C_{2}^{1/q}$.
\end{lem}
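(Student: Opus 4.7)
The plan is to reduce to the classical $L^{2}$ Schur bound via a two-step strategy: a pointwise Hölder splitting of the kernel against the auxiliary weights $h_{1},h_{2}$, followed by Minkowski's integral inequality (where the hypothesis $p\le q$ is essential). The roles of the two hypotheses are forced: the first is usable after Hölder in the $y$-variable, while the second is only reachable after the order of integration has been swapped.

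Concretely, I would first factor the integrand of $T(f)(x)$ as
\[
K(x,y)\,|f(y)|=\bigl[K(x,y)^{\delta_{1}}h_{1}(y)\bigr]\cdot\bigl[K(x,y)^{\delta_{2}}h_{1}(y)^{-1}|f(y)|\bigr]
\]
and apply Hölder in $y$ with conjugate exponents $p',p$. The $p'$-th power of the first factor, integrated in $y$, is exactly the left-hand side of the first hypothesis, which produces the pointwise bound
\[
|T(f)(x)|\le C_{1}^{1/p'}h_{2}(x)\,\phi(x)^{1/p},\qquad \phi(x):=\int K(x,y)^{\delta_{2}p}h_{1}(y)^{-p}|f(y)|^{p}\,d\nu(y).
\]

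Next I would raise to the $q$-th power and integrate against $d\mu$. Writing $h_{2}(x)^{q}\phi(x)^{q/p}=(h_{2}(x)^{p}\phi(x))^{q/p}$ and observing that $h_{2}(x)^{p}\phi(x)$ is itself a $d\nu(y)$-integral with $x$-dependent integrand, Minkowski's integral inequality is applicable precisely because $q/p\ge 1$. Pulling the $L^{q/p}(d\mu)$-norm inside the $y$-integral reduces matters to
\[
\left(\int K(x,y)^{\delta_{2}q}h_{2}(x)^{q}\,d\mu(x)\right)^{p/q}\le C_{2}^{p/q}h_{1}(y)^{p},
\]
by the second hypothesis. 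This $h_{1}(y)^{p}$ cancels the $h_{1}(y)^{-p}$ factor remaining from the first step, leaving $C_{2}^{p/q}|f(y)|^{p}$ under the $d\nu$-integral, which integrates to $C_{2}^{p/q}\|f\|_{L^{p}}^{p}$. Taking $q$-th roots and collecting constants yields $\|T(f)\|_{L^{q}}\le C_{1}^{1/p'}C_{2}^{1/q}\|f\|_{L^{p}}$.

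I do not anticipate a genuine obstacle: the only delicate ingredient is Minkowski's inequality for the mixed-norm expression, which is exactly what forces the assumption $p\le q$. The case $p=q=2$, $\delta_{1}=\delta_{2}=1/2$, $h_{1}=h_{2}$ collapses the Minkowski step to Fubini and reproduces the usual Schur test, confirming that the weight-and-split scheme above is the correct generalization. The only caution is a mild typo in the statement of the second hypothesis, where $d\mu(y)$ should read $d\mu(x)$; the proof above uses the evidently intended version.
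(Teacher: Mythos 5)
Your proof is correct: the Hölder splitting $K|f|=[K^{\delta_1}h_1]\cdot[K^{\delta_2}h_1^{-1}|f|]$ followed by Minkowski's integral inequality in $L^{q/p}(d\mu)$ (where $p\le q$ enters) gives exactly the claimed bound $C_1^{1/p'}C_2^{1/q}$, and your reading of the misprints (the second hypothesis should integrate $d\mu(x)$, the first $d\nu(y)$) matches the intended statement. The paper itself does not prove this lemma but only cites Zhao's generalized Schur test, and your argument is essentially the standard proof given there, so there is nothing further to reconcile.
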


\noindent{\bf Acknowledgments.}
Ling-Bing He is supported by NSF of China under the grant 12141102. 
Jin-Cheng Jiang and Meng-Hao Liang were supported in part by  National Sci-Tech Grant 
MOST 109-2115-M-007-002-MY3 and NSTC 112-2115-M-007-005-MY3.
Hung-Wen Kuo was supported in part by  NSTC 112-2115-M-006-009.

\end{document}